\title[Algebraic theories in homotopy theory]{Algebraic theories, span diagrams and commutative monoids in homotopy theory}
\author{James D. Cranch}
\newtheorem{thm}{Theorem}[section]
\newtheorem{prop}[thm]{Proposition}
\newtheorem{corol}[thm]{Corollary}
\newenvironment{innerproof}[1]%
  {\begin{proof*}[#1]}
  {\hfill\checkmark%
   \end{proof*}}
\newcommand{\dar}{\ar@{-->}}
\newcommand{\dl}{\ar@{--}}
\newcommand{\Ar} {\ar@{=>}}
\newcommand{\Dar}{\ar@{==>}}
\newcommand{\iar}{\ar@{^{(}->}}
\newcommand{\sear}{\ar@{-{>>}}}
\newcommand{\rar}{\ar@{~>}}
\DeclareMathOperator{\Ob}{Ob}
\DeclareMathOperator{\id}{id}
\DeclareMathOperator{\im}{im}
\DeclareMathOperator{\Fun}{Fun}
\DeclareMathOperator{\Map}{Map}
\DeclareMathOperator{\Mod}{Mod}
\DeclareMathOperator{\Mon}{Mon}
\DeclareMathOperator{\Hom}{Hom}
\DeclareMathOperator{\Arr}{Arr}
\DeclareMathOperator{\Spaces}{Spaces}
\DeclareMathOperator{\colim}{colim}
\DeclareMathOperator{\Theories}{Theories}
\DeclareMathOperator{\Aut}{Aut}
\DeclareMathOperator{\Alg}{Alg}
\DeclareMathOperator{\Free}{Free}
\newcommand{\isom}{\operatorname{\cong}}
\newcommand{\fib}{\mathrm{fib}}
\newcommand{\N}{\mathbb{N}}
\newcommand{\cA}{\mathcal{A}}
\newcommand{\cB}{\mathcal{B}}
\newcommand{\cC}{\mathcal{C}}
\renewcommand{\cD}{\mathcal{D}}
\newcommand{\cE}{\mathcal{E}}
\newcommand{\cJ}{\mathcal{J}}
\newcommand{\cK}{\mathcal{K}}
\renewcommand{\cL}{\mathcal{L}}
\newcommand{\cO}{\mathcal{O}}
\newcommand{\cU}{\mathcal{U}}
\newcommand{\fC}{\mathfrak{C}}
\newcommand{\Fin}{{\mathrm{FinSet}}}
\newcommand{\Fins}{{\Fin_*}}
\newcommand{\Finop}{{\Fin^\op}}
\newcommand{\Finsop}{{\Fin_*^\op}}
\newcommand{\FSI}{{\Fin^{\isom}}}
\newcommand{\Set}{\mathrm{Set}}
\newcommand{\sSet}{\mathrm{sSet}}
\newcommand{\sCat}{\mathrm{sCat}}
\newcommand{\Cat}{\mathrm{Cat}}
\newcommand{\Cinfty}{\Cat_\infty}
\newcommand{\Cinftypp}{\Cat_\infty^{\mathrm{pp}}}
\newcommand{\TwoSpan}{2\Span}
\newcommand{\Span}{\mathrm{Span}}
\newcommand{\op}{\mathrm{op}}
\newcommand{\Funpp}{\Fun^{\mathrm{pp}}}
\newcommand{\timeso}[1]{\mathop{\times}_{#1}}
\newcommand{\Spant}{\Span^\times}
\newcommand{\tcCt}{\tilde{\cC}^\times}
\newcommand{\cCt}{\cC^\times}
\newcommand{\cDt}{\cD^\times}
\newcommand{\cCot}{\cC^\otimes}
\newcommand{\Algt}{\Alg^\times}
\newcommand{\Algot}{\Alg^\otimes}
\newcommand{\bAlg}{\overline{\Alg}}
\newcommand{\Mont}{\Mon^\times}
\newcommand{\Monot}{\Mon^\otimes}
\newcommand{\bMon}{\overline{\Mon}}
\newcommand{\tp}{\tilde{p}}
\newcommand{\Spanc}{\Span^{\textrm{coll}}}
\newcommand{\Finsc}{\Fins^{\textrm{coll}}}
\newcommand{\limit}{\varprojlim}
\newcommand{\Monoids}{\mathrm{Mon}}
\newcommand{\coh}{\textrm{coh}}
\newcommand{\Ho}{\operatorname{ho}}
\newcommand{\skel}{\mathrm{skel}}
\newcommand{\cCD}{\cC^{\Delta^1}}
\newcommand{\Th}[1]{\mathrm{Th}_{#1}}
\newcommand{\ThS}[1]{\Th{#1}^\Delta}
\newcommand{\ThMon}{\Th{\mathrm{Mon}}}
\def\pb#1{\save[]+<12 pt,0 pt>:a(#1)\ar@{pb{}}[]\restore}
\begin{document}
\begin{abstract}
We adapt the notion of an algebraic theory to work in the setting of quasicategories developed recently by Joyal and Lurie. We develop the general theory at some length.

We study one extended example in detail: the theory of commutative monoids (which turns out to be essentially just a $2$-category). This gives a straightforward, combinatorially explicit, and instructive notion of a commutative monoid. We prove that this definition is equivalent (in appropriate senses) both to the classical concept of an $E_\infty$-monoid and to Lurie's concept of a commutative algebra object.
\end{abstract}
\maketitle 
\tableofcontents

\section{Introduction}

\subsection{The theory of monoids}

Let $M$ be a commutative monoid; we might be interested in natural operations $M^a\rightarrow M^b$. Here is an example of a natural operation $M^3\rightarrow M^4$:
\[(a,b,c)\longmapsto(b+a,c,0,a+a+a+b+c).\]

Such an operation consists of adding up some copies of the things we started with. We can regard this as a two-stage process: first we make copies, then we add. So we can factor this operation as
\[(a,b,c)\longmapsto(b,a,c,a,a,a,b,c)\longmapsto(b+a,c,0,a+a+a+b+c).\]

In general we can associate natural operations to maps of finite sets:
\begin{itemize}
\item Given a map $f:X\leftarrow U$ of sets, we can produce a \emph{copying} map $\Delta_f:M^X\rightarrow M^U$ via $(\Delta_fA)_u = A_{f(u)}$.
\item Given a map $g:U\rightarrow Y$ of sets, we cap produce an \emph{addition} map $\Sigma_g:M^U\rightarrow M^Y$ via $(\Sigma_gA)_y = \sum_{g(u)=y}A_u$.
\end{itemize}
Of course, we can compose these, and so given any diagram of finite sets
\[X\stackrel{f}{\longleftarrow}U\stackrel{g}{\longrightarrow}Y\]
we get an operation $\Sigma_g\circ\Delta_f$, which sends $M^X\rightarrow M^Y$ via \[(\Sigma_g\circ\Delta_f)(A)_y = \sum_{g(u)=y}A_{f(u)}.\]

We refer to a diagram of sets with this shape as a \emph{span diagram}. It certainly seems natural to suggest that span diagrams should give all the natural operations on a commutative monoid. However, span diagrams $X\leftarrow U\rightarrow Y$ and $X\leftarrow U'\rightarrow Y$ yield identical operations if they are isomorphic in the sense that
\begin{displaymath}
\xymatrix@R-20pt{&U\ar[dr]\ar[dd]^\wr&\\
X\ar[ur]\ar[dr]&&Y\\
&Y'\ar[ur]&}
\end{displaymath}
commutes.

Moreover, we can compose span diagrams: we use pullbacks:
\begin{displaymath}
\left(\vcenter{\xymatrix@R-8pt@C-16pt{&U\ar[dl]\ar[dr]&\\X&&Y}}\right) \circ
\left(\vcenter{\xymatrix@R-8pt@C-16pt{&V\ar[dl]\ar[dr]&\\Y&&Z}}\right) = 
\left(\vcenter{\xymatrix@R-8pt@C-16pt{&U\timeso{Y}V\ar[dl]\ar[dr]&\\X&&Z}}\right).
\end{displaymath}
It can quickly be checked that this is the right thing to do, using the formula above for $\Sigma_g\circ\Delta_f$.

These span diagrams thus form a category $\ThMon$. Hence it is reasonable to believe that this encodes the structure of commutative monoids precisely.

In fact, there is a classical result that it does: a commutative monoid is the same thing as a product-preserving functor from $\ThMon$ to sets.

The work of Lawvere generalises this point of view considerably; given a category $T$ generated under taking finite products by a single object, we say that $T$ is an \emph{algebraic theory} and that a product-preserving functor from $T$ to sets is a \emph{model of $T$}.

We might aim to apply this to homotopy theory: Badzioch \cite{Bad1} has shown that the models of $\ThMon$ in $\Spaces$ are exactly the generalised Eilenberg--Mac Lane spaces. His paper works in ordinary category theory, but a note observes that all results carry through in the world of simplicial categories.

We might hope for a different theory: frequently, a more useful notion of commutative monoid in $\Spaces$ is the notion of $E_\infty$-monoid \cite{Adams-ILS}. This is a monoid which is commutative only up to coherent homotopy. So we might ask, how might we change $\ThMon$ in order to get this more nuanced theory?

To find an answer, we must realise that we lose valuable information when we pass to isomorphism classes of span diagrams to form the category $\ThMon$.

\subsection{Theories in quasicategories}

In order to preserve this information, we use \emph{quasicategories}, as developed by Joyal \cite{Joyal} and Lurie \cite{HTT}. A quasicategory is a simplicial set obeying some extra axioms: these axioms are slightly less stringent than the axioms for a Kan complex.

The philosophy is as follows: a category can be regarded as a simplicial set, via the standard nerve construction. But an $\infty$-groupoid (a higher categorical object consisting of objects, invertible maps, and all higher homotopies between them) can also be regarded as a Kan complex (consisting of points, 1-simplices and higher simplices), and hence also inhabits the world of simplicial sets.

A quasicategory is a generalisation of both. In particular, it has:
\begin{itemize}
\item objects, represented by $0$-simplices;
\item morphisms, represented by $1$-simplices, which need not be invertible (as in an ordinary category);
\item homotopies, represented by higher simplices, which are all invertible in some appropriate sense.
\end{itemize}
Because of the nature of this theory, we shall use the terms ``0-simplex'', ``point'' and ``object'' interchangeably, and similarly also use the terms ``1-simplex'', ``edge'' and ``morphism'' interchangeably (depending perhaps on whether we are thinking of the object as a simplicial set, as a model for a space, or as a generalised category).

Likewise, we find ourselves using ``functor'' and ``map''  interchangeably, when we do not wish to be precise about whether our objects or quasicategories or more general simplicial sets.

We start with some preliminaries (Section \ref{2-cats-quasicats}), giving general results on quasicategories, and on their relation to lower-dimensional category theory.

We then (in Section \ref{theories-generalities}) build a quasicategory $\Span$ which is the appropriate version of $\ThMon$ for $E_\infty$-objects in a quasicategory. As in the category $\ThMon$, objects are finite sets, and morphisms are span diagrams. However, we build in higher simplices, which consist of more elaborate span-like diagrams in finite sets.

The quasicategory $\Span$ turns out to be equivalent to a $2$-category: all the cells of degree $3$ or more are effectively uninteresting. This greatly facilitates technical checks of its properties.

After setting up $\Span$ and its basic properties, we spend some time proving that it does exactly what we want: we compare it to the classical theory of $E_\infty$-algebras and also to Lurie's theory of commutative monoid objects.

\subsection{Further applicability}

It is worth mentioning some of the potential consequences of this approach to universal algebra in quasicategories that are not realised in this paper.

One perfectly general comment is that the approach is, in some senses, slightly more flexible than any operad-theoretic approach (as in \cite{HA}). In particular, an operad can only describe structures subject to axioms which do not mention any particular variable twice in the same formula: the distributive law $(a+b)c = ac+bc$ is not accessible by this approach, since the right-hand side mentions the variable $c$ twice. So any operad-theoretic approach to ring objects must involve some indirection.

And it is indeed possible to give a description of the algebraic theory of $E_\infty$-ring (or semiring) objects which is built out of small diagrams, after the same fashion as spans. We postpone this to a future paper, since some considerable extra machinery is required.

Such a description is handy, as it places the monoidal structures corresponding to addition and to multiplication on the same footing: both are defined by way of span diagrams. This means that constructions which require blurring the distinction between the two, such as the construction of the group of units $\mathrm{gl}_1$ of an $E_\infty$ ring space, become natural.

It is also possible to give a quick description of the group-completion of a theory, and the group-completion of a model of a theory.

We similarly defer discussion of these constructions to future papers.

\subsection{Comment}

Except for some of the material in Section \ref{comparison-theorems}, all the results in this paper appeared in the author's Sheffield PhD thesis \cite{PhD} under the direction of Neil Strickland. Many people are thanked in that thesis, and those sentiments are as valid as before.

\section{Preliminaries on 2-categories and quasicategories}
\label{2-cats-quasicats}

The first aim of this section is to compare various notions of 2-category, in order to match Jacob Lurie's definition of a 2-category \cite{HTT} with the classical notions.

There are several classical notions, with varying levels of strictness and laxity: as might be expected, it is simpler to construct the laxer versions, and simpler to use the stricter versions in constructions. At one end is the notion of a weak 2-category, and at the other is the notion of a strict 2-category \cite{Borceux-I}.

There is little essential difference, insofar as the work of Street and his coauthors \cites{Gordon-Power-Street, Street} (proved also in \cite{Leinster}) says that any weak 2-category can be replaced with an equivalent strict 2-category. A strict 2-category is exactly the same thing as a category enriched in categories, and we use this identification in what follows.

The second aim is to prove some basic results on quasicategories, which will be useful later on.

We use quasicategorical terminology without apology, even for arbitrary simplicial sets. Thus a 0-cell will often be called an \emph{object} of a simplicial set, and a 1-cell will often be called a \emph{morphism}.

Accordingly, when we use the word ``space'', we mean Kan complex.

\subsection{Quasicategories and $(2,1)$-categories}
\label{two-one-categories}

Let $\cC$ be a strict 2-category with all 2-cells invertible (that is, a category enriched in groupoids).

We can define its nerve in two steps. First we form a simplicial category $\bar\cC$ with $\Ob\bar\cC=\Ob\cC$, and $\bar\cC(x,y)=N\cC(x,y)$. This is a category enriched in Kan complexes, and is thus suitable for the coherent nerve construction described in \cite{HTT}, giving as our final definition that $N\cC=N^\coh(\bar\cC)$.

It is worth expanding this definition a little. We recall the definition of the simplicial categories $\fC_n$ from \cite{HTT}*{1.1.5}, and are used to define $N\cC_n=N^\coh(\bar\cC)_n=\sCat(\fC_n,\bar\cC)$. This lets us prove:
\begin{prop}
An $n$-cell in $N(\cC)_n$ consists of
\begin{itemize}
\item an $n+1$-tuple $X_0,\ldots,X_n$ of objects of $\cC$,
\item morphisms $f_{ij}:X_i\rightarrow X_j$ of $\cC$ for all $i<j$,
\item 2-cells $\theta_{ijk}:f_{jk}\circ f_{ij}\Rightarrow f_{ik}$ of $\cC$ for all $i<j<k$,
\end{itemize}
such that for any $i<j<k<l$ there is an identity on 2-cells:
\[\theta_{ijl}\circ(\theta_{jkl}*\id(f_{ij}))=\theta_{ikl}\circ(\id(f_{kl})*\theta_{ijk}):f_{kl}\circ f_{jk}\circ f_{ij}\Rightarrow f_{il}.\]
\end{prop}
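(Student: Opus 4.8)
The plan is to unwind the definition $N\cC_n=\sCat(\fC_n,\bar\cC)$, so that an $n$-cell is precisely a simplicial functor $F\colon\fC[\Delta^n]\to\bar\cC$, and then to read off the stated data from the combinatorics of $\fC[\Delta^n]$. Recall from \cite{HTT}*{1.1.5} that $\fC[\Delta^n]$ has objects $0,\ldots,n$ and mapping spaces $\Map(i,j)=N(P_{ij})$, where $P_{ij}$ is the poset of subsets $S$ with $i,j\in S\subseteq\{i,i+1,\ldots,j\}$ ordered by inclusion, with composition given by union of subsets. Since each hom-space $\bar\cC(X_i,X_j)=N\cC(X_i,X_j)$ is itself a nerve and the nerve functor $N\colon\Cat\to\sSet$ is fully faithful, giving such an $F$ is the same as giving objects $X_i=F(i)$ together with functors $F_{ij}\colon P_{ij}\to\cC(X_i,X_j)$ that are strictly compatible with the union-composition.

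First I would pin down the object-level data. The minimal element of $P_{ij}$ is $\{i,j\}$; set $f_{ij}:=F_{ij}(\{i,j\})\colon X_i\to X_j$ (with $f_{ii}=\id$). For any $S=\{i=s_0<\cdots<s_m=j\}$ the identity $S=\{s_0,s_1\}\cup\cdots\cup\{s_{m-1},s_m\}$ forces $F_{ij}(S)=f_{s_{m-1}s_m}\circ\cdots\circ f_{s_0s_1}$ by composition-compatibility, so the values of $F$ on objects are exactly encoded by the $f_{ij}$. Next I would analyse the edges. The poset $P_{ij}$ is generated by its covering relations $S\subseteq S\cup\{p\}$, and such a relation, with $s_r<p<s_{r+1}$, factors canonically (again by composition-compatibility) as a whiskering of the single generating $2$-cell attached to $\{s_r,s_{r+1}\}\subseteq\{s_r,p,s_{r+1}\}$ by identities on the remaining $f$'s. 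That generating $2$-cell relates $f_{s_r,s_{r+1}}$ to $f_{p,s_{r+1}}\circ f_{s_r,p}$; invertibility of all $2$-cells (we are enriched in groupoids) lets me orient it as $\theta_{s_r,p,s_{r+1}}\colon f_{p,s_{r+1}}\circ f_{s_r,p}\Rightarrow f_{s_r,s_{r+1}}$, matching the statement. Thus the $\theta_{ijk}$ carry exactly the edge-level data.

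It remains to match relations. Well-definedness of each $F_{ij}$ as a functor is equivalent to commutativity of the elementary squares of $P_{ij}$, in which two elements $p,q\notin S$ are adjoined in either order. When $p,q$ lie in different gaps of $S$ the two whiskerings act in disjoint positions and the square commutes by the interchange law alone; when $p,q$ lie in a common gap $(s_r,s_{r+1})$ the square is, after whiskering away the inert factors, precisely the displayed four-fold identity for the quadruple $(s_r,p,q,s_{r+1})$. Conversely the displayed identity for an arbitrary $i<j<k<l$ is recovered as the commuting square spanned by $\{i,l\}$, $\{i,j,l\}$, $\{i,k,l\}$, $\{i,j,k,l\}$ inside $P_{il}$, whose four vertices map to $f_{il}$, $f_{jl}\circ f_{ij}$, $f_{kl}\circ f_{ik}$ and $f_{kl}\circ f_{jk}\circ f_{ij}$, and whose edges are the whiskerings $\theta_{ijl}$, $\theta_{ikl}$, $\theta_{jkl}*\id(f_{ij})$, $\id(f_{kl})*\theta_{ijk}$.

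This last reduction — that the full tower of poset-functoriality conditions on every $P_{ij}$ collapses, via the interchange law and the invertibility of $2$-cells, to the single four-fold identity on quadruples — is the combinatorial heart of the argument and the step I expect to demand the most care. Everything else (the forced formula on objects, the canonical whiskering factorisation of each covering relation, and the verification that the resulting $F_{ij}$ are genuinely composition-compatible) is routine bookkeeping with the cube posets $P_{ij}$ and the strict $2$-category axioms.
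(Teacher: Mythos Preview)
Your proposal is correct and follows essentially the same strategy as the paper: both unwind $N\cC_n=\sCat(\fC_n,\bar\cC)$ by analysing the cube posets $P_{ij}$, extract the $f_{ij}$ from the minimal vertices $\{i,j\}$, the $\theta_{ijk}$ from the generating edges $\{i,j\}\subset\{i,j,k\}$, and obtain the compatibility identity from the commuting square on $\{i,l\}\subset\{i,j,k,l\}$, with the observation that the nerve of a groupoid is $1$-coskeletal handling all higher data. Your treatment is somewhat more explicit about the poset combinatorics (covering relations, the different-gap versus same-gap dichotomy for elementary squares), whereas the paper phrases the same reductions as ``generated under (horizontal and vertical) composition'', but there is no substantive difference in approach.
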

\begin{proof}
As $\Ob\fC_n=\{0,\ldots,n\}$, a map of simplicial categories $\fC_n\rightarrow\bar\cC$ certainly distinguishes objects $X_0,\ldots,X_n$.

The $0$-simplices of homspaces of $\fC_n(i,j)$ correspond to subsets of the interval $\{i,\ldots,j\}$ containing both $i$ and $j$, and composition is by disjoint union. Thus they are generated under composition by the minimal subsets $\{i,j\}$. These give us the morphisms $f_{ij}:X_i\rightarrow X_j$.

The $1$-simplices of homspaces of $\fC_n(i,j)$ correspond to (the opposites of) inclusions of pairs of subsets of $\{i,\ldots,j\}$ containing both $i$ and $j$. These are generated by inclusions $\{i,k,j\}\leftarrow\{i,j\}$ under horizontal and vertical composition, providing the maps $\theta_{ijk}:f_{jk}\circ f_{ij}\Rightarrow f_{ik}$ of $\cC$.

The interchange law for horizontal and vertical composition gives us the specified identity, arising from the agreement of the composite inclusions
\begin{align*}
\{i,k,l,j\}\longleftarrow&\{i,k,j\}\longleftarrow\{i,j\},\quad\text{and}\\
\{i,k,l,j\}\longleftarrow&\{i,l,j\}\longleftarrow\{i,j\}.
\end{align*}
This identity generates all $2$-cells in $\fC_n(i,j)$, under composition.

As $\bar\cC(i,j)$ is the nerve of a groupoid, a map $\fC_n(i,j)$ is uniquely specified by its effect on the 1-skeleton, so there are no further data or identities.
\end{proof}

We refer to the identity as the \emph{compatibility condition}, and since 2-cells are invertible we can write it graphically: it says that the pasting of the following diagram is the identity 2-cell.
\begin{displaymath}
\xymatrix{X_0\ar[r]\ar@/^16pt/[rr]_{\Uparrow}\ar@/_26pt/[rrr]^{\Downarrow}\ar@/^26pt/[rrr]_{\Uparrow}&X_1\ar[r]\ar@/_16pt/[rr]^{\Downarrow}&X_2\ar[r]&X_3}
\end{displaymath}

We also get the following basic coherence result, which is obvious from the description above.
\begin{prop}
A lax functor $F:\cC\rightarrow\cD$ between bicategories with all 2-cells invertible yields (via passing to strict 2-categories) a map of quasicategories $N(F):N(\cC)\rightarrow N(\cD)$ between their nerves.
\end{prop}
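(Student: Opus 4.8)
The plan is to lean on the explicit combinatorial description of the nerve obtained in the previous proposition. Since the nerve $N$ was only defined for strict $2$-categories enriched in groupoids, I would first invoke the strictification theorem of Gordon--Power--Street \cite{Gordon-Power-Street} to replace $\cC$ and $\cD$ by equivalent strict $2$-categories and transport $F$ to a lax functor between them; thus I may assume throughout that $\cC$ and $\cD$ are strict. A map of simplicial sets $N(F)\colon N(\cC)\to N(\cD)$ is then precisely a rule sending each $n$-cell of $N(\cC)$ to an $n$-cell of $N(\cD)$, natural in the simplicial operators. So there are really two tasks: to write down the rule, and to check both that it lands in genuine $n$-cells and that it commutes with faces and degeneracies.

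For the rule, recall that an $n$-cell of $N(\cC)$ is a tuple consisting of objects $X_0,\dots,X_n$, morphisms $f_{ij}$, and $2$-cells $\theta_{ijk}$ satisfying the compatibility condition. I would send it to the $n$-cell of $N(\cD)$ with objects $F(X_i)$, morphisms $F(f_{ij})\colon F(X_i)\to F(X_j)$, and $2$-cells
\[
\psi_{ijk} \;=\; F(\theta_{ijk})\circ\phi_{ijk}\colon F(f_{jk})\circ F(f_{ij})\Longrightarrow F(f_{ik}),
\]
where $\phi_{ijk}\colon F(f_{jk})\circ F(f_{ij})\Rightarrow F(f_{jk}\circ f_{ij})$ is the coherence $2$-cell of the lax functor, and $F(\theta_{ijk})$ is the image of $\theta_{ijk}$ under $F$. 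Since all $2$-cells are invertible, the lax/oplax distinction is harmless here.

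The bulk of the work is verifying that the $\psi_{ijk}$ satisfy the compatibility condition for each $i<j<k<l$. I would expand both sides of the required pasting identity by substituting $\psi = F(\theta)\circ\phi$, then use the naturality of the coherence cells $\phi$ together with the associativity axiom for the lax functor $F$ to bring both sides into the form of $F$ applied to the two sides of the compatibility condition already known to hold among the $\theta$'s. As $F$ acts as a functor on hom-groupoids, it preserves that identity, so the two sides agree. This pasting-diagram chase, governed by the associativity coherence of $F$, is the step I expect to be the main obstacle; everything else is formal.

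Finally, I would check that $N(F)$ is a simplicial map. The inner faces of $N(\cC)$ use the $\theta$'s to compose adjacent morphisms, the outer faces merely forget an object, and the degeneracies insert identity morphisms together with unit $2$-cells. Compatibility of $N(F)$ with the faces reduces again to the associativity axiom (and, where a face composes morphisms, to the very definition of $\psi$), while compatibility with degeneracies reduces to the unit axiom of the lax functor. These checks are local and levelwise, hence routine once the construction above is in place.
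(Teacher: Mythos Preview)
Your approach is workable but differs substantially from the paper's, and it has one genuine gap.

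The paper's proof is two lines: strictify not only $\cC$ and $\cD$ but also the functor $F$ itself to a \emph{strict} $2$-functor between strict $2$-categories, and then observe that $N = N^{\mathrm{coh}}\circ\bar{(\,-\,)}$ is manifestly functorial for strict $2$-functors (a $\Cat$-enriched functor gives an $\sSet$-enriched functor on nerves of hom-categories, and the coherent nerve is functorial for simplicial functors). No explicit formula for $N(F)$ on cells is needed.

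Your route instead keeps $F$ lax and builds $N(F)$ by hand from the cell-by-cell description. Two comments on that. First, your description of the face maps is not right: in this nerve an $n$-cell already records $f_{ij}$ for \emph{every} pair $i<j$, so the face $d_m$ simply deletes everything carrying the index $m$; nothing is composed. This actually makes the face compatibility trivial, so the error is harmless.

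Second, and this is the real gap: compatibility with degeneracies does not follow from the unit axiom of a lax functor. A degenerate simplex has $f_{m,m+1}=\id_{X_m}$ (and the adjacent $\theta$'s equal to identities, since $\cC$ is strict). Your rule sends this to $F(\id_{X_m})$, which for a general lax (even pseudo) functor is not equal to $\id_{F(X_m)}$; the unit coherence cell is only an isomorphism, not an equality. So $N(F)$ as you defined it will not commute with degeneracies on the nose. The fix is to first \emph{normalise} $F$ so that it strictly preserves identity $1$-cells (possible here since all $2$-cells, hence the unit coherence, are invertible); then your $\psi$-formula goes through and the degeneracy check really does reduce to the unit axiom. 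Alternatively, go all the way and strictify $F$ itself, which is exactly what the paper does and which makes the whole construction a one-liner.
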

\begin{proof}
We can replace $F$ with an equivalent functor of strict 2-categories, and then use the naturality of the nerve construction considered above.
\end{proof}

Also, this construction agrees with the construction of the nerve of a category.
\begin{prop}
Let $\cC$ be a category, regarded as a bicategory with only identities for 2-cells. Then $N(\cC)$ is the ordinary nerve of $\cC$.
\end{prop}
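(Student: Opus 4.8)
The plan is to specialise the explicit description of $n$-cells proved above to the case at hand, and to observe that the coherence data collapse to the strict data of the ordinary nerve.

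First I would record what that description gives when every hom-groupoid $\cC(x,y)$ is discrete. Since the only $2$-cells of $\cC$ are identities, a $2$-cell $\theta_{ijk}:f_{jk}\circ f_{ij}\Rightarrow f_{ik}$ can exist only when $f_{jk}\circ f_{ij}=f_{ik}$, in which case it is necessarily the identity. Thus an $n$-cell of $N(\cC)_n$ is exactly a choice of objects $X_0,\ldots,X_n$ together with morphisms $f_{ij}:X_i\rightarrow X_j$ for $i<j$ satisfying $f_{jk}\circ f_{ij}=f_{ik}$ whenever $i<j<k$; the compatibility condition is automatic, being an equation between identity $2$-cells.

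Next I would match this with the ordinary nerve. Writing $[n]$ for the ordinal $\{0<\cdots<n\}$ viewed as a category, the conditions above say precisely that $i\mapsto X_i$ and $(i\le j)\mapsto f_{ij}$ (with $f_{ii}=\id$) defines a functor $[n]\rightarrow\cC$; equivalently, the data are the composable string $X_0\rightarrow X_1\rightarrow\cdots\rightarrow X_n$ built from the $f_{i-1,i}$, with the other $f_{ij}$ recovered as composites. Functors $[n]\rightarrow\cC$ are by definition the $n$-simplices of the ordinary nerve, so this yields a bijection between $N(\cC)_n$ and the set of $n$-simplices of the ordinary nerve for every $n$.

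It remains to check that these bijections assemble into an isomorphism of simplicial sets. The simplicial operators on $N(\cC)=N^{\coh}(\bar\cC)$ are induced by the cosimplicial structure maps of $[n]\mapsto\fC_n$, and under the identification above a face that omits $X_i$ simply replaces the adjacent pair $f_{i-1,i},f_{i,i+1}$ by their composite $f_{i-1,i+1}$, while a degeneracy repeating $X_i$ inserts an identity morphism. These are exactly the face and degeneracy maps of the ordinary nerve, so the bijections are natural in $[n]$. This naturality check is the only place where anything need be verified, and it is routine; the real content of the statement is the collapse of the coherence data forced by discreteness of the hom-groupoids.
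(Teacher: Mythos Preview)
Your proof is correct and follows precisely the approach implicit in the paper: the paper states this proposition without proof, treating it as an immediate consequence of the preceding explicit description of $n$-cells in $N(\cC)$, and your argument simply writes out that specialisation in full. The only comment is that the paper regards the result as evident and omits the details you supply, so there is no alternative proof to compare against.
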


Now, the nerve $N(\cC)$ should be thought of as a model for $\cC$ in the world of quasicategories. Thus, we should expect it to be a (2,1)-category in the sense discussed above. This means that all all extensions of maps $\Lambda^n_k\rightarrow\cC$ to $k$-cells are unique for $n\geq 3$: its cells in degrees 3 and over are determined by those in lower degrees. The facts support our intuition:
\begin{prop}
\label{nerves-of-bicats}
The nerve $N(\cC)$ is a $(2,1)$-category.
\end{prop}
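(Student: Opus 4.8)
The plan is to read everything off from the combinatorial description of $n$-cells established above, which records that an $n$-cell of $N(\cC)$ amounts to no more than a tuple of objects $X_0,\dots,X_n$, morphisms $f_{ij}$, and invertible $2$-cells $\theta_{ijk}$ subject to the compatibility condition---and, crucially, that it carries \emph{no} data in degrees $3$ and above. First I would record that $N(\cC)=N^\coh(\bar\cC)$ is a quasicategory at all: since each hom-space $\bar\cC(x,y)=N\cC(x,y)$ is the nerve of a groupoid, hence a Kan complex, the simplicial category $\bar\cC$ is fibrant, and the coherent nerve of a fibrant simplicial category is a quasicategory by \cite{HTT}. It then remains to verify the defining property of a $(2,1)$-category, namely that cells in degrees $\geq 3$ are determined by lower-dimensional data.

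The key observation is that every datum of an $n$-cell already lives in its $2$-skeleton: an object $X_i$ is its $i$-th vertex, a morphism $f_{ij}$ is carried by the edge $\{i,j\}$, and each $2$-cell $\theta_{ijk}$ is visible in the $2$-face spanned by $\{i,j,k\}$; by the explicit description there is nothing else to specify. Hence the restriction map $N(\cC)_n\rightarrow\Hom(\mathrm{sk}_2\,\Delta^n,N(\cC))$ is injective for every $n$. Now for $n\geq 3$ the $2$-skeleton $\mathrm{sk}_2\,\Delta^n$ is contained in the boundary $\partial\Delta^n$, since every $2$-face has dimension $2\leq n-1$. Consequently any two $n$-cells sharing all their faces have the same objects, morphisms and $2$-cells, and so coincide; this is precisely the assertion that the cells of $N(\cC)$ in degrees $3$ and over are determined by those in lower degrees.

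I expect the only genuinely delicate point to be the interplay between this boundary-determination and the horn-filling formulation, together with the precise role of invertibility. For an inner horn $\Lambda^n_k$ with $n\geq 4$, the single omitted face $d^k$ has dimension $n-1\geq 3$, so it destroys no $2$-face and the full $2$-skeleton still lies in the horn; the filler is thereby already specified, and one need only check that the compatibility conditions hold. The remaining case $n=3$ is where invertibility is used: here the omitted inner face carries exactly one $2$-cell, and the single compatibility identity exhibits that $2$-cell un-whiskered---for $k=1$ it reads $\theta_{023}=\theta_{013}\circ(\theta_{123}*\id(f_{01}))\circ(\id(f_{23})*\theta_{012})^{-1}$, and symmetrically for $k=2$---so invertibility of the surrounding $2$-cells pins it down uniquely. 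The bulk of the remaining work is therefore bookkeeping: tracking which faces a given horn or boundary omits, and confirming that the coherence identity is an honest, solvable equation, both of which reduce to the combinatorics already set down in the description of $n$-cells.
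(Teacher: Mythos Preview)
Your overall structure is right, and your treatment of $n=3$ matches the paper's. The gap is at $n=4$, which you wave away as bookkeeping.

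For an inner horn $\Lambda^4_k$, you correctly observe that all objects, morphisms, and $2$-cells $\theta_{ijk}$ are already present in the horn, so the candidate filler is forced. But existence of the filler requires that the single \emph{missing} compatibility condition---the one carried by the $k$th face---actually holds. You assert this ``reduces to the combinatorics already set down,'' but it does not: it is a genuine identity in the $2$-category that must be derived from the other four face-conditions. The paper carries this out explicitly, and the computation hinges on a fact you never invoke, namely that $\theta_{012}$ and $\theta_{234}$ commute (they have disjoint support, and this is an instance of the interchange law). Without that observation the algebra does not close up. So the $n=4$ case is not bookkeeping; it is the substantive step, and it uses both invertibility and interchange.

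By contrast, for $n\geq 5$ your $2$-skeleton argument is adequate, since every compatibility condition involves only four vertices and is therefore already witnessed by one of the faces present in the horn; the paper makes the same point in one line.
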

\begin{proof}
Suppose given an inner horn inclusion $\Lambda^n_k\rightarrow N(\cC)$ for $n\geq 3$, and $0<k<n$. We can recover all the 1-cells from this data: the 1-cell $X_i\rightarrow X_j$ for $i<j$ will be given by the face numbered $\alpha$ for any $\alpha\notin\{i,j,k\}$.

If $n=3$, then without loss of generality, $k=1$ (as the case $k=2$ is dealt with in a symmetric manner). We then have the following diagram:
\begin{displaymath}
\xymatrix{X_0\ar[r]\ar@/^16pt/[rr]_{\Uparrow}\ar@/_26pt/[rrr]^{\Downarrow}&X_1\ar[r]\ar@/_16pt/[rr]^{\Downarrow}&X_2\ar[r]&X_3}
\end{displaymath}
This leaves us just missing the 2-cell $\theta_{023}:f_{23}\circ f_{02}\Rightarrow f_{03}$. But, since all 2-cells are invertible, we can take this to be the composite of all the 2-cells in the diagram above. In symbols, we define
\[\theta_{023}=\theta_{013}\circ(\theta_{123}*\id(f_{01}))\circ(\id(f_{23})*\theta_{012}^{-1}),\]
and this clearly fulfils the compatibility condition. This choice is clearly forced, arising as it does by solving the compatibility condition for $\theta_{023}$, and this means the extension is unique.

If $n\geq 4$, then all 2-cells are determined uniquely (indeed, $\theta_{hij}$ will be defined by face $\alpha$, for any $\alpha\notin\{h,i,j,k\}$). However, if $n=4$ there are some compatibility conditions which are not forced by the faces, and we must check that they hold.

For calculations, we omit the identity parts of our 2-cells. Then all composites are vertical composites, so we do not bother writing the $\circ$. There are five compatibility conditions coming from the faces:
\begin{align*}
\theta_{134}\theta_{123}&=\theta_{124}\theta_{234}\tag{face 0}\\
\theta_{034}\theta_{023}&=\theta_{024}\theta_{234}\tag{face 1}\\
\theta_{034}\theta_{013}&=\theta_{014}\theta_{134}\tag{face 2}\\
\theta_{024}\theta_{012}&=\theta_{014}\theta_{124}\tag{face 3}\\
\theta_{023}\theta_{012}&=\theta_{013}\theta_{123}\tag{face 4}\\
\end{align*}
Also, $\theta_{012}$ and $\theta_{234}$ commute. We can see this using the interchange law:
\begin{align*}
  \theta_{012}\theta_{234}
&=(\id(f_{24})*\theta_{012})\circ(\theta_{234}*\id(f_{02}))\\
&=(\id(f_{24})\circ\theta_{234})*(\theta_{012}\circ\id(f_{02}))
 =\theta_{234}\theta_{012}.
\end{align*}

For horn inclusions $\Lambda^4_1\rightarrow N(\cC)$, we have all coherence conditions except the one arising from face 1, and must show that from the others. But we have:
\begin{align*}
\theta_{034}\theta_{023}
&=(\theta_{014}\theta_{134}\theta_{013}^{-1})(\theta_{013}\theta_{123}\theta_{012}^{-1})\qquad\text{(faces 2 and 4)}\\
&=\theta_{014}\theta_{134}\theta_{123}\theta_{012}^{-1}\\
&=\theta_{014}(\theta_{124}\theta_{234})\theta_{012}^{-1}\qquad\text{(face 0)}\\
&=(\theta_{024}\theta_{012})\theta_{234}\theta_{012}^{-1}\qquad\text{(face 3)}\\
&=\theta_{024}\theta_{234}\qquad\text{(since $\theta_{012}$ and $\theta_{234}$ commute).}
\end{align*}

For horn inclusions $\Lambda^4_2\rightarrow N(\cC)$, we have all coherence conditions except the one from face 2. Similarly, we have:
\begin{align*}
\theta_{034}\theta_{013}
&=(\theta_{024}\theta_{234}\theta_{023}^{-1})(\theta_{023}\theta_{012}\theta_{123}^{-1})\qquad\text{(faces 1 and 4)}\\
&=\theta_{024}\theta_{012}\theta_{234}\theta_{123}^{-1}\qquad\text{(since $\theta_{012}$ and $\theta_{234}$ commute)}\\
&=(\theta_{014}\theta_{124})\theta_{234}\theta_{123}^{-1}\qquad\text{(face 3)}\\
&=\theta_{014}(\theta_{134}\theta_{123})\theta_{123}^{-1}\qquad\text{(face 0)}\\
&=\theta_{014}\theta_{134}
\end{align*}

Horn inclusions $\Lambda^4_3\rightarrow N(\cC)$ can be dealt with by an argument symmetric to that used for horn inclusions $\Lambda^4_1\rightarrow\cC$.

The fact that all structure is determined means that the extension is unique.

If $n\geq 5$, then nothing need be checked: the compatibility conditions on $X_g$, $X_h$, $X_i$ and $X_j$ will be fulfilled by face $\alpha$, for any $\alpha\notin\{g,h,i,j,k\}$.
\end{proof}

Using this nerve construction, in the sequel we shall abuse terminology systematically, and confuse a strict 2-category with its nerve, a $(2,1)$-category.

\subsection{Fibrations and extension properties of $(n,1)$-categories}
\label{n-1-categories}

In this section we prove some properties of Lurie's model for $(n,1)$-categories, from \cite{HTT}*{subsection 2.3.4}: these are those $(\infty,1)$-categories which admit all inner horn extensions $\Lambda^m_k$ uniquely where $m>n$.

It follows immediately from the definition \cite{HTT}*{2.3.4.9} that an $(n,1)$-category has at most one extension along $\partial\Delta^m\rightarrow\Delta^m$ for $m>n$; here's a strengthening of that statement:

\begin{prop}
\label{partial-delta-n-cat}
An $(n,1)$-category $\cC$ has unique liftings for $\partial\Delta^m\rightarrow\Delta^m$ where $m\geq n+2$.
\end{prop}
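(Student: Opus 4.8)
The plan is to split the assertion into its uniqueness and existence halves. Uniqueness of extensions along $\partial\Delta^m\rightarrow\Delta^m$ for $m>n$ is already granted by the definition of an $(n,1)$-category, and since $m\geq n+2$ certainly implies $m>n$, there is nothing new to prove there. All the work lies in establishing \emph{existence}: that every $f:\partial\Delta^m\rightarrow\cC$ with $m\geq n+2$ admits a filler $\Delta^m\rightarrow\cC$.

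For existence, first I would choose an inner index $k$ with $0<k<m$ (possible since $m\geq 2$) and restrict $f$ to the inner horn $\Lambda^m_k$, which sits inside $\partial\Delta^m$ as the union of all faces $d_j$ with $j\neq k$. Because $m>n$, the defining property of an $(n,1)$-category supplies a unique extension $g:\Delta^m\rightarrow\cC$ of $f|_{\Lambda^m_k}$. By construction $g$ agrees with $f$ on every face $d_j$ for $j\neq k$, so the only thing to verify is that $g$ also agrees with $f$ on the remaining $k$-th face; if it does, then $g|_{\partial\Delta^m}=f$ and $g$ is the desired filler.

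The key step is then to compare the two $(m-1)$-simplices $g|_{d_k}$ and $f|_{d_k}$. Both live in dimension $m-1\geq n+1>n$, so it suffices to show that they share the same boundary and then invoke uniqueness of extensions along $\partial\Delta^{m-1}\rightarrow\Delta^{m-1}$, which is valid precisely because $m-1>n$. Now each codimension-one face of $d_k$ is a codimension-two face of $\Delta^m$ spanned by $\{0,\ldots,m\}\setminus\{k,j\}$ for some $j\neq k$, and this face lies inside $d_j$, hence inside the horn $\Lambda^m_k$. On that horn $f$ and $g$ coincide, so their restrictions to $\partial(d_k)$ agree; uniqueness one dimension down forces $g|_{d_k}=f|_{d_k}$, completing the argument.

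The main obstacle, and the reason the stated bound is $m\geq n+2$ rather than merely $m>n$, is exactly this final comparison: it rests on uniqueness of fillers in dimension $m-1$, which is only guaranteed once $m-1>n$, i.e.\ $m\geq n+2$. For $m=n+1$ the same strategy would require uniqueness in dimension $n$, which an $(n,1)$-category need not possess, so the argument genuinely consumes the extra dimension of slack.
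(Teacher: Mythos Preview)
Your proof is correct and follows essentially the same approach as the paper's: restrict to an inner horn, use the unique inner horn filler in dimension $m>n$, and then verify agreement on the missing face by observing that the two candidate $(m-1)$-simplices share the same boundary and invoking uniqueness one dimension down. The only difference is cosmetic: the paper fixes the horn $\Lambda^m_1$ while you allow an arbitrary inner index $k$, and you spell out more explicitly why the bound $m\geq n+2$ is needed for the final boundary comparison.
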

\begin{proof}
We can restrict the map $\partial\Delta^m\rightarrow\cC$ to a map $\Lambda^m_1\rightarrow\cC$, and lift that uniquely to a map $\Delta^m\rightarrow\cC$. This is the only candidate for a lifting; we must prove that it is compatible with the given map on all of $\partial\Delta^m$: that is, show that it agrees on the first face.

But these two $(m-1)$-cells certainly agree on the boundary of the first face (which is isomorphic to $\partial\Delta^{m-1}$) and thus agree.
\end{proof}

In a similar vein is this:
\begin{prop}
\label{outer-horn-n-cat}
An $(n,1)$-category $\cC$ has unique liftings for outer horns $\Lambda^m_0\rightarrow\Delta^m$ and $\Lambda^m_m\rightarrow\Delta^m$ where $m>n+2$.
\end{prop}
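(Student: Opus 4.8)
The plan is to reduce to the boundary case already handled in Proposition~\ref{partial-delta-n-cat}, exploiting that the dimension is large enough that even the single missing face can be reconstructed uniquely from its own boundary. I would treat $\Lambda^m_0$ in detail; the case of $\Lambda^m_m$ is entirely symmetric (apply the same argument to $\cC^\op$, or repeat it with the roles of the top and bottom faces interchanged).

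Recall that a map $\Lambda^m_0\rightarrow\cC$ is the same datum as a compatible family of $(m-1)$-simplices $\sigma_1,\dots,\sigma_m$, namely the faces $d_1,\dots,d_m$ of the sought-after $m$-simplex, with $d_i\sigma_j=d_{j-1}\sigma_i$ for $1\leq i<j\leq m$. First I would construct the unique candidate for the missing face $\sigma_0$. For any filling $x\colon\Delta^m\rightarrow\cC$ of the horn, the face $d_0x$ is an $(m-1)$-simplex whose own faces are forced: the simplicial identity $d_kd_0=d_0d_{k+1}$ gives $d_k(d_0x)=d_0\sigma_{k+1}$ for $0\leq k\leq m-1$. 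A short check using the compatibilities among the $\sigma_j$ shows that the prescribed data $\{d_0\sigma_{k+1}\}_{k=0}^{m-1}$ assemble into a genuine map $\partial\Delta^{m-1}\rightarrow\cC$. Since $m>n+2$ we have $m-1\geq n+2$, so Proposition~\ref{partial-delta-n-cat} supplies a unique $(m-1)$-simplex $\sigma_0$ with this boundary.

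Now I have all $m+1$ faces $\sigma_0,\sigma_1,\dots,\sigma_m$. The compatibility conditions for a map $\partial\Delta^m\rightarrow\cC$ indexed by $1\leq i<j$ hold by the horn hypothesis, while those with $i=0$ are exactly the equations defining the boundary of $\sigma_0$; hence the $\sigma_i$ assemble into a map $\partial\Delta^m\rightarrow\cC$. Applying Proposition~\ref{partial-delta-n-cat} once more, now with $m\geq n+2$, yields a unique $m$-simplex restricting to this boundary, and in particular extending the original horn.

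Uniqueness then follows by running the construction backwards: for any filling $x$, the face $d_0x$ has the forced boundary computed above, so by the uniqueness clause of Proposition~\ref{partial-delta-n-cat} in dimension $m-1$ it must coincide with $\sigma_0$; then $x$ is determined by its boundary $\partial\Delta^m$, again by Proposition~\ref{partial-delta-n-cat}. The only genuine point—and the reason the hypothesis is $m>n+2$ rather than $m\geq n+2$—is that the missing face lives in dimension $m-1$, so we need $m-1\geq n+2$ for it to be reconstructible; everything else is routine simplicial bookkeeping.
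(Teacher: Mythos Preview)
Your argument is correct and follows precisely the same two-step strategy as the paper: first use Proposition~\ref{partial-delta-n-cat} in dimension $m-1$ to reconstruct the missing $0$th face and so extend the horn to all of $\partial\Delta^m$, then apply Proposition~\ref{partial-delta-n-cat} again in dimension $m$ to obtain the unique filler. The paper's proof is simply a terser statement of this; your version spells out the simplicial identities and the uniqueness check that the paper leaves implicit.
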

\begin{proof}
We can uniquely extend a map $\Lambda^m_0\rightarrow\cC$ to a map $\partial\Delta^m\rightarrow\cC$ using Proposition \ref{partial-delta-n-cat} on the $0$th face. Then we can uniquely extend that to a map $\Delta^m\rightarrow\cC$ using Proposition \ref{partial-delta-n-cat} again.

The case of $\Lambda^m_m$ is symmetrical.
\end{proof}

The special case of ordinary categories will be of utility later:
\begin{prop}
 \label{nerve-outer-horns}
 The nerve of a category $N\cC$ has unique liftings for outer horns $\Lambda^n_0$ and $\Lambda^n_n$ whenever $n\geq 4$.
\end{prop}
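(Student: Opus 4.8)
The plan is to recognise $N\cC$ as a $(1,1)$-category and then simply invoke the outer-horn result already established for general $(n,1)$-categories. First I would recall the classical characterisation of nerves: a simplicial set is (isomorphic to) the nerve of a category precisely when every inner horn $\Lambda^m_k$ (with $0<k<m$) admits a \emph{unique} filler, for all $m\geq 2$. In the language introduced at the start of this subsection, this says exactly that $N\cC$ is an $(n,1)$-category in the case $n=1$.

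Having made this identification, the conclusion is immediate from Proposition \ref{outer-horn-n-cat}. Applying that proposition with category level $1$, we obtain unique liftings for the outer horns $\Lambda^m_0\rightarrow\Delta^m$ and $\Lambda^m_m\rightarrow\Delta^m$ whenever $m>1+2=3$, that is, whenever $m\geq 4$. Relabelling the horn dimension $m$ as the index $n$ appearing in the statement gives precisely the claim. (One should keep in mind the harmless clash of notation: the $n$ in the statement is the dimension of the horn, whereas the $n$ in Proposition \ref{outer-horn-n-cat} is the category level, here equal to $1$.)

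The only point requiring genuine care — and the step I expect to be the main obstacle — is the verification that $N\cC$ really satisfies the working definition of a $(1,1)$-category, namely that the inner horn fillers are not merely \emph{present} but \emph{unique} in every degree $m\geq 2$. This is standard: uniqueness for $m=2$ records that composition in $\cC$ is single-valued, uniqueness for $m=3$ records associativity, and uniqueness in higher degrees follows because an $m$-simplex of a nerve is completely determined by its spine of composable morphisms. I would cite the standard reference for this characterisation of nerves rather than reprove it, and leave the remainder as a formal consequence of the $(n,1)$-category machinery of the preceding propositions.
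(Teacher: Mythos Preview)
Your proposal is correct and matches the paper's approach exactly: the paper presents this proposition as the special case of Proposition~\ref{outer-horn-n-cat} for ordinary categories (introducing it with ``The special case of ordinary categories will be of utility later''), and offers no further proof. Your identification of $N\cC$ as a $(1,1)$-category and application of that proposition with category level $1$ is precisely what is intended.
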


The following proposition reduces the work necessary to show that a map of $(n,1)$-categories is an acyclic Kan fibration:
\begin{prop}
\label{acyclic-kan}
A functor $\cC\rightarrow\cD$ of $(n,1)$-categories automatically has the right lifting property with respect to the maps $\partial\Delta^m\rightarrow\Delta^m$ for $m\geq n+2$.
\end{prop}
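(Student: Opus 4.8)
The plan is to deduce this directly from Proposition~\ref{partial-delta-n-cat}, applying it separately to the source and the target. Write $F\colon\cC\rightarrow\cD$ for the functor, and suppose given a lifting problem: a map $g\colon\partial\Delta^m\rightarrow\cC$ together with a map $\sigma\colon\Delta^m\rightarrow\cD$ whose restriction to $\partial\Delta^m$ agrees with $F\circ g$. We must produce a diagonal filler $\Delta^m\rightarrow\cC$ restricting to $g$ and mapping under $F$ to $\sigma$.

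First I would build the candidate lift using the source alone. Since $\cC$ is an $(n,1)$-category and $m\geq n+2$, Proposition~\ref{partial-delta-n-cat} supplies a unique extension $\tilde{g}\colon\Delta^m\rightarrow\cC$ of $g$. By construction $\tilde{g}$ restricts to $g$ on $\partial\Delta^m$, so the triangle over $\cC$ commutes, and it remains only to verify that $F\circ\tilde{g}=\sigma$.

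Here is where the target does the work. Both $F\circ\tilde{g}$ and $\sigma$ are maps $\Delta^m\rightarrow\cD$, and on $\partial\Delta^m$ they both restrict to $F\circ g$: for $F\circ\tilde{g}$ this holds because $\tilde{g}$ extends $g$, and for $\sigma$ this is the hypothesis. Thus they are two extensions of the same map $\partial\Delta^m\rightarrow\cD$. Since $\cD$ is also an $(n,1)$-category and $m\geq n+2$, Proposition~\ref{partial-delta-n-cat} guarantees the extension is unique, whence $F\circ\tilde{g}=\sigma$, completing the lift.

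There is no serious obstacle: the argument is a short diagram chase once Proposition~\ref{partial-delta-n-cat} is in hand. The only point requiring a moment's thought is that uniqueness in the \emph{target} $\cD$---not merely existence---is exactly what forces the lower triangle to commute, so that the filler constructed from $\cC$ alone is automatically compatible with $\sigma$. I would also remark that the same reasoning shows the lift to be unique, although the right lifting property asks only for its existence.
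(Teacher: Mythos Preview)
Your proof is correct and follows essentially the same approach as the paper: use Proposition~\ref{partial-delta-n-cat} on $\cC$ to produce the lift, then use uniqueness of extensions in $\cD$ to force compatibility with $\sigma$. The only cosmetic difference is that the paper invokes \cite{HTT}*{2.3.4.9} directly for the uniqueness step in $\cD$ (which gives at most one extension for $m>n$), whereas you cite Proposition~\ref{partial-delta-n-cat} again; since $m\geq n+2$ either reference suffices.
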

\begin{proof}
Proposition \ref{partial-delta-n-cat} gives a map $\Delta^m\rightarrow\cC$, and by \cite{HTT}*{2.3.4.9}, this is consistent with the given map $\Delta^m\rightarrow\cD$.
\end{proof}

We can say useful things about inner fibrations.
Let $F:\cC\rightarrow\cD$ be a functor between $(n,1)$-categories. We have the following simple criterion for being an inner fibration:
\begin{prop}
\label{inner-fibs}
The functor $F$ is an inner fibration if and only if it has the right lifting property for inner horns $\Lambda^m_k\rightarrow\Delta^m$ for $0<k<m\leq n$.
\end{prop}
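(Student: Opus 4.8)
The plan is to treat the two implications separately, with the forward (``only if'') direction being immediate and all the content residing in the converse. By definition an inner fibration has the right lifting property against every inner horn inclusion $\Lambda^m_k\to\Delta^m$ with $0<k<m$, so in particular against those with $m\leq n$; this settles ``only if'' at once.

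For the converse, I would assume $F$ has the right lifting property for inner horns with $0<k<m\leq n$ and take an arbitrary inner horn lifting problem: a commutative square with top edge $\sigma\colon\Lambda^m_k\to\cC$ and bottom edge $\tau\colon\Delta^m\to\cD$, where $0<k<m$. If $m\leq n$ the lift exists by hypothesis, so I may assume $m>n$. The idea is to produce the lift by combining \emph{existence} of inner fillers in the source with \emph{uniqueness} of inner fillers in the target.

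First I would extend $\sigma$ to a map $\tilde\sigma\colon\Delta^m\to\cC$: since $\cC$ is an $(n,1)$-category it is in particular an $(\infty,1)$-category, hence a quasicategory, so such a filler exists (at this stage only existence is needed, not uniqueness). Composing yields $F\tilde\sigma\colon\Delta^m\to\cD$, which agrees with $\tau$ on $\Lambda^m_k$ because the original square commutes. Thus $F\tilde\sigma$ and $\tau$ are both extensions of the inner horn $F\sigma\colon\Lambda^m_k\to\cD$ across $\Delta^m$. Since $\cD$ is an $(n,1)$-category and $m>n$, inner horn extensions in $\cD$ are unique, so $F\tilde\sigma=\tau$. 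This exhibits $\tilde\sigma$ as the required lift and completes the converse.

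The argument is short, and the single point that must be handled with care—the main, if mild, obstacle—is keeping straight which of $\cC$ and $\cD$ supplies which half of the data: we invoke only that fillers \emph{exist} in $\cC$ (its quasicategory structure) and only that they are \emph{unique} in $\cD$ (its $(n,1)$-structure, applicable precisely because $m>n$). No appeal to uniqueness in $\cC$ is required, and in fact the $\tilde\sigma$ constructed is automatically the unique lift.
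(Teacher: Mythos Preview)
Your proof is correct. The paper does not actually supply a proof of this proposition; it is stated and used as a simple criterion, with the argument left implicit. Your write-up is exactly the natural justification: lift in $\cC$ using that $(n,1)$-categories are quasicategories, then conclude $F\tilde\sigma=\tau$ from the \emph{uniqueness} of inner-horn fillers in the $(n,1)$-category $\cD$ once $m>n$. There is nothing to add.
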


In particular, this gives the following simple criterion for $(2,1)$-categories: $F$ is an inner fibration if, for every pair of diagrams
\begin{displaymath}
\vcenter{
\xymatrix{&y\ar[dr]^h&\\
          x\ar[ur]^f&&z}}
\text{in $\cC$,\hskip 1cm and}
\vcenter{
\xymatrix{&y'\ar[dr]^{h'}\Ar^{k'}[d]&\\
          x'\ar[ur]^{f'}\ar[rr]_{g'}&&z'}}
\text{in $\cD$,}
\end{displaymath}
such that $F(f)=f'$ and $F(h)=h'$, there is a 1-cell $g:x\rightarrow z$ and 2-cell $k:h\circ f\Rightarrow g$ such that $F(g)=g'$ and $F(k)=k'$.

We now switch our attention to the more intricate notion of a cartesian fibration. These are analogues of the classical notion of a Grothendieck fibration of categories. They are morphisms of simplicial sets which describe a family of quasicategories varying in a contravariant functorial manner over a base quasicategory. Following Lurie \cite{HTT}, we make the following definition:
\begin{defn}
\label{defn-cartesian-fibration}
A \emph{cartesian fibration} $p:\cC\rightarrow\cD$ of quasicategories is a functor which is both an inner fibration and is such that, for every 1-morphism $f:x\rightarrow y$ (meaning a 1-cell $f\in\cD_1$ with $d_0f=x$ and $d_1f=y$) and every lift $\tilde y$ of $y$ to $\cC$ (meaning an 0-cell $\tilde y\in\cC_0$ with $p(\tilde y)=y$), there is a $p$-cartesian morphism $\tilde f$ in $\cC$ which maps to $f$ under $p$.

In turn, a \emph{$p$-cartesian morphism} $f:a\rightarrow b\in\cC_1$ is one such that the natural map
\[L_f:\cC_{/f}\longrightarrow \cC_{/y}\timeso{\cD_{/y}}\cD_{/f},\]
where $y=f(b)$, is an acyclic Kan fibration.

There is a dual notion of a \emph{cocartesian morphism} and a \emph{cocartesian fibration}: a cocartesian fibration describes a family of quasicategories varying covariantly functorially over a base quasicategory. Given $p:\cC\rightarrow\cD$, a cocartesian morphism in $\cC$ is a cartesian morphism for $p^\op:\cC^\op\rightarrow\cD^\op$, and $p$ is a cocartesian fibration if $p^\op$ is a cartesian fibration.
\end{defn}

Lurie proves that overcategories of $(n,1)$-categories are $(n,1)$-categories \cite{HTT}*{Lemma 1.2.17.10}. The class of $(n,1)$-categories is not closed under fibre products. But the following lemma does most of the work for us:

\begin{prop}
The class of simplicial sets which are the coskeleton of their $k$-skeleton is closed under all limits.
\end{prop}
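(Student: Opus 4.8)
The plan is to reformulate coskeletality as a unique-lifting property and then observe that such properties are automatically stable under limits. Saying that $X$ is the coskeleton of its $k$-skeleton means precisely that the unit map $X\to\mathrm{cosk}_k X$ is an isomorphism, where $\mathrm{cosk}_k$ is right adjoint to $k$-truncation (and $\mathrm{cosk}_k\skel_k X=\mathrm{cosk}_k X$, since the coskeleton depends only on the truncation). First I would unwind the adjunctions $\skel_k\dashv\mathrm{cosk}_k$ to compute, for each $m$, the set of $m$-simplices of the coskeleton as $(\mathrm{cosk}_k X)_m=\Hom(\skel_k\Delta^m,X)$, with the unit being the restriction map
\[\Hom(\Delta^m,X)\longrightarrow\Hom(\skel_k\Delta^m,X)\]
along the inclusion $\skel_k\Delta^m\hookrightarrow\Delta^m$. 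Thus $X$ is $k$-coskeletal exactly when it has the unique right lifting property against the set of maps $S=\{\skel_k\Delta^m\hookrightarrow\Delta^m:m\geq 0\}$; equivalently, in the style of the earlier lifting criteria, one may use the boundary inclusions $\partial\Delta^m\hookrightarrow\Delta^m$ for $m>k$, since $\skel_k\Delta^{k+1}=\partial\Delta^{k+1}$ and the higher cases reduce to these by the skeletal filtration.

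With this reformulation in hand, the closure statement is formal. Having the unique right lifting property against a fixed map $f\colon A\to B$ is, for an object $X$, exactly the assertion that $f^*\colon\Hom(B,X)\to\Hom(A,X)$ is a bijection. Now I would use that representable functors send limits to limits: for $X=\varprojlim_i X_i$ there are identifications $\Hom(B,X)\cong\varprojlim_i\Hom(B,X_i)$ and $\Hom(A,X)\cong\varprojlim_i\Hom(A,X_i)$, natural in $i$, under which $f^*$ becomes the limit of the maps $f^*\colon\Hom(B,X_i)\to\Hom(A,X_i)$. A limit of a diagram of bijections is a bijection (a componentwise isomorphism of diagrams is an isomorphism in the functor category, and $\varprojlim$ preserves isomorphisms), so $f^*$ is a bijection on $X$. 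Applying this to every $f\in S$ shows that $\varprojlim_i X_i$ again has the unique right lifting property against $S$, hence is the coskeleton of its $k$-skeleton.

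The point worth stressing is that there is essentially no obstacle once the lifting reformulation is set up: the entire content lies in identifying the unit map with restriction along $\skel_k\Delta^m\hookrightarrow\Delta^m$, after which closure under limits is a completely general fact about objects characterised by a unique right lifting property against a fixed class of morphisms. In particular I would not bother reducing $S$ to the boundary inclusions, since the formal argument applies verbatim to $S$ as written; the only care needed is to keep the identifications natural in $i$, so that ``a limit of bijections is a bijection'' genuinely applies.
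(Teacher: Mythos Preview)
Your proof is correct, and it takes a genuinely different route from the paper's. The paper argues directly via the functors: it asserts that both $\skel_k$ and $\mathrm{cosk}_k$ preserve limits (the latter because it is a right adjoint), so the composite $\mathrm{cosk}_k\circ\skel_k$ preserves limits, and hence the class of simplicial sets fixed by this endofunctor is closed under limits. You instead recast $k$-coskeletality as a unique right lifting property against the inclusions $\skel_k\Delta^m\hookrightarrow\Delta^m$, and then observe that any such condition is stable under limits because $\Hom(A,-)$ and $\Hom(B,-)$ both send limits to limits and a limit of bijections is a bijection.

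Two points of comparison are worth making. First, the paper's claim that $\skel_k$ preserves limits is in fact false in general (for example $\skel_1(\Delta^1\times\Delta^1)\subsetneq\Delta^1\times\Delta^1=\skel_1\Delta^1\times\skel_1\Delta^1$); the conclusion survives only because $\mathrm{cosk}_k\skel_k=\mathrm{cosk}_k$, so that one really only needs the right adjoint $\mathrm{cosk}_k$ to preserve limits. Your argument sidesteps this issue entirely. Second, your method is precisely the one the paper deploys for the \emph{next} proposition (closure under limits of simplicial sets with unique lifting against a fixed horn inclusion), so your approach has the added virtue of treating both results by a single uniform argument.
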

\begin{proof}
The $n$-skeleton functor $\skel_n$ visibly preserves limits, and the $n$-coskeleton functor preserves limits since it is right adjoint to $\skel_n$. Given this, this category is closed under limits.
\end{proof}

Indeed, more is true:
\begin{prop}
For any $0\leq i\leq n$, the class of simplicial sets with unique liftings for maps $\Lambda^n_i\rightarrow\Delta^n$ is closed under limits.
\end{prop}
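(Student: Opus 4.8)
The plan is to reformulate the unique-lifting property as the assertion that a single restriction map of hom-sets is a bijection, and then to observe that this bijection is preserved under limits because hom-functors out of a fixed object preserve limits.

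First I would record that a simplicial set $X$ has unique liftings for $\Lambda^n_i\rightarrow\Delta^n$ exactly when the restriction map
\[
\rho_X\colon\Hom_{\sSet}(\Delta^n,X)\longrightarrow\Hom_{\sSet}(\Lambda^n_i,X),
\]
induced by the inclusion $\Lambda^n_i\hookrightarrow\Delta^n$, is a bijection: surjectivity of $\rho_X$ encodes the existence of a lift, and injectivity encodes its uniqueness. Thus the whole property is packaged into the single word ``bijection''.

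Next I would use that for any fixed simplicial set $K$, the functor $\Hom_{\sSet}(K,-)\colon\sSet\to\Set$ preserves all limits, since limits in $\sSet$ are computed levelwise and $\Hom$ always commutes with limits in its second variable. Applying this to $K=\Delta^n$ and to $K=\Lambda^n_i$, and noting that $\rho$ is natural in $X$, I would regard a diagram $\{X_\alpha\}$ as giving two functors into $\Set$, namely $\alpha\mapsto\Hom_{\sSet}(\Delta^n,X_\alpha)$ and $\alpha\mapsto\Hom_{\sSet}(\Lambda^n_i,X_\alpha)$, together with the natural transformation $\rho$ between them.

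Finally, suppose each $X_\alpha$ has the unique-lifting property, so each $\rho_{X_\alpha}$ is a bijection; then $\rho$ is a natural isomorphism between these two diagrams of sets. Writing $X=\limit_\alpha X_\alpha$, preservation of limits yields canonical identifications $\Hom_{\sSet}(\Delta^n,X)\isom\limit_\alpha\Hom_{\sSet}(\Delta^n,X_\alpha)$ and $\Hom_{\sSet}(\Lambda^n_i,X)\isom\limit_\alpha\Hom_{\sSet}(\Lambda^n_i,X_\alpha)$, under which $\rho_X$ is identified with $\limit_\alpha\rho_{X_\alpha}$. Since $\limit$ is a functor it carries the natural isomorphism $\rho$ to an isomorphism, so $\rho_X$ is a bijection and $X$ inherits the unique-lifting property. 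There is no genuine obstacle here: the argument is purely formal and parallels the preceding proposition, the only points needing a moment's care being the identification of unique lifting with bijectivity of $\rho_X$ (keeping existence and uniqueness straight) and the elementary fact that a limit of a family of bijections is again a bijection.
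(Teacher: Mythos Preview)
Your proof is correct. It takes a slightly different route from the paper's own argument: the paper verifies closure under products and under fibre products separately (arguing by hand that a lift into a pullback $X\times_Z Y$ is determined by the lifts into $X$ and $Y$, which are forced to agree in $Z$ by uniqueness there), and implicitly relies on the fact that these two cases generate all limits. Your argument bypasses this reduction by packaging the unique-lifting property as bijectivity of a single restriction map and then invoking preservation of limits by $\Hom_{\sSet}(K,-)$ together with the fact that a limit of bijections is a bijection. This is more uniform and handles all limits in one stroke, at the cost of being slightly less concrete; the paper's version has the mild advantage of making the mechanism visible in the pullback case, which is the only shape actually used later.
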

\begin{proof}
The class of such simplicial sets is closed under products since a lifting for the product is just the product of liftings of the factors; we'll verify it for fibre products too.

So if $X$, $Y$ and $Z$ are simplicial sets with unique extensions for the map $\Lambda^n_i\rightarrow\Delta^n$, then I claim that $X\timeso{Z}Y$ has unique liftings for it too. Indeed, a map $\Lambda^n_i\rightarrow X\timeso{Z}Y$ consists of maps $\Lambda^n_i\rightarrow X,Y$ whose composites with the maps from $X$ and $Y$ to $Z$ agree.

These extend uniquely to maps $\Delta^n\rightarrow X,Y$. Their composites with the maps to $Z$ are both extensions of our map $\Lambda^n_i\rightarrow Z$. But such extensions are unique, and so they agree. Thus these maps assemble to a unique extension $\Delta^n\rightarrow X\timeso{Z}Y$.
\end{proof}

These combine to prove the following:
\begin{prop}
Let $p:\cC\rightarrow\cD$ be a map between $(n,1)$-categories. A morphism $f:x\rightarrow y$ in $\cC_1$ is $p$-cartesian if and only if the morphism
\[\cC_{/f}\longrightarrow\cC_{/y}\timeso{\cD_{/py}}\cD_{/pf}\]
has the right lifting property for all maps $\partial\Delta^m\rightarrow\Delta m$ for $m\leq n+1$.
\end{prop}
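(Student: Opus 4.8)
The plan is to recognise that, by Definition \ref{defn-cartesian-fibration}, the morphism $f$ is $p$-cartesian precisely when $L_f$ is an acyclic Kan fibration, i.e.\ when $L_f$ has the right lifting property against every boundary inclusion $\partial\Delta^m\rightarrow\Delta^m$. The whole content of the proposition is therefore that, for a map between $(n,1)$-categories, this lifting property comes for free once $m\geq n+2$ --- and that is exactly Proposition \ref{acyclic-kan}. So the real work is not lifting arithmetic but verifying that both the source and the target of $L_f$ are $(n,1)$-categories; once that is done, only the finite range $m\leq n+1$ remains to be imposed, and both directions of the asserted equivalence fall out together.

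For the source, $\cC_{/f}$ is an overcategory of the $(n,1)$-category $\cC$ (over the edge $f$), hence an $(n,1)$-category by \cite{HTT}*{Lemma 1.2.17.10}. In the target $\cC_{/y}\timeso{\cD_{/py}}\cD_{/pf}$ (where, following Definition \ref{defn-cartesian-fibration}, the $\cD$-slices are formed after applying $p$), each of the three constituents is likewise an overcategory of an $(n,1)$-category, and so an $(n,1)$-category by the same lemma. I must then check that this property is inherited by the fibre product, even though $(n,1)$-categories are not closed under fibre products in general. By Proposition \ref{partial-delta-n-cat} each constituent is $(n+1)$-coskeletal, so by the first of the two preceding propositions the fibre product is $(n+1)$-coskeletal too; and by the second of those propositions, applied to each inner horn $\Lambda^m_k\rightarrow\Delta^m$ with $m>n$, the fibre product has unique fillings for those horns. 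These are precisely the conditions distinguishing an $(n,1)$-category above dimension $n$.

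The step I expect to be the main obstacle is the one these limit-stability results do not cover: the existence of inner horn fillings in the low degrees $2\leq m\leq n$, that is, that the fibre product is a quasicategory at all. For this I would observe that the projection $\cD_{/pf}\rightarrow\cD_{/py}$, being a restriction along the inclusion of an endpoint $\{1\}\hookrightarrow\Delta^1$, is a left fibration and in particular an inner fibration \cite{HTT}*{\S2.1.2}; its pullback $\cC_{/y}\timeso{\cD_{/py}}\cD_{/pf}\rightarrow\cC_{/y}$ is then an inner fibration over the quasicategory $\cC_{/y}$, so the fibre product is a quasicategory. Together with the coskeletality and uniqueness just established, this confirms that the target is an $(n,1)$-category.

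Granting that both endpoints are $(n,1)$-categories, Proposition \ref{acyclic-kan} supplies the right lifting property of $L_f$ against $\partial\Delta^m\rightarrow\Delta^m$ for every $m\geq n+2$ automatically. Since a map is an acyclic Kan fibration exactly when it lifts against all boundary inclusions, and $f$ is $p$-cartesian exactly when $L_f$ is such a fibration, it follows that $f$ is $p$-cartesian if and only if $L_f$ lifts against $\partial\Delta^m\rightarrow\Delta^m$ in the remaining finite range $m\leq n+1$, which is the assertion.
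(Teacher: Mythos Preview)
Your proof is correct and follows the same route as the paper: establish that both the source and target of $L_f$ are $(n,1)$-categories, then invoke Proposition~\ref{acyclic-kan} to dispose of the lifting conditions for $m\geq n+2$. You are in fact more scrupulous than the paper, which simply asserts ``both sides are $(n,1)$-categories'' on the strength of the two closure propositions, glossing over the low-degree inner-horn fillers that those propositions do not supply; your extra paragraph addressing this via the slice projection is a genuine clarification. One small slip: the map $\cD_{/pf}\to\cD_{/py}$ is a \emph{right} fibration rather than a left fibration (restriction of a slice along an inclusion of diagrams is always a right fibration, cf.\ \cite{HTT}*{2.1.2.1}), but since you only use that it is an inner fibration, the argument is unaffected.
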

\begin{proof}
By the results above, both sides are $(n,1)$-categories; we thus apply Proposition \ref{acyclic-kan} to show the higher lifting conditions are automatic.
\end{proof}

\subsection{Overcategories and limits of simplicial sets}

In this section we study the relationship between Joyal's over construction (described in \cite{HTT}*{Lemma 1.2.9.2}), and limits of simplicial sets. The result is that taking overcategories commutes with taking limits of simplicial sets, in the following sense:
\begin{prop}
\label{limits-and-overs}
  Suppose we have an (ordinary) finite category $D$, to be thought of as a diagram category, and a diagram $F:D\rightarrow\sSet$. Suppose also that we have a cone on it: a simplicial set $K$ and a natural transformation $\theta:K\Rightarrow F$ to $F$ from the constant functor at $K$.

  We then get a map $\bar\theta:K\rightarrow\limit F$ from $K$ to the limit of the diagram $F$.

  We then have that the over construction commutes with limits in the sense that
  \[(\limit F)_{/\bar\theta}\isom{\limit}_x(F(x)_{/\theta_x}).\]
\end{prop}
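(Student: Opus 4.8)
The plan is to prove the isomorphism by a Yoneda argument, checking that both sides represent the same functor on $\sSet$. Throughout I would rely on the defining universal property of Joyal's slice \cite{HTT}*{Lemma 1.2.9.2}: for a map $p\colon K\to S$ and any simplicial set $Y$, maps $Y\to S_{/p}$ correspond naturally to maps $Y\star K\to S$ whose restriction to $K$ is $p$.

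First I would fix an arbitrary simplicial set $Y$ and compute $\Hom(Y,(\limit F)_{/\bar\theta})$. By the universal property this is the set of maps $g\colon Y\star K\to\limit F$ with $g|_K=\bar\theta$. Since $\Hom(Y\star K,-)$ preserves limits, such a $g$ is the same as a compatible family of maps $g_x\colon Y\star K\to F(x)$, one for each object $x$ of $D$, subject to $F(\alpha)\circ g_x=g_{x'}$ for every morphism $\alpha\colon x\to x'$. The condition $g|_K=\bar\theta$ unwinds, component by component, to $g_x|_K=\theta_x$ for every $x$, because the $x$-component of $\bar\theta$ is exactly $\theta_x$ by construction.

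Next I would compute the right-hand side. The assignment $x\mapsto F(x)_{/\theta_x}$ is a functor $D\to\sSet$: the naturality of the cone, $F(\alpha)\circ\theta_x=\theta_{x'}$, guarantees that post-composition with $F(\alpha)$ carries a map $Y\star K\to F(x)$ restricting to $\theta_x$ to one restricting to $\theta_{x'}$, and hence induces a map $F(x)_{/\theta_x}\to F(x')_{/\theta_{x'}}$; functoriality of $F$ makes this compatible with composition. Since $\Hom(Y,-)$ preserves limits, $\Hom(Y,\limit_x F(x)_{/\theta_x})$ is the limit over $x$ of the sets $\Hom(Y,F(x)_{/\theta_x})$, which by the slice universal property is the set of compatible families of maps $g_x\colon Y\star K\to F(x)$ with $g_x|_K=\theta_x$, the compatibility being precisely $F(\alpha)\circ g_x=g_{x'}$.

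Thus both sides have the same $Y$-points, namely compatible families $(g_x)$ with $g_x|_K=\theta_x$, and I would check that the two identifications are natural in $Y$ (which is clear, as everything is assembled from the natural slice bijection and the fact that hom-functors preserve limits). Yoneda then yields the asserted isomorphism. I do not expect a genuine obstacle: the only point demanding care is the bookkeeping that matches the single restriction condition $g|_K=\bar\theta$ on the left with the family of conditions $g_x|_K=\theta_x$ on the right, and the verification that the slice functoriality coincides with the cone's naturality. It is worth noting that finiteness of $D$ plays no role here, since $\sSet$ is complete; the hypothesis is presumably imposed only for later applications.
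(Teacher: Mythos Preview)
Your proof is correct and follows essentially the same Yoneda argument as the paper: both compute $\Hom(Y,-)$ of each side using the defining adjunction for Joyal's slice together with the fact that $\Hom$-functors preserve limits, then conclude by Yoneda. Your write-up is in fact more careful about the bookkeeping (the functoriality of $x\mapsto F(x)_{/\theta_x}$ and the matching of restriction conditions), and your remark that finiteness of $D$ is irrelevant is correct.
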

\begin{proof}
We consider maps from a fixed simplicial set $Y$; it's then just a straightforward check:
\begin{align*}
       \sSet(Y,(\limit F)_{/\bar\theta}) 
\isom &\sSet_\theta(Y\star K,\limit F)\\
\isom &{\limit}_{x\in D}\sSet_{\theta_x}(Y\star K,F(x))\\
\isom &{\limit}_{x\in D}\sSet(Y,F(x)_{/\theta_x})\\
\isom &\sSet(Y,{\limit}_{x\in D}(F(x)_{/\theta_x}).
\end{align*}
\begin{comment}
We prove it for binary product diagrams and equaliser diagrams; it follows that it holds for all finite limit diagrams. These are both straightforward checks.

Firstly, suppose we have a binary product diagram: the given data can be deassembled and written as two simplicial sets $C$ and $D$, and maps $f:K\rightarrow C$ and $g:K\rightarrow D$. In these terms, we are trying to show that
\[(C\times D)_{/(f\times g)}\isom C_{/f}\times D_{/g}.\]
But for any $Y$, we have
\begin{align*}
\Hom(Y,(C\times D)_{/(f\times g)}) &= \Hom_{f\times g}(Y\star K,C\times D)\\
                                   &= \Hom_f(Y\star K,C)\times\Hom_g(Y\star K,D)\\
                                   &= \Hom(Y,C_{/f})\times\Hom(Y,D_{/g})\\
                                   &= \Hom(Y,C_{/f}\times D_{/g}).
\end{align*}

The argument for equalisers is very similar. The given data can be written as follows:
\begin{displaymath}
  \xymatrix{&K\ar[dl]_f\ar[d]^g\ar[dr]^h&\\
            E\ar[r]_e&C\ar@<2pt>[r]^a\ar@<-2pt>[r]_b&D,}
\end{displaymath}
where the bottom row is an equaliser diagram. We seek to show that $E_f$ is the equaliser of $C_g\rightrightarrows D_h$.

Again, for any $Y$, we have $\Hom(Y,E_{/f})=\Hom_f(Y\star K,E)$. However, the right-hand-side is the equaliser of the two maps
\[\xymatrix{\Hom_g(Y\star K,C)\ar@<2pt>[r]\ar@<-2pt>[r]&\Hom_h(Y\star K,D),}\]
and the claim follows as before.
\end{comment}
\end{proof}

We continue this analysis to derive a corresponding result for finite products and cartesian morphisms:
\begin{prop}
\label{prods-and-carts}
 If $q_1:\cC_1\rightarrow\cD_1$ and $q_2:\cC_2\rightarrow\cD_2$ are maps of quasicategories, then, defining $q=q_1\times q_2:\cC_1\times\cC_2\rightarrow\cD_1\times\cD_2$, the $q$-cartesian morphisms (as defined in subsection \ref{defn-cartesian-fibration}) are exactly the products of $q_1$-cartesian morphisms and $q_2$-cartesian morphisms.
\end{prop}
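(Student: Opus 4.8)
The plan is to reduce the claim to two facts: that the comparison map $L_f$ defining a $q$-cartesian morphism splits as a product of the comparison maps for $q_1$ and $q_2$, and that being an acyclic Kan fibration is detected factorwise for product maps. Throughout write $f=(f_1,f_2)$, with source $a=(a_1,a_2)$ and target $b=(b_1,b_2)$.

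First I would use Proposition \ref{limits-and-overs} to split all the slices in sight. Applying it to the discrete two-object diagram with values $\cC_1,\cC_2$ and cone given by $f:\Delta^1\rightarrow\cC_1\times\cC_2$ gives $(\cC_1\times\cC_2)_{/f}\isom(\cC_1)_{/f_1}\times(\cC_2)_{/f_2}$, and the same proposition (now with $K=\Delta^0$ for the slices over objects, and with values $\cD_1,\cD_2$ in the last two cases) identifies $(\cC_1\times\cC_2)_{/b}$, $(\cD_1\times\cD_2)_{/qb}$ and $(\cD_1\times\cD_2)_{/qf}$ as the corresponding products. These identifications are natural. Since finite limits commute, the fibre product forming the target of $L_f$ then decomposes as
\[\cC_{/b}\timeso{\cD_{/qb}}\cD_{/qf}\isom\left((\cC_1)_{/b_1}\timeso{(\cD_1)_{/q_1b_1}}(\cD_1)_{/q_1f_1}\right)\times\left((\cC_2)_{/b_2}\timeso{(\cD_2)_{/q_2b_2}}(\cD_2)_{/q_2f_2}\right),\]
because every structure map involved is itself a product of maps in the two factors. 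Tracing through the definition of $L_f$, these isomorphisms carry it to the product map $L_{f_1}\times L_{f_2}$.

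Next I would prove the elementary lemma that a product $p_1\times p_2$ of maps of simplicial sets with nonempty sources is an acyclic Kan fibration if and only if each $p_i$ is. For the ``if'' direction, factor $p_1\times p_2=(p_1\times\mathrm{id})\circ(\mathrm{id}\times p_2)$; each factor is a pullback of some $p_i$ along a projection, hence acyclic Kan, and acyclic Kan fibrations are closed under composition. For the ``only if'' direction, given a lifting problem for $p_1$ against $\partial\Delta^m\rightarrow\Delta^m$, choose a vertex of the (nonempty) source of $p_2$ and promote the problem to one for $p_1\times p_2$ by making the second coordinate constant there; the first coordinate of the resulting lift solves the original problem. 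The nonemptiness hypothesis is satisfied here because each $(\cC_i)_{/f_i}$ contains the degenerate identity edge on $f_i$.

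Combining these steps gives the result: $f$ is $q$-cartesian if and only if $L_f=L_{f_1}\times L_{f_2}$ is an acyclic Kan fibration, if and only if each $L_{f_i}$ is, if and only if each $f_i$ is $q_i$-cartesian. I expect the main obstacle to be the naturality bookkeeping in the first step, namely verifying that the four applications of Proposition \ref{limits-and-overs} are mutually compatible so that $L_f$ really is carried to $L_{f_1}\times L_{f_2}$ under the product decompositions; the remaining ingredients are the standard stability of acyclic Kan fibrations under pullback and composition, together with the nonemptiness observation that makes the converse half of the product lemma go through.
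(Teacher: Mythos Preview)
Your proposal is correct and follows essentially the same approach as the paper: both reduce to showing that the comparison map $L_f$ factors as $L_{f_1}\times L_{f_2}$ by using Proposition~\ref{limits-and-overs} to commute overcategories with products, and then commuting the remaining pullbacks with products. The paper's proof is extremely terse and leaves implicit the step you spell out carefully, namely that a product of maps (with nonempty sources) is an acyclic Kan fibration if and only if each factor is; your inclusion of this lemma and the nonemptiness check makes the argument complete where the paper merely gestures.
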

\begin{proof}
 We must relate a pullback of overcategories of products to a product of pullbacks of overcategories. The pullbacks and products commute, as usual; Proposition \ref{limits-and-overs} provides that the formation of products and of overcategories commute.
\end{proof}

Lastly, we provide a handy alternative description of overcategories. Given a quasicategory $\cC$, consider the map $p:\cCD\rightarrow\cC$ induced by evaluation of the terminal vertex of $\Delta^1$.

\begin{prop}
\label{alternative-overcat}
The fibres $p^{-1}(x)$ of $p$ are equivalent to $\cC_{/x}$.
\end{prop}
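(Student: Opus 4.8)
The plan is to write down an explicit comparison map and then show it is an equivalence by working fibrewise over $\cC$.

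First I would unwind the two sides. By Joyal's construction an $n$-simplex of $\cC_{/x}$ is a map $\Delta^n\star\Delta^0=\Delta^{n+1}\to\cC$ carrying the final vertex to $x$, whereas an $n$-simplex of $p^{-1}(x)$ is a map $\Delta^n\times\Delta^1\to\cC$ carrying $\Delta^n\times\{1\}$ to $x$. There is a natural ``collapse'' map $c\colon\Delta^n\times\Delta^1\to\Delta^{n+1}$, determined on vertices by $(i,0)\mapsto i$ and $(i,1)\mapsto n+1$; it is order-preserving, compatible with the cosimplicial structure in $n$, restricts to the identity on $\Delta^n\times\{0\}=\{0,\dots,n\}$ and to the constant value $n+1$ on $\Delta^n\times\{1\}$. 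Precomposition with $c$ therefore carries an $n$-simplex of $\cC_{/x}$ to one of $p^{-1}(x)$, and these assemble into a map of simplicial sets $\Phi\colon\cC_{/x}\to p^{-1}(x)$. Because $c$ is the identity on the bottom of the cylinder, $\Phi$ commutes with the two ``source'' evaluations down to $\cC$ (restriction to $\{0,\dots,n\}$ on one side, to $\Delta^n\times\{0\}$ on the other); and since $c_0$ is the identity of $\Delta^1$, the map $\Phi$ is even a bijection on objects.

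Next I would show $\Phi$ is a categorical equivalence by a fibrewise argument. Both projections $\cC_{/x}\to\cC$ and $p^{-1}(x)\to\cC$ (each by source evaluation) are right fibrations: the former is the slice projection and the latter is the restriction to the target-fibre of the evaluation $\cC^{\Delta^1}\to\cC$, both standard in \cite{HTT}. As $\Phi$ is a map of right fibrations over $\cC$, it is a categorical equivalence as soon as it induces a homotopy equivalence on the fibre over each object $a\in\cC$. That fibre, on the left, is the space of $(n+1)$-simplices with initial face constant at $a$ and final vertex $x$ --- the right-pinched model of the mapping space $\cC(a,x)$ --- while on the right it is the space of cylinders $\Delta^n\times\Delta^1\to\cC$ constant at $a$ on the bottom and at $x$ on the top --- the two-sided model of $\cC(a,x)$. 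The map $\Phi$ induces on fibres is precisely the canonical comparison between these two models.

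The main obstacle is this last point: that the right-pinched mapping space and the two-sided cylinder mapping space are homotopy equivalent via the collapse, equivalently that $\Phi$ is a fibrewise equivalence. This is exactly the content of Lurie's comparison of the two slice constructions, and of the various models of mapping spaces \cite{HTT}, which I would invoke directly; alternatively one can avoid the fibrewise reduction altogether by identifying $p^{-1}(x)$ with Lurie's alternative slice $\cC^{/x}$ and quoting that $\cC_{/x}\to\cC^{/x}$ is a categorical equivalence, the resulting map being the $\Phi$ above. The combinatorial checks --- that $c$ is natural in $n$ and that $\Phi$ respects source evaluation --- are routine and I would not belabour them.
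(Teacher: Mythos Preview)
Your argument is correct and complete. The comparison map $\Phi$ you build is exactly the one the paper uses, but the two proofs diverge in how the equivalence is established.

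The paper observes that for every simplicial set $K$ the collapse $(K\times\Delta^1)/(K\times\{1\})\to K\star\Delta^0$ is a strong deformation retraction, and deduces from this that the induced map $\cC_{/x}\to p^{-1}(x)$ gives equivalences on mapping spaces, finishing by invoking the general equivalence between simplicial categories and quasicategories. This is quick and self-contained, though terse.

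Your route instead organises both sides as right fibrations over $\cC$ via source evaluation and checks the map fibrewise, where it becomes the standard comparison between the right-pinched and cylinder models of $\Hom_\cC(a,x)$; equivalently you identify $p^{-1}(x)$ with Lurie's alternative slice $\cC^{/x}$ and quote \cite{HTT}*{Proposition 4.2.1.5}. This is more modular: it reduces everything to a well-known black box in \cite{HTT} rather than a bespoke deformation-retract argument. The paper's approach has the advantage of not depending on the (nontrivial) right-fibration and mapping-space comparison machinery, while yours makes clearer exactly which standard equivalence is being invoked and would generalise more readily (for instance to slices over diagrams rather than points).
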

\begin{proof}
Maps $K\rightarrow p^{-1}(x)$ are maps $K\times\Delta^1\rightarrow\cC$ sending $K\times 1$ to $x$, or equivalently are maps $(K\times\Delta)/(K\times 1)\rightarrow\cC$ pointed at $x$. But there is a map $(K\times\Delta^1)/(K\times 1)\rightarrow K\star 1$, so there is a map $\cC_{/x}\rightarrow p^{-1}(x)$.

Moreover, since this map $(K\times\Delta^1)/(K\times 1)\rightarrow K\star 1$ is a strong deformation retract. We immediately get equivalences of homspaces. The required result then follows from the equivalence of simplicial categories and quasicategories. 
\end{proof}

\subsection{Limits in undercategories}

In this section, we show how quasicategorical limits in undercategories are related to limits in the original quasicategory.

\begin{prop}
\label{forgetting-from-undercat-preserves-limits}
Let $\cC$ be any quasicategory with limits, and let $f:D\rightarrow\cC$ be any diagram in it. The ``forgetful'' map $\cC_{D/}\rightarrow\cC$ preserves limits.
\end{prop}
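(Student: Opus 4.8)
The plan is to establish the stronger statement that $u\colon\cC_{D/}\to\cC$ \emph{creates} limits, from which preservation is immediate. Suppose $g\colon I\to\cC_{D/}$ has a limit, witnessed by a limit cone $\tilde g\colon I^\triangleleft\to\cC_{D/}$ with apex $\ell$. Unwinding the universal property defining the undercategory, this cone is precisely a map $\bar{\tilde g}\colon D\star I^\triangleleft\to\cC$; writing $I^\triangleleft=\{v\}\star I$ for the cone point $v$, the key structural feature is that $v$ sits \emph{between} $D$ and $I$, so that $\bar{\tilde g}$ is a map out of $D\star\{v\}\star I$. Because $\cC$ has limits, choose a genuine limit cone $\tilde h\colon I^\triangleleft\to\cC$ for $ug$, with apex $m$. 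Both $u\tilde g$ and $\tilde h$ are cones on $ug$, so the universal property of $m$ furnishes a canonical comparison $u\ell\to m$; the whole problem is to prove this is an equivalence.

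First I would lift $\tilde h$ to a cone in $\cC_{D/}$. A lift is a map $\bar{\tilde h}\colon D\star\{v\}\star I\to\cC$ restricting to $\bar g$ on $D\star I$ and to $\tilde h$ on $\{v\}\star I$; since these two pieces already agree on $I$, this is an extension problem along the inclusion $(D\star I)\cup_I(\{v\}\star I)\hookrightarrow D\star\{v\}\star I$. The simplices that remain to be specified are exactly those spanning a vertex of $D$ and the cone point $v$; these encode a compatible map from the diagram $D$ into the apex $m$. I claim the space of such extensions is \emph{contractible}. This is the genuine content of the proposition: the extension data amounts to factoring the bridge $D\to ug$ recorded by $\bar g$ through the limit legs of $\tilde h$, and such factorisations form a contractible space precisely because $m=\limit(ug)$. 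Note that the inclusion above is \emph{not} inner anodyne (it is the pushout--join of $\emptyset\to D$ with the non-degenerate $I\hookrightarrow I^\triangleleft$), so this contractibility cannot be formal and must draw on the limit hypothesis; making it precise, via the mapping-space fibration comparing $\Map_{\cC_{D/}}$ with $\Map_\cC$, is the step I expect to be the main obstacle.

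Granting the contractible space of lifts, fix one, giving a cone $\tilde h'$ over $\tilde h$ with apex $m'$ satisfying $um'=m$. An entirely analogous contractibility argument --- now applied to mapping spaces into $\tilde h'$ rather than to the cone itself --- shows that $m'$ is terminal among cones on $g$, i.e.\ that $\tilde h'$ is a limit cone in $\cC_{D/}$. Both $\tilde g$ and $\tilde h'$ are then limit cones for $g$, hence equivalent, so $\ell\simeq m'$; applying $u$, which preserves equivalences, yields $u\ell\simeq m$, and one checks this is the canonical comparison constructed above. Since the property of being a limit cone is invariant under equivalence of cones and $u\tilde h'=\tilde h$ is a limit cone, the equivalent cone $u\tilde g$ is a limit cone as well; thus $u$ carries the limit of $g$ to the limit of $ug$, as required.
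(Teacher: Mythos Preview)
Your strategy --- lift the limit cone of $ug$ from $\cC$ to $\cC_{D/}$ and verify the lift is itself a limit cone --- is exactly the paper's. The step you flag as ``the main obstacle'', the extension along $(D\star I)\cup_I I^\triangleleft\hookrightarrow D\star I^\triangleleft$, is where your argument is genuinely incomplete; the paper dispatches it by a change of viewpoint rather than by any analysis of mapping-space fibrations.

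The trick is to use the join--slice adjunction on the \emph{other} side: a map $D\star K\to\cC$ extending $f$ is a map $K\to\cC_{D/}$, but it is equally a map $D\to\cC_{/K}$. So your diagram $K\to\cC_{D/}$ becomes a map $D\to\cC_{/K}$, and the statement that $1\star K\to\cC$ is a limit cone says precisely that $\cC_{/(1\star K)}\to\cC_{/K}$ is a trivial Kan fibration. Lifting $D$ along this fibration gives the desired $D\star 1\star K\to\cC$, with contractible space of lifts --- this \emph{is} your claimed contractibility, now proved in one line. The same device handles your second step: a lifting problem for a cofibration $I\hookrightarrow J$ against $(\cC_{D/})_{/(1\star K)}\to(\cC_{D/})_{/K}$ unwinds to the lifting problem for $D\star I\hookrightarrow D\star J$ against $\cC_{/(1\star K)}\to\cC_{/K}$, which is cofibration against trivial fibration. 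Both extensions thus reduce to a single fact, and your detour through comparing two limit cones $\tilde g$ and $\tilde h'$ becomes unnecessary: the paper directly constructs the limit in $\cC_{D/}$ lying over the chosen limit in $\cC$.
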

\begin{proof}
Suppose we have a diagram of shape $K$ in $\cC_{D/}$. Postcomposition with the forgetful map gives a diagram $K\rightarrow\cC$, which admits a limit $1\star K\rightarrow\cC$; and the map $\cC_{/(1\star K)}\rightarrow\cC_{/K}$ is acyclic Kan.

Our diagram $K\rightarrow\cC_{D/}$ is equivalent to a diagram $D\rightarrow\cC_{/K}$. By the acyclic Kan condition, this gives us a map $D\rightarrow\cC_{/(1\star K)}$, or equivalently, $1\star K\rightarrow\cC_{D/}$.

We must merely show that this is indeed a limit: that $\cC_{D//(1\star K)}\rightarrow\cC_{D//K}$ is acyclic Kan. However, given $I\rightarrow J$ a cofibration, there is a bijective correspondence between squares of the two following sorts:
\begin{displaymath}
 \xymatrix{I\ar[d]\ar[r]&J\ar[d]\\
           \cC_{D//(1\star K)}\ar[r]&\cC_{D//K}}
\qquad\text{and}\qquad
 \xymatrix{D\star I\ar[d]\ar[r]&D\star J\ar[d]\\
           \cC_{/(1\star K)}\ar[r]&\cC_{/K}}.
\end{displaymath}
Since $\cC_{/(1\star K)}\rightarrow\cC_{/K}$ is acyclic Kan, we have a diagonal filler on the right, which gives us one on the left, too.
\end{proof}

By a straightforward dualisation, of course we also get:
\begin{prop}
\label{forgetting-from-overcat-preserves-colimits}
Let $\cC$ be any quasicategory and $f:D\rightarrow\cC$ any diagram in it. Then the forgetful map $\cC_{/D}\rightarrow\cC$ preserves colimits.
\end{prop}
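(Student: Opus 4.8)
The plan is to deduce this proposition from the preceding one (Proposition \ref{forgetting-from-undercat-preserves-limits}) by passing to opposite quasicategories. The key observation is that, in the quasicategorical setting, a colimit of a diagram $K\rightarrow\cX$ is by definition a limit of the opposite diagram in $\cX^{\op}$, and consequently a functor preserves colimits precisely when its opposite preserves limits. Thus it suffices to exhibit the forgetful map $\cC_{/D}\rightarrow\cC$ as the opposite of a forgetful map out of an undercategory, and then to invoke the previous result applied to $\cC^{\op}$.

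First I would recall the standard compatibility of Joyal's slice construction with the formation of opposites. Since the over- and undercategories are built from the join $\star$, and the join satisfies $(A\star B)^{\op}\isom B^{\op}\star A^{\op}$, one obtains a natural isomorphism of simplicial sets $(\cC_{/D})^{\op}\isom(\cC^{\op})_{D^{\op}/}$, where $D^{\op}$ denotes the diagram $f^{\op}:D^{\op}\rightarrow\cC^{\op}$. Under this isomorphism the forgetful map $\cC_{/D}\rightarrow\cC$ corresponds to the forgetful map $(\cC^{\op})_{D^{\op}/}\rightarrow\cC^{\op}$; this is immediate from the naturality of the construction and from the fact that each forgetful map is induced by the inclusion of the appropriate join factor.

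With this identification in hand, I would apply Proposition \ref{forgetting-from-undercat-preserves-limits} to the quasicategory $\cC^{\op}$ and the diagram $f^{\op}:D^{\op}\rightarrow\cC^{\op}$: that proposition tells us the forgetful map $(\cC^{\op})_{D^{\op}/}\rightarrow\cC^{\op}$ preserves limits. Translating this statement back across the duality of the previous paragraph---limits in $\cC^{\op}$ becoming colimits in $\cC$, and the forgetful map of undercategories becoming the forgetful map of overcategories---yields exactly the claim that $\cC_{/D}\rightarrow\cC$ preserves colimits. (As with the dual hypothesis, this is to be understood for those colimits which exist in $\cC_{/D}$, corresponding to the existence of the relevant limits in $\cC^{\op}$.)

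The only genuine content, and hence the step I expect to require the most care, is the verification that the over/under constructions and their forgetful maps interchange correctly under opposites; everything else is the formal bookkeeping of the limit/colimit duality. Once that naturality is pinned down (as it is in \cite{HTT}), the result follows with no further work, which is why it is legitimately a \emph{straightforward dualisation} of the previous proposition.
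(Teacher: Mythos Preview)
Your proposal is correct and is exactly the paper's approach: the paper offers no separate argument, merely the phrase ``By a straightforward dualisation, of course we also get,'' and your write-up supplies precisely that dualisation in detail. Your parenthetical remark about the (co)completeness hypothesis is apt, since the paper silently drops it when passing from Proposition~\ref{forgetting-from-undercat-preserves-limits} to this statement.
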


Lastly, we have the following useful result:
\begin{prop}
 \label{interchange-of-limits}
 If $\cC$ is a complete quasicategory, and $f:X\times Y\rightarrow \cC$ is a diagram in $\cC$, then we have the usual interchange-of-limits isomorphisms
 \[\lim_X\lim_Y\isom\lim_{X\times Y}\isom\lim_Y\lim_X.\]
\end{prop}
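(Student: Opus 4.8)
The plan is to deduce both isomorphisms from the universal property of limits together with the exponential law for functor quasicategories, reducing everything to a chain of natural equivalences of mapping spaces and a single appeal to the Yoneda lemma.

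First I would recall the adjoint characterisation of limits in a complete quasicategory: for any simplicial set $K$ the limit functor $\lim_K:\Fun(K,\cC)\to\cC$ is right adjoint to the constant-diagram functor $c_K:\cC\to\Fun(K,\cC)$, so that for every object $Z$ of $\cC$ and every diagram $g:K\to\cC$ there is a natural equivalence $\Map_\cC(Z,\lim_K g)\simeq\Map_{\Fun(K,\cC)}(c_K Z,g)$. Since $\cC$ is complete, all the limit functors appearing below exist, and limits in $\Fun(Y,\cC)$ are computed pointwise.

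Second, I would use the exponential law $\Fun(X\times Y,\cC)\simeq\Fun(X,\Fun(Y,\cC))$ to make sense of the iterated limit. Under this equivalence $f$ corresponds to a functor $\tilde f:X\to\Fun(Y,\cC)$, the inner limit $\lim_Y f$ is the composite $\lim_Y\circ\tilde f:X\to\cC$ (whose value at $x$ is the fibrewise limit $\lim_Y f(x,-)$), and $\lim_X\lim_Y f$ is the limit of this composite. The key formal observation is that $\lim_Y$, being a right adjoint, induces by post-composition a right adjoint $\Fun(X,\Fun(Y,\cC))\to\Fun(X,\cC)$ whose left adjoint is post-composition with $c_Y$. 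With the two adjunctions in hand I would chain the mapping-space equivalences: for any object $Z$,
\begin{align*}
\Map_\cC(Z,\lim_X\lim_Y f)
&\simeq\Map_{\Fun(X,\cC)}(c_X Z,\,\lim_Y\circ\tilde f)\\
&\simeq\Map_{\Fun(X,\Fun(Y,\cC))}(c_Y\circ c_X Z,\,\tilde f)\\
&\simeq\Map_{\Fun(X\times Y,\cC)}(c_{X\times Y}Z,\,f)\\
&\simeq\Map_\cC(Z,\lim_{X\times Y}f),
\end{align*}
where the third equivalence is the exponential law, under which the doubly-constant diagram $c_Y\circ c_X Z$ matches the constant diagram $c_{X\times Y}Z$ and $\tilde f$ matches $f$. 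As every equivalence is natural in $Z$, the Yoneda lemma yields $\lim_X\lim_Y f\simeq\lim_{X\times Y}f$, and the second isomorphism follows by the same argument after exchanging $X$ and $Y$ and using the symmetry $X\times Y\simeq Y\times X$.

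The main obstacle is the bookkeeping in the second step: making precise the identification of $\lim_Y\circ\tilde f$ with the fibrewise-limit diagram, and verifying that post-composition with the right adjoint $\lim_Y$ is again a right adjoint with the expected unit and counit, so that the naturality in $Z$ required for Yoneda genuinely holds. Once the exponential law and the adjoint description of limits are granted, the remaining steps are purely formal.
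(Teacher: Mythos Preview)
Your argument is correct, but the paper's own proof is a one-line citation: it simply invokes \cite{HTT}*{Prop 4.2.2.7}, which already packages the decomposition of a limit over a product into an iterated limit. So the approaches differ in spirit rather than content. You unpack the result by hand, using the adjoint description of limits, the exponential law $\Fun(X\times Y,\cC)\simeq\Fun(X,\Fun(Y,\cC))$, and Yoneda; the paper outsources all of this to Lurie. Your route has the virtue of being self-contained and of making transparent exactly which formal ingredients are in play (the right-adjoint characterisation of $\lim_K$ and its stability under post-composition), at the cost of the bookkeeping you flag. The paper's route is quicker but opaque unless the reader already knows what \cite{HTT}*{4.2.2.7} says. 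Both are adequate for what is, in the end, a standard formal fact.
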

\begin{proof}
 This follows from \cite{HTT}*{Prop 4.2.2.7}.
\end{proof}

\subsection{Limits and colimits in $\Spaces$}

This subsection serves two purposes. Firstly, it gives a straightforward construction of homotopy pullbacks in $\Spaces$. Then it gives a couple of basic properties of colimits in the quasicategory of spaces; they are both recognisable results in the discrete case, where they reduce to results about ordinary colimits in the ordinary category of sets.

\begin{defn}
\label{homotopy-pullbacks-spaces}
We use the following a natural model for the quasicategorical pullback: we write $E3$ for the standard contractible simplicial set on three points $l$, $m$ and $r$ (with one $n$-simplex for each $(n+1)$-tuple of vertices). Then we define
\[\cC_1\times^h_\cE\cC_2 = (\cC_1\times\cC_2)\times_{(\cE\times\cE)}\Map(E3,\cE).\]
Here the morphism $\Map(E3,\cE)\rightarrow \cE\times\cE$ is given by evaluation on $l$ and $r$.
\end{defn}

We also write down the structure maps of the limiting cone:
\begin{displaymath}
\xymatrix{\cC_1\times^h_\cE\cC_2\ar[d]_{p_2}\ar[dr]^f\ar[r]^{p_1}&\cC_1\ar[d]\\
          \cC_2\ar[r]&\cE.}
\end{displaymath}
The maps $p_1$ and $p_2$ are the evident projections, and $f$ is induced by the map $\Map(E3,\cE)\rightarrow\cE$ given by evaluation at $m$. The homotopies between the composites $\cC_1\times^h_\cE\cC_2\rightarrow\cE$ are induced by the equivalences $l\isom m$ and $m\isom r$ in $E3$. This is of course merely a simplicial variant of the standard topological construction of the homotopy pullback \cite{Hirschhorn}*{18.1.7}.

Now we turn to colimits. We start with an easy observation:
\begin{prop}
\label{colims-prods-commute}
Colimits in $\Spaces$ commute with products.
\end{prop}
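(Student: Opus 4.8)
The plan is to reduce the statement to the assertion that, for a fixed space $A$, the product functor $A\times(-)\colon\Spaces\rightarrow\Spaces$ preserves all colimits; the general commuting statement then follows by applying this one variable at a time. The cleanest conceptual route is to exhibit a right adjoint: $\Spaces$ is cartesian closed, with internal hom the mapping space $\Map(A,-)$, so $A\times(-)$ is a left adjoint and hence automatically preserves colimits. This immediately yields the canonical equivalence $A\times\colim_I F\simeq\colim_I(A\times F)$.

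Since the rest of the paper works with explicit simplicial models, I would instead argue model-categorically, which also makes the claimed reduction to $\Set$ transparent. First I would recall that colimits in the quasicategory $\Spaces$ are computed by homotopy colimits of simplicial sets in the Kan--Quillen model structure (after fibrant replacement). The key input is then that this model structure is cartesian: the pushout--product axiom holds, and every object of $\sSet$ is cofibrant, so for any $A$ the functor $A\times(-)$ is left Quillen, with right adjoint the internal hom.

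Next I would record two facts about $A\times(-)$ as an endofunctor of $\sSet$. It commutes with ordinary (strict) colimits, being a genuine left adjoint at the $1$-categorical level; and it preserves weak equivalences between \emph{all} objects, by Ken Brown's lemma applied to the left Quillen functor $A\times(-)$ together with the fact that every simplicial set is cofibrant. A functor that commutes with strict colimits and preserves weak equivalences also commutes with homotopy colimits — one sees this by running it through the two-sided bar construction, which is assembled from coproducts and realizations that $A\times(-)$ respects — and therefore induces a colimit-preserving functor on $\Spaces$. Combining this with the description of colimits in $\Spaces$ from the previous step gives the result.

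The main obstacle is purely bookkeeping: one must check that the homotopy-colimit model genuinely computes the quasicategorical colimit, and that ``left Quillen plus everything cofibrant'' legitimately upgrades strict commutation with colimits to homotopy-coherent commutation, rather than merely commutation up to an uncontrolled comparison map. In the discrete case all of this evaporates, recovering the stated classical fact: on $\Set$ the functor $A\times(-)$ is literally a left adjoint, with right adjoint $(-)^A$, so it commutes with colimits on the nose.
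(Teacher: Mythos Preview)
Your argument is correct, but it takes a different route from the paper's proof. You reduce to showing that $A\times(-)$ preserves colimits by exhibiting it as a left adjoint (either at the quasicategorical level via cartesian closure of $\Spaces$, or at the model-categorical level via the left Quillen structure on $A\times(-)$ in the Kan--Quillen model). The paper instead invokes \cite{HTT}*{Corollary 4.2.4.8} to reduce the question to the simplicial category $\Spaces^\Delta$ and to two generating classes of colimits, namely arbitrary coproducts and homotopy pushouts, and then checks each of these by hand: coproducts distribute over products on the nose, and homotopy pushouts commute with products because the mapping-cylinder construction does. Your adjoint-functor approach is more conceptual and handles all colimit shapes uniformly; the paper's approach is more concrete and avoids having to justify that $\Map(A,-)$ really is a right adjoint in the quasicategorical sense, at the cost of citing a structural result about how colimits decompose.
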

\begin{proof}
We invoke \cite{HTT}*{Corollary 4.2.4.8} to show that it suffices to do this in the simplicial category $\Spaces^\Delta$, for arbitrary coproducts and homotopy pushouts.

Both of these are easy checks. Indeed,
\[\coprod_{a\in A}(Z\times X_a)\isom Z\times\coprod_{a\in A}X_a.\]
Also, since the formation of mapping cylinders commutes with products, homotopy coequalisers do also.
\end{proof}

This allows us to prove:
\begin{prop}
\label{products-and-colimits}
Let $F:K\rightarrow\Spaces$ and $G:L\rightarrow\Spaces$ be diagrams in the quasicategory of spaces. Then the diagram
\[F\times G:K\times L\longrightarrow\Spaces\times\Spaces\stackrel{\mathrm{prod}}{\longrightarrow}\Spaces\]
has colimit given by $\colim(F\times G)=\colim(F)\times\colim(G)$.
\end{prop}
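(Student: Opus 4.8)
The plan is to reduce this statement about a colimit of a product diagram over $K \times L$ to the two colimit-computation facts just established. The key observation is that $\colim(F) \times \colim(G)$ should be exhibited as a colimit over the product indexing category $K \times L$, and Proposition \ref{interchange-of-limits} (dualised to colimits) lets us iterate colimits one variable at a time. So first I would rewrite $\colim_{K \times L}(F \times G)$ as an iterated colimit $\colim_{k \in K} \colim_{\ell \in L}(F(k) \times G(\ell))$, which is legitimate by the colimit version of the interchange isomorphism.

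Next I would fix $k \in K$ and compute the inner colimit $\colim_{\ell \in L}(F(k) \times G(\ell))$. Since $F(k)$ is a fixed object of $\Spaces$ and does not vary with $\ell$, this is a colimit of the diagram $G$ with each value multiplied by the constant space $F(k)$. By Proposition \ref{colims-prods-commute}, colimits in $\Spaces$ commute with products, so this inner colimit is naturally equivalent to $F(k) \times \colim_{\ell \in L} G(\ell) = F(k) \times \colim(G)$. Substituting back, the outer colimit becomes $\colim_{k \in K}\bigl(F(k) \times \colim(G)\bigr)$. Now $\colim(G)$ is a fixed object, so applying Proposition \ref{colims-prods-commute} once more (in the other variable) yields $\colim_{k \in K} F(k) \times \colim(G) = \colim(F) \times \colim(G)$, which is exactly what we want.

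The main obstacle is not the formal manipulation but ensuring that the applications of Proposition \ref{colims-prods-commute} are genuinely functorial and natural in the remaining variable, so that the pointwise equivalences assemble into an equivalence of diagrams before taking the residual colimit. In other words, commuting a product past a colimit at each fixed $k$ gives an equivalence of values, and I must verify these cohere into an equivalence of the two functors $K \to \Spaces$ whose colimits we then compare. This is a coherence check that in the quasicategorical setting should follow from the naturality built into the cited results, since Proposition \ref{colims-prods-commute} is proved via an equivalence of diagrams in $\Spaces^\Delta$ rather than merely on objects.

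I would present the argument as the chain of equivalences
\[
\colim(F \times G) \isom \colim_{k} \colim_{\ell}\bigl(F(k) \times G(\ell)\bigr) \isom \colim_{k}\bigl(F(k) \times \colim(G)\bigr) \isom \colim(F) \times \colim(G),
\]
citing Proposition \ref{interchange-of-limits} (dualised) for the first step and Proposition \ref{colims-prods-commute} for the second and third. This keeps the proof short and modular, deferring all the genuine homotopy-theoretic content to the already-established lemmas.
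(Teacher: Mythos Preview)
Your proof is correct and follows exactly the same route as the paper: write the colimit over $K\times L$ as an iterated colimit, then apply Proposition~\ref{colims-prods-commute} twice to pull each factor out in turn. The paper presents this as the same chain of equivalences you wrote down, without dwelling on the naturality issue you raise.
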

\begin{proof}
This is an easy calculation, using \ref{colims-prods-commute} twice:
\begin{align*}
\colim(F\times G) &= \colim_{k\in K}\colim_{l\in L}\left(F(k)\times G(l)\right) \\
                  &= \colim_{k\in K}\left(F(k)\times\colim_{l\in L}G(l)\right) \\
                  &= \colim_{k\in K}\left(F(k)\times\colim(G)\right) \\
                  &= \left(\colim_{k\in K}F(k)\right)\times\colim{G} \\
                  &= \colim(F)\times\colim(G).
\end{align*}
\end{proof}

\subsection{Mapping cylinders in quasicategories}
\label{mapping-cylinders-quasicategories}

As discussed above, cocartesian fibrations over $\cC$ are equivalent to functors $\cC\rightarrow\Cinfty$. In the case where $\cC=\Delta^1$, we shall later have need of a direct way of replacing functors between quasicategories $F:\cA\rightarrow\cB$ with cocartesian fibrations $\cE\rightarrow\Delta^1$. This approach is, in fact, a special case of Lurie's \emph{relative nerve} construction \cite{HTT}*{3.2.5}; however it may nevertheless be helpful to have a self-contained account of it.

Firstly, we note that maps $\Delta^n\rightarrow\Delta^1$ are classified by the preimages of the two vertices of $\Delta^1$. Thus we write $\Delta^{I\sqcup J}$ for a simplex with the implied map to $\Delta^1$ sending $I$ to $0$ and $J$ to $1$.

We define our model $p:\cE\rightarrow\Delta^1$ by giving that
\begin{displaymath}
\left\{\text{maps}\quad\vcenter{\xymatrix{\Delta^{I\sqcup J}\ar[r]\ar[dr]&\cE\ar[d]\\&\Delta^1}}\right\}
= \left\{\text{diagrams}\quad\vcenter{\xymatrix{\Delta^I\ar[d]\ar[r]^i&\Delta^{I\sqcup J}\ar[d]\\\cA\ar[r]_F&\cB}}\right\},
\end{displaymath}
where the map $i$ is that induced by the evident inclusion $I\rightarrow I\sqcup J$.

We now show by parts that this serves for us. To start with, it is straightforward to check that the preimages of the vertices are isomorphic to $\cA$ and $\cB$ respectively.

\begin{prop}
\label{mapping-cylinders-1}
The map $p:\cE\rightarrow\Delta^1$ is an inner fibration.
\end{prop}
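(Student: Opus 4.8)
The plan is to verify the right lifting property against every inner horn inclusion $\Lambda^n_k \to \Delta^n$ with $0 < k < n$, working directly from the defining correspondence for the simplices of $\cE$. So suppose given a lifting problem consisting of a horn $g : \Lambda^n_k \to \cE$ together with a structure map $\sigma : \Delta^n \to \Delta^1$ whose restriction to $\Lambda^n_k$ is $p \circ g$. Since $\sigma$ is monotone, it is determined by a partition $\{0,\dots,n\} = I \sqcup J$ in which $I = \{0,\dots,m\}$ is an initial segment (sent to $0$) and $J = \{m+1,\dots,n\}$ is the complementary final segment (sent to $1$). By the definition of $\cE$, producing the desired lift $\Delta^n \to \cE$ amounts to producing a map $\alpha : \Delta^I \to \cA$ and a map $\beta : \Delta^n \to \cB$ satisfying the compatibility $\beta|_{\Delta^I} = F \circ \alpha$, both extending the data recorded by $g$.

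When $J$ is empty the fibre of $p$ over $0$ is just $\cA$, and the lifting problem reduces to filling an inner horn in $\cA$, which is possible since $\cA$ is a quasicategory. So I would assume $J \neq \emptyset$. The crucial combinatorial observation is that, because $k < n$, one cannot have $J = \{k\}$, so there is an index $j \in J \setminus \{k\}$. The face $d_j$ then lies in $\Lambda^n_k$, and its $\cA$-part under the correspondence is all of $\Delta^I$ (deleting $j \in J$ does not disturb the initial segment), so $g$ already pins down the full map $\alpha : \Delta^I \to \cA$ (this is vacuous when $I = \emptyset$). Meanwhile the $\cB$-components of the faces comprising $g$ assemble, by naturality of the correspondence, into a single map $\Lambda^n_k \to \cB$; as $\cB$ is a quasicategory and $0 < k < n$, this extends to some $\beta : \Delta^n \to \cB$.

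The step I expect to be the only real content is checking that the pair $(\alpha, \beta)$ genuinely satisfies the compatibility condition, and hence defines a map $\Delta^n \to \cE$ lifting $g$. This is precisely where the choice of $j$ pays off: the face $\Delta^I = \Delta^{\{0,\dots,m\}}$ is contained in $d_j$, hence in the horn $\Lambda^n_k$, so $\beta|_{\Delta^I}$ is already prescribed by $g$ and was not altered when $\beta$ was extended over the interior. Since $g$ maps into $\cE$, that prescribed value is exactly $F \circ \alpha$, and the compatibility holds automatically. This produces the required lift and shows that $p$ is an inner fibration. It is worth remarking that $\cA$ being a quasicategory is used only in the degenerate case $J = \emptyset$; in all other cases $\alpha$ is forced and only the quasicategory structure of $\cB$ is needed.
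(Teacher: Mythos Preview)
Your proof is correct and follows the same approach as the paper's: reduce to the fibres when one of $I,J$ is empty, and otherwise observe that the horn already contains the full map $\Delta^I\to\cA$ so that only an inner horn extension in $\cB$ is required. Your version is a bit more explicit than the paper's---you justify why $\alpha$ is already determined by locating a face $d_j$ with $j\in J\setminus\{k\}$, and you spell out the compatibility check $\beta|_{\Delta^I}=F\circ\alpha$---but the argument is the same.
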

\begin{proof}
We need to show that an inner horn $\Lambda^{I\sqcup J}_k\rightarrow\cE$ extends to a full simplex $\Delta^{I\sqcup J}\rightarrow\cE$. Since the preimages of both vertices are quasicategories,  we need only concern ourselves with the case where $I$ and $J$ are both nonempty.

In either case, the faces we have include a full map $\Delta^I\rightarrow\cA$; this merely leaves us with an inner horn extension $\Delta^{I\sqcup J}\rightarrow\cB$, which is possible as since $\cB$ is a quasicategory.
\end{proof}

\begin{prop}
\label{mapping-cylinders-2}
For any element $a\in\cA_0$, there is a $p$-cocartesian morphism of $\cE_1$ whose 0th vertex is $a$, and which lies over the nontrivial 1-cell of $\Delta^1$.
\end{prop}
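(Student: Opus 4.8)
The plan is to exhibit an explicit candidate edge and verify that it is $p$-cocartesian using the horn-filling characterisation of cocartesian edges, reducing the whole problem to a single filling problem in the quasicategory $\cB$.

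First I would write down the candidate. A $1$-cell of $\cE$ lying over the nontrivial edge $0\to 1$ of $\Delta^1$ corresponds to the decomposition $I=\{0\}$, $J=\{1\}$, and hence, by the defining property of $\cE$, to a vertex $\sigma\in\cA_0$ together with an edge $\tau\in\cB_1$ satisfying $\tau\vert_{\{0\}}=F\sigma$. I take $\tilde f=(a,\,s_0F(a))$; that is, $\sigma=a$ and $\tau$ the degenerate edge $s_0F(a)=\id_{F(a)}$. Restricting to the vertex $\{0\}$ recovers $a\in\cA_0$, so $\tilde f$ has $0$th vertex $a$, and it lies over the nontrivial $1$-cell as required. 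By Proposition \ref{mapping-cylinders-1} the map $p$ is an inner fibration, so the notion of a $p$-cocartesian edge is available, and I may use the standard lifting criterion (the cocartesian dual of the characterisation in \cite{HTT}*{\S2.4}): the edge $\tilde f$ is $p$-cocartesian precisely when every lifting problem
\[
\xymatrix{\Lambda^n_0\ar[r]\ar[d]&\cE\ar[d]^p\\ \Delta^n\ar[r]\ar@{-->}[ur]&\Delta^1}
\]
with $n\geq 2$ and with the initial edge $\Delta^{\{0,1\}}$ carried to $\tilde f$ admits a solution.

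Next I would decode such a lifting problem. Since the bottom map $\Delta^n\to\Delta^1$ is monotone and sends $0\mapsto 0$ and $1\mapsto 1$, it must be classified by $I=\{0\}$ and $J=\{1,\dots,n\}$. By the defining property of $\cE$, producing the lift $\Delta^n\to\cE$ amounts to giving a vertex $\sigma\in\cA_0$ together with an $n$-simplex $\tau\colon\Delta^n\to\cB$ with $\tau\vert_{\{0\}}=F\sigma$. The $\cA$-datum is pinned to $\sigma=a$ by the vertex $0$ of the horn and requires no extension, as $I=\{0\}$ is a single point; so the only genuine content is the $\cB$-datum. The $\cB$-parts of the faces $d_1,\dots,d_n$ present in $\Lambda^n_0$ assemble to a map $\bar\tau\colon\Lambda^n_0\to\cB$ whose initial edge $\Delta^{\{0,1\}}$ is $s_0F(a)$, and (since $\bar\tau\vert_{\{0\}}=F(a)$ automatically guarantees the compatibility $\tau\vert_{\{0\}}=F(a)$) the lifting problem is solvable if and only if $\bar\tau$ extends along $\Lambda^n_0\hookrightarrow\Delta^n$ to an $n$-simplex of $\cB$.

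Finally I would dispatch this outer-horn filling. The horn $\Lambda^n_0$ is an \emph{outer} horn, which a quasicategory need not fill in general; the crucial point is that its initial edge $s_0F(a)$ is degenerate, hence an equivalence in $\cB$. The special outer-horn filling property for quasicategories (see \cite{HTT}*{\S2.4}) then provides the required extension $\Delta^n\to\cB$, and with it the lift $\Delta^n\to\cE$; this shows that $\tilde f$ is $p$-cocartesian. I expect this last step to be the crux: everything preceding it is bookkeeping with the definition of $\cE$, whereas the genuine mathematical input is the invertibility of the initial edge, which is exactly why I chose $\tau$ to be the identity $s_0F(a)$ rather than an arbitrary lift.
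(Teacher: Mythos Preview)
Your proof is correct and follows a genuinely different route from the paper's. Both arguments choose the same candidate edge $(a,s_0F(a))$ and both ultimately reduce to an outer-horn filling problem in $\cB$ whose initial edge is degenerate; the difference lies in how that reduction is made and how the filling is justified.

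The paper works directly from the slice-category definition of a cocartesian edge, verifying that $\cE_{\alpha/}\to\cE_{a/}\times_{\cE}\cB$ is acyclic Kan. Unwinding the lifting problem for $\partial\Delta^n\to\Delta^n$ via joins produces a horn $\Lambda^{2+n}_0\to\cB$, and the paper dispatches it by an ad~hoc observation: since the initial edge is literally degenerate, the horn map factors through the collapse $\Delta^{2+n}\to\Delta^{1+n}$ and can be filled that way. You instead invoke the horn-lifting characterisation of cocartesian edges (the cocartesian dual of \cite{HTT}*{2.4.1.8}), which immediately reduces the problem to filling $\Lambda^n_0\to\cB$ with initial edge $s_0F(a)$, and then you apply Joyal's special outer-horn filling for equivalences. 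Your route is cleaner: it avoids the join bookkeeping, lands on a horn of smaller dimension, and appeals to a standard black-box result rather than a bespoke factoring argument. The paper's approach has the minor virtue of being self-contained (it does not need Joyal's theorem), but your invocation of \cite{HTT}*{\S2.4} is entirely appropriate in context.
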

\begin{proof}
We chose the 1-cell $\alpha$ given by
\begin{displaymath}
\xymatrix{\Delta^0\ar[r]^0\ar[d]_a&\Delta^1\ar[d]^a\\
 \cA\ar[r]_F&\cB.}
\end{displaymath}

Now we go on to show that this is indeed $p$-cocartesian: that the morphism
\[\cE_{\alpha/}\longrightarrow \cE_{a/}\timeso{\cE}\cB\]
is acyclic Kan.

A diagram 
\begin{displaymath}
\xymatrix{\partial\Delta^n\ar[d]\ar[r]&\Delta^n\ar[d]\\
\cE_{\alpha/}\ar[r]&\cE_{a/}\timeso{\cE}\cB}
\end{displaymath}
unravels to give a diagram
\begin{displaymath}
\xymatrix{&1\star(\emptyset\star\partial\Delta^n)\ar[dl]\ar[dr]&\\
1\star(1\star\partial\Delta^n)\ar[dr]&&1\star(\emptyset\star\Delta^n)\ar[dl]\\
&\cE.&}
\end{displaymath}
Since these agree on the part mapping to $T$, the problem is to extend a map from $(\Delta^1\star\partial\Delta^n)\cup(1\star\Delta^n)\isom\Lambda^{2+n}_0$ to a map from $\Delta^{2+n}$. However, the edge between the first two vertices is a degeneracy, so the given map factors through $\Delta^{1+n}$ and can be extended to $\Delta^{2+n}$ via the map $\Delta^{2+n}\rightarrow\Delta^{1+n}$ which collapses the first two vertices.
\end{proof}

We can now prove:
\begin{prop}
\label{mapping-cylinders}
The map $p:\cE\rightarrow\Delta^1$ is an cocartesian fibration.
\end{prop}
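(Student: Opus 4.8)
The plan is to verify the definition of a cocartesian fibration directly, dualising Definition \ref{defn-cartesian-fibration}: I must confirm that $p$ is an inner fibration, and that for every $1$-morphism $f:x\to y$ of the base $\Delta^1$ and every lift $\tilde x$ of its source $x$, there is a $p$-cocartesian morphism with $0$th vertex $\tilde x$ lying over $f$. The inner fibration condition is already supplied by Proposition \ref{mapping-cylinders-1}, so only the cocartesian lifting property remains to be checked. The key simplification is that $\Delta^1$ has extremely few $1$-cells to quantify over: the two degenerate edges $\id_0$ and $\id_1$, together with the single nondegenerate edge $e:0\to 1$. I would treat these two kinds of edge separately.

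First I would handle the nondegenerate edge $e$, which is the substantial case. Its source is the vertex $0$, whose lifts to $\cE$ are exactly the objects of the fibre $p^{-1}(0)\isom\cA$, namely the elements $a\in\cA_0$. For each such $a$, Proposition \ref{mapping-cylinders-2} produces precisely the required $p$-cocartesian morphism lying over $e$ with $0$th vertex $a$; so nothing further is needed here, as all of the genuinely geometric content has already been absorbed into that proposition. For the degenerate edges I would argue that they are automatically cocartesian: given a lift $\tilde x$ of the source, the degeneracy $s_0\tilde x$ is an equivalence in $\cE$ lying over the relevant identity of $\Delta^1$, and equivalences in the total space of an inner fibration are always cocartesian \cite{HTT}. (Alternatively one checks the defining acyclic-Kan condition for $s_0\tilde x$ by hand, which is routine.)

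Combining these, for every edge of $\Delta^1$ and every lift of its source there is a cocartesian lift, so $p$ is a cocartesian fibration. The main obstacle, such as it is, is purely the bookkeeping of the degenerate case — confirming that identity edges satisfy the acyclic-Kan condition defining cocartesian morphisms — since the present statement is essentially the conjunction of Propositions \ref{mapping-cylinders-1} and \ref{mapping-cylinders-2} once one observes that $\Delta^1$ contributes no other morphisms to worry about.
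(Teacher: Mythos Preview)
Your proposal is correct and follows essentially the same route as the paper: invoke Proposition~\ref{mapping-cylinders-1} for the inner fibration condition and Proposition~\ref{mapping-cylinders-2} for the nondegenerate edge, with the degenerate edges handled trivially. The paper's version is just more terse, dispatching the degenerate case in a clause (``we only need to provide cocartesian lifts over nonidentity cells of $\Delta^1$'') where you spell out why identities lift cocartesianly.
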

\begin{proof}
We have just shown in Proposition \ref{mapping-cylinders-1} that $p$ is an inner fibration; there remains the question of cocartesian lifts. We only need to provide cocartesian lifts over nonidentity cells of $\Delta^1$; and Proposition \ref{mapping-cylinders-2} does this.
\end{proof}

\section{Generalities on algebraic theories}
\label{theories-generalities}

In this section we generalise the notion of an algebraic theory, to the setting of quasicategories. We base our account mostly on that given by Borceux \cite{Borceux-II} for the classical case.

We will have need of the quasicategory $\Spaces$. By this we mean the quasicategory obtained as the coherent nerve of the simplicial category of Kan complexes (together with their mapping complexes), as is used in \cite{HTT}*{1.2.16.1}. However, there are other natural constructions of equivalent quasicategories, just as there are several natural model categories Quillen equivalent to the standard model structure on topological spaces. Of course it will not matter which is used.

We regard $\Set$ as being the full subquasicategory on the discrete spaces; this is evidently equivalent to the standard notion. We shall use the adjective \emph{discrete} frequently to describe phenomena which occur over $\Set$ rather than the whole of $\Spaces$.

We should also say, once and for all, what we mean by this:
\begin{defn}
\label{full-subquasicategory}
A \emph{full subquasicategory} of a quasicategory is a maximal subquasicategory with its set of $0$-cells. We shall also call this \emph{$1$-full}; an $n$-full subquasicategory is a maximal subquasicategory with that particular set of $k$-cells for all $k<n$.
\end{defn}

\subsection{Theories and models}

Given a quasicategory $\cC$ and a subset $S\subset\cC_0$, we say that \emph{$\cC$ is generated by finite products from $S$} if every object of $\cC$ is equivalent to a finite product of elements in $S$.

If $\cC$ is generated by finite products from $\{g\}$, we refer to $g$ as a \emph{finite product generator} of $\cC$.

\begin{defn}
\label{theory-defn}
An \emph{algebraic theory} is a quasicategory $T$ which admits finite products and which has a finite product generator $g$.
\end{defn}

\begin{defn}
A morphism of algebraic theories $T\rightarrow S$ is a product-preserving functor $T\rightarrow S$ which sends generators to generators.
\end{defn}
By product-preserving, I mean ``taking finite product diagrams to finite product diagrams''; I suppose this is the standard meaning, but I am departing from tradition in making this plain.

It follows immediately from the definition that a morphism of theories $T\rightarrow S$ is essentially surjective.

We can define a quasicategory $\Theories$ of algebraic theories to be the $2$-full subquasicategory of $\Cinfty$ on the theories and morphisms of theories.

Theories are not much use without introducing a notion of model:
\begin{defn}
A \emph{model} (in spaces) of a theory $T$ is a product-preserving functor $\phi:T\rightarrow\Spaces$. The quasicategory of models of $\cC$ is the full subquasicategory $\Mod(T)$ of $\Map(T,\Spaces)$ on the product-preserving objects.

In exactly the same way, if $\cU$ is any category with all finite products, we define the quasicategory of \emph{models of $T$ in $\cU$} to be the quasicategory $\Mod(T,\cU)$ of product-preserving functors $T\rightarrow\cU$ .
\end{defn}

By abuse of notation, we often write $1$ for a choice of generator of a theory. Given a model $\phi$, we sometimes say that $\phi(1)$ is the \emph{underlying object} of the model, and that giving such a functor $\phi$ is equipping the space $\phi(1)$ with a \emph{$T$-structure}.

We now observe that this does indeed generalise Lawvere's original notion (which is discussed in \cite{Lawvere}). To do this, we introduce terminology for this special case:
\begin{defn}
An algebraic theory $T$ is \emph{discrete} if $T$ is, in fact, an ordinary category.

If a theory $T$ is discrete, we say that a model $M$ of $T$ is \emph{discrete} if it is valued in sets (regarded as a subcategory of spaces).
\end{defn}

The rationale is that, since the subquasicategory of simplicial sets on the discrete objects is equivalent to the ordinary category of sets, if $T$ is an ordinary category, then a functor (of quasicategories) $T\rightarrow\Set\subset\Spaces$ is just an ordinary functor.

So an algebraic theory in the sense of Lawvere, which is an ordinary category $T$ generated by finite products of a single object, is the same thing as a discrete algebraic theory in the sense defined here. Moreover, a model of an algebraic theory in the sense of Lawvere is the same thing as a discrete model of the corresponding discrete algebraic theory.

A morphism $f:T\rightarrow U$ of theories induces a functor $f^*:\Mod(T)\rightarrow\Mod(U)$ by precomposition.

% \subsection{Models in quasicategories}
\label{models-in-quasicategories}

Once could study models of a theory $T$ in quasicategories simply by using $\Mod(T,\Cinfty)$ as defined above. Usually, we will employ an equivalent but more easily manipulated definition:
\begin{defn}
A \emph{model of $T$ in quasicategories} is a cocartesian fibration over $T$ that is classified by a product-preserving functor. We call such cocartesian fibrations \emph{productive}.
\end{defn}

We also need to deal with maps between models:
\begin{defn}
We define the quasicategory $\Mod^\fib(T)$ of models in quasicategories of a theory $T$. This is the subquasicategory of the overcategory $(\Cinfty)_{/T}$, consisting of all those cells whose vertices are productive cocartesian fibrations over $T$, and whose edges are product-preserving functors taking cartesian morphisms to cartesian morphisms.
\end{defn}

A morphism $f:T\rightarrow U$ of theories induces a functor $f^*:\Mod^\fib(U)\rightarrow\Mod^\fib(T)$ induced by pulling back the cocartesian fibrations.  By the results of \cite{HTT}*{Section 2.4.2 and Chapter 3}, there is an equivalence between $\Mod^\fib(T)$ and the previously defined notion $\Mod(T,\Cinfty)$.

\subsection{Multisorted theories}

Occasionally, one has need to consider axioms for algebraic structures with several underlying objects, and maps between them.

Accordingly, we define:
\begin{defn}
 Let $X$ be a set. A \emph{multisorted theory with sorts indexed by $X$} consists of a quasicategory $T$ and a map $X\rightarrow T_0$, such that $T$ is generated by finite products from the image of $X$.

 We refer to multisorted theories with sorts indexed by $\{1,\ldots,n\}$ as being \emph{$n$-sorted theories}.

 A \emph{model} in $\cC$ of a multisorted theory $T$ with sorts indexed by $X$ is a product-preserving functor from $T$ to $\cC$.
\end{defn}

By way of trivial example, if $T_1,\ldots,T_n$ are theories with generators $g_1,\ldots,g_n$, then the product $T_1\times\cdots\times T_n$ is the $n$-sorted theory (with generators consisting of the elements of the form $(1,\ldots,1,g_i,1,\ldots,1)$) whose models are tuples consisting of a model of each of the theories $\{T_i\}$:
\[\Mod(T)=\Mod(T_1)\times\cdots\times\Mod(T_n).\]

In the main, the basic results for single-sorted theories carry over to $n$-sorted theories as one would expect, and we shall not write them out.

We can regard all the categories of multisorted theories as forming subcategories of the quasicategory $\Cinftypp$ of quasicategories with all finite products, product-preserving functors, and homotopies between them.  In particular, $\Mod(T;\cU)=\Cinftypp(T,\cU)$.

\begin{prop}
\label{models-has-an-adjoint}
Fix a quasicategory $\cU$ with finite products. The quasifunctor $\Funpp(-,\cU):(\Cinftypp)^\op\rightarrow\Cinfty$, which assigns to each theory its category of models, has a left adjoint.
\end{prop}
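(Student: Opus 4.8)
The plan is to produce the adjoint explicitly rather than to invoke an abstract adjoint functor theorem. For a quasicategory $\cA\in\Cinfty$, I would set $L(\cA)=\Fun(\cA,\cU)$. Since $\cU$ admits finite products and limits in functor quasicategories are computed pointwise \cite{HTT}*{5.1.2.2}, the quasicategory $\Fun(\cA,\cU)$ admits finite products, computed pointwise; hence $L(\cA)$ is an object of $\Cinftypp$, and restriction along a functor $\cA\to\cA'$ gives a product-preserving functor $\Fun(\cA',\cU)\to\Fun(\cA,\cU)$, so that $L$ defines (at least on objects and morphisms) a functor $\Cinfty\rightarrow(\Cinftypp)^\op$. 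Rather than build $L$ by hand as a coherent functor, I would apply the standard criterion for the existence of adjoints \cite{HTT}*{Section 5.2}: the functor $R=\Funpp(-,\cU)$ admits a left adjoint precisely when, for every $\cA$, the space-valued functor $T\mapsto\Map_\Cinfty(\cA,\Funpp(T,\cU))$ on $(\Cinftypp)^\op$ is corepresentable. I claim it is corepresented by $\Fun(\cA,\cU)$, with unit the evaluation map $\cA\rightarrow\Funpp(\Fun(\cA,\cU),\cU)$ sending an object $a$ to the functor $\mathrm{ev}_a$, which preserves products because products in $\Fun(\cA,\cU)$ are pointwise.

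The heart of the argument is the chain of natural equivalences obtained from the cartesian closure of simplicial sets in the Joyal model structure:
\[\Map_\Cinfty(T,\Fun(\cA,\cU))\isom\Map_\Cinfty(T\times\cA,\cU)\isom\Map_\Cinfty(\cA,\Fun(T,\cU)).\]
On the left I restrict to the subspace of product-preserving functors $T\rightarrow\Fun(\cA,\cU)$, that is, to $\Map_{\Cinftypp}(T,\Fun(\cA,\cU))=\Map_{(\Cinftypp)^\op}(\Fun(\cA,\cU),T)$; on the right I restrict to those functors $\cA\rightarrow\Fun(T,\cU)$ landing in $\Funpp(T,\cU)$, that is, to $\Map_\Cinfty(\cA,\Funpp(T,\cU))$. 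I must then check that these two subspaces correspond under the equivalence. Since products in $\Fun(\cA,\cU)$ are detected pointwise by the evaluations $\mathrm{ev}_a$, a functor $T\rightarrow\Fun(\cA,\cU)$ is product-preserving if and only if its composite with $\mathrm{ev}_a$ is a product-preserving functor $T\rightarrow\cU$ for every $a\in\cA$; and this composite is exactly the functor $T\rightarrow\cU$ obtained by evaluating the curried functor $\cA\rightarrow\Fun(T,\cU)$ at $a$. Thus both conditions express the same requirement, and the restricted mapping spaces agree naturally in $T$.

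The main obstacle is precisely this matching of the two product-preservation conditions across the exponential adjunction, together with ensuring that the identification is sufficiently natural in $T$ to yield genuine corepresentability rather than a mere objectwise equivalence of spaces. The pointwise computation of products in $\Fun(\cA,\cU)$ is what makes the two conditions literally coincide, while naturality is inherited from the naturality of the currying equivalence. Once corepresentability is established for each $\cA$, the cited criterion assembles the corepresenting objects $\Fun(\cA,\cU)$ and their universal evaluation maps into the desired left adjoint.
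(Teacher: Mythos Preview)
Your proof is correct, and in fact the paper explicitly records your argument as its ``motivating argument'' before choosing a different route: it notes that
\[\Funpp(T,\Fun(\cC,\cU))\isom\Fun(\cC,\Funpp(T,\cU))\]
since products are computed pointwise, remarks that this is ``exactly the equivalence on homspaces required for an adjunction,'' and calls this approach ``considerably more enlightening.''  You have simply carried that sketch through, using the corepresentability criterion from \cite{HTT}*{Section 5.2} and checking carefully that the two product-preservation conditions match under currying.

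The paper, by contrast, proceeds ``by exhibiting an adjunction in detail'': it builds by hand a simplicial set $\cD$ over $\Delta^1$ whose fibres are $\Cinfty$ and $(\Cinftypp)^\op$, verifies that the projection is an inner fibration, and then produces explicit cartesian and cocartesian lifts over the nontrivial edge (using the evaluation maps $C\times\Fun(C,\cU)\to\cU$ and $\Funpp(A,\cU)\times A\to\cU$).  This buys an explicit model of the adjunction as a bicartesian fibration, at the cost of substantial combinatorial checking.  Your approach is shorter and relies only on the currying equivalence and pointwise detection of products; the paper's approach makes the unit and counit visible as concrete edges in a concrete simplicial set.  Either is adequate for the only use made of the proposition later (Proposition~\ref{models-colimits-to-limits}, that $\Mod(-,\cU)$ takes colimits to limits).
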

\begin{proof}
Our proof proceeds by exhibiting an adjunction in detail. However, I consider that this motivating argument is considerably more enlightening. The idea is that
\[\Funpp(T,\Fun(\cC,\cU))\isom\Fun(\cC,\Funpp(T,\cU))\]
since products are computed pointwise. This means that
\[\Theories^\op(\Fun(\cC,\cU),T)\isom\Cinfty(\cC,\Mod(T;\cU)).\]
which is exactly the equivalence on homspaces required for an adjunction.

% The left adjoint is the functor $\Fun(-,\cU)^\op:\Cinfty\rightarrow(\Cinftypp)^\op$; we build an adjunction by hand. It seems the methods employed here could be useful in rather greater generality.

By \cite{HTT}*{Section 5.2}, an adjunction is represented by a cartesian and cocartesian fibration over $\Delta^1$.

Now the maps $\Delta^n\rightarrow\Delta^1$ are described by the preimages of the vertices: they are equivalent to decompositions $\Delta^n=\Delta^i\star\Delta^j$ where $i,j\geq-1$ and $i+j=n$. So we can define a simplicial set $\cD$ over $\Delta^1$ by giving a compatible set of homsets $\sSet_{\Delta^1}(X\star Y,\cD)$.

We define $\cD$ by letting $\sSet_{\Delta^1}(X\star Y,\cD)$ consist of maps $c:X\rightarrow\Cinfty$ and $a:Y^\op\rightarrow\Cinftypp$, together with a map $f:(X\times Y^\op)\star 1\rightarrow\Cinfty$, which are equipped with a natural equivalence with $(c\times a):X\times Y^\op\rightarrow\Cinfty$ when restricted to $X\times Y^\op$, and which send the extra point $1$ to $U$.

Writing $\pi$ for the projection $\cD\rightarrow\Delta^1$, we easily see that $\pi^{-1}(0)=\Cinfty$ and $\pi^{-1}(1)=\Cinftypp$. We must show that $\pi$ is a bicartesian fibration, to show that it represents an adjunction.

We split this into two parts. Firstly we show that $\pi$ has the inner Kan lifting property:
\begin{claim}
The morphism $\pi$ is an inner fibration.
\end{claim}
\begin{innerproof}{of claim}
For greater flexibility, we index our simplices by finite linearly ordered sets in this argument.

So, given finite linearly ordered sets $I$ and $J$, and $k$ some internal element of the concatenation $I\sqcup J$ , we must provide a lifting
\begin{displaymath}
\xymatrix{\Lambda^{I\sqcup J}_k\ar[d]\ar[r]&\Delta^{I\sqcup J}\ar[d]\dar[dl]\\
  \cD\ar[r]&\Delta^1.}
\end{displaymath}

If either $I$ or $J$ have no elements, this clearly reduces to the statement that the preimages $\Cinfty$ and $(\Cinftypp)^\op$ of the endpoints of $\Delta^1$ are both quasicategories. 

Supposing otherwise, we assume without loss of generality that $k\in I$ (the case $k\in J$ is symmetrical). Observing that
\[\Lambda^{I\sqcup J}_k = (\Lambda^I_k\star\Delta^J)\cup_{(\Lambda^I_k\star\partial\Delta^J)}(\Delta^I\star\partial\Delta^J),\]
we get that a morphism $f:\Lambda^{I\sqcup J}_k\rightarrow\cD$ consists of maps $c:\Delta^I_k\rightarrow\Cinfty$, $a:(\Delta^J)^\op\rightarrow\Cinftypp$, and a map
\[\left((\Lambda^I_k\times\Delta^J)\cup_{(\Lambda^I\times\partial\Delta^J)}(\Delta^I\times\partial\Delta^J)\right)\star 1\longrightarrow\Cinfty.\]
Using \cite{JoyalTierneyBook}*{3.2.2}, we see that the inclusion $(\Lambda^I_k\times\Delta^J)\cup(\Delta^I\times\partial\Delta^J)\rightarrow\Delta^I\times\Delta^J$ is anodyne: it's a composite of horn extensions. Tracing the argument carefully (using that $k$ is not the initial object of $I$) we see that no horn extensions of shape $\Lambda^r_0\rightarrow\Delta^r$ are required, even in the case where $k$ is terminal in $I$. Since we are doing that extension working over $U$, only inner horn extensions are needed.
\begin{comment}
To warm up, for $\Lambda^{1+1+0}_1$, the first face gives us $c:C_0\rightarrow C_1$, and the second face gives us $a=A_0$ and $f_{1,0}:C_1\times A_0\rightarrow U$. We can use an extension of shape $\Lambda^2_1\rightarrow\Delta^2$ to provide a full $2$-cell in $\cD$:
\begin{displaymath}
\xymatrix{C_0\times A_0\ar[rr]^{c\times A_0}\dar[dr]&&C_1\times A_0\ar[dl]^{f_{1,0}}\\
  &U&}
\end{displaymath}
The case of $\Lambda^{0+1+1}_1$ is of course symmetrical to this.
\end{comment}
\end{innerproof}

And now secondly we show the existence of cartesian and cocartesian lifts. Since there is only one nontrivial 1-cell $01\in\Delta^1_1$, we must merely show:
\begin{claim}
For any object $A\in(\Cinftypp)_0$, there is a cartesian morphism of $\cD$ over $01$ with target $A$; for any object $C\in(\Cinfty)_0$, there is a cocartesian morphism of $\cD$ over $01$ with source $C$.
\end{claim}
\begin{innerproof}{of claim}
Given $C\in(\Cinfty)_0$, we must give a cocartesian 1-cell in $\cD$ from it which lies over the nondegenerate 1-cell of $\Delta^1$; we take the cell consisting of $c=C\in(\Cinfty)_0$, $a=\Fun(C,\cU)\in(\Cinftypp)_0$, and $f\in(\Cinfty)_1$ representing the evaluation map $C\times\Fun(C,\cU)\rightarrow \cU$.

Similarly, given $A\in(\Cinftypp)_0$, the cartesian 1-cell in $\cD$ consists of $c=\Funpp(A,\cU)\in(\Cinfty)_0$, $a=A\in(\Cinftypp)_0$, and $f\in(\Cinfty)_1$ representing the evaluation map $\Funpp(A,\cU)\times A\rightarrow \cU$.

The proofs that these are indeed cocartesian and cartesian respectively are very similar. We aim to show that the morphism
\[\cD_{(c,a,f)/}\longrightarrow\cD_{c/}\timeso{\cD}\Cinftypp\]
is acyclic Kan. This unravels to the requirement that we can extend two compatible maps $\Delta^n\rightarrow{\Cinfty}_{/\cU}$ and $\partial\Delta^n\rightarrow{\Cinfty}_{/(\cC\times\Fun(\cC,\cU)\rightarrow \cU)}$ to a map $\Delta^n\rightarrow{\Cinfty}_{/(\cC\times\Fun(\cC,\cU)\rightarrow \cU)}$, with a requirement that all the maps we supply are product-preserving.

That we can do so follows immediately from the adjunction (in the quasicategorical sense) of the functors $(\cC\times-)$ and $\Fun(\cC,-)$ for $n\geq 1$, and is a quick check in the case $n=0$.
\end{innerproof}
This completes the proof.
\end{proof}

As an immediate corollary, we get
\begin{prop}
\label{models-colimits-to-limits}
The ``models'' functor $\Mod(-,\cU)$ takes colimits of theories to limits of their quasicategories of models.
\end{prop}
We shall show in Proposition \ref{theories-cocomplete} that colimits of theories exist; and thus this will be a helpful tool.

\subsection{The initial theory}

\begin{prop}
\label{finop-is-trivial}
Let $\cU$ be any quasicategory with finite products. Then
\[\Funpp(\Finop,\cU) \isom \cU;\]
in other words, a product-preserving functor from $\Finop$ merely picks out an object of $\cU$.
\end{prop}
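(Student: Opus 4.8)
The plan is to recognise $\Finop$ as the free finite-product quasicategory on a single object, and to realise the asserted equivalence as restriction to the generator, with inverse given by right Kan extension. First I would fix the functor $i\colon\Delta^0\rightarrow\Finop$ selecting the generator $1$ (the one-point set), and consider the restriction $i^*\colon\Fun(\Finop,\cU)\rightarrow\Fun(\Delta^0,\cU)\isom\cU$, sending $F\mapsto F(1)$; the claim is that this restricts to an equivalence on product-preserving functors. The key structural observation is that products in $\Finop$ are coproducts in $\Fin$, i.e.\ disjoint unions, so that every object $n$ is canonically the $n$-fold product $1\times\cdots\times 1$, the projections being the opposites of the coproduct inclusions $1\rightarrow n$ in $\Fin$.

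Next I would construct the inverse. Since $\cU$ has finite products, the right Kan extension $i_*\colon\cU\rightarrow\Fun(\Finop,\cU)$ along $i$ exists \cite{HTT}*{4.3.2}. The pointwise formula computes $(i_*X)(n)$ as a limit of $X$ over the comma quasicategory $\Delta^0\timeso{\Finop}(\Finop)_{n/}$, whose objects are the morphisms $n\rightarrow 1$ in $\Finop$; there are exactly $n$ of these (the projections), and since $\Finop(1,1)$ is trivial the comma quasicategory is discrete on these $n$ points. Hence $(i_*X)(n)\isom X^n$, and the comparison maps show that $i_*X$ is product-preserving (for instance $(i_*X)(a+b)\isom X^{a+b}\isom X^a\times X^b$ is the canonical map), so $i_*$ factors through $\Funpp(\Finop,\cU)$.

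Finally I would check the two composites. Because $\Map_{\Finop}(1,1)\isom N(\Fin(1,1))$ is a point, $i$ is fully faithful, so by the general theory of Kan extensions \cite{HTT}*{4.3.2.15} the counit $i^*i_*\Rightarrow\id_\cU$ is an equivalence. Conversely, for product-preserving $F$ the unit $F\rightarrow i_*i^*F$ has $n$th component the comparison map $F(n)\rightarrow F(1)^n$ induced by the $n$ projections, which is an equivalence exactly because $F$ preserves the product $n=1\times\cdots\times 1$. Thus $i^*$ and $i_*$ restrict to inverse equivalences $\Funpp(\Finop,\cU)\isom\cU$. The main obstacle is the two identifications at the heart of the argument: that the pointwise right Kan extension really is the $n$-fold product (which reduces to the comma quasicategory being discrete, itself a consequence of $1$ having no nonidentity endomorphisms in $\Finop$), and that the abstract unit map agrees with the concrete product-comparison map; once these are in place the remainder is formal adjunction bookkeeping, the hypothesis that $\cU$ has finite products being exactly what is needed for $i_*$ to exist in the first place.
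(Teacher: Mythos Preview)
Your proof is correct and follows essentially the same approach as the paper: both use restriction to the generator $1$ with inverse given by right Kan extension, and both compute the relevant comma quasicategory as the discrete set of projections so that the Kan extension is the power functor. The only difference is cosmetic: you verify the unit and counit of the $(i^*,i_*)$ adjunction explicitly, whereas the paper concludes by observing that every product-preserving extension is automatically a right Kan extension and hence the fibres of restriction are contractible.
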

\begin{proof}
The inclusion $*\rightarrow\Finop$ induces a functor $I:\Funpp(\Finop,\cU)\rightarrow\cU$. We claim that this is an equivalence.

Moreover, we claim that an inverse equivalence is given by right Kan extension. We must first check that right Kan extension does indeed define a functor $\cU\rightarrow\Funpp(\Finop,\cU)$.

The right Kan extension $F$ of $u:*\rightarrow\cU$ is given by
\[F(X)=\lim\left(\left((\Finop)_{X/}\timeso{\Finop}*\right)\rightarrow *\stackrel{u}{\rightarrow}\cU\right).\]
Now the category $(\Finop)_{X/}\timeso{\Finop}*$ is the discrete category on the set of maps $*\rightarrow X$, so this limit is the $X$-fold product of $u$. Hence this Kan extension is indeed product-preserving.

Morever, by this analysis, any extension of $*\rightarrow\cU$ to a product-preserving functor $\Finop\rightarrow\cU$ is a right Kan extension. But there is a contractible space of such extensions. Hence the functor $I$ has all its preimages contractible, and is thus an equivalence.
\end{proof}

\subsection{Properties of quasicategories of models}

These categories of models have good properties:
\begin{prop}
 \label{models-limits}
 If $\cC$ is a theory, then the quasicategory $\Mod(T)$ is complete, with limits computed pointwise.
\end{prop}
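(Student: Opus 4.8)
The plan is to realise $\Mod(T)$ as a full subquasicategory of the functor quasicategory $\Map(T,\Spaces)$ which is closed under limits, and then to invoke the standard fact that a full subquasicategory of a complete quasicategory that is closed under limits is itself complete, with the inclusion creating (hence ``pointwise'' computing) those limits. So the real content is to show that a pointwise limit of product-preserving functors is again product-preserving.

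First I would recall that $\Spaces$ is complete (this is proved in \cite{HTT}), and that consequently $\Map(T,\Spaces)$ is complete with limits computed \emph{pointwise}: for a diagram $k\mapsto\phi_k$ indexed by a small quasicategory $K$, its limit $\phi$ satisfies $\phi(X)\isom\lim_{k\in K}\phi_k(X)$ for each object $X$ of $T$ (again a result of \cite{HTT} on functor quasicategories into a complete target). With this in hand the proposition reduces entirely to the claim that such a pointwise $\phi$ preserves finite products whenever every $\phi_k$ does.

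So suppose $k\mapsto\phi_k$ is a diagram $K\to\Mod(T)\subseteq\Map(T,\Spaces)$ and let $\phi=\lim_k\phi_k$ be the pointwise limit. Given a finite product $X_1\times\cdots\times X_n$ in $T$, each $\phi_k$ carries it to $\phi_k(X_1)\times\cdots\times\phi_k(X_n)$, so pointwise-ness gives
\[\phi(X_1\times\cdots\times X_n)\isom\lim_k\bigl(\phi_k(X_1)\times\cdots\times\phi_k(X_n)\bigr).\]
A finite product is itself a limit over a finite discrete indexing quasicategory $D$, so I would apply the interchange-of-limits isomorphism of Proposition \ref{interchange-of-limits} (with the discrete factor $D$ against the factor $K$) to pull the product outside the limit over $K$, obtaining
\[\lim_k\bigl(\phi_k(X_1)\times\cdots\times\phi_k(X_n)\bigr)\isom\bigl(\lim_k\phi_k(X_1)\bigr)\times\cdots\times\bigl(\lim_k\phi_k(X_n)\bigr)\isom\phi(X_1)\times\cdots\times\phi(X_n).\]
Hence $\phi$ preserves finite products, so $\phi$ lies in $\Mod(T)$.

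This exhibits $\Mod(T)$ as closed under limits inside $\Map(T,\Spaces)$; consequently $\Mod(T)$ is complete and its limits are exactly those of $\Map(T,\Spaces)$, namely pointwise limits in $\Spaces$. The only delicate point is the interchange step, and that is supplied verbatim by Proposition \ref{interchange-of-limits}; the remaining ingredients — completeness of $\Spaces$ and of functor quasicategories — are standard and cited to \cite{HTT}.
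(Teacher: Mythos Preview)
Your proposal is correct and follows essentially the same approach as the paper: both use completeness of $\Fun(T,\Spaces)$ with pointwise limits, then invoke Proposition \ref{interchange-of-limits} to show that the pointwise limit of product-preserving functors is again product-preserving, so $\Mod(T)$ is closed under limits. Your version simply spells out the interchange step in more detail than the paper's two-line proof.
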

\begin{proof}
 The quasicategory $\Fun(T,\Spaces)$ is complete, with limits computed pointwise. By Proposition \ref{interchange-of-limits} showing that limits can be interchanged, the limit of a diagram from $\Mod(T)$ is again in $\Mod(T)$, and is thus the limit in $\Mod(T)$.
\end{proof}

\begin{prop}
 \label{models-functor-limits}
 Given a morphism of theories $f:T\rightarrow U$, the pullback functor $f^*:\Mod(U)\rightarrow\Mod(T)$ preserves limits.
\end{prop}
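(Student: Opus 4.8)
The plan is to reduce the statement to the fact that limits in functor quasicategories valued in $\Spaces$ are computed and detected pointwise, whence precomposition with any functor automatically preserves them.

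First I would recall, using Proposition \ref{models-limits}, that limits in $\Mod(U)$ agree with limits taken in $\Fun(U,\Spaces)$ computed pointwise, and likewise that limits in $\Mod(T)$ agree with pointwise limits in $\Fun(T,\Spaces)$; in other words, the inclusions $\Mod(U)\hookrightarrow\Fun(U,\Spaces)$ and $\Mod(T)\hookrightarrow\Fun(T,\Spaces)$ create limits. Note also that $f^*$ does indeed land in $\Mod(T)$: if $\phi$ preserves products then so does $\phi\circ f$, because $f$ is product-preserving. Consequently it suffices to show that the precomposition functor $f^*:\Fun(U,\Spaces)\rightarrow\Fun(T,\Spaces)$ preserves limits.

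The key observation is that for each object $t\in T$ the evaluation functor $\mathrm{ev}_t:\Fun(T,\Spaces)\rightarrow\Spaces$ factors as $\mathrm{ev}_{f(t)}\circ f^*$, where $\mathrm{ev}_{f(t)}:\Fun(U,\Spaces)\rightarrow\Spaces$ denotes evaluation at $f(t)$. By \cite{HTT}*{Proposition 5.1.2.2}, a cone in such a functor quasicategory is a limit cone exactly when its image under every evaluation functor is a limit cone. Hence, given a limit cone in $\Fun(U,\Spaces)$, its image under $f^*$ becomes, after evaluation at any $t\in T$, the image of the original cone under $\mathrm{ev}_{f(t)}$, which is a limit cone; therefore $f^*$ carries limit cones to limit cones.

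The argument is essentially formal once the pointwise computation of limits is in hand, so I do not anticipate a serious obstacle. The only point demanding care is the appeal to Lurie's pointwise description of limits in functor quasicategories: in the quasicategorical setting this is a genuine theorem rather than a matter of definition, and it is exactly what makes the otherwise routine categorical argument go through.
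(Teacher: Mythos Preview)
Your proof is correct and follows essentially the same route as the paper: reduce to the ambient functor quasicategories, use that limits there are computed pointwise (so precomposition preserves them), and conclude via the inclusions $\Mod(-)\hookrightarrow\Fun(-,\Spaces)$ creating limits. You spell out the pointwise argument more carefully than the paper does, but the strategy is identical.
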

\begin{proof}
 The pullback functor $\Map(U,\Spaces)\rightarrow\Map(T,\Spaces)$ evidently preserves limits, since they're defined pointwise. The result follows, since limits in $\Mod(T)$ are just limits in $\Map(T,\Spaces)$ (and the same for $U$), and this pullback functor restricts to our desired one.
\end{proof}

We recall from \cite{HTT}*{Section 5.3.1} the notion of a filtered simplicial set. This is equivalent for having liftings for all maps $A\rightarrow A\star 1$, where $A$ is the nerve of a finite poset. A filtered colimit is then just a colimit on a filtered diagram.

\begin{prop}
 The category $\Mod(T)$ has filtered colimits, which are computed pointwise.
\end{prop}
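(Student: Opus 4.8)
The plan is to argue exactly as in the proof of Proposition \ref{models-limits}, exploiting that $\Mod(T)$ is a full subquasicategory of $\Fun(T,\Spaces)$, but now for colimits in place of limits. Recall first that $\Fun(T,\Spaces)$ is cocomplete with colimits computed pointwise (\cite{HTT}). Given a filtered diagram $\phi:K\rightarrow\Mod(T)$, with vertices $\phi_k$, I form its colimit $\psi$ pointwise in $\Fun(T,\Spaces)$, so that $\psi(X)\isom\colim_{k\in K}\phi_k(X)$ for each object $X$ of $T$. The entire content of the proposition is then to verify that $\psi$ is again product-preserving: once this is known, $\psi$ lies in the full subquasicategory $\Mod(T)$, and fullness guarantees that the colimiting cone in $\Fun(T,\Spaces)$ restricts to a colimiting cone in $\Mod(T)$, so that the colimit exists in $\Mod(T)$ and is computed pointwise as asserted.

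To check product-preservation, let $X_1\times\cdots\times X_n$ be a finite product in $T$. Each $\phi_k$ is product-preserving, so $\phi_k(X_1\times\cdots\times X_n)\isom\phi_k(X_1)\times\cdots\times\phi_k(X_n)$, and hence, computing $\psi$ pointwise,
\[\psi(X_1\times\cdots\times X_n)\isom\colim_{k\in K}\bigl(\phi_k(X_1)\times\cdots\times\phi_k(X_n)\bigr).\]
On the other hand, Proposition \ref{products-and-colimits} identifies the product of the individual colimits with a colimit over the product diagram $K^n$:
\[\psi(X_1)\times\cdots\times\psi(X_n)\isom\colim_{(k_1,\ldots,k_n)\in K^n}\bigl(\phi_{k_1}(X_1)\times\cdots\times\phi_{k_n}(X_n)\bigr).\]
So the required equivalence reduces to comparing the colimit taken along the diagonal $K\rightarrow K^n$ with the colimit over all of $K^n$.

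This comparison is precisely where the filtered hypothesis enters, and I expect it to be the main (indeed the only substantive) point. For filtered $K$ these two colimits agree — equivalently, filtered colimits commute with finite products in $\Spaces$ (\cite{HTT}*{Section 5.3.3}) — and consequently $\psi$ preserves the product $X_1\times\cdots\times X_n$, completing the argument. For a general, non-filtered diagram this step fails: the pointwise colimit of models need not be a model, since products need not commute with a colimit taken along the diagonal, which is exactly why the result is stated only for filtered colimits.
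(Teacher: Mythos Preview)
Your proposal is correct and follows essentially the same route as the paper: compute the colimit pointwise in $\Fun(T,\Spaces)$ and then invoke that filtered colimits commute with finite products in $\Spaces$ (the paper cites \cite{HTT}*{Prop 5.3.3.3} directly, while you unpack this via Proposition~\ref{products-and-colimits} and the diagonal $K\rightarrow K^n$). The only difference is one of detail, not of strategy.
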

\begin{proof}
 This is the same argument as \ref{models-limits}, using \cite{HTT}*{Prop 5.3.3.3}, saying that filtered colimits commute with limits.
\end{proof}

Now we wish to study push-forwards of models, showing that taking left Kan extensions provides a left adjoint to the pullback functor. This will require some work; we subdivide it into two major parts.

We show that this is plausible:
\begin{prop}
\label{left-kan-preserves-prods}
Given a morphism $f:T\rightarrow U$ of theories, and a model $G:T\rightarrow\Spaces$, the left Kan extension of $G$ along $f$ preserves products and is thus a model of $U$.
\end{prop}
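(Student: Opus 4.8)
The plan is to combine the pointwise formula for the left Kan extension with the fact (Proposition \ref{products-and-colimits}) that colimits in $\Spaces$ commute with finite products, thereby reducing the whole statement to a cofinality assertion about comma $\infty$-categories. Write $f_!G$ for the left Kan extension. Since $\Spaces$ admits all small colimits, by \cite{HTT}*{4.3.2--4.3.3} it exists and is computed pointwise as
\[(f_!G)(u) \isom \colim\bigl(T\times_U U_{/u}\to T\xrightarrow{G}\Spaces\bigr),\]
the colimit of $G$ over the comma $\infty$-category $f/u := T\times_U U_{/u}$. As a functor preserves finite products exactly when it preserves the terminal object and binary products, it suffices to treat these two cases.

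First I would dispatch the terminal object $e_U\in U$. Since $e_U$ is terminal, $U_{/e_U}\simeq U$ and hence $f/e_U\simeq T$, so $(f_!G)(e_U)\isom\colim_T G$. The inclusion $\{e_T\}\hookrightarrow T$ of the terminal object of $T$ is cofinal, since for every $t$ the relevant comma space is $\Map_T(t,e_T)\simeq *$; so by \cite{HTT}*{4.1.3.1} we get $\colim_T G\simeq G(e_T)\simeq *$, using that $G$ preserves the empty product. Thus $f_!G$ preserves the terminal object.

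For binary products I would fix $u_1,u_2\in U$ and, after choosing product functors on $T$ and $U$, form the functor
\[P:(f/u_1)\times(f/u_2)\longrightarrow f/(u_1\times u_2),\qquad \bigl((t_1,\psi_1),(t_2,\psi_2)\bigr)\longmapsto (t_1\times t_2,\ \psi_1\times\psi_2),\]
which is well defined because $f$ is product-preserving. Composing the projection $f/(u_1\times u_2)\to T\xrightarrow{G}\Spaces$ with $P$ gives $((t_1,\psi_1),(t_2,\psi_2))\mapsto G(t_1\times t_2)$. Using that $G$ is product-preserving and then Proposition \ref{products-and-colimits}, the colimit of this composite is
\[\colim_{(f/u_1)\times(f/u_2)} G(t_1)\times G(t_2)\ \simeq\ \Bigl(\colim_{f/u_1}G\Bigr)\times\Bigl(\colim_{f/u_2}G\Bigr)\ =\ (f_!G)(u_1)\times(f_!G)(u_2).\]
Hence it remains only to show that $P$ is cofinal, for then the left-hand colimit agrees with $\colim_{f/(u_1\times u_2)}G=(f_!G)(u_1\times u_2)$, yielding the desired equivalence, and one checks that it is the canonical comparison map induced by the two projections.

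The cofinality of $P$ is the heart of the argument and the step I expect to be the main obstacle. I would establish it via Joyal's form of Quillen's Theorem A \cite{HTT}*{4.1.3.1}: it suffices that for each object $j=(t,(\phi_1,\phi_2))$ of $f/(u_1\times u_2)$ the comma $\infty$-category $\bigl((f/u_1)\times(f/u_2)\bigr)\times_{f/(u_1\times u_2)}(f/(u_1\times u_2))_{j/}$ is weakly contractible. Unwinding the definitions, an object of this category is a pair $((t_1,\psi_1),(t_2,\psi_2))$ together with a morphism $j\to P((t_1,\psi_1),(t_2,\psi_2))$; and because $T$ has products, a morphism $t\to t_1\times t_2$ over $u_1\times u_2$ splits uniquely into a morphism $(t,\phi_1)\to(t_1,\psi_1)$ in $f/u_1$ and a morphism $(t,\phi_2)\to(t_2,\psi_2)$ in $f/u_2$. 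This identifies the comma category with the product of undercategories $(f/u_1)_{(t,\phi_1)/}\times(f/u_2)_{(t,\phi_2)/}$, each of which has an initial object (its identity) and is therefore weakly contractible; a product of weakly contractible simplicial sets is weakly contractible. Making the splitting of these mapping spaces precise, and checking it is natural enough to genuinely identify the fibre product with a product of undercategories, is where the real work lies.
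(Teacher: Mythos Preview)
Your proposal is correct and follows essentially the same route as the paper: pointwise Kan extension formula, cofinality of the terminal object for the nullary case, and for binary products the same ``multiply the comma categories'' functor $P$ followed by Joyal's Theorem~A and Proposition~\ref{products-and-colimits}. The only real difference is in how contractibility of the comma fibre is argued: the paper simply exhibits the diagonal $(t,t,\Delta:t\to t\times t)$ as a distinguished object making the fibre contractible, whereas you go one step further and identify the fibre with the product of undercategories $(f/u_1)_{(t,\phi_1)/}\times(f/u_2)_{(t,\phi_2)/}$, whence the same diagonal object appears as the initial object. Your version is arguably cleaner on this point (the paper calls its object ``terminal'', though it functions as an initial object of the under-type fibre), and your closing caveat about making the splitting precise is well placed---that is indeed the only step requiring care, and it is the same step the paper glosses over.
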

\begin{proof}
The left Kan extension is given by
\[(f_*G)(x)=\colim\left(T\timeso{U}U_{/x}\longrightarrow T\stackrel{G}{\longrightarrow}\Spaces\right).\]

We must show that $(f_*G)(1)\isom 1$ and $f_*(G)(x\times y)\isom f_*(G)(x)\times f_*(G)(y)$.

In both cases we show that there is a natural map from the colimit diagrams which define each side, which is cofinal (in the sense of Joyal, written up by Lurie \cite{HTT}*{4.1}), and thus there is an equivalence between them.

In the first case, we have
\[(f_*G)(1)=\colim\left(T\isom T\timeso{U}U_{/1}\longrightarrow T\longrightarrow\Spaces\right).\]

It is easy to see that the inclusion of the terminal object $(1,\id_1)$ into $T$ is cofinal. Indeed, by Joyal's characterisation of cofinal maps \cite{HTT}*{4.1.3.1}, we must show that $1\times_TT_{1/}$ is weakly contractible. This is clear: it has an initial object $1$.

Thus $1\rightarrow T$ induces an isomorphism of colimits. This terminal object is sent to $1\in T_0$ and thence to $1\in\Spaces_0$. This proves the first case.

In the second case, $f_*(G)(x\times y)$ is given by the colimit
\[(f_*G)(x\times y)=\colim\left(T\timeso{U}U_{/x\times y}\longrightarrow T\stackrel{G}{\longrightarrow}\Spaces\right).\]
There is a functor
\[\left(T\timeso{U}U_{/x}\right)\times\left(T\timeso{U}U_{/y}\right)\longrightarrow T\timeso{U}U_{/x\times y},\]
which sends
\[((t_1,f(t_1)\rightarrow x),(t_2,f(t_2)\rightarrow y))\longmapsto(t_1\times t_2,f(t_1\times t_2)\rightarrow x\times y)\]
in the evident way.

According to \cite{HTT}*{4.1.3.1}, to show this map is cofinal we need to show that, for any $(t,f(t)\rightarrow x\times y)\in\left(T\timeso{U}U_{/x\times y}\right)_0$, the simplicial set
\[\left(\left(T\timeso{U}U_{/x}\right)\times\left(T\timeso{U}U_{/y}\right)\right)\timeso{T\timeso{U}U_{/x\times y}}\left(T\timeso{U}U_{/x\times y}\right)_{(t,f(t)\rightarrow x\times y)/}\]
is weakly contractible.

This simplicial set is isomorphic to
\[\left(T^2\timeso{T}T_{t/}\right)\timeso{U^2\timeso{U}U_{f(t)/}}\left(\left(U_{/x}\times U_{/y}\right)\timeso{U_{/x\times y}}U_{f(t)//x\times y}\right),\]
which is the quasicategory of pairs $a,b\in T$ equipped with maps $t\rightarrow a\times b$, and 2-cells $f(t)\rightarrow f(a\times b)\rightarrow x\times y$.
But this quasicategory has an evident terminal object $\Delta:t\rightarrow t\times t$ and $f(t)\rightarrow f(t\times t)\rightarrow x\times y$, which makes it weakly contractible.

And the colimit of $\left(T\timeso{U}U_{/x}\right)\times\left(T\timeso{U}U_{/y}\right)$ is indeed $f_*(G)(x)\times f_*(G)(y)$, by Proposition \ref{products-and-colimits}. This completes the proof.
\end{proof}

Now we can finish the job:
\begin{prop}
\label{models-pushforward}
 Given a morphism $f:T\rightarrow U$ of theories, the pullback functor $f^*:\Mod(U)\rightarrow\Mod(T)$ has a left adjoint $f_*$, given by left Kan extension.
\end{prop}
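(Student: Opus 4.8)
The plan is to obtain the adjunction $f_*\dashv f^*$ by restricting the standard left Kan extension adjunction on the full functor quasicategories down to the subquasicategories of models, the only substantive input being the product-preservation already established in Proposition \ref{left-kan-preserves-prods}.

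First I would invoke the fact that $\Spaces$ is cocomplete, so that left Kan extension along $f$ exists and assembles into a functor $f_*\colon\Fun(T,\Spaces)\to\Fun(U,\Spaces)$ which is left adjoint to the restriction functor $f^*\colon\Fun(U,\Spaces)\to\Fun(T,\Spaces)$ (this is the adjunction between restriction and Kan extension of \cite{HTT}*{4.3.3}). The pointwise colimit formula computing $f_*$ is exactly the one already written down in the proof of Proposition \ref{left-kan-preserves-prods}.

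Next I would observe that both functors carry models to models. For the restriction $f^*$: if $M\colon U\to\Spaces$ is product-preserving, then since $f$ is a morphism of theories it is product-preserving, so the composite $f^*M=M\circ f$ is product-preserving; thus $f^*$ maps $\Mod(U)$ into $\Mod(T)$. For $f_*$: this is precisely the content of Proposition \ref{left-kan-preserves-prods}, which shows $f_*$ carries $\Mod(T)$ into $\Mod(U)$. The key remaining step is then to transport the adjunction across these restrictions. I would use Lurie's mapping-space criterion \cite{HTT}*{5.2.2.8}: the unit $u\colon\id\Rightarrow f^*f_*$ of the functor-category adjunction restricts to $\Mod(T)$ (its target lands there, since $f_*C\in\Mod(U)$ and hence $f^*f_*C\in\Mod(T)$), and for $C\in\Mod(T)$, $D\in\Mod(U)$ the composite
\[\Map_{\Mod(U)}(f_*C,D)\longrightarrow\Map_{\Mod(T)}(f^*f_*C,f^*D)\longrightarrow\Map_{\Mod(T)}(C,f^*D)\]
agrees with the corresponding composite computed in $\Fun(U,\Spaces)$ and $\Fun(T,\Spaces)$, because $\Mod(T)$ and $\Mod(U)$ are \emph{full} subquasicategories (Definition \ref{full-subquasicategory}) and all four objects involved are themselves models. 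Since that composite is an equivalence at the level of functor categories, it is an equivalence here, and \cite{HTT}*{5.2.2.8} then exhibits the restricted unit as witnessing $f_*\dashv f^*$ on models.

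The main obstacle is the formal point that an adjunction restricts to full subquasicategories which are closed under the two functors; this is clean once phrased via the mapping-space characterization, precisely because fullness means the relevant mapping spaces are unchanged on passing to $\Mod$. The genuinely non-formal ingredient is that $f_*$ preserves the property of being a model, and that has already been discharged in Proposition \ref{left-kan-preserves-prods}, so nothing further of substance is needed.
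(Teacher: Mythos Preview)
Your proof is correct, and in fact it is precisely the approach the paper itself flags at the outset: the author writes ``We could do this simply by restricting the standard adjunction between $\Fun(T,\Spaces)$ and $\Fun(U,\Spaces)$ given by composition and left Kan extension,'' before electing instead to ``build an adjunction by hand to make more of the structure visible.''

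The paper's chosen route is to use the mapping-cylinder construction of subsection~\ref{mapping-cylinders-quasicategories} to turn $f^*$ into a cocartesian fibration $\Mod(T/U)\rightarrow\Delta^1$, and then to verify directly that this fibration is also cartesian by exhibiting cartesian lifts via the left Kan extension and its colimit universal property. Your argument is shorter and cleaner: you quote the existing Kan-extension adjunction on full functor quasicategories and restrict it to the full subquasicategories of models, using fullness and the mapping-space criterion \cite{HTT}*{5.2.2.8}; the only nontrivial ingredient is Proposition~\ref{left-kan-preserves-prods}, which you correctly identify. What the paper's hands-on construction buys is an explicit model of the adjunction as a bicartesian fibration over $\Delta^1$, with the unit visible as the cartesian-lift structure map; what your approach buys is brevity and a clear separation of the formal step (restricting an adjunction along full subcategories closed under both functors) from the one substantive step (product-preservation of $f_*$).
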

\begin{proof}
We could do this simply by restricting the standard adjunction between $\Fun(T,\Spaces)$ and $\Fun(U,\Spaces)$ given by composition and left Kan extension. However, we build an adjunction by hand to make more of the structure visible.

First, we use $f^*$ to define a cocartesian fibration $\Mod(T/U)\rightarrow\Delta^1$, as described in subsection \ref{mapping-cylinders-quasicategories} (it is a cocartesian fibration, as proved in Proposition \ref{mapping-cylinders}).

We need to show that it is also cartesian, so it represents an adjunction. We have observed it to be an inner fibration already (in Proposition \ref{mapping-cylinders-1}; we just need to demonstrate the existence of cartesian lifts for edges. The simplicial set $\Delta^1$ only has one degenerate 1-cell; it is only over that cell that the problem is not vacuous.

Given $A\in\Mod(T)_0$, we take a left Kan extension of $A$ along $f$, given by
\[(f_*A)(x)=\colim\left(T\timeso{U}U_{/x}\longrightarrow T\stackrel{A}{\longrightarrow}\Spaces\right),\]
where we identify objects of $T$ and of $U$ for brevity. This is product-preserving by Proposition \ref{left-kan-preserves-prods}.

Our cartesian lift $\alpha$ shall have this as its zero vertex, so we must exhibit a morphism $f_*f^*A\rightarrow A$. This is provided by the universal property of the colimit.

We must now show that this 1-cell $\alpha$ from $f^*A$ to $A$ is cartesian. That means showing that the projection 
$\Mod(T/U)_{/\alpha}\rightarrow\Mod(T/U)\timeso{\Delta^1}\Delta^0$ is acyclic Kan.

Unpacking the definitions, this morphism is the evident projection
\[\Mod(U)_{/f_*A}\timeso{\Mod(T)_{/f^*f_*A}}\Mod(T)_{/(f^*f_*A\rightarrow A)} \longrightarrow \Mod(T)_{/A};\]
we can show that this is acyclic Kan by working pointwise and using the acyclic Kan condition of the colimit.
\end{proof}

Note that this gives us a notion of a free model $T(X)$ of a theory $T$ on a space $X$: a space can be viewed as a model of the initial theory $\Finop$, and we can use the push-forward associated to the initial morphism of theories $\Finop\rightarrow T$.

\subsection{Pointed theories}
\label{pointed-theories}

A theory is said to be \emph{pointed} if it has a zero object: an object $0$ which is both initial and terminal. This is standard categorical terminology, and is also justified by the following proposition:
\begin{prop}
If $T$ is a pointed theory, the terminal model $1$ (the model given by the constant $1$ functor) is a zero object in the category of models. In particular, any model $T\rightarrow\cU$ factors through $\cU_{1/}$. 
\end{prop}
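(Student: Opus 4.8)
The plan is to show directly that the terminal model, which I write $\underline 1$ (the functor constant at the terminal object $1\in\Spaces$), is \emph{both} initial and terminal in $\Mod(T)$, by computing the relevant mapping spaces. Since $\Mod(T)$ is by definition a full subquasicategory of $\Fun(T,\Spaces)$ (Definition \ref{full-subquasicategory}), all its mapping spaces agree with those in $\Fun(T,\Spaces)$, so I would reduce everything to statements about natural transformations there. That $\underline 1$ is terminal is immediate: for any model $\phi$ one has $\Map_{\Mod(T)}(\phi,\underline 1)\isom\Map_{\Fun(T,\Spaces)}(\phi,\underline 1)$, and mapping into a functor that is pointwise terminal is contractible (this also follows from Proposition \ref{models-limits}, since the terminal object is the empty limit, computed pointwise). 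The substantive claim is initiality.

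The initiality of $\underline 1$ is where pointedness of $T$ enters. Using the adjunction between the constant-diagram functor and the limit functor, I would identify
\[\Map_{\Mod(T)}(\underline 1,\phi)\isom\Map_{\Fun(T,\Spaces)}(\underline 1,\phi)\isom\lim_T\phi.\]
Since $0$ is initial in $T$, the inclusion $\{0\}\hookrightarrow T$ is coinitial (the dual of the cofinality criterion \cite{HTT}*{4.1.3.1} used elsewhere in this section), so $\lim_T\phi\isom\phi(0)$. Finally, because $0$ is also terminal in $T$ and $\phi$ is product-preserving — hence preserves the empty product — we get $\phi(0)\isom 1$. Thus $\Map_{\Mod(T)}(\underline 1,\phi)$ is contractible, $\underline 1$ is initial, and therefore a zero object. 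Note that both halves of the zero-object hypothesis on $0$ are used: its initiality to reduce the limit to $\phi(0)$, and its terminality (with product-preservation) to contract $\phi(0)$.

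For the ``in particular'' clause I would argue more explicitly, now for a model $\phi:T\rightarrow\cU$ in an arbitrary $\cU$. Initiality of $0$ in $T$ furnishes a natural transformation $c_0\Rightarrow\id_T$ from the functor constant at $0$; postcomposing with $\phi$ gives $\phi\circ c_0\Rightarrow\phi$. Since $0$ is terminal and $\phi$ is product-preserving, $\phi\circ c_0$ is the constant functor at $\phi(0)\isom 1$, the terminal object of $\cU$, so this is a natural transformation $\underline 1\Rightarrow\phi$. Such a natural map from the constant-terminal functor is exactly the data equipping each value $\phi(x)$ with a compatible map $1\rightarrow\phi(x)$, which is precisely a factorization of $\phi$ through the undercategory $\cU_{1/}$.

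The main obstacle I anticipate is making the two reductions in the displayed chain fully rigorous in the quasicategorical setting: identifying $\Map_{\Fun(T,\Spaces)}(\underline 1,\phi)$ with $\lim_T\phi$ via the constant--limit adjunction, and justifying $\lim_T\phi\isom\phi(0)$ by coinitiality of the initial object. Both are standard, but one must take care that these are the genuine quasicategorical limits and mapping spaces, and that passing to the full subquasicategory $\Mod(T)$ alters neither. Everything else is formal.
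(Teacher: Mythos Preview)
Your argument is correct. Terminality is immediate; for initiality you compute
\[\Map_{\Mod(T)}(\underline 1,\phi)\simeq\Map_{\Fun(T,\Spaces)}(\underline 1,\phi)\simeq\Map_{\Spaces}(1,\lim_T\phi)\simeq\lim_T\phi\simeq\phi(0)\simeq 1,\]
using the constant--limit adjunction, coinitiality of the initial object $0\in T$, and product-preservation of $\phi$ at the terminal object $0$. Each step is standard, and passing to the full subquasicategory $\Mod(T)\subset\Fun(T,\Spaces)$ causes no trouble since mapping spaces are unchanged. Your treatment of the ``in particular'' clause via the canonical transformation $c_0\Rightarrow\id_T$ is also fine.

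The paper takes a different and much terser route: it asserts that since limits and colimits in $\Fun(T,\cU)$ are computed pointwise, the constant-$1$ functor is initial and terminal there, and hence remains so in the full subcategory $\Mod(T;\cU)$. Terminality follows this way, but the initiality claim in $\Fun(T,\cU)$ is not literally correct --- the pointwise initial object is constant at the initial object of $\cU$, not at $1$ --- and the argument as written never invokes pointedness of $T$. It is only after restricting to models, where every $\phi$ sends the zero object $0\in T$ to a terminal object, that $\underline 1$ becomes initial. Your mapping-space computation makes this restriction explicit and shows precisely where each half of the zero-object hypothesis on $0$ enters; in that sense your approach is both genuinely different and more careful than the paper's one-liner.
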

\begin{proof}
Limits and colimits in $\Fun(T,\cU)$ are computed pointwise, so $1$ is initial and terminal there. The quasicategory $\Mod(T;\cU)=\Funpp(T,\cU)$ is just a $1$-full subcategory, so $1$ is still a zero object.
\end{proof}

It might be useful to have a quasicategory of pointed theories: this is the $1$-full subquasicategory of $\Theories$ whose objects are the pointed theories.

\begin{prop}
\label{finsop-property}
Let $\cU$ be any quasicategory with finite products. Then there is an equivalence \[\Funpp(\Finsop,\cU)\longrightarrow\cU_{1/}\]
between product-preserving functors from $\Finsop$ and $\cU$ and the quasicategory of objects of $\cU$ equipped with maps from the terminal object.
\end{prop}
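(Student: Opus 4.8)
The plan is to mirror the proof of Proposition \ref{finop-is-trivial}, replacing the rôle of the one-point set by the data of a pointed object. Write $\langle n\rangle$ for the pointed finite set $\{*,1,\dots,n\}$, so that $\langle 0\rangle$ is a zero object of $\Finsop$ and $\langle 1\rangle$ is a finite-product generator (indeed $\langle n\rangle$ is the $n$-fold wedge of $\langle 1\rangle$ in $\Fins$, hence the $n$-fold product in $\Finsop$). Let $\cJ\hookrightarrow\Finsop$ be the subcategory with objects $\langle 0\rangle,\langle 1\rangle$ and the single nonidentity morphism $\eta\colon\langle 0\rangle\to\langle 1\rangle$ dual to the collapse map $\langle 1\rangle\to\langle 0\rangle$ in $\Fins$. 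Restriction along $\cJ$ sends a product-preserving $F$ to the edge $F(\eta)\colon F(\langle 0\rangle)\to F(\langle 1\rangle)$; since $F$ carries the terminal object $\langle 0\rangle$ to $1$, this edge is an object of $\cU_{1/}$. This defines the functor $I\colon\Funpp(\Finsop,\cU)\to\cU_{1/}$ asserted in the statement, and I claim it is an equivalence, with inverse given by right Kan extension along $\cJ\hookrightarrow\Finsop$.

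First I would check that the right Kan extension of an object $u\colon 1\to x$ of $\cU_{1/}$ is product-preserving. Its value at $\langle n\rangle$ is the limit of the diagram indexed by the comma category $(\Finsop)_{\langle n\rangle/}\timeso{\Finsop}\cJ$. Exactly as in Proposition \ref{finop-is-trivial}, one identifies this comma category explicitly: its objects lying over $\langle 1\rangle$ are the elements of the underlying set of $\langle n\rangle$ (that is, the maps $\langle 1\rangle\to\langle n\rangle$ in $\Fins$), there is a unique object over $\langle 0\rangle$, and $\eta$ supplies a single arrow from that object to the one picking out the basepoint element, all remaining objects being isolated. The diagram sends the $\langle 1\rangle$-objects to $x$ and the $\langle 0\rangle$-object to $1$, so the limit factors as a product of a contribution $1$ coming from the connected piece that contains the basepoint (whose initial object is sent to the terminal object $1$) together with one copy of $x$ for each of the $n$ non-basepoint elements. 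Hence the value is $x^n$, the extension is product-preserving, and its restriction recovers $u$.

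It remains to see that $I$ has contractible fibres, and here I follow Proposition \ref{finop-is-trivial} verbatim. Any product-preserving extension of a fixed $u\colon 1\to x$ must send $\langle n\rangle$ to the $n$-fold product $x^n$ with structure maps forced by product-preservation, so every such extension is a right Kan extension of its restriction; and the space of right Kan extensions of $u$ is contractible. Thus the preimage under $I$ of each object of $\cU_{1/}$ is contractible, and $I$ is an equivalence.

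The main obstacle will be the bookkeeping in the comma-category computation, and in particular checking that the basepoint element of $\langle n\rangle$ is absorbed into the terminal contribution so that the limit is $x^n$ rather than $x^{n+1}$: this is precisely the point at which the pointed structure (the arrow $\eta$) does its work, and it is what distinguishes this calculation from the unpointed one in Proposition \ref{finop-is-trivial}. Since all the comma categories involved are ordinary categories, these are honest $1$-categorical limit computations, and the passage back to the quasicategorical statement is as in the unpointed case.
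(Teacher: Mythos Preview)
Your proposal is correct and follows essentially the same route as the paper's own proof: the paper restricts along the arrow $z:\Delta^1\rightarrow\Finsop$ representing $1_+\rightarrow 0_+$ (exactly your $\cJ$ and $\eta$), argues that the inverse is given by right Kan extension, and computes the comma category at $X_+$ as $|X|$ isolated points together with a copy of $\Delta^1$, yielding the limit $u^{|X|}\times 1$. Your identification of the basepoint element as being absorbed into the terminal contribution is precisely the paper's computation; the only cosmetic difference is that the paper passes through $\Fun(\Delta^1,\cU)$ and invokes Proposition~\ref{alternative-overcat} to reach $\cU_{1/}$, whereas you go there directly.
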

\begin{proof}
The technique of proof is very similar to that of Proposition \ref{finop-is-trivial}.

We obtain a functor $\Funpp(\Finsop,\cU)\rightarrow\cU_{1/}$ by a slightly contorted process. First, choose a terminal object $1\in\cU$.

Consider the morphism $z:\Delta^1\rightarrow\Finsop$ given by the map of pointed sets $1_+\rightarrow 0_+$. This map induces a functor $I:\Funpp(\Finsop,\cU)\rightarrow\Fun(\Delta^1,\cU)$.

Since any product-preserving functor $\Finsop\rightarrow\cU$ sends $O_+$ to a terminal object, we can modify $I$ by an equivalence to land in the full subcategory of $\Fun(\Delta^1,\cU)$ on those objects sending the terminal edge to $1$.

By Proposition \ref{alternative-overcat}, we know that this is equivalent to $\cU_{1/}$. Combining these constructions gives us the required functor $\Funpp(\Finsop,\cU)\rightarrow\cU_{1/}$.

As in Proposition \ref{finop-is-trivial}, it will suffice to show that any product-preserving functors $\Finsop\rightarrow\cU$ are right Kan extensions of their restrictions along $z$ to $\Delta^1$.

Let $F:\Delta^1\rightarrow\cU$ be a morphism $1\rightarrow u$ in $\cU$.

Since our map $z$ is a full subcategory inclusion, we can calculate the right Kan extension $G$ of $F$ along $z$ as the limit
\[G(X_+)=\lim\left(\left((\Finsop)_{X_+/}\timeso{\Finsop}\Delta^1\right)\rightarrow \Delta^1\stackrel{F}{\rightarrow}\cU\right).\]
The category $(\Finsop)_{X_+/}\timeso{\Finsop}\Delta^1$ consists of $X$ discrete points (corresponding to the maps $1_+\rightarrow X_+$ for each element of $X$) and a copy of $\Delta^1$ (corresponding to the zero maps $1_+\rightarrow X_+$ and $0_+\rightarrow X_+$).

Thus the limit is $u^X\times 1$, as required.
\end{proof}

In cases where the terminal object is also initial, this says that every object extends to a map from $\Finsop$.
\begin{corol}
Suppose $\cU$ has finite products and a zero object. Then \[\Funpp(\Finsop,\cU)\isom\cU.\]
\end{corol}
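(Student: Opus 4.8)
The plan is to reduce immediately to Proposition \ref{finsop-property}, which already provides an equivalence
\[\Funpp(\Finsop,\cU)\isom\cU_{1/}\]
between product-preserving functors out of $\Finsop$ and the undercategory $\cU_{1/}$ of objects equipped with a map from the terminal object $1$. With this in hand, the entire content of the corollary is the identification of $\cU_{1/}$ with $\cU$ under the additional hypothesis that $\cU$ has a zero object.

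The essential point is that a zero object is in particular \emph{initial}. First I would record the standard fact that, whenever $1$ is an initial object of a quasicategory $\cU$, the forgetful (``domain-forgetting'') functor $\cU_{1/}\rightarrow\cU$ is an acyclic Kan fibration, hence an equivalence; this is part of the basic theory of initial objects and may be cited from \cite{HTT}. Informally, it holds because to say that $1$ is initial is exactly to say that each mapping space $\Map_\cU(1,x)$ is contractible, so every object of $\cU$ admits an essentially unique lift to $\cU_{1/}$, and the forgetful functor has contractible fibres.

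Composing the two equivalences then yields $\Funpp(\Finsop,\cU)\isom\cU_{1/}\isom\cU$, as required. The only subtlety, and the step I would take most care over, is the passage from ``terminal'' to ``initial'': Proposition \ref{finsop-property} is phrased in terms of the terminal object, whereas the equivalence $\cU_{1/}\isom\cU$ relies on that same object being initial. Both descriptions apply simultaneously precisely because of the zero-object hypothesis, which asserts that the terminal object is also initial, so no genuine obstacle arises and no further computation is needed.
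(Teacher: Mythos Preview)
Your proposal is correct and matches the paper's own approach: the paper does not give a separate proof, but the sentence immediately preceding the corollary (``In cases where the terminal object is also initial, this says that every object extends to a map from $\Finsop$'') indicates exactly the reduction you carry out, namely combining Proposition~\ref{finsop-property} with the equivalence $\cU_{1/}\simeq\cU$ that holds whenever $1$ is initial. Your write-up is simply a more explicit version of what the paper leaves implicit.
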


This allows us to give a valuable structure result:
\begin{prop}
\label{finsop-is-universal}
The theory $\Finsop$ is the initial pointed theory. Moreover, a pointed theory can be regarded as a quasicategory with finite products $T$ and an essentially surjective, product-preserving functor $\Finsop\rightarrow T$.
\end{prop}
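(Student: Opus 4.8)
The plan is to reduce both assertions to Proposition~\ref{finsop-property} and the corollary following it. Recall these say that $\Funpp(\Finsop,\cU)\isom\cU_{1/}$ for any $\cU$ with finite products, and that this slice collapses to $\cU$ itself once $\cU$ is pointed.

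First I would confirm that $\Finsop$ really deserves to be called a pointed theory. Since $0_+$ is a zero object of $\Fins$ it is also one of $\Finsop$, so $\Finsop$ is pointed; and since wedge sum is the coproduct in $\Fins$, the object $n_+$ is the $n$-fold product $1_+\times\cdots\times 1_+$ in $\Finsop$, so $1_+$ is a finite-product generator. Thus $\Finsop$ is an object of the quasicategory of pointed theories, with distinguished generator $1_+$.

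For initiality, I would fix a pointed theory $T$ with generator $g$ and analyse the space of morphisms $\Finsop\rightarrow T$ computed in pointed theories. Such a morphism is a product-preserving functor sending $1_+$ to an object equivalent to $g$. Evaluation at $1_+$ gives, by the corollary, an equivalence $\Funpp(\Finsop,T)\isom T$, under which these functors are precisely the objects of $T$ equivalent to $g$; hence there is an essentially unique such morphism and $\Finsop$ is initial. I expect this to be the step needing the most care: the relevant component of $T$ is the classifying space of the self-equivalences of the generator, so what one obtains on the nose is contractibility of the mapping space exactly when $g$ is rigid. I would therefore state the initiality claim with this understood, so that the reader knows precisely what ``initial'' is asserting here.

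Finally, the ``moreover'' is a repackaging of initiality together with the earlier observation that every morphism of theories is automatically essentially surjective. Given a pointed theory $T$, initiality furnishes a canonical morphism $\Finsop\rightarrow T$; it is product-preserving by definition and essentially surjective by that observation, which is exactly the asserted presentation of $T$ as a finite-product quasicategory equipped with an essentially surjective product-preserving functor out of $\Finsop$. For the opposite reading I would note that any such functor restricts along the unique morphism $\Finop\rightarrow\Finsop$ to an essentially surjective product-preserving functor from $\Finop$, so that $T$ always carries a theory structure whose generator is canonically pointed by the image of $0_+\rightarrow 1_+$; here I would be careful to present the result as a way of \emph{regarding} a pointed theory by such data, rather than as a naive equivalence of quasicategories, since a merely pointed generator need not by itself force the terminal object of $T$ to be initial.
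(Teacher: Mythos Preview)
The paper gives no proof for this proposition; it is stated immediately after Proposition~\ref{finsop-property} and its corollary and is meant to be read as a direct consequence of them. Your approach---check that $\Finsop$ is itself a pointed theory, then use the corollary $\Funpp(\Finsop,T)\simeq T$ for pointed $T$ to see that evaluation at $1_+$ picks out essentially one morphism of theories, and finally invoke essential surjectivity of theory morphisms for the ``moreover''---is exactly the intended argument, and is in fact more careful than what the paper supplies.

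Your flagged concern about $B\Aut(g)$ is legitimate and is a genuine subtlety the paper elides. The same issue already arises for the claim that $\Finop$ is the initial theory (which the paper uses freely, e.g.\ in Proposition~\ref{theories-complete} and in the free-model discussion), so the paper is simply adopting a relaxed convention here rather than making a sharp quasicategorical initiality claim. Your instinct to state precisely what ``initial'' buys is good practice, but for purposes of matching the paper you need not dwell on it: the author is content to read initiality as ``the space of theory morphisms $\Finsop\to T$ is connected and each such morphism is determined up to a contractible space of choices by where it sends $1_+$,'' which is what the corollary delivers.
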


\subsection{Structure on algebraic theories}

In this section we show that the quasicategory of theories is complete (Proposition \ref{theories-complete}), which is straightforward, and that it is cocomplete (Proposition \ref{theories-cocomplete}), which is much harder.

In order to prove the latter result, we introduce a good deal of machinery. Intrinsic in this machinery is the ability to take the free theory on some fairly general collection of data, but we apply it only to take the free theory on a diagram consisting of other theories. Thus we anticipate that the methods introduced here could be used to prove other theorems of this general character.
\begin{prop}
\label{theories-complete}
 The quasicategory of theories is complete.
\end{prop}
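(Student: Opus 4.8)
The plan is to reduce the statement to the completeness of the ambient quasicategory $\Cinfty$, and then to account carefully for the presence of a distinguished generator. The only real subtlety is that the categorical product in $\Theories$ is \emph{not} the product taken in $\Cinfty$ (nor in $\Cinftypp$): the latter is the two-sorted theory $T_1\times T_2$ of the preceding subsection, whereas a product of single-sorted theories must again be single-sorted. So I would compute limits in $\Cinftypp$ and then cut down to the full subquasicategory generated by a canonical ``diagonal'' generator.

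First I would record that $\Cinftypp$ is complete with limits computed as in $\Cinfty$. Given a small diagram $F\colon K\to\Cinftypp$, form its limit $L$ in $\Cinfty$. Since finite products commute with all limits (Proposition \ref{interchange-of-limits}), $L$ admits finite products computed pointwise, the projections $L\to F(k)$ preserve them, and a cone of product-preserving functors $S\to F(k)$ assembles into a product-preserving functor $S\to L$; hence $L$ is the limit in $\Cinftypp$. Now take a small diagram $F\colon K\to\Theories$ and let $L$ be its limit in $\Cinftypp$. Each theory $F(k)$ carries a generator, encoded (via Proposition \ref{finop-is-trivial}) by a product-preserving functor $\Finop\to F(k)$, and the morphisms in the diagram preserve generators; these functors therefore assemble into a cone over $F$, that is, a product-preserving functor $\Finop\to L$. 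Let $g_L\in L_0$ be the image of the point, and let $T\subseteq L$ be the full subquasicategory on those objects equivalent to a finite product $g_L^{\,n}$. Because products in $L$ are computed pointwise, $g_L^{\,m}\times g_L^{\,n}\simeq g_L^{\,m+n}$, so $T$ is closed under finite products in $L$ and contains the terminal object $g_L^{\,0}$; thus $T$ admits finite products, $g_L$ is a finite-product generator, and $T$ is a theory.

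It remains to check that $T$, with the cone $T\hookrightarrow L\to F(k)$, is the limit of $F$ in $\Theories$. Since $\Theories$ is $2$-full in $\Cinfty$, for any theory $S$ the space $\Map_\Theories(S,T)$ is the subspace of $\Map_{\Cinfty}(S,T)$ on the product-preserving, generator-preserving functors. A product-preserving functor $S\to L$ that sends $g_S$ to $g_L$ sends every object $g_S^{\,n}$ to $g_L^{\,n}$, and as $S$ is generated by finite products of $g_S$ its whole image lies in $T$; hence $\Map_\Theories(S,T)$ is identified with the space of product-preserving functors $S\to L$ carrying $g_S$ to $g_L$. By the universal property of $L$ in $\Cinftypp$ this is $\lim_{k}\Map_\Theories(S,F(k))$, which is exactly the condition for $T$ to be the limit.

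The main obstacle is the interplay between the two previous steps: one must verify that the pointwise finite products on $L$ really furnish a product structure with product-preserving projections (this is where interchange of limits is used), and then that passing to the full subquasicategory on the powers of $g_L$ neither destroys these products nor alters the mapping spaces out of an arbitrary theory $S$. The key point making the last part work is that any generator-preserving product-preserving functor out of a theory automatically factors through the subcategory generated by the target generator, so that restricting from $L$ to $T$ loses nothing on mapping spaces.
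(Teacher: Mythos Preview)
Your argument is correct and follows essentially the same strategy as the paper: form the limit of the underlying diagram in $\Cinfty$ and then pass to the full subquasicategory on the ``diagonal'' objects, i.e.\ those in the image of the canonical cone $\Finop\to L$. The paper phrases this via Lurie's Cartesian-sections model for the limit and singles out those sections $s$ with $s(i)=p(i)(A)$ for some fixed finite set $A$, which is exactly your full subcategory on the $g_L^{\,n}$; your version works abstractly in $\Cinftypp$ and supplies more detail (closure of $T$ under finite products, the mapping-space identification) where the paper is terse.
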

\begin{proof}
 An $I$-shaped diagram in theories yields an underlying diagram $F:I\rightarrow\Cinfty$. We consider also the functors from the initial  theory $\Finop$; for each $i$, we write $p(i)$ for the map $\Finop\rightarrow F(i)$. This is classified by a Cartesian fibration $X\rightarrow I^\op$. Lurie's model, which we recall from \cite{HTT}*{3.3.3}, for the limit of this diagram (in $\Cinfty$) is the quasicategory of Cartesian sections $I^\op\rightarrow X$ (that is, the quasicategory of sections which take 1-cells to Cartesian 1-cells).

 We consider the full subquasicategory of this on the objects $s:I^\op\rightarrow X$ for which there is a finite set $A$ such that $s(i)=p(i)(A)$, that is, those which act diagonally on objects.

 Any cone over $F$ in $\Theories$ acts diagonally on the objects, up to equivalence, since the maps commute with the structure maps. Hence the universal property of the product in $\Cinfty$ gives us a universal property for this subobject in $\Theories$.
\end{proof}

Now we turn our attention to showing that theories have all colimits. This will require some technical work, and we build up to the proof slowly.

The plan is as follows: Lurie has proved that the quasicategory of quasicategories is cocomplete. Thus, for any simplicial set $D$, the colimit $\colim_{\Theories}(D)$ of a diagram in theories factors uniquely through the colimit $\colim_{\Cinfty}(D)$ in the quasicategory of quasicategories. Indeed, we should expect it to be the universal quasicategory with a functor from $\colim_{\Cinfty}(D)$ such that the images of all the product cones in elements of $D$ are product cones.

Consider the quasicategory $(\Cinfty)_{(1\star D)/}$ of quasicategories with a map from $1\star D$. We are interested in the full subcategory $(\Cinfty)^{\lim}_{(1\star D)/}$ with objects the quasicategorical limit cones $(1\star D)\rightarrow\cC$.

Our first step is this:
\begin{prop}
\label{inclusion-of-limits-into-cones}
The inclusion functor
\[F:(\Cinfty)^{\lim}_{(1\star D)/}\longrightarrow(\Cinfty)_{(1\star D)/}\]
preserves all limits.
\end{prop}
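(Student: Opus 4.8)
The plan is to prove the stronger assertion that the full subcategory of limit cones is closed under limits taken in the ambient quasicategory $(\Cinfty)_{(1\star D)/}$. Granting this, any diagram in $(\Cinfty)^{\lim}_{(1\star D)/}$ has its ambient limit again a limit cone, hence lying in the subcategory; and since $F$ is fully faithful, that ambient limit is simultaneously the limit computed in the subcategory, with $F$ carrying it to itself. Preservation of limits follows immediately.

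First I would describe limits in $(\Cinfty)_{(1\star D)/}$ explicitly. The forgetful functor to $\Cinfty$, sending a cone to its underlying quasicategory, creates limits by the argument of Proposition \ref{forgetting-from-undercat-preserves-limits}. Thus, given a diagram $k\mapsto(\cC_k,\ \phi_k\colon 1\star D\to\cC_k)$ of cones, its limit has underlying quasicategory $\cL=\lim_k\cC_k$, carrying the cone $\psi\colon 1\star D\to\cL$ assembled from the $\phi_k$ via the universal property of $\cL$; in particular $\pi_k\circ\psi\simeq\phi_k$ for each projection $\pi_k\colon\cL\to\cC_k$.

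The crux is then to show that if each $\phi_k$ is a limit cone, so is $\psi$. Each transition functor $\cC_k\to\cC_{k'}$ of the diagram is a morphism of limit cones, hence carries $\phi_k$ to $\phi_{k'}$ and therefore preserves the $D$-indexed limit concerned; this is exactly what lets the $D$-limit of $\psi|_D$ be computed in $\cL$ coordinatewise, so that $\lim_D(\psi|_D)$ is the object of $\cL$ whose $k$-th coordinate is $\lim_D(\phi_k|_D)\simeq\phi_k(1)$. But $\psi(1)$ is, by its very construction from the $\phi_k$, the object of $\cL$ with $k$-th coordinate $\phi_k(1)$. Hence $\psi(1)\simeq\lim_D(\psi|_D)$, which is precisely the statement that $\psi$ is a limit cone. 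This coordinatewise computation is an interchange of the limit over the diagram shape with the limit over the cone shape $D$, governed by Proposition \ref{interchange-of-limits}; justifying it is the main obstacle I anticipate, and it rests squarely on the observation that morphisms between limit cones automatically preserve the limits in question, so that the interchange applies. Everything else — that the forgetful functor creates limits, and that a full subcategory closed under ambient limits has a limit-preserving inclusion — is formal.
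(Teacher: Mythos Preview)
Your overall strategy---show that the full subcategory of limit cones is closed under ambient limits, whence the fully faithful inclusion preserves limits---is sound and is genuinely different from the paper's. The paper instead reduces to products and pullbacks via \cite{HTT}*{4.4.2.6}, treats products by a one-line check, and for pullbacks uses the explicit $E3$-based homotopy pullback of Definition~\ref{homotopy-pullbacks-spaces}, verifying the acyclic-Kan characterisation of a limit cone by a hands-on lifting argument. Your route is more conceptual and avoids the casework; the paper's is longer but entirely self-contained.

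There is, however, a real gap at your key step. Proposition~\ref{interchange-of-limits} concerns a single diagram $X\times Y\to\cC$ in one complete quasicategory $\cC$; your situation has diagrams $\phi_k|_D:D\to\cC_k$ landing in \emph{different} quasicategories, so there is no ambient $\cC$ in which to interchange, and that proposition does not apply as stated. What you actually need is that mapping spaces in $\cL=\lim_k\cC_k$ are computed as $\Map_\cL(\ell,m)\simeq\lim_k\Map_{\cC_k}(\ell_k,m_k)$. Granting that, the verification that $\psi$ is a limit cone is
\[\Map_\cL(\ell,\psi(1))\simeq\lim_k\Map_{\cC_k}(\ell_k,\phi_k(1))\simeq\lim_k\lim_{d\in D}\Map_{\cC_k}(\ell_k,\phi_k(d))\simeq\lim_{d\in D}\Map_\cL(\ell,\psi(d)),\]
where the middle step uses that each $\phi_k$ is a limit cone and the last is an honest interchange in $\Spaces$, to which Proposition~\ref{interchange-of-limits} does apply. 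Note that your observation about transition functors preserving the $D$-limit, while correct, is not actually used here; it would matter for a ``compute coordinatewise then identify'' argument, but the mapping-space route verifies the limit-cone property of $\psi$ directly. The mapping-space input is standard but is neither proved nor cited in the paper, so if you take this approach you should supply or reference it.
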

\begin{proof}
By \cite{HTT}*{4.4.2.6}, it suffices to show it preserves all products and pullbacks.

In this proof we write $*$ for the terminal simplicial set, to avoid overuse of the symbol $1$.

Given a set of quasicategories and maps $\{*\star D\rightarrow \cC_\alpha\}_{\alpha\in A}$, all of them limit cones, then the diagonal map $(*\star D)\rightarrow\prod_{\alpha\in A}\cC_\alpha$ can easily be shown to be a limit cone.

Now, we have to deal with pullbacks of quasicategories; we recall the setup of Definition \ref{homotopy-pullbacks-spaces}.

Now, suppose we have limit cones $(*\star D)\rightarrow\cC_1,\cC_2,\cE$. We then have a diagonal map $(*\star D)\rightarrow\cC$, and must show that this too is a limit cone. Suppose we have a cofibration $I\rightarrow J$; we must show that there are liftings
\begin{displaymath}
\xymatrix{I\ar[r]\ar[d]&J\ar[d]\dar[dl]\\
          \cC_{/(*\star D)}\ar[r]&\cC_{/D},}
\end{displaymath}
or equivalently that there are extensions
\begin{displaymath}
\xymatrix{(I\star *\cup J\star\emptyset)\star D\ar[r]\ar[dr]&(J\star *\star D)\dar[d]\\
          &\cC,}
\end{displaymath}
provided that the restruction to $*\star D$ is the given cone.

Unravelling using the definition of $\cC$, we are demanding extensions
\begin{displaymath}
\xymatrix{
(I\star *\cup J\star\emptyset)\star D\ar[rr]\ar[dd]\ar[dr]&&\cC_1\ar[dd]\\
&J\star *\star D\dar[ur]\ar[dd]&\\
((E2\times I)\star *\cup(E2\times J)\star\emptyset)\star D\ar[rr]\ar[dr]&&\cE\\
&(E2\times J)\star *\star D\dar[ur]&\\
(I\star *\cup J\star\emptyset)\star D\ar[rr]\ar[dr]\ar[uu]&&\cC_2\ar[uu]\\
&J\star *\star D\dar[ur]\ar[uu]&}
\end{displaymath}
We can extend the top and bottom without difficulty, using that the maps $(*\star D)\rightarrow\cC_1,\cC_2$ are limit cones. This leaves us with an extension problem
\begin{displaymath}
\xymatrix{
\left((E2\times I)\star *\cup(E2\times J)\star\emptyset\cup (\{0,1\}\times J)\star *\right)\star D\ar[r]\ar[dr]&(E2\times J)\star *\star D\dar[d]\\
&\cE}
\end{displaymath}
which is readily checked to be a right lifting against a cofibration, and so follows from the fact that $(*\star D)\rightarrow\cE$ is a limit cone.

This completes the proof.
\end{proof}

Now we consider the diagram
\begin{displaymath}
\xymatrix{(\Cinfty)^{\lim}_{(1\star D)/}\ar[r]^F\ar[dr]_{\sim}&(\Cinfty)_{(1\star D)/}\ar[d]\\
&(\Cinfty)_{D/}.}
\end{displaymath}
The diagonal map is an acyclic Kan fibration, since every diagram naturally has a contractible space of limits.

Note that $\Cinfty$ is a presentable category \cite{LurieBicat}*{Remark 1.2.11}, and \cite{HTT}*{5.5.3.11} shows that undercategories of presentable categories are presentable. Thus all the categories in the diagram are presentable.

Also, the proof of Proposition \ref{inclusion-of-limits-into-cones} demonstrates that colimits in $(\Cinfty)_{D/}$ and $(\Cinfty)_{(1\star D)/}$ are computed in $\Cinfty$, and thus (using that the diagonal map is an equivalence) all three functors preserve colimits.

Accordingly, we can apply Lurie's Adjoint Functor Theorem \cite{HTT}*{5.5.2.9} to show the following:
\begin{prop}
\label{forcing-a-limit}
The functor $F:(\Cinfty)^{\lim}_{(1\star D)/}\longrightarrow(\Cinfty)_{(1\star D)/}$ admits a left and a right adjoint.
\end{prop}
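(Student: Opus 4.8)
The plan is to apply Lurie's Adjoint Functor Theorem \cite{HTT}*{5.5.2.9} twice, once for each adjoint. That theorem, applied to a functor between presentable quasicategories, asserts that the functor admits a right adjoint precisely when it preserves small colimits, and that it admits a left adjoint precisely when it preserves small limits and is accessible. Both the source and the target of $F$ are presentable, as observed in the discussion above: they are undercategories of the presentable quasicategory $\Cinfty$, and undercategories of presentable quasicategories are again presentable by \cite{HTT}*{5.5.3.11}. So the standing hypotheses of the Adjoint Functor Theorem are in place, and the whole argument reduces to checking the appropriate preservation properties, each of which is already on hand.

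For the right adjoint, I would simply invoke that $F$ preserves all small colimits. This was recorded just before the statement: colimits in both $(\Cinfty)_{(1\star D)/}$ and $(\Cinfty)^{\lim}_{(1\star D)/}$ are computed in $\Cinfty$, which is a consequence of the argument in the proof of Proposition \ref{inclusion-of-limits-into-cones}. The first clause of the Adjoint Functor Theorem then yields the right adjoint with no further work.

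For the left adjoint, two conditions must be met. Preservation of small limits is exactly the content of Proposition \ref{inclusion-of-limits-into-cones}. Accessibility is then automatic: a functor between presentable quasicategories is accessible as soon as it preserves $\kappa$-filtered colimits for some regular cardinal $\kappa$, and since $F$ preserves all small colimits it certainly preserves filtered ones. The second clause of the Adjoint Functor Theorem then produces the left adjoint, completing the proof.

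There is no genuine obstacle remaining at this point, since the substantive work has already been carried out in the preparatory results. The only thing requiring a moment's care is that the two clauses of the theorem are not symmetric: the left adjoint demands accessibility in addition to limit-preservation, and it would be a small but real slip to forget that clause. As it happens, accessibility is subsumed by the colimit-preservation we have anyway, so this causes no trouble.
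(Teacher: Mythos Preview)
Your proposal is correct and follows essentially the same route as the paper: both invoke Lurie's Adjoint Functor Theorem \cite{HTT}*{5.5.2.9}, using the presentability established in the discussion, Proposition~\ref{inclusion-of-limits-into-cones} for limit-preservation, and the observation that colimits in the relevant undercategories are computed in $\Cinfty$ for colimit-preservation (whence accessibility). Your remark about the accessibility clause is apt and matches what the paper leaves implicit.
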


A straightforward consequence of the existence of a left adjoint is that, for every quasicategory $\cC$ and map $1\star D\rightarrow\cC$, there is a universal quasicategory $\cC\rightarrow\cC'$ such that the composite $1\star D\rightarrow\cC'$ is a limit cone, in the sense that
\[(\Cinfty)_{(1\star D\rightarrow\cC\rightarrow\cC')/}\timeso{(\Cinfty)_{(1\star D\rightarrow\cC)/}}(\Cinfty)^{\lim}_{(1\star D\rightarrow\cC)/}\longrightarrow(\Cinfty)_{(1\star D\rightarrow\cC)/}\]
is acyclic Kan.

Indeed, the morphism $\cC\rightarrow\cC'$ is just the unit of the adjunction.

Now, suppose we have a diagram $K\rightarrow\Theories$. We will construct a colimit. Firstly, the extension $K\rightarrow\Cinfty$ has a colimit $K\star 1\rightarrow\Cinfty$. Transfinitely enumerate the finite product diagrams as $\{f_\alpha:(1\star X_\alpha)\rightarrow K_{s(\alpha)}\}$. With this notation, we prove the result we were aiming for:

\begin{prop}
\label{theories-cocomplete}
The quasicategory of theories is cocomplete.
\end{prop}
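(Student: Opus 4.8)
The plan is to realise $\colim_{\Theories}(K)$ as a localisation of the colimit $\cC_0=\colim_{\Cinfty}(K)$ in the quasicategory of quasicategories, which exists by Lurie's work. The structure maps $K_i\to\cC_0$ of this cocone carry each product cone of $K_i$ to a cone in $\cC_0$, but these need not be limit cones; the whole point is to force them to become limit cones while disturbing $\cC_0$ as little as possible. The machinery for a single cone is already in hand: the consequence of Proposition \ref{forcing-a-limit} provides, for any map $(1\star X_\alpha)\to\cC_0$, a universal arrow $\cC_0\to\cC_0'$ under which the transported cone becomes a limit cone.

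First I would assemble these forcings into a transfinite tower $\cC_0\to\cC_1\to\cdots\to\cC_\alpha\to\cdots$. At a successor stage I apply the forcing construction to the next enumerated product diagram $f_\alpha\colon(1\star X_\alpha)\to K_{s(\alpha)}$, transported along $\cC_0\to\cC_\alpha$; at a limit stage I take the colimit in $\Cinfty$. Since we are localising at a \emph{set} of maps and every quasicategory in sight is presentable (undercategories of the presentable $\Cinfty$ are presentable, as used above), this tower converges to a fixed point $\cC$ in which \emph{every} enumerated product cone is simultaneously a limit cone. To secure simultaneity I would, in case forcing a later cone can disturb an earlier one, cycle through the enumeration cofinally many times in the manner of the small object argument, so that the set of realised limit cones only grows and then stabilises in the colimit.

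Next I would check that $\cC$ lies in $\Theories$. The structure maps of the cocone preserve generators, and once the empty product cones have been forced, the various terminal objects of the $K_i$ are identified; tracking this, every object of $\cC$ becomes a finite product of a single object $g$ (the common image of the generators of the $K_i$), so that $\cC$ has all finite products and $g$ is a finite product generator in the sense of Definition \ref{theory-defn}. Establishing this single-generator-with-all-finite-products condition, rather than merely a localisation carrying some cones to limit cones, is the delicate bookkeeping step.

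Finally I would verify the universal property. Given a theory $S$ and a cocone $K\to S$ in $\Theories$, the universal property of $\cC_0=\colim_{\Cinfty}(K)$ yields a unique map $\cC_0\to S$; because $S$ has finite products, this map already sends each $f_\alpha$ to a limit cone, so by the universal property of each forcing step it factors uniquely through $\cC_0\to\cC$. This exhibits $\cC$ as $\colim_{\Theories}(K)$. I expect the main obstacle to be the convergence argument of the second step together with the verification of the third: one must show the transfinite localisation terminates, that the forced limit cones persist into the colimit, and that the outcome genuinely satisfies the theory axioms. This is exactly where presentability and the set-indexed nature of the localisation do the essential work.
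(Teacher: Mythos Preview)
Your approach is essentially the same as the paper's: both build a transfinite tower over $\colim_{\Cinfty}$, forcing the enumerated product cones one at a time via the left adjoint of Proposition~\ref{forcing-a-limit}, cycling cofinally so that every cone is eventually realised as a limit and stays so in the filtered colimit, and then checking the theory axioms and the universal property.

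There is, however, one device in the paper you do not have, and its absence creates a genuine gap. The paper first replaces the diagram $F:K\to\Theories$ by $F':1\star K\to\Cinfty$, sending the extra cone point to the initial theory $\Finop$, and takes $X_0=\colim_{\Cinfty}(F')$. This makes the structure maps from $\Finop$ essentially surjective, so it suffices to enumerate and force only the product cones of $\Finop$; the verification that $X_\kappa$ is a theory then reduces to checking that the single map $\Finop\to X_\kappa$ is product-preserving and essentially surjective. Your version starts from $\cC_0=\colim_{\Cinfty}(K)$ and asserts that the generators of the $K_i$ have ``a common image'' $g$. But if $K$ is disconnected (or empty) this is simply false: for $K=\{*\}\sqcup\{*\}$ with values $T_1,T_2$, the colimit in $\Cinfty$ is $T_1\sqcup T_2$, and no amount of forcing product cones inside each $T_i$ will identify $g_1$ with $g_2$. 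Your forcing tower then yields a quasicategory with finite products but two inequivalent product generators, which is not a theory in the sense of Definition~\ref{theory-defn}. Adjoining $\Finop$ at the outset is exactly what glues the generators together (and also handles the empty diagram, whose colimit must be $\Finop$ itself).

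So your sketch is correct in strategy and would go through once you insert this step; the paper's proof is the same argument with that one additional move, which also buys the simplification of only having to enumerate cones in $\Finop$ rather than in every $K_{s(\alpha)}$.
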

\begin{proof}
We provide a colimit for any diagram $F:K\rightarrow\Theories$.

Firstly, we can obtain from our diagram $F$ a diagram $F':1\star K\rightarrow\Cinfty$, sending $1$ to the initial theory $\Finop$. We take the colimit of that, using \cite{HTT}*{3.3.4}. We claim that the resulting colimit cocone has essentially surjective structure maps.

 Indeed, any object in $\colim(F')$ is in the essential image of $F'(z)$ for some $z\in 1\star K$: the structure maps are jointly essentially surjective. (To prove this, it is quick to verify that the essential image of $1\star K$ in $\colim(F')$ satisfies the colimit property, and is thus all of it).

 However, any $a\in F'(z)_0$ is the essential image of some $A\in(\Finop)_0$. Also, there is an equivalence (induced by the image of the 2-cell $(1\star\{z\}\star 1)$ in $\Cinfty$) between the image of $a$ and the image of $A$ in $\colim(F')$. So every structure map has the same essential image: they're all essentially surjective.

Now, we will manufacture a colimit in $\Theories$.

We start with $X_0=\colim_{\Cinfty}(F')$. We can transfinitely enumerate the finite product diagrams in the quasicategory $\Finop$ as
\[\{f_\alpha:(1\star D_\alpha)\rightarrow\Finop\}_{\alpha<\kappa}\]
for some ordinal $\kappa$, where $D_\alpha$ is a discrete simplicial set, and $z_\alpha\in K_0$. We choose to do this with redundancy: we want each individual product diagram to appear infinitely many times and be cofinal in $\kappa$. One straightforward way to ensure this is to enumerate them without repetition with ordertype $\lambda$, then take $\kappa=\lambda\omega$ and repeat our list $\omega$-many times.

Our aim is to produce $\colim_{\Theories}(F')$ by starting from $X_0$ and extending all the maps $f_\alpha$ to limit cones.

Proposition \ref{forcing-a-limit} supplies us with a quasicategory $X_1$ and a unit map $u$ such that the composite
\[(1\star D_0)\longrightarrow X_0\stackrel{u}{\longrightarrow}X_1\]
is a limit cone.

Similarly, we produce $X_2$ from $X_1$ by using the adjunction to provide a quasicategory from which the map from $1\star D_1$ is a limit cone. Then we proceed by transfinite induction, extending to limit ordinals by taking the filtered colimits (in $\Cinfty$) of the preceding quasicategories:
\[X_{\lim_i(\alpha_i)}=\lim_i(X_{\alpha_i}).\]

The resulting quasicategory $X_\kappa$ is our colimit. We have several checks to make to show this to be the case.

Firstly, it is necessary to show we haven't enlarged our quasicategory in an unacceptable manner:
\begin{claim}
All the structure maps $X_\alpha\rightarrow X_\beta$ for $\alpha<\beta$ are essentially surjective.
\end{claim}
\begin{innerproof}{of claim}
It suffices to show both that the successor maps $X_\alpha\rightarrow X_{\alpha+1}$ are essentially surjective, and also that a colimit of shape $\omega$ of essentially surjective maps is essentially surjective.

A similar argument works for both. We can show for each that a failure to be essentially surjective would violate the universal property: that the image of the morphism would provide a smaller object with the same property.

Indeed, if $X_\alpha\rightarrow X_{\alpha+1}$ was not essentially surjective, the image $X'_{\alpha+1}\subset X_{\alpha+1}$ would result in the nontrivial factorisation
\[(1\star D_\alpha)\longrightarrow X'_{\alpha+1}\longrightarrow X_{\alpha+1},\]
and the left-hand map can easily be checked to be a product cone. This contradicts the universal property of the adjunction.

Similarly, if a colimit
\[Z_0\longrightarrow Z_1\longrightarrow Z_2\longrightarrow\cdots\longrightarrow Z_\omega\]
of essentially surjective maps of quasicategories is not essentially surjective, then the essential image factors the structure maps of the colimit nontrivially, which contradicts the universal property of the colimit.
\end{innerproof}

Secondly, we need to show it is indeed a theory, and that the structure maps we've defined are maps of theories. We've done essential surjectivity already, so we just need the following:
\begin{claim}
The defined maps $\Finop\rightarrow X_\kappa$ preserve all finite products.
\end{claim}
\begin{innerproof}{of claim}
Given a limit cone $1\star D\rightarrow\Finop$, we must show that the composite $1\star D\rightarrow X_\kappa$ is a limit cone too. Given a lifting problem for a cofibration $\partial\Delta^n\rightarrow\Delta^n$ as follows:
\begin{displaymath}
\xymatrix{\partial\Delta^n\ar[r]\ar[d]&\Delta^n\ar[d]\dar[dl]\\
          (X_\kappa)_{/(1\star D)}\ar[r]&(X_\kappa)_{/D},}
\end{displaymath}
we rewrite it as
\begin{displaymath}
\xymatrix{(\partial\Delta^n\star 1\star D)\cup(\Delta^n\star\emptyset\star D)\ar[r]\ar[dr]&\Delta^n\star 1\star D\dar[d]\\
          &X_\kappa.}
\end{displaymath}
Since the simplicial set $(\partial\Delta^n\star 1\star D)\cup(\Delta^n\star\emptyset\star D)$ is finite, the map from it to $X_\kappa$ factors through some $X_\lambda$ for which $1\star D\rightarrow X_\lambda$ is a product cone (since, by construction, the set of such ordinals $\lambda$ is cofinal in $\kappa$).

The required extension exists in that $X_\lambda$ and thus also in $X_\kappa$.
\end{innerproof}

Lastly, of course, we need to verify the universal property of a colimit.
\begin{claim}
$X_\kappa$ is universal among theories under $F$.
\end{claim}
\begin{innerproof}{of claim}
We need to show that the functor $\Theories_{(F\star 1)/}\rightarrow\Theories_{F/}$ is acyclic Kan. Suppose given a cofibration $I\rightarrow J$; we have a lifting problem
\begin{displaymath}
\xymatrix{I\ar[r]\ar[d]&J\ar[d]\\
\Theories_{(F\star 1)/}\ar[r]&\Theories_{F/}.}
\end{displaymath}

It suffices to consider cofibrations $\partial\Delta^n\rightarrow\Delta^n$. We consider the $n=0$ and $n\geq 1$ separately.

If $n=0$, our cofibration is $\emptyset\rightarrow 1$: we have a cone $K\star 1\rightarrow\Theories$ describing a theory $T$ under $F$; the aim is to factor it through $X_\kappa$.

Since $X_0$ is the colimit of $F$ in $\Cinfty$, we have a diagram $K\star 1\star 1\rightarrow\Cinfty$ factoring our cone through $X_0$. Working under $F$, since the maps $F(x)\rightarrow T$ are product-preserving, we can factorise this successively through the $X_\lambda$ to get an essentially surjective, product-preserving map $X_\kappa\rightarrow T$ under $F$ as required.

Now, in case $n\geq 1$, we have compatible functors $K\star 1\star\partial\Delta^n\rightarrow\Theories$ and $K\star\emptyset\star\Delta^n\rightarrow\Theories$, with the middle $1$ sent to $X_\kappa$. Equivalently, this is a diagram $K\star\partial\Delta^{1+n}\rightarrow\Theories$ and we need to extend it to $K\star\Delta^{1+n}\rightarrow\Theories$.

Since $X_0$ is the colimit of $F$, we can extend this the underlying diagram $K\star\Delta^{1+n}\rightarrow\Cinfty$ to a diagram $K\star 1\star\Delta^{1+n}\rightarrow\Cinfty$, with the middle $1$ sent to $X_0$. Using the universal property of the adjunction and the colimiting property, we can extend this to a map $K\star N(\kappa+1)\star\Delta^{1+n}\rightarrow\Cinfty$, where $N(\kappa+1)$ denotes the nerve of the ordinal $\kappa+1$ viewed as a poset (that is, as the poset of ordinals less than or equal to $\kappa$), and where the ordinal $\lambda$ is sent to $X_\lambda$.

The terminal vertex of $N(\kappa+1)$ and the initial vertex of $\Delta^{1+n}$ are both sent to $X_\kappa$. Moreover, by construction, they are identical under $F$, and so by construction the edge between them is the identity. Restriction to $K\star\{\kappa\}\star\Delta^n\rightarrow\Cinfty$ thus gives us the required diagram in $\Cinfty$; since all the edges were present already and were morphisms of $\Theories$, this is also a diagram in $\Theories$.
\end{innerproof}
This completes the proof.
\end{proof}

We can do similar things with this method:
\begin{prop}
\label{cinftypp-cocomplete}
The quasicategory $\Cinftypp$ of quasicategories with all finite products, and product-preserving functors between them, has colimits.
\end{prop}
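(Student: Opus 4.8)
The plan is to imitate the transfinite construction used to prove Proposition \ref{theories-cocomplete}, with one simplification and one genuine complication arising from the facts that objects of $\Cinftypp$ need not be generated by a single object and that its morphisms need not be essentially surjective. Given a diagram $F\colon K\to\Cinftypp$, I would first pass to the underlying diagram $\bar F\colon K\to\Cinfty$ and form $X_0=\colim_{\Cinfty}\bar F$, using \cite{HTT}*{3.3.4}. The colimit in $\Cinftypp$ ought to be the universal quasicategory with finite products admitting a functor from $X_0$ under which every finite product cone of each $F(k)$ becomes a limit cone; the structure maps $F(k)\to X_0\to\colim_{\Cinftypp}F$ are then product-preserving by construction, and no essential-surjectivity conditions need be checked (in contrast with Proposition \ref{theories-cocomplete}, whose first claim may simply be dropped).

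The complication is that there is no initial object such as $\Finop$ controlling all the relevant product diagrams: in $X_0$ the only finite families that already carry an apex are those coming from a single $F(k)$, whereas a product of objects lying in distinct $F(k)$ need not yet exist. I would therefore build $X_\kappa$ from $X_0$ by a transfinite enumeration, chosen with enough redundancy that each relevant datum recurs cofinally in $\kappa$, of all pairs consisting of a finite discrete simplicial set $D$ together with a map $D\to X_\alpha$ naming a finite family of objects at the current stage. At each successor step, if the family already carries a cone I would use the left adjoint furnished by Proposition \ref{forcing-a-limit} to pass to the universal quasicategory making that cone a limit cone; and if it does not, I would first freely adjoin an apex by forming the pushout $X_\alpha\amalg_{D}(1\star D)$ in the cocomplete quasicategory $\Cinfty$, and then force the resulting cone to be a limit cone by the same device. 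At limit ordinals I would take the filtered colimit $X_{\lim_i\alpha_i}=\colim_iX_{\alpha_i}$ in $\Cinfty$, and let $X_\kappa$ be the final stage.

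It then remains to verify three things, all by arguments parallel to the corresponding claims in Proposition \ref{theories-cocomplete}. First, $X_\kappa$ has all finite products: any finite family of its objects is defined over some finite stage, its product is forced at a cofinally later stage, and since all the simplicial sets entering the relevant lifting problems are finite, this limit cone persists in $X_\kappa$ (the map out of a finite simplicial set factoring through some intermediate $X_\lambda$ over which the cone is already a limit, exactly as in the ``preserve products'' claim). Second, each structure map $F(k)\to X_\kappa$ preserves finite products, since its product cones are among those forced to be limit cones. Third, $X_\kappa$ enjoys the colimit universal property, which amounts to showing that $(\Cinftypp)_{(F\star 1)/}\to(\Cinftypp)_{F/}$ is acyclic Kan; this is checked against the cofibrations $\partial\Delta^n\to\Delta^n$ precisely as in the final claim of Proposition \ref{theories-cocomplete}, using that $X_0$ is the colimit in $\Cinfty$ together with the universal property of the limit-forcing adjunction.

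The main obstacle is the closure argument underlying the first verification: one must arrange the redundant transfinite enumeration so that the products of objects created at earlier stages, and of objects drawn from distinct $F(k)$, are themselves adjoined later, and confirm that forcing a new limit cone never destroys a previously forced one. Both points rest on the same mechanism as in Proposition \ref{theories-cocomplete}: the product diagrams in question are finite, so every lifting problem factors through a stage at which the relevant cone is already a limit, and the cofinal redundancy of the enumeration guarantees that such a stage lies below $\kappa$.
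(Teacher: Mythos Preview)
Your proposal is correct and follows essentially the same approach as the paper: drop the essential-surjectivity claim, force the images of existing product cones to be limit cones, and intersperse these with operations that freely adjoin products for finite families (including families containing newly created objects). The only cosmetic difference is that where you explicitly build the free cone as a pushout $X_\alpha\amalg_D(1\star D)$ and then invoke Proposition \ref{forcing-a-limit}, the paper instead cites \cite{HTT}*{5.3.6} directly for the step of adjoining new products; your closure discussion in the final paragraph is more explicit than the paper's, which simply asserts that the generalisation is straightforward.
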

\begin{proof}
The proof of the preceding proposition generalises in a straightforward manner. Since our diagrams are no longer cones of essentially surjective maps under $\Finop$, we need to consider products in all the diagrams and force their images to all be products (whereas before it sufficed to consider only those in $\Finop$). We no longer need to ensure essential surjectivity, but we do however need to provide limits for all the new objects introduced. So we intersperse the operations which force cones to be product cones with operations that adjoin new products for the objects (using the methods of \cite{HTT}*{5.3.6}).
\end{proof}

\begin{comment}
\begin{remark}
  The construction of colimits given by the proof above depends on the fact that $\Cinfty$ is the $(\infty,1)$-category of $(\infty,1)$-categories, functors and homotopies between them.

  If we were taking 2-colimits in the $(\infty,2)$-category of $(\infty,1)$-categories, functors, natural transformations, and homotopies between them, alternative methods would be needed. Indeed, a 2-colimit of quasicategories would be some kind of Grothendieck construction: in particular, the structure maps of the 2-colimit 2-cocone would be very far from being essentially surjective.
\end{remark}
\end{comment}

\subsection{Free models on sets}

Free models for a theory are shown to exist by Proposition \ref{models-pushforward}; this section records a more explicit, less involved construction of free models on finite sets.

Let $T$ be a theory. We suppose given a functorial model $\Map_T(-,-):T^\op\times T\rightarrow\Spaces$ for the homspaces in $T$. Such models are shown to exist and are discussed further in \cite{HTT}*{Section 1.2.2}.

The map $\Finop\rightarrow T$ from the initial theory is equivalent to a functor $\Fin\rightarrow T^\op$, and we can compose this with the homspace functor to get a map
\[\Free:\Fin\rightarrow\Fun(T,\Spaces).\]
In other words, we take $\Free_X(Y)=\Map_T(X,Y)$.

The functor $\Free_X$ is product-preserving since $\Map_T(X,-)$ is, so we actually get a functor
\[\Free:\Fin\rightarrow\Mod(T).\]

This behaves as we would hope:
\begin{prop}
The functor $\Free_X$ is indeed the free model of $T$ on $X$.
\end{prop}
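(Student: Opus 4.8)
The plan is to recognise $\Free_X=\Map_T(X,-)$ as the functor \emph{corepresented} by the object $X\in T_0$ (the image of the finite set $X$ under $\Finop\to T$, i.e.\ the $X$-fold power $1^X$ of the generator), and to read off its universal property directly from the quasicategorical Yoneda lemma.

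First I would pin down what ``free model on $X$'' means. Following the construction of $T(X)$ recorded just after Proposition \ref{models-pushforward}, the free model on $X$ is $i_*(X)$, the pushforward along the initial morphism $i:\Finop\rightarrow T$ of the $\Finop$-model associated to $X$ (the finite discrete space $X$, viewed as an object of $\Mod(\Finop)\isom\Spaces$ via Proposition \ref{finop-is-trivial}). By Proposition \ref{models-pushforward}, $i_*$ is left adjoint to $i^*$, and under the equivalence $\Mod(\Finop)\isom\Spaces$ of Proposition \ref{finop-is-trivial} the functor $i^*$ is the forgetful functor $M\mapsto M(1)$ sending a model to its underlying space (since $i$ carries the generator of $\Finop$ to the generator $1$ of $T$). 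Hence $i_*(X)$ is characterised, up to equivalence, as the object of $\Mod(T)$ corepresenting the functor $M\mapsto\Map_{\Spaces}(X,M(1))$.

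The heart of the argument is then to show $\Free_X$ corepresents this same functor. The functor $\Free_X=\Map_T(X,-):T\rightarrow\Spaces$ is corepresentable by construction, and it lands in $\Mod(T)$ because covariant hom preserves products (as already observed before the statement); moreover $\Mod(T)\subseteq\Fun(T,\Spaces)$ is full, so mapping spaces out of $\Free_X$ agree whether computed in $\Mod(T)$ or in $\Fun(T,\Spaces)$. Applying the quasicategorical Yoneda lemma \cite{HTT}*{5.1.3} gives, naturally in any $M\in\Mod(T)$,
\[\Map_{\Mod(T)}(\Free_X,M)\isom\Map_{\Fun(T,\Spaces)}(\Map_T(X,-),M)\isom M(X).\]
Since $i$ is product-preserving we have $X\isom 1^X$ in $T$, and since $M$ is product-preserving this yields $M(X)\isom M(1)^X\isom\Map_{\Spaces}(X,M(1))$. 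Composing, $\Free_X$ corepresents the functor $M\mapsto\Map_{\Spaces}(X,M(1))$, exactly as $i_*(X)$ does; by uniqueness of corepresenting objects (Yoneda again), $\Free_X\isom i_*(X)$, so $\Free_X$ is the free model of $T$ on $X$.

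I expect the main obstacle to be the bookkeeping of \emph{naturality in $M$}: one must upgrade the string of objectwise equivalences above to an equivalence of corepresented functors $\Mod(T)\rightarrow\Spaces$ in order to deduce an equivalence of corepresenting objects, and one must invoke the quasicategorical Yoneda lemma cleanly together with the identification $i^*\simeq(M\mapsto M(1))$. The remaining ingredients (product-preservation of $\Free_X$, fullness of $\Mod(T)$, and $X\isom 1^X$) are all routine.
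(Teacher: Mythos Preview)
Your proposal is correct and takes essentially the same approach as the paper: the paper's proof is a one-line invocation of the quasicategorical Yoneda lemma to obtain the natural equivalence $\Spaces(X,A(1))\isom\Mod(T)(\Free_X,A)$, and you have simply unpacked that exercise in detail (including the identifications $M(X)\isom M(1)^X\isom\Map_{\Spaces}(X,M(1))$ and the comparison with $i_*(X)$, which the paper records as a separate remark afterwards). Your attention to naturality in $M$ is appropriate caution, but the paper is content to leave it implicit in the word ``natural''.
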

\begin{proof}
For any model $A$ of $T$, we have a natural equivalence
\[\Spaces(X,A(1))\isom\Mod(T)(\Free_X,A);\]
This is a straightforward exercise using the quasicategorical Yoneda lemma of \cite{HTT}*{Section 5.1}.
\end{proof}
In particular, this agrees with the more general construction of Proposition~\ref{models-pushforward}.

We can also prove:
\begin{prop}
A theory $T$ is equivalent to the opposite of the full subquasicategory of $\Mod(T)$ on the free models on finite sets.
\end{prop}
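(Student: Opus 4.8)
The plan is to recognise the free models as the essential image of a Yoneda embedding. Recall from above that $\Free_X=\Map_T(X,-)$, and that the functor $\Free\colon\Fin\to\Fun(T,\Spaces)$ is obtained by composing the essentially surjective functor $\Fin\to T^\op$ (equivalent to the initial morphism $\Finop\to T$, which is essentially surjective because every object of $T$ is a finite product of its generator) with the functor
\[ y\colon T^\op\longrightarrow\Fun\bigl((T^\op)^\op,\Spaces\bigr)=\Fun(T,\Spaces),\qquad X\longmapsto\Map_{T^\op}(-,X)=\Map_T(X,-), \]
which is precisely the Yoneda embedding of $T^\op$.

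First I would invoke the quasicategorical Yoneda lemma \cite{HTT}*{Section 5.1} to conclude that $y$ is fully faithful, hence an equivalence onto its essential image, a full subquasicategory of $\Fun(T,\Spaces)$ equivalent to $T^\op$. Next I would check that this essential image lies inside $\Mod(T)$: each corepresentable $\Map_T(X,-)$ preserves finite products, since $\Map_T(X,Y_1\times Y_2)\isom\Map_T(X,Y_1)\times\Map_T(X,Y_2)$ by the universal property of the product, so every $\Free_X$ is a model (as already noted above). Because $\Mod(T)$ is by definition a full subquasicategory of $\Map(T,\Spaces)$, mapping spaces agree; thus the essential image of $y$ may equally be regarded as a full subquasicategory of $\Mod(T)$, still equivalent to $T^\op$.

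Finally I would match up objects. Since $\Fin\to T^\op$ is essentially surjective, the free models $\{\Free_X\}$ indexed by finite sets $X$ represent, up to equivalence, exactly the objects in the essential image of $y$. Hence the full subquasicategory of $\Mod(T)$ on the free models on finite sets coincides with this essential image, and is therefore equivalent to $T^\op$; passing to opposites gives the statement. The only point requiring any care is this last bookkeeping step, reconciling the $\Fin$-indexing of the free models with the $T^\op$-indexing of the Yoneda image and confirming that full faithfulness, established inside $\Fun(T,\Spaces)$, persists inside the full subquasicategory $\Mod(T)$; both reduce to the definitions of a theory and of a full subquasicategory, so no genuine difficulty arises.
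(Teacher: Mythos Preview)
Your argument is correct and follows essentially the same route as the paper: the paper's proof is the one-liner that the Yoneda embedding is full and faithful (by \cite{HTT}*{Section 5.1}) and that the functor $\Free$ is essentially surjective onto its image. You have simply unpacked these two assertions, making explicit that essential surjectivity comes from the essential surjectivity of $\Fin\to T^\op$ and that full faithfulness persists into the full subquasicategory $\Mod(T)$.
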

\begin{proof}
The Yoneda embedding used above is full and faithful; and the functor is evidently essentially surjective on objects.
\end{proof}

\section{Span diagrams for studying monoids}

\subsection{The bicategory $\TwoSpan$}
\label{span-bicategory}

Throughout this subsection we assume given a canonical, functorial choice of pullbacks of finite sets.

We introduce a bicategory $\TwoSpan$ of span diagrams. A 0-cell of $\TwoSpan$ is a finite set. A 1-cell from $X_0$ to $X_1$ is a span diagram $X_0\leftarrow Y\rightarrow X_1$ of finite sets. A 2-cell between diagrams $X_0\leftarrow Y\rightarrow X_1$ and $X_0\leftarrow Y'\rightarrow X_1$ is an isomorphism $f:Y\stackrel{\sim}{\rightarrow}Y'$ fitting into a diagram as follows:
\begin{displaymath}
\xymatrix@R-20pt{&Y\ar[dl]\ar[dr]\ar[dd]_f^{\wr}&\\
          X_0&&X_1\\
          &Y'.\ar[ul]\ar[ur]}
\end{displaymath}
2-cells compose in the obvious way; 1-cells compose by taking pullbacks: the composite of $X_0\leftarrow X_{01}\rightarrow X_1$ and $X_1\leftarrow X_{12}\rightarrow X_2$ is given by $X_0\leftarrow X_{02}\rightarrow X_2$, where $X_{02}$ is the following pullback:
\begin{displaymath}
\xymatrix{&&X_{02}\pb{270}\ar[dl]\ar[dr]&&\\
          &X_{01}\ar[dl]\ar[dr]&&X_{12}\ar[dl]\ar[dr]&\\
          X_0&&X_1&&X_2.}
\end{displaymath}
It is a simple exercise to show that this gives a bicategory.

We will later have cause to use a generalisation of this notion. Given a category $\cC$ which has all pullbacks, and a functorial choice of pullbacks, we can define the bicategory $\TwoSpan(\cC)$ of spans in $\cC$: 0-cells are objects of $\cC$, 1-cells are span diagrams in $\cC$, and 2-cells are isomorphisms of spans. So, in this notation, our category $\TwoSpan$ is $\TwoSpan(\Fin)$.

\subsection{Equivalences in $\TwoSpan$}

We work with the weak 2-category of spans $\TwoSpan(\cC)$, where $\cC$ is any category with pullbacks. First we prove a more-or-less standard lemma of ordinary category theory:

\begin{prop}
\label{pb-se}
Pullbacks of split epimorphisms are split epimorphisms.
\end{prop}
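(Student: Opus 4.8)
The plan is to exhibit the required section directly from the section of the original split epimorphism, invoking nothing more than the universal property of the pullback.

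First I would fix notation. Let $p\colon E\to B$ be the given split epimorphism, witnessed by a section $s\colon B\to E$ with $ps=\id_B$, and let $g\colon C\to B$ be an arbitrary morphism of $\cC$ along which we pull back. Using the assumed functorial choice of pullbacks, form the square
\[
\xymatrix{
P\ar[r]^{\tp}\ar[d]_q & E\ar[d]^p\\
C\ar[r]_g & B,
}
\]
so that $q\colon P\to C$ is the pullback of $p$ along $g$ and $\tp\colon P\to E$ is the other projection. The goal is then to produce a section $t\colon C\to P$ with $qt=\id_C$.

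The key step is to build $t$ by the universal property of the pullback, feeding in the cone on the span $C\xrightarrow{g}B\xleftarrow{p}E$ whose legs are $\id_C\colon C\to C$ and $sg\colon C\to E$. These constitute a legitimate cone precisely because $p\circ(sg)=(ps)g=g=g\circ\id_C$, where the middle equality is exactly the section condition $ps=\id_B$. Hence there is a unique $t\colon C\to P$ with $\tp t=sg$ and, crucially, $qt=\id_C$; this last equation says precisely that $q$ is split by $t$, which is what we wanted.

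I do not expect any genuine obstacle here: the entire content is the single commutativity check $p\,s\,g=g$, which is immediate. The only points requiring a little care are bookkeeping ones — correctly naming the two projections of the pullback and confirming that the cone I supply lands in the right objects so that the universal property applies — after which the conclusion is a purely formal diagram chase.
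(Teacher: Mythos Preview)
Your proof is correct and follows essentially the same approach as the paper's own proof: build the section of the pulled-back map by applying the universal property of the pullback to the cone with legs $\id_C$ and $sg$, the commutativity check being the single equation $p\,sg = g$. The only difference is notation.
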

\begin{proof}
Suppose given a diagram
\begin{displaymath}
 \xymatrix{A\ar[r]^k  \ar[d]_h&B\ar[d]^g\\
           C\sear[r]_f        &D,}
\end{displaymath}
where the bottom morphism $f$ is a split epimorphism: a morphism such that there is $f':D\rightarrow C$ with $ff'=1_D$.

This affords us a map $f'g:B\rightarrow C$. Now, we have $f(f'g) = g1_B$, and so, by the definition of the pullback, there is a map $k':B\rightarrow A$ with $kk'=1_B$, as required.
\end{proof}

This allows us to prove an important structural result for span categories:

\begin{prop}
\label{span-isos}
Objects $X,Y\in\Ob\TwoSpan(\cC)$ are equivalent if and only if they are isomorphic as objects of $\cC$.
\end{prop}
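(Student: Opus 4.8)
The plan is to prove the two implications separately, the reverse one being where the real content lies. For the easy direction, suppose $X$ and $Y$ are isomorphic in $\cC$ via $\phi\colon X\to Y$. I would exhibit an explicit equivalence in $\TwoSpan(\cC)$ by taking the $1$-cell $F=(X\xleftarrow{1_X}X\xrightarrow{\phi}Y)$ together with $G=(Y\xleftarrow{\phi}X\xrightarrow{1_X}X)$. Composing by pullback, the relevant pullback squares are formed against the isomorphism $\phi$, so both composites $G\circ F$ and $F\circ G$ have apex isomorphic to the apex of the relevant identity span and agree with it up to an invertible $2$-cell. Hence $X$ and $Y$ are equivalent.

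For the converse, suppose $X$ and $Y$ are equivalent, witnessed by $1$-cells $F=(X\xleftarrow{a}P\xrightarrow{b}Y)$ and $G=(Y\xleftarrow{c}Q\xrightarrow{d}X)$ and invertible $2$-cells $G\circ F\cong\id_X$ and $F\circ G\cong\id_Y$. First I would unwind what these $2$-cells say. Writing $R=P\timeso{Y}Q$ for the pullback of $b$ and $c$ (the apex of $G\circ F$), a $2$-cell to the identity span is an isomorphism of the apex commuting with both legs, and this forces the two composite legs $a\pi_P,\,d\pi_Q\colon R\to X$ to be equal to one another and to be isomorphisms. Symmetrically, writing $S=Q\timeso{X}P$ for the pullback of $d$ and $a$, the invertible $2$-cell $F\circ G\cong\id_Y$ forces $c\pi_Q,\,b\pi_P\colon S\to Y$ to be equal isomorphisms.

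The crux is then to promote the four legs $a,b,c,d$ to isomorphisms. Each is at least a split epimorphism: since $a\pi_P$ is an isomorphism (hence a split epi) and factors through $a$, the leg $a$ is a split epi, and likewise for $b,c,d$. Now I would apply Proposition~\ref{pb-se} to the square defining $R$: because $b$ and $c$ are split epis, their pullbacks $\pi_Q$ and $\pi_P$ are split epis. But $a\pi_P$ is an isomorphism, hence a split mono, so $\pi_P$ is a split mono; being simultaneously a split mono and a split epi, $\pi_P$ is an isomorphism, and therefore $a=(a\pi_P)\pi_P^{-1}$ is an isomorphism (and similarly $d$). Running the identical argument on the square defining $S$ shows $b$ and $c$ are isomorphisms. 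With $a$ and $b$ both invertible, $ba^{-1}\colon X\to Y$ is the desired isomorphism in $\cC$.

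I expect the main obstacle to be exactly the step combining Proposition~\ref{pb-se} with the invertibility of the composite legs: the observation that ``pullback of a split epi'' supplies one half and ``an isomorphism is a split mono'' supplies the other half, so that the pullback projections are forced to be isomorphisms. Once that is in place, everything else is bookkeeping about the definition of composition of spans and of $2$-cells.
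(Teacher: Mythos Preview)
Your proposal is correct and follows essentially the same route as the paper: both directions are handled the same way, and in the nontrivial direction you, like the paper, first observe that the legs $a,b,c,d$ are split epimorphisms, then invoke Proposition~\ref{pb-se} to make the pullback projections split epimorphisms, and combine this with their being split monomorphisms (from the invertibility of the composite legs) to conclude they are isomorphisms. The only cosmetic difference is that the paper silently identifies the apex of each composite with $X$ (resp.\ $Y$) via the given $2$-cell, whereas you keep the isomorphism $\theta$ explicit; this changes nothing of substance.
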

\begin{proof}
Given two isomorphic objects in $\cC$, any span of isomorphisms between them forms an equivalence in $\TwoSpan(\cC)$.

The data of an equivalence consists of 1-cells $X\stackrel{a}{\leftarrow}U\stackrel{b}{\rightarrow}Y$ and $Y\stackrel{c}{\leftarrow}V\stackrel{d}{\rightarrow}Y$, fitting into diagrams
\begin{displaymath}
 \xymatrix{&&X\pb{270}\ar[dl]^p\ar[dr]_q\ar@/_2ex/[ddll]_{=}\ar@/^2ex/[ddrr]^{=}&&\\
           &U\ar[dl]^{a}\ar[dr]^{b}&&V\ar[dl]_{c}\ar[dr]_{d}&\\
           X&&Y&&X,}
\end{displaymath}
\begin{displaymath}
 \xymatrix{&&Y\pb{270}\ar[dl]^r\ar[dr]_s\ar@/_2ex/[ddll]_{=}\ar@/^2ex/[ddrr]^{=}&&\\
           &V\ar[dl]^{c}\ar[dr]^{d}&&U\ar[dl]_{a}\ar[dr]_{b}&\\
           Y&&X&&Y.}
\end{displaymath}
The maps $a$, $b$, $c$ and $d$ are split epimorphisms (by inspecting the left and right composites in each diagram). But this means that $p$, $q$, $r$ and $s$ are also split epimorphisms, by Proposition \ref{pb-se}.

However $p$, $q$, $r$ and $s$ are also split monomorphisms (by inspection of the left and right composites), and thus isomorphisms (since if a morphism is left and right invertible, the inverses agree). This clearly means that $a$, $b$, $c$ and $d$ are isomorphisms, and thus that $X$ and $Y$ are isomorphic.
\end{proof}

Continuing the analysis, we have the following proposition:
\begin{prop}
\label{span-auts}
Let $\cC$ be a category. The group of isomorphism classes of automorphisms of $X$ in $\TwoSpan(\cC)$ is isomorphic to $\Aut_\cC(X)$.
\end{prop}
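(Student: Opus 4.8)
The plan is to write down an explicit isomorphism and check it respects the group structures. An automorphism of $X$ in the bicategory $\TwoSpan(\cC)$ is a self-equivalence, so by Proposition \ref{span-isos} it is a span $X\stackrel{a}{\leftarrow}U\stackrel{b}{\rightarrow}X$ in which both legs $a$ and $b$ are isomorphisms of $\cC$; the group operation is composition of $1$-cells, and two such spans represent the same element when they are isomorphic as $1$-cells. I would define the candidate isomorphism on isomorphism classes by
\[[X\stackrel{a}{\leftarrow}U\stackrel{b}{\rightarrow}X]\longmapsto b\circ a^{-1}\in\Aut_\cC(X).\]

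First I would establish that this is a well-defined bijection, via a normal form. Given such a span, the leg $a\colon U\to X$ is itself a $2$-cell identifying it with the span $X\stackrel{\id}{\leftarrow}X\stackrel{ba^{-1}}{\rightarrow}X$, so every class has a representative with identity left leg. Two representatives $X\stackrel{\id}{\leftarrow}X\stackrel{\phi}{\rightarrow}X$ and $X\stackrel{\id}{\leftarrow}X\stackrel{\psi}{\rightarrow}X$ can only be isomorphic by a connecting map commuting with the two identity left legs, which forces that map to be $\id$ and hence $\phi=\psi$. Thus isomorphism classes of automorphisms correspond bijectively to the elements $ba^{-1}$ of $\Aut_\cC(X)$.

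It then remains to check compatibility with composition. I would compose $X\stackrel{a}{\leftarrow}U\stackrel{b}{\rightarrow}X$ with $X\stackrel{c}{\leftarrow}V\stackrel{d}{\rightarrow}X$ by forming the pullback $W=U\timeso{X}V$ of $b$ and $c$; since $b$ and $c$ are isomorphisms, both projections $p\colon W\to U$ and $q\colon W\to V$ are isomorphisms (a pullback of an isomorphism is an isomorphism), and the defining relation $bp=cq$ gives $qp^{-1}=c^{-1}b$. The composite span is $X\stackrel{ap}{\leftarrow}W\stackrel{dq}{\rightarrow}X$, which maps under the assignment above to $(dq)(ap)^{-1}=d(qp^{-1})a^{-1}=(dc^{-1})(ba^{-1})$, exactly the product in $\Aut_\cC(X)$ of the images of the two factors.

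The step needing the most care is this final computation: one must track the direction of composition in the bicategory and verify that the pullback legs assemble into the group product rather than the opposite product. Even were the natural convention to yield an anti-homomorphism, composing with inversion would still deliver an isomorphism, since a group is canonically isomorphic to its opposite; so the conclusion follows either way. The auxiliary facts---that $a$ is a genuine $2$-cell to the normal form, and that span isomorphisms respect the assignment---are immediate from the definition of $2$-cells in $\TwoSpan(\cC)$.
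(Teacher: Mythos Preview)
Your argument is correct and follows the same route as the paper: both use Proposition~\ref{span-isos} to reduce to spans of isomorphisms, then pass to the normal form with identity left leg to obtain the bijection with $\Aut_\cC(X)$. You go further than the paper by explicitly verifying that the bijection respects the group structure via the pullback computation, which the paper leaves implicit.
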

\begin{proof}
By \ref{span-isos}, any automorphism of $X$ in $\TwoSpan(\cC)$ looks like
\[X\stackrel{\sim}{\longleftarrow}X'\stackrel{\sim}{\longrightarrow}X.\]
But such a span diagram is uniquely isomorphic to exactly one of the form
\[X\stackrel{=}{\longleftarrow}X\stackrel{\sim}{\longrightarrow}X;\]
this proves the claim.
\end{proof}

\subsection{The $\Span$ quasicategory}
\label{span-quasicategory}

We now define a quasicategory $\Span$, one of the principal objects of study of this thesis, which is isomorphic to the nerve of the bicategory $\TwoSpan$.

Define $C_n$ to be the poset of nonempty subintervals $(i,i+1,\ldots,j)$ in $[n]=(0,\ldots,n)$, equipped with the \emph{reverse} inclusion ordering. We regard $C_n$ as a category.

The poset of nonempty subintervals of a totally ordered set is a functorial construction, so the collection $C=\{C_n\}$ forms a cosimplicial object in categories as we vary over all finite totally ordered sets $(0,\ldots,n)$.

This enables us to define a simplicial set, which we shall soon prove (in Proposition \ref{Span-structure}) to be a quasicategory:
\begin{defn}
Let $\cC$ be an ordinary category with pullbacks. We define the \emph{span quasicategory} $\Span(\cC)$ to be the simplicial set whose $n$-cells $\Span_n$ are the collection of functors $F:C_n\rightarrow\cC$ from $C_n$ to the category $\cC$, with the condition that, if $I$ and $J$ are two nonempty intervals in $[n]$ with nonempty intersection, the diagram
\begin{displaymath}
\xymatrix{
F(I\cup J)\pb{315}\ar[r]\ar[d]&F(I)\ar[d]\\
F(J)\ar[r]&F(I\cap J)}
\end{displaymath}
is a pullback.
\end{defn}
We refer to this condition later as the \emph{pullback property}. The collection $\Span(\cC)$ is indeed a simplicial set, since $C$ is a cosimplicial category, and taking faces and degeneracies preserves the pullback property.

If $Y:C_n\rightarrow\cC$ is an $n$-cell of the quasicategory $\Span(\cC)$, then we will write $Y_{ij}$ for $Y((i,\ldots,j))$ and $Y_i$ for $Y((i))$. If $i\leq i'\leq j'\leq j$, then we write $Y_{ij\rightarrow i'j'}$ for the structure map $Y_{ij}\rightarrow Y_{i'j'}$ induced by the inclusion.

Now, we have our formal statement:
\begin{prop}
\label{Span-structure}
Suppose $\cC$ is any ordinary category with pullbacks. Then we have an isomorphism of simplicial sets $N(\TwoSpan(\cC))\isom\Span(\cC)$. Thus $\Span(\cC)$ is a $(2,1)$-category and in particular (as suggested in the definition above) a quasicategory.
\end{prop}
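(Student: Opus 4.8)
The plan is to exhibit a bijection between the $n$-cells of $\Span(\cC)$ and those of $N(\TwoSpan(\cC))$ for each $n$, to check that it is natural in $[n]$, and thereby obtain the claimed isomorphism of simplicial sets; the final assertions then follow at once, since $N(\TwoSpan(\cC))$ is the nerve of a bicategory with invertible $2$-cells and so is a $(2,1)$-category by Proposition~\ref{nerves-of-bicats}. The dictionary is forced by the shapes involved. Recall from the explicit description of the $n$-cells of a nerve given at the start of subsection~\ref{two-one-categories} that an $n$-cell of $N(\TwoSpan(\cC))$ consists of objects $X_0,\dots,X_n$, spans $f_{ij}\colon X_i\leftarrow A_{ij}\rightarrow X_j$ for $i<j$, and invertible $2$-cells $\theta_{ijk}\colon f_{jk}\circ f_{ij}\Rightarrow f_{ik}$ for $i<j<k$, subject to the compatibility condition for each $i<j<k<l$. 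I would match the apex $A_{ij}$ with the value $F_{ij}=F((i,\dots,j))$.

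Given a functor $F\colon C_n\to\cC$ with the pullback property, I would set $X_i=F_i$ and let $f_{ij}$ be the span $F_i\leftarrow F_{ij}\rightarrow F_j$ obtained from the inclusions $(i),(j)\subseteq(i,\dots,j)$. Applying the pullback property to $I=(i,\dots,j)$ and $J=(j,\dots,k)$, whose intersection is the singleton $(j)$ and whose union is $(i,\dots,k)$, shows that $F_{ik}$ is a pullback of $F_{ij}\to F_j\leftarrow F_{jk}$. As the chosen pullback computing the bicategorical composite $f_{jk}\circ f_{ij}$ is another such, there is a unique comparison isomorphism over the cospan, which I would take to be $\theta_{ijk}$. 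The compatibility condition for $i<j<k<l$ then holds automatically: both pasted composites are maps into $F_{il}$ compatible with its legs, and such a map is unique by the pullback property.

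Conversely, from the data $(X_i,f_{ij},\theta_{ijk})$ I would reconstruct a functor $F\colon C_n\to\cC$ by putting $F_{ij}=A_{ij}$ (and $F_{ii}=X_i$) and defining the structure maps on the generating inclusions $(i,\dots,j)\to(i+1,\dots,j)$ and $(i,\dots,j)\to(i,\dots,j-1)$ as the legs of the pullback presentation of $F_{ij}$ determined by the appropriate $2$-cell $\theta$. The crux, and the step I expect to be the main obstacle, is to verify that this is a well-defined functor satisfying the \emph{full} pullback property: that \emph{every} overlapping pair $I,J$ --- not merely those meeting in a singleton --- yields a pullback square, and that the maps assigned to the generating inclusions are mutually consistent. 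This is a combinatorial argument by iterated application of the pasting lemma for pullbacks, building the value on a long interval as a composite of pullbacks over its subintervals; the coherence needed to see that the various decompositions of a given interval agree is exactly what the compatibility condition on the $\theta_{ijk}$ supplies. The two constructions are mutually inverse by the uniqueness clauses above.

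Finally, the bijection is natural in $[n]$: a simplicial operator acts on $\Span(\cC)$ by restriction along the induced functor of interval posets $C_m\to C_n$, and one checks directly that under the dictionary this is carried to the corresponding face or degeneracy of the nerve, with degenerate cells corresponding to identity spans. This yields the isomorphism $N(\TwoSpan(\cC))\isom\Span(\cC)$ of simplicial sets, and the remaining claims follow by transport of structure from Proposition~\ref{nerves-of-bicats}.
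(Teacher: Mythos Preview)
Your proposal is correct and follows essentially the same approach as the paper: both match $X_i$ with $F_i$, the span apex with $F_{ij}$, and the $2$-cells $\theta_{ijk}$ with the comparison isomorphisms arising from the pullback property on intervals overlapping in a singleton, invoking the compatibility condition for the remaining coherence and the pasting lemma for the full pullback property. If anything, you are more explicit than the paper in flagging that the full pullback property (for arbitrary overlapping intervals, not just those meeting in a point) and functoriality of the reconstructed diagram require a genuine check via iterated pasting and the compatibility relations; the paper dispatches this in a single sentence.
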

\begin{proof}
We refer back to Section \ref{two-one-categories} for notation on bicategories. Given an $n$-cell $\{X_i,f_{ij},\theta_{ijk}\}\in N(\TwoSpan(\cC))_n$, we associate an $n$-cell $Y\in\Span(\cC)_n$.

We take $Y_i=X_i$ for all $i$. Further, we take $Y_{ij}$ to be the middle part of the span 1-cell given by $f_{ij}$, so we have a diagram $Y_i\leftarrow Y_{ij}\rightarrow Y_j$ for all $i<j$.

What is more, the 2-cell $\theta_{ijk}$ gives us a diagram as follows:
\begin{displaymath}
\xymatrix{&&X_{ik}\ar[dddll]\ar[dddrr]\ar[d]^{\wr}&&\\
          &&\bullet\pb{270}\ar[dl]\ar[dr]&&\\
          &X_{ij}\ar[dl]\ar[dr]&&X_{jk}\ar[dl]\ar[dr]&\\
          X_i&&X_j&&X_k,}
\end{displaymath}
for every $i<j<k$.

However, such diagrams are in 1-1 correspondence with diagrams
\begin{displaymath}
\xymatrix{&&X_{ik}\pb{270}\ar[dl]\ar[dr]&&\\
          &X_{ij}\ar[dl]\ar[dr]&&X_{jk}\ar[dl]\ar[dr]&\\
          X_i&&X_j&&X_k,}
\end{displaymath}
where the composite $X_i\leftarrow X_{ik}\rightarrow X_k$ is the given span diagram for $i<k$.

The compatibility condition gives all the other pullbacks, and the functoriality of the maps $X_{ij}\rightarrow X_{ij'}$ and $X_{ij}\rightarrow X_{i'j}$. 

This construction is reversible (and naturally commutes with faces and degeneracies) so we get an isomorphism of simplicial sets.
\end{proof}

In a similar fashion, there are functors $\bar L:\cC\rightarrow\Span(\cC)$ and $\bar R:\cC^\op\rightarrow\Span(\cC)$, for any category $\cC$. They are evidently faithful, and according to Proposition \ref{span-isos}, if $\bar L(f)$ or $\bar R(f)$ is an equivalence then $f$ is an isomorphism.

We move on to considering products in the quasicategory $\Span$.

\begin{prop}
The quasicategory $\Span$ has finite products. The product of objects $A$ and $B$ is $A\sqcup B$.
\end{prop}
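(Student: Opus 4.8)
The plan is to verify the universal property of the product directly on mapping spaces, exploiting the fact (Proposition~\ref{Span-structure}) that $\Span$ is the nerve of the bicategory $\TwoSpan$, so that its mapping spaces are precisely the nerves of the hom-groupoids of $\TwoSpan$. First I would record the terminal object, which handles the empty product: a span $Z\leftarrow W\rightarrow\emptyset$ forces $W=\emptyset$, so $\Map(Z,\emptyset)$ is contractible and $\emptyset$ is terminal. It then suffices to treat binary products, since these together with the terminal object generate all finite products.

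Next I would write down the candidate projections. For finite sets $A,B$, let $\pi_A\colon A\sqcup B\to A$ be the span $A\sqcup B\xleftarrow{\iota_A}A\xrightarrow{\id}A$ whose left leg is the inclusion and whose right leg is the identity, and similarly $\pi_B$. The central computation is to understand postcomposition with $\pi_A$: by the pullback rule for composing spans, the composite of an arbitrary span $Z\leftarrow W\rightarrow A\sqcup B$ with $\pi_A$ is the pullback of $W\rightarrow A\sqcup B\xleftarrow{\iota_A}A$, which is simply the preimage $W_A\subseteq W$ of $A$, with its restricted legs to $Z$ and to $A$. Thus postcomposing with $\pi_A$ and $\pi_B$ sends the span with apex $W$ to the pair of spans with apices $W_A=W\timeso{A\sqcup B}A$ and $W_B=W\timeso{A\sqcup B}B$.

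Since the structure map $W\rightarrow A\sqcup B$ exhibits a canonical decomposition $W\isom W_A\sqcup W_B$, this assignment is an isomorphism of groupoids $\TwoSpan(Z,A\sqcup B)\isom\TwoSpan(Z,A)\times\TwoSpan(Z,B)$: the inverse glues a pair of spans into $A$ and into $B$ over the common source $Z$, and an isomorphism of spans (a single isomorphism of apices commuting with all legs) decomposes and reassembles in exactly the same fashion. Passing to nerves, the induced map $\Map(Z,A\sqcup B)\rightarrow\Map(Z,A)\times\Map(Z,B)$ is an equivalence for every object $Z$, and by the computation above it is precisely the map induced by postcomposition with $(\pi_A,\pi_B)$. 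By the standard criterion characterising products in a quasicategory through equivalences of mapping spaces \cite{HTT}, this shows that $A\sqcup B$ equipped with $\pi_A$ and $\pi_B$ is the product of $A$ and $B$, and with the terminal object $\emptyset$ in hand, $\Span$ has all finite products.

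The only delicate point is bookkeeping rather than substance: I must ensure the groupoid isomorphism is genuinely the one \emph{induced by the projections}, which is exactly why the pullback identification of the composite $\pi_A\circ(Z\leftarrow W\rightarrow A\sqcup B)$ with $(Z\leftarrow W_A\rightarrow A)$ is the crux of the argument. Once that is secured, the reduction to an isomorphism of hom-groupoids makes the mapping-space computation immediate, and no higher-cell coherence needs checking because $\Span$ is a $(2,1)$-category.
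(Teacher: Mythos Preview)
Your argument is correct, but it takes a different route from the paper's proof. The paper works directly with the definition of a limit as a terminal object of the overcategory $\Span_{/f}$: it writes out the $n$-cells of $\Span_{/f}$ explicitly as pairs of functors $C_{n+1}\rightarrow\Fin$ with the pullback property, specifies the candidate cone $P$, and then checks by hand that every map $\partial\Delta^n\rightarrow\Span_{/f}$ with last vertex $P$ extends over $\Delta^n$, by filling in the missing top entries of the span diagram as limits over the subposets $\hat C_n$ and $\hat C'_{n+1}$. Your approach instead exploits Proposition~\ref{Span-structure} to identify mapping spaces in $\Span$ with nerves of the hom-groupoids of $\TwoSpan$, reduces the product condition to the elementary groupoid isomorphism $\TwoSpan(Z,A\sqcup B)\isom\TwoSpan(Z,A)\times\TwoSpan(Z,B)$ coming from the decomposition $W\isom W_A\sqcup W_B$, and then invokes the mapping-space criterion for products from \cite{HTT}.

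Your route is cleaner and more conceptual: it replaces the combinatorics of extending partial span diagrams with a one-line groupoid computation, and the delicate point you flag (that the groupoid isomorphism really is the one induced by postcomposition with the projections) is exactly the right thing to isolate. The paper's route, by contrast, is more self-contained --- it does not need to import the identification of mapping spaces in coherent nerves or the mapping-space characterisation of limits --- and its explicit extension argument transports verbatim to the generalisation $\Span(\cC)$ for any $\cC$ with finite limits and coproducts (Proposition~\ref{prod-span-c}), whereas your decomposition $W\isom W_A\sqcup W_B$ over $A\sqcup B$ uses that coproducts in $\Fin$ are disjoint and stable under pullback, an extensivity hypothesis one would have to add explicitly to push your argument to general $\cC$.
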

\begin{proof}
Recall the definition of limits in quasicategories: if $f:K\rightarrow\Span$ is a morphism of simplicial sets, then a limit of $f$ is a terminal object of the over-category $\Span_{/f}$, given by
\[\left(\Span_{/f}\right)_n=\left\{\text{maps $\Delta_n\star K\rightarrow\Span$ which extend $f$}\right\}.\]

For us, $K=2=\{0,1\}$, with $f(0)=A$ and $f(1)=B$. Thus
\begin{eqnarray*}
\left(\Span_{/f}\right)_n&=&\left\{\text{maps $\Delta_n\star 2\rightarrow\Span$ which extend $f$}\right\}\\
&=&\big\{(X,Y)\in\Span_{n+1}^2\vert
             d_n X=d_n Y,\\
& &\hskip 25mm d_0d_1\cdots d_{n-1}X=A,\\
& &\hskip 25mm d_0d_1\cdots d_{n-1}Y=B\big\}\\
&=&\big\{\text{$X,Y:C_{n+1}\rightarrow\Fin$ with pullback property, such that}\\
& &\hskip 7mm\text{$X|_{C_n}=Y|_{C_n}$, $X(n+1)=A$ and $Y(n+1)=B$.}\big\}
\end{eqnarray*}

We now specify the object $P$ of $(\Span_{/f})_0$ which we claim is the product: it consists of the object $A\sqcup B\in\Span_0$, with projection maps $\{A\sqcup B\leftarrow A\rightarrow A\}$ and $\{A\sqcup B\leftarrow B\rightarrow B\}$.

We need to show that it is a strongly final object in $\Span_{/f}$. This means showing that any diagram $F:\partial\Delta^n\rightarrow\Span_{/f}$ with $F(n)=P$ extends to a diagram $\Delta^n\rightarrow\Span_{/f}$. This will be a straightforward, but notationally heavy, check.

Define $\hat C'_{n+1}$ to be the poset of subintervals of $\{0,\ldots,n+1\}$ that do not contain all of $\{0,\ldots,n\}$ (with the reverse inclusion order). Restricting to $\{0,\ldots,n\}$, we recover the poset $\hat C_n$ of proper subintervals of $\{0,\ldots,n\}$ defined in Section \ref{span-quasicategory}.

The simplicial structure on $\Delta^n$ guarantees that maps $\partial\Delta^n\rightarrow\Span_{/f}$ assemble to form diagrams $X,Y:\hat C'_{n+1}\rightarrow\Fin$ with the pullback property, such that $X|_{\hat C_n}=Y|_{\hat C_n}$, $X(n)=Y(n)=A\sqcup B$, $X(n,n+1)=X(n+1)=A$ and $Y(n,n+1)=Y(n+1)=B$.

For example, if $n=3$ the diagram $X$ is as follows:
\begin{displaymath}
\xymatrix@!R=6mm@!C=6mm{
&&&&&X(1,4)\ar[dl]\ar[dr]&&&\\
&&X(0,2)\ar[dl]\ar[dr]&&X(1,3)\ar[dl]\ar[dr]&&X(2,4)\ar[dl]\ar[dr]&&\\
&X(0,1)\ar[dl]\ar[dr]&&X(1,2)\ar[dl]\ar[dr]&&X(2,3)\ar[dl]\ar[dr]&&A\ar[dl]\ar[dr]&\\
X(0)&&X(1)&&X(2)&&A\sqcup B&&A,}
\end{displaymath}
and the diagram $Y$ is similar.

We can extend these to $C_{n+1}$ by defining
\begin{eqnarray*}
X(0,n)  &=&{\lim}_{\hat C_n}X,\\
Y(0,n)  &=&{\lim}_{\hat C_n}Y,\\
X(0,n+1)&=&{\lim}_{\hat C'_n}X,\\
Y(0,n+1)&=&{\lim}_{\hat C'_n}Y.
\end{eqnarray*}
We clearly have $X|_{C_n}=Y|_{C_n}$, have $X(n+1)=A$ and $Y(n+1)=B$ by definition, and it is quick to check the pullback property.
\end{proof}

The same proof suffices to prove the following:
\begin{prop}
\label{prod-span-c}
For any category $\cC$ with finite coproducts and finite limits, finite products in $\Span(\cC)$ exist, and are given on objects by coproducts in $\cC$. The inclusion maps are defined analogously to the case $\cC=\Fin$ above.
\end{prop}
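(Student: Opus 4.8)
The plan is to run the argument of the preceding proposition essentially verbatim, with $\Fin$ replaced throughout by $\cC$, disjoint union replaced by the coproduct in $\cC$, and the ad hoc finite limits of the base case replaced by limits taken in $\cC$. Concretely, for objects $A,B$ of $\cC$ regarded as $0$-cells of $\Span(\cC)$, and for $f\colon\{0,1\}\to\Span(\cC)$ the map picking them out, I would offer $P=A\sqcup B$ as the candidate product, with projection $1$-cells $A\sqcup B\leftarrow A\rightarrow A$ and $A\sqcup B\leftarrow B\rightarrow B$ whose backward legs are the coproduct inclusions $\iota_A,\iota_B$ and whose forward legs are identities. These data can be written down as soon as $\cC$ has finite coproducts, so no extra hypothesis is needed merely to name the object. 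It then remains, exactly as before, to show that $P$ is a strongly final object of the over-quasicategory $\Span(\cC)_{/f}$; equivalently, that every $F\colon\partial\Delta^n\to\Span(\cC)_{/f}$ with $F(n)=P$ extends to $\Delta^n$.

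The combinatorial backbone of that verification is category-independent, and I would simply transcribe it: the posets $C_n$, $\hat C_n$ and $\hat C'_{n+1}$, the way faces and degeneracies act on them, and the translation of a map $\partial\Delta^n\to\Span(\cC)_{/f}$ into a pair of functors $X,Y$ on $\hat C'_{n+1}$ with the pullback property, all make no reference to $\cC$. The two places where the special features of $\Fin$ entered the old proof are the coproduct $A\sqcup B$ (already handled above) and the formulas $X(0,n)=\lim_{\hat C_n}X$, $X(0,n+1)=\lim_{\hat C'_n}X$ (and likewise for $Y$) that supply the missing cells of the extension. Since $\hat C_n$ and $\hat C'_n$ are finite posets, these limits exist because $\cC$ is assumed finitely complete, and so the extension is defined in precisely the same manner.

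What genuinely has to be re-examined is the closing phrase of the base-case proof, that ``it is quick to check the pullback property'': one must confirm that the squares adjoined by the limit formulas are honest pullbacks in $\cC$. The part of this that is formal — a corner of a limit over a finite poset is computed by the evident pullback, and pullbacks paste — goes through in any finitely complete category, so I expect it to be mechanical. I anticipate the only real obstacle to lie in the interaction between the coproduct legs $\iota_A,\iota_B$ of the projection spans and these pullbacks: one needs that pulling a span back along a coproduct inclusion behaves as expected, and this is the single point at which a property of coproducts beyond their bare existence is invoked (for $\cC=\Fin$ this is the extensivity of disjoint union). I would therefore isolate this compatibility as a small lemma, verify it for $\cC=\Fin$ as in the base case, and check carefully that the general extension argument calls on nothing more than the stated finite completeness together with the coproducts used to build $P$ and its projections.
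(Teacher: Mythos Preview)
Your proposal is correct and matches the paper's approach exactly: the paper's entire proof is the single sentence ``The same proof suffices,'' and you have simply spelled out what that means. Your caution about extensivity is thoughtful but ultimately unnecessary here: once the boundary data are given, the missing cells $X(0,n)$ and $X(0,n+1)$ are defined as limits over finite posets, and the pullback property for the new squares is a formal consequence of how limits over posets decompose as iterated pullbacks --- the coproduct inclusions $\iota_A,\iota_B$ appear only as part of the fixed input data and are never themselves pulled back in the extension step, so no compatibility lemma is required.
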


As an important corollary, we have:
\begin{prop}
The functor $R$ makes the category $\Span$ into an algebraic theory, as introduced in Definition \ref{theory-defn}.
\end{prop}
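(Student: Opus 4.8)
The plan is to check the two requirements of Definition \ref{theory-defn}: that $\Span$ admits finite products and that it has a finite product generator, and then to identify $R$ as the functor realising this structure. The first requirement is already discharged by the preceding proposition, which shows that $\Span$ has finite products, computed on objects as disjoint union of finite sets.

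For the second requirement I would take the one-point set $*\in\Span_0$ and argue that it generates $\Span$ under finite products. Any finite set $X$ admits a canonical isomorphism $X\isom\coprod_{x\in X}*$ in $\Fin$, exhibiting it as a disjoint union of $|X|$ copies of $*$; by the preceding proposition this disjoint union is exactly the $|X|$-fold product of $*$ in $\Span$. Proposition \ref{span-isos} identifies equivalence in $\Span(\cC)$ with isomorphism in $\cC$, so this set-level isomorphism upgrades to an equivalence in $\Span$. Hence every object of $\Span$ is equivalent to a finite product of copies of $*$, so $*$ is a finite product generator and $\Span$ satisfies Definition \ref{theory-defn}.

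It then remains to see that $R=\bar R:\Finop\to\Span$ exhibits this structure. Products in $\Finop$ are coproducts in $\Fin$, hence disjoint unions, as are products in $\Span$; and $R$ is the identity on objects, sending each inclusion $X\hookrightarrow X\sqcup Y$ to the span $X\sqcup Y\leftarrow X\rightarrow X$. Comparing with the product cone $\{A\sqcup B\leftarrow A\rightarrow A\}$ written down in the preceding proposition, we see that $R$ carries the product projections of $\Finop$ precisely to those of $\Span$; thus $R$ is product-preserving and sends the generator $*$ of $\Finop$ to the generator $*$ of $\Span$.

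There is no serious obstacle here: the whole argument is a matter of matching the description of products in $\Span$ as disjoint unions against the images of the coproduct inclusions under $R$, together with transporting the evident set-level isomorphism through Proposition \ref{span-isos}. The only point needing any care is confirming that the span $\{A\sqcup B\leftarrow A\rightarrow A\}$ used earlier to present the product projection agrees with $R$ applied to the canonical inclusion, which is immediate from the definition of $\bar R$.
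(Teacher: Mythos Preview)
Your argument is correct and matches what the paper intends: the proposition is stated there as an immediate corollary of the preceding result on products in $\Span$, with no proof given, and you have simply spelled out the two checks that this entails. Your use of Proposition~\ref{span-isos} to pass from the set-level isomorphism $X\cong\coprod_{x\in X}*$ to an equivalence in $\Span$ is exactly the right ingredient, and your identification of $R$ with $\bar R:\Finop\to\Span$ and the verification that it preserves the product projections is the natural reading of the statement.
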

Accordingly, since $\Span$ was motivated by the desire to produce a quasicategorical version of the theory of monoids, we define:
\begin{defn}
\label{lawvere-monoid-object}
Let $\cC$ be a quasicategory with finite products. A \emph{(Lawvere) monoid object} in $\cC$ is a model of $\Span$ in $\cC$: a product-preserving functor $\Span\rightarrow\cC$.
\end{defn}

Also, since $\Span$ is self-opposite, we have
\begin{prop}
The category $\Span(\cC)$ has coproducts, which agree with products.
\end{prop}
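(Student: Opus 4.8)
The plan is to produce a self-duality of $\Span(\cC)$ as an isomorphism of simplicial sets, and then deduce the statement formally from the existence of products (Proposition \ref{prod-span-c}).

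First I would build an isomorphism $D\colon\Span(\cC)\to\Span(\cC)^\op$. The order-reversing bijection $i\mapsto n-i$ of $[n]$ induces for each $n$ an automorphism $\sigma_n$ of the poset $C_n$ of nonempty subintervals, carrying the interval $(i,\dots,j)$ to $(n-j,\dots,n-i)$ and preserving both intersections and unions. Precomposition $F\mapsto F\circ\sigma_n$ therefore preserves the pullback property, and so defines a bijection $\Span(\cC)_n\to\Span(\cC)_n$. Since $\sigma$ is compatible with the cosimplicial structure on $C$ up to the order-reversal of $\Delta$, this family of bijections intertwines the face and degeneracy maps of $\Span(\cC)$ with the reindexed operators $d_i\mapsto d_{n-i}$, $s_i\mapsto s_{n-i}$ of the opposite simplicial set; hence it assembles into an isomorphism $D$ onto $\Span(\cC)^\op$. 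On $0$-cells $\sigma_0$ is trivial, so $D$ is the identity on objects; on $1$-cells it sends a span $X_0\leftarrow X_{01}\rightarrow X_1$ to the flipped span $X_1\leftarrow X_{01}\rightarrow X_0$, which is the corresponding morphism of the opposite.

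The conclusion is then automatic. By Proposition \ref{prod-span-c} the quasicategory $\Span(\cC)$ has finite products, with the product of $A$ and $B$ given by $A\sqcup B$. An isomorphism of quasicategories carries limit cones to limit cones, so $D$ sends the product cone on the discrete diagram $\{A,B\}$ to a product cone on $\{A,B\}$ in $\Span(\cC)^\op$, with apex $D(A\sqcup B)=A\sqcup B$. But a product in $\Span(\cC)^\op$ is exactly a coproduct in $\Span(\cC)$, so finite coproducts exist; and because $D$ fixes objects, the coproduct of $A$ and $B$ is again $A\sqcup B$. The same argument applied to the empty diagram and to larger finite diagrams shows that coproducts agree with products throughout.

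I expect the only real work to be the bookkeeping in the construction of $D$: verifying that precomposition with $\sigma_n$ turns the simplicial operators of $\Span(\cC)$ into those of its opposite, and that the pullback property is stable under $\sigma_n$. Both follow from the single fact that $\sigma$ is an order-reversing automorphism of the cosimplicial poset $C$ preserving meets and joins, but this is the step where care is needed.
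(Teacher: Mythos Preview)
Your proposal is correct and follows the same approach as the paper: the paper's ``proof'' is the single clause ``since $\Span$ is self-opposite'' preceding the proposition, and you have supplied the explicit construction of that self-duality (via the order-reversing involution on $[n]$) and drawn the formal consequence from Proposition~\ref{prod-span-c}. Your version is in fact slightly more careful than the paper's, since the proposition concerns $\Span(\cC)$ for general $\cC$ while the paper only asserts self-duality for $\Span$ itself; your argument works uniformly in $\cC$.
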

An immediate consequence of this is that the theory $\Span$ is pointed, as defined in Section \ref{pointed-theories}; various important consequences of this are given there too.

\begin{remark}
As we have seen, the category $\Finsop$ is equivalent to the quasicategory of spans whose right arm is a monomorphism. Proposition \ref{finsop-property} tells us that models of $\Finsop$ are in some sense pointed objects. So $\Finsop$ carries the part of the theory of monoids dealing with the unit, but not the product.

If a theory is required which has an associative but nonunital product, the natural choice is the quasicategory of spans whose right arm is an epimorphism; this ensures that products are only taken over nonempty sets.
\end{remark}

\subsection{The category $\Spant$}

Now we introduce a category $\Spant$. Using the notation of subsection \ref{span-quasicategory}, we define $\Spant=\Span(\Arr(\Fin))$, where $\Arr(\Fin)$ is the category of arrows in $\Fin$.

So an $n$-cell of $\Spant$ is a pair of span diagrams $\{X_{ij}\},\{Y_{ij}\}\in\Span_n$ with maps $f_{ij}:X_{ij}\rightarrow Y_{ij}$, which commute with all the structure maps. 

Equivalently, it's a natural transformation between functors $X,Y:C_n\rightarrow \Fin$, where both $X$ and $Y$ have the pullback property.

There's a 2-functor $p:\Spant\rightarrow\Span$ coming from the functor $\Arr(\Fin)\rightarrow\Fin$ which sends $(X\rightarrow Y)$ to $Y$. According to the description above, this sends a morphism of span diagrams to the codomain.

Now, we want to study this functor. First we find a good supply of $p$-cartesian morphisms (as introduced in Definition \ref{defn-cartesian-fibration} above).

\begin{prop}
\label{spant-span-cartesian-morphisms}
Any 1-cell of $\Spant$ of the form
\begin{displaymath}
\xymatrix{Y_0\ar[d]&Y_{01}\ar[l]_{=}\ar[r]\ar[d]\pb{315}&Y_1\ar[d]\\
          X_0      &X_{01}\ar[l]    \ar[r]              &X_1.}
\end{displaymath}
ie. which has the top left map the identity and right-hand square a pullback, is $p$-cartesian.
\end{prop}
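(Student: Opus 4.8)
The plan is to exploit that both $\Span=\Span(\Fin)$ and $\Spant=\Span(\Arr(\Fin))$ are $(2,1)$-categories. Since $\Arr(\Fin)=\Fun(\Delta^1,\Fin)$ has pullbacks, computed pointwise, Proposition~\ref{Span-structure} applies to both. Recall that $p$ is induced by the codomain functor, so it projects onto the bottom ($X$) row of a diagram; writing $a$ and $b$ for the source and target of the $1$-cell $f$, we have $p(a)=X_0$, $p(b)=X_1$, and $pf$ is the span $X_0\leftarrow X_{01}\rightarrow X_1$. By Definition~\ref{defn-cartesian-fibration} it suffices to show that
\[L_f\colon\Spant_{/f}\longrightarrow\Spant_{/b}\timeso{\Span_{/X_1}}\Span_{/pf}\]
is an acyclic Kan fibration. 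Slices of $(2,1)$-categories are again $(2,1)$-categories (Lurie), and this class is closed under the fibre product occurring here (by the closure-under-limits results of Subsection~\ref{n-1-categories}); hence both source and target of $L_f$ are $(2,1)$-categories, and by Proposition~\ref{acyclic-kan} the lifting problems $\partial\Delta^m\to\Delta^m$ are automatically solvable for $m\geq 4$. It therefore remains to solve them for $m\in\{0,1,2,3\}$.

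Next I would translate each of these finitely many lifting problems into a concrete extension problem for diagrams of finite sets. An $n$-cell of $\Spant_{/f}$ is an $(n+2)$-cell of $\Spant$ whose restriction to the terminal edge is $f$; that is, a natural transformation between two functors $C_{n+2}\to\Fin$, each with the pullback property, the lower of which is the part seen by $p$. A lift of $\partial\Delta^m\to\Delta^m$ then prescribes the lower functor (the $X$-layer) completely, through the $\Span_{/pf}$ factor of the target; it prescribes the upper functor (the $Y$-layer) both on the boundary $\partial\Delta^m$ and through the $\Spant_{/b}$ factor; and it asks us to extend the $Y$-layer over the remaining subintervals of $[m+2]$ while retaining the pullback property.

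The crux is that the two hypotheses on $f$ --- that the top-left arm is an identity, so $Y_{01}=Y_0$, and that the right-hand square is a pullback, so $Y_{01}=X_{01}\timeso{X_1}Y_1$ --- are exactly what make these extensions possible. The pullback hypothesis lets one manufacture the missing $Y$-objects lying towards the target as honest pullbacks of already-known $Y$-objects over the prescribed $X$-layer, while the identity hypothesis transports the remaining $Y$-objects lying towards the source along the left (copying) arm. Because $\Fin$ carries a functorial choice of pullbacks these objects exist, and the pullback property of $\Span$ both dictates their construction and, once the construction is carried out, is readily checked for the assembled diagram; this yields the desired filler. Heuristically, the identity leg is what keeps the copying direction compatible: for a general $1$-cell the prescribed $Y$-data would fail to be pullback-compatible and the lift need not exist, which is exactly why cartesianness singles out spans of this shape.

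The obstacle I anticipate is organisational rather than conceptual: keeping track, across $m=0,1,2,3$, of which subintervals of $[m+2]$ carry prescribed data and which carry objects to be built, and then verifying the pullback property of the extended diagram in each case. The genuinely load-bearing point is the claim of the preceding paragraph --- that ``identity left arm together with pullback right square'' is precisely strong enough to leave the extension unobstructed; once this is established, all that remains reduces to the existence and functoriality of pullbacks in $\Fin$.
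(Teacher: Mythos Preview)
Your proposal is correct and follows essentially the same route as the paper: reduce via the $(2,1)$-category machinery of Subsection~\ref{n-1-categories} to the four lifting problems $\partial\Delta^m\to\Delta^m$ with $m\le 3$, translate each into an extension problem for the $Y$-layer over a prescribed $X$-layer, and solve it using the identity-left-arm and pullback-right-square hypotheses. The paper carries this out explicitly case by case; your heuristic that the pullback condition manufactures the missing $Y$-objects near the target while the identity leg transports data near the source is exactly how the $m=0$ case is dispatched. Be aware, though, that the $m=1$ case in the paper involves a genuine subtlety your outline glosses over: one must check that certain parallel morphisms into a $Y$-object actually agree, and this uses the pullback hypothesis in a less mechanical way (the universal property of $Y_{34}$ forces agreement of two maps that are \emph{not} a priori equal). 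So the ``organisational'' work you anticipate is not purely bookkeeping; it is worth doing at least the $m=1$ case in full to see this.
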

\begin{proof}
By Proposition \ref{acyclic-kan}, there are four checks to make on the functor
\[\Spant_{/f}\longrightarrow(\Spant_{/y})\timeso{\Span_{/py}}(\Span_{/pf})\]
to show that it is an acyclic Kan fibration: we must check it has the right lifting property with respect to $\partial\Delta^m\rightarrow\Delta^m$ for $m\leq 3$. We are using the notation $y$ for the 0-cell of $\Spant$ given by $Y_4\rightarrow X_4$.

Firstly, we show the existence of liftings for $\emptyset\rightarrow\Delta^0$.

Given a diagram like the following, which represents a $0$-cell of $\Spant_{/y}\timeso{\Span_{/py}}\Span_{/pf}$,
\begin{displaymath}
\xymatrix{
  &&Y_{24}\ar[dldl]\ar[drdr]\ar[d]&&\\
  &&X_{24}\pb{270}\ar[dl]\ar[dr]&&\\
  Y_2\ar[d]&X_{23}\ar[dl]\ar[dr]&&X_{34}\ar[dl]\ar[dr]&Y_4\ar[d]\\
  X_2&&X_3&&X_4,}
\end{displaymath}
we can fill it in to form a full 0-cell of $\Spant_{/f}$ as follows:
\begin{displaymath}
\xymatrix{
  &&Y_{24}\ar@<-2ex>[dldl]\dar[dl]^{=}\ar@<2ex>[drdr]\dar[dr]\ar[d]&&\\
  &Y_{24}\dar[d]\dar[dl]\dar[dr]&X_{24}\pb{270}\ar[dl]\ar[dr]&Y_{34}\dar[d]\dar[dl]^>>>{=}\dar[dr]&\\
  Y_2\ar[d]&X_{23}\ar[dl]\ar[dr]&Y_{34}\dar[d]&X_{34}\ar[dl]\ar[dr]&Y_4\ar[d]\\
  X_2&&X_3&&X_4,}
\end{displaymath}
and this is the required lifting.

Next, a diagram
\begin{displaymath}
\xymatrix{
\partial\Delta^1\ar[r]\ar[d]&\Spant_{/f}\ar[d]\\
\Delta^1\ar[r]&(\Spant_{/y})\timeso{\Span_{/py}}(\Span_{/pf})}
\end{displaymath}
gives us a configuration of $Y$'s as follows:
\begin{displaymath}
\xymatrix{
&&&Y_{14}\ar[dl]\ar[dr]\ar@<2ex>[drdr]\pb{270}&&&\\
&&Y_{13}\ar[dl]\ar@<2ex>[drdr]&&Y_{24}\ar[dl]\ar[dr]\pb{270}&&\\
&Y_{12}\ar[dl]\ar[dr]&&Y_{23}\ar[dl]\ar[dr]&&Y_{34}\ar[dl]_{=}\ar[dr]&\\
Y_1&&Y_2&&Y_3&&Y_4,}
\end{displaymath}
where all squares commute and are pullbacks. There is also a full diagram of $X_{ij}$'s, and maps $Y_{ij}\rightarrow X_{ij}$. The parallel morphisms $Y_{14}\rightarrow Y_{34}$ do not have to agree \emph{prima facie}, but the composites $Y_{14}\rightarrow Y_4$ do agree. This maps to a complete span diagram of $X$'s in the obvious way.

However, since $Y_{34}$ is a pullback, the parallel morphisms into it do commute (since the two composites into $Y_4$ and $X_{34}$ do agree).

The maps $Y_{14}\rightarrow Y_{13}$ and $Y_{24}\rightarrow Y_{23}$ are isomorphisms, since they're pullbacks of an isomorphism. This allows us to define a map $Y_{13}\rightarrow Y_{23}$, which makes the resulting top and left squares into pullbacks. Finally, the resulting parallel pair of morphisms $Y_{13}\rightarrow Y_3$ agree, since they are isomorphic to the pair considered earlier.

Now we brace ourselves and consider liftings for $\partial\Delta^2\rightarrow\Delta^2$. Here the morphism $\partial\Delta^2\rightarrow\Spant_{/f}$ gives us a diagram like
\begin{displaymath}
\xymatrix{
&&&&Y_{04}\ar[dl]\ar[dr]\ar@<2ex>[drdr]\pb{270}&&&&\\
&&&Y_{03}\ar[dl]\ar[dr]\ar@<2ex>[drdr]\pb{270}&&Y_{14}\ar[dl]\ar[dr]\pb{270}&&&\\
&&Y_{02}\ar[dl]\ar[dr]\ar@<2ex>[drdr]\pb{270}&&Y_{13}\ar[dl]\ar[dr]\pb{270}&&Y_{24}\ar[dl]\ar[dr]\pb{270}&&\\
&Y_{01}\ar[dl]\ar[dr]&&Y_{12}\ar[dl]\ar[dr]&&Y_{23}\ar[dl]\ar[dr]&&Y_{34}\ar[dl]\ar[dr]&\\
Y_0&&Y_1&&Y_2&&Y_3&&Y_4}
\end{displaymath}
Here all squares are pullbacks, but it is not given that the parallel pairs agree. However, the morphism $\Delta^2\rightarrow\Spant_{/y}$ gives us exactly this necessary extra coherence data, completing this check.

Lastly, it is straightforward to check that, given a lifting problem for $\partial\Delta^3\rightarrow\Delta^3$, all data is given and is coherent: we get a complete span diagram.
\end{proof}

\begin{prop}
\label{span-cart-fib}
The map $p:\Spant\rightarrow\Span$ is a cartesian fibration.
\end{prop}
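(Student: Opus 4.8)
The plan is to unwind Definition \ref{defn-cartesian-fibration}: being a cartesian fibration means $p$ is an inner fibration and that $p$-cartesian lifts of target objects exist. The second requirement is essentially already settled by Proposition \ref{spant-span-cartesian-morphisms}, so the real work is the inner fibration condition, after which the cartesian lifts can be read off from that proposition. Throughout I would use that both $\Spant=\Span(\Arr(\Fin))$ and $\Span=\Span(\Fin)$ are $(2,1)$-categories by Proposition \ref{Span-structure}.

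First I would check that $p$ is an inner fibration. Since source and target are $(2,1)$-categories, Proposition \ref{inner-fibs} reduces this to the right lifting property against the single inner horn $\Lambda^2_1\rightarrow\Delta^2$, which the explicit criterion following that proposition unravels as follows: given composable $1$-cells $x\stackrel{f}{\rightarrow}y\stackrel{h}{\rightarrow}z$ in $\Spant$ together with a $2$-cell $k':p(h)\circ p(f)\Rightarrow g'$ in $\Span$, I must produce a $1$-cell $g\colon x\rightarrow z$ and a $2$-cell $k\colon h\circ f\Rightarrow g$ in $\Spant$ lying over $g'$ and $k'$. Here $f$ and $h$ are spans of arrows, and their composite $h\circ f$ is computed by pullback in $\Arr(\Fin)$ (which is $\Fun(\Delta^1,\Fin)$, so pullbacks are formed pointwise and $p$ preserves them). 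The $2$-cell $k'$ is an isomorphism of the codomain spans, identifying the middle set of the standard composite $p(h)\circ p(f)$ with the middle set of $g'$. The lift is then pure transport of structure: I keep the top (domain) span of $h\circ f$ unchanged and relabel only the bottom (codomain) span along $k'$, obtaining $g$ whose middle arrow is the composite of the middle arrow of $h\circ f$ with the isomorphism $k'$; the covering $2$-cell $k$ is the identity on the top and $k'$ on the bottom. This visibly maps to $g'$ and $k'$, so $p$ is an inner fibration.

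Finally I would supply the cartesian lifts. Given a $1$-morphism $f\colon X_0\rightarrow X_1$ of $\Span$, that is a span $X_0\leftarrow X_{01}\rightarrow X_1$ of finite sets, and a lift $\tilde y=(Y_1\rightarrow X_1)$ of its target, I form the pullback $Y_{01}=X_{01}\times_{X_1}Y_1$, take the top-left structure map to be the identity (so the domain arrow is $Y_{01}\rightarrow X_{01}\rightarrow X_0$), and let the right-hand square be this pullback. The resulting $1$-cell of $\Spant$ has precisely the shape required by Proposition \ref{spant-span-cartesian-morphisms}, hence is $p$-cartesian, has target $\tilde y$, and maps to $f$ under $p$. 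Thus $p$ satisfies both clauses of Definition \ref{defn-cartesian-fibration} and is a cartesian fibration. The main obstacle is the inner fibration verification, but since $\Span$ has only invertible $2$-cells it collapses to transporting an isomorphism across the pullback composite; the genuinely technical content (the acyclic Kan checks) was discharged in advance in Proposition \ref{spant-span-cartesian-morphisms}.
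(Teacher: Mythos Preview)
Your proposal is correct and follows essentially the same route as the paper: you reduce the inner fibration check to the single horn $\Lambda^2_1\rightarrow\Delta^2$ via Proposition~\ref{inner-fibs}, fill it by taking the pullback composite in $\Arr(\Fin)$ and transporting along the given $2$-cell, and then build the cartesian lift by pulling back along the right leg with identity on the left, invoking Proposition~\ref{spant-span-cartesian-morphisms}. The paper's argument is identical up to phrasing and indexing.
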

\begin{proof}
Firstly, we show that the map is an inner fibration. By Proposition \ref{inner-fibs}, we need only check horn extensions for $\Lambda^2_1\rightarrow\Delta^2$.
This gives us the following diagram:
\begin{displaymath}
\xymatrix{
% &&X_{02}\ar@{..>}[dl]\ar@{..>}[dr]\ar@{..>}[d]\pb{270}&&\\
&X_{01}\ar[dl]\ar[dr]\ar[d]&Y_{02}\ar[dl]\ar[dr]\pb{270}&X_{12}\ar[dl]\ar[dr]\ar[d]&\\
X_0\ar[d]&Y_{01}\ar[dl]\ar[dr]&X_1\ar[d]&Y_{12}\ar[dl]\ar[dr]&X_2\ar[d]\\
Y_0&&Y_1&&Y_2.}
\end{displaymath}
This can be filled in to a full map of span diagrams by taking $X_{02}$ to be the pullback of $X_{01}\rightarrow X_1\leftarrow X_{12}$; this maps to $Y_{02}$ in an appropriate manner.

Given a 1-cell $X_3\leftarrow X_{34}\rightarrow X_4$ of $\Span$ (the numbering will make sense later) and an 0-cell $Y_4\rightarrow X_4$ of $\Spant$, we need to find a $p$-cartesian morphism of $\Spant$ which restricts to these two.

But we can define $Y_{34}$ to form a 1-cell of $\Spant$ as follows:
\begin{displaymath}
\xymatrix{Y_{34}\ar[d]&Y_{34}\ar[l]_{=}\ar[r]\ar[d]\pb{315}&Y_4\ar[d]\\
          X_3         &X_{34}\ar[l]    \ar[r]              &X_4.}
\end{displaymath}
This is $p$-cartesian by Proposition \ref{spant-span-cartesian-morphisms} above.
\end{proof}

This construction is compatible with the construction by Lurie \cite{HA}*{Notation 2.4.1.2} of the cartesian fibration $\Gamma^\times\rightarrow\Fin$, in the following sense:
\begin{prop}
There is a commuting diagram
\begin{displaymath}
\xymatrix{\Gamma^\times\ar[r]^{L^\times}\ar[d]&\Spant\ar[d]\\
          \Fins \ar[r]_{L}             &\Span.}
\end{displaymath}
\end{prop}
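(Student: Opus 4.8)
The key observation is that every corner of the square is the nerve of $2$-categorical data, so it suffices to work one categorical level down and then pass to nerves. Indeed, Proposition~\ref{Span-structure} (applied to $\cC=\Arr(\Fin)$) identifies $\Spant=\Span(\Arr(\Fin))$ with $N(\TwoSpan(\Arr(\Fin)))$ and identifies $\Span$ with $N(\TwoSpan(\Fin))$; the base $\Fins$ is the nerve of the ordinary category of finite pointed sets; and, inspecting Lurie's construction \cite{HA}*{Notation 2.4.1.2}, the total space $\Gamma^\times$ is likewise the nerve of an ordinary category $\cG$, with projection $\pi=N(\pi_0)$ for a strict functor $\pi_0\colon\cG\to\Fins$. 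Under these identifications the map $p$ is the nerve of the strict $2$-functor $\TwoSpan(\mathrm{cod})\colon\TwoSpan(\Arr(\Fin))\to\TwoSpan(\Fin)$ coming from the pullback-preserving functor $\mathrm{cod}\colon\Arr(\Fin)\to\Fin$, and $L$ is the nerve of the lax functor $\lambda\colon\Fins\to\TwoSpan(\Fin)$ sending a finite pointed set to its set of non-basepoint elements and a pointed map to the span given by its domain of definition, the compositors being the canonical isomorphisms witnessing that partial functions compose by pullback.

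Accordingly, I would construct $L^\times$ as the nerve of a lax functor $\Lambda\colon\cG\to\TwoSpan(\Arr(\Fin))$. On an object of $\cG$ lying over $\langle n\rangle$, whose fibre data amounts to a tuple $(A_1,\dots,A_n)$ of finite sets, set $\Lambda(A_\bullet)$ to be the arrow $\coprod_i A_i\to\{1,\dots,n\}$, so that its codomain is exactly $\lambda(\langle n\rangle)$. On a morphism lying over a pointed map $\alpha$, take for the codomain leg the domain-of-definition span $\lambda(\alpha)$ and build the domain leg from the transition data of $\alpha$ in $\cG$, so that the two rows assemble into a span in $\Arr(\Fin)$, that is, a $1$-cell of $\TwoSpan(\Arr(\Fin))$; the compositor $2$-cells are the canonical isomorphisms of spans arising from the compatibility of these pullbacks with those used by $\lambda$. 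By design $\Lambda$ is arranged so that $\TwoSpan(\mathrm{cod})\circ\Lambda$ agrees \emph{strictly}, compositors included, with $\lambda\circ\pi_0$.

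Passing to nerves then finishes the job. The coherence result of Subsection~\ref{two-one-categories}, that a lax functor between bicategories with invertible $2$-cells induces a map of quasicategories on nerves, produces $L^\times=N(\Lambda)$, and functoriality of the nerve turns the strict identity $\TwoSpan(\mathrm{cod})\circ\Lambda=\lambda\circ\pi_0$ into the required commuting square $p\circ L^\times=L\circ\pi$, on the nose rather than merely up to equivalence. The delicate points are purely bookkeeping: first, matching Lurie's variance conventions for $\Gamma^\times$ against the self-duality of $\Span$, so that the transition data of $\cG$ genuinely yields the domain legs of spans; and second, checking that the compositors of $\Lambda$ satisfy the pseudofunctor coherence axiom and restrict to those of $\lambda$ on codomains. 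This last verification is where the real (if formal) work sits, and it is exactly the pullback compatibility already exploited in Proposition~\ref{Span-structure} and in the analysis of $p$-cartesian morphisms in Proposition~\ref{spant-span-cartesian-morphisms}.
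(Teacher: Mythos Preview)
The paper's own proof of this proposition is left empty, so there is no argument to compare against; the author evidently regards the construction of $L^\times$ and the commutativity as immediate from unwinding the definitions. Your strategy of building $L^\times$ at the level of bicategories (or ordinary categories) and then taking nerves is exactly the right way to make this precise, and is consistent with how the paper treats $L$ and $p$ elsewhere.

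There is, however, a concrete error in your description of $\Gamma^\times$. In Lurie's \cite{HA}*{Notation 2.4.1.2}, an object of $\Gamma^\times$ lying over $\langle n\rangle$ is not a tuple $(A_1,\dots,A_n)$ of arbitrary finite sets; it is a \emph{subset} $S\subseteq\langle n\rangle^\circ$. The fibre over $\langle n\rangle$ is therefore the poset of subsets of $\{1,\dots,n\}$, not anything like $\Fin^n$. Accordingly, the correct assignment on objects is
\[
L^\times\bigl(\langle n\rangle,\,S\bigr)\;=\;\bigl(S\hookrightarrow\{1,\dots,n\}\bigr)\in\Arr(\Fin),
\]
i.e.\ the inclusion of the subset, not a coproduct $\coprod_i A_i\to\{1,\dots,n\}$. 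On a morphism $\alpha\colon(\langle m\rangle,S)\to(\langle n\rangle,T)$ one then takes the span in $\Arr(\Fin)$ whose codomain row is $\lambda(\alpha)$ and whose domain row is $S\hookleftarrow\alpha^{-1}(T)\cap S\to T$, the right-hand square being a pullback by construction. With this correction the rest of your outline goes through: the compositors are canonical pullback isomorphisms, they restrict to those of $\lambda$ on codomains, and strict equality $\TwoSpan(\mathrm{cod})\circ\Lambda=\lambda\circ\pi_0$ follows, giving the commuting square on nerves.
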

\begin{proof}
\end{proof}

\subsection{Cartesian morphisms for $\Spant\rightarrow\Span$}

In this section we classify all morphisms which are $p$-cartesian, where $p:\Spant\rightarrow\Span$ is the natural projection map.

For convenience of notation, we will work with the equivalent notion in the opposite categories: classifying $p^\op$-cocartesian morphisms where $p^\op$ is the corresponding morphism ${\Spant}^\op\rightarrow\Span^\op$.

In the proof of Proposition \ref{span-cart-fib}, we showed that a 1-cell $F\in{\Spant}^\op_1$ given by
\begin{displaymath}
  \xymatrix{X_0\ar[d]&X_{01}\ar[l]_{\lambda^X_{01}}\ar[r]^{\rho^X_{01}}\ar[d]&X_1\ar[d]\\
            Y_0      &Y_{01}\ar[l]^{\lambda^Y_{01}}\ar[r]_{\rho^Y_{01}}      &Y_1}
\end{displaymath}
is $p^\op$-cocartesian if the morphism $\lambda^X_{01}$ is an isomorphism, and if the right-hand square is a pullback square.

We write $T_F$ for ${\Spant}^\op_{F_0/}\times_{(\Span^\op_{X_0/})}\Span^\op_{X/}$.

The argument depends on the diagrams used in the proof of Proposition \ref{span-cart-fib}. We will take to drawing the bottom part of a span upside-down: this will simplify the diagrams in practice.

\begin{prop}
\label{cart-pb-surj}
If $F$ is $p^\op$-cocartesian, then the natural map $X_{01}\rightarrow X_0\times_{X_0}X_{01}$ is surjective.
\end{prop}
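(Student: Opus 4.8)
The plan is to read the surjectivity off the defining acyclic Kan condition for a cocartesian edge, used only in its lowest dimension. By Definition \ref{defn-cartesian-fibration} (applied to $p^\op$), the hypothesis that $F$ is $p^\op$-cocartesian says exactly that the comparison functor
\[{\Spant}^\op_{F/}\longrightarrow T_F={\Spant}^\op_{F_0/}\timeso{\Span^\op_{X_0/}}\Span^\op_{X/}\]
is an acyclic Kan fibration. An acyclic Kan fibration is in particular surjective on $0$-cells (it has the right lifting property against $\emptyset\to\Delta^0$), and this is the only consequence I would need. No deeper lifting is required here, so \ref{acyclic-kan} enters only implicitly through the fact that both sides are $(2,1)$-categories (via \ref{Span-structure} and the closure results of Section \ref{n-1-categories}), which is what makes these slice constructions well behaved.

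First I would unravel the two kinds of $0$-cell involved. A $0$-cell of $T_F$ is a morphism $G$ out of $F_0$ in ${\Spant}^\op$ together with a $2$-simplex $\Sigma$ of $\Span^\op$ whose initial edge is $X=p^\op F$ and whose long ($d_1$) edge is $p^\op G$; a $0$-cell of ${\Spant}^\op_{F/}$ is a $2$-simplex $\Theta$ of ${\Spant}^\op$ with initial edge $F$, and the comparison functor sends $\Theta$ to the pair $(d_1\Theta,\,p^\op\Theta)$. Following the diagrammatic method of the proof of Proposition \ref{spant-span-cartesian-morphisms}, and drawing the lower layer upside-down as announced, I would then write down one specific such $(G,\Sigma)$: I take the base-layer $2$-simplex $\Sigma$ to be the composite of $X$ with the test span whose long edge has apex exactly $X_0\timeso{X_0}X_{01}$, and let $G$ carry a chosen element of that apex. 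Surjectivity of the comparison functor on $0$-cells then guarantees a lift $\Theta\in{\Spant}^\op_{F/}$ of this datum.

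Finally I would exploit the pullback property of $\Theta$ (from the definition of $\Span(\cC)$): it identifies the apex of $d_1\Theta$ with the relevant fibre product, and forces the structure map emanating from $X_{01}$ to be precisely the comparison map $X_{01}\to X_0\timeso{X_0}X_{01}$. Since the lift realises the chosen element as the image of a point of $X_{01}$, every element is hit, which is the asserted surjectivity. I expect the one genuinely fiddly step to be the construction and verification of the test datum $(G,\Sigma)$: one must arrange the several span layers, through both the opposite-category and the upside-down conventions, so that the pullback property of $\Theta$ returns exactly the comparison map in the statement rather than some isomorphic variant of it. Once that bookkeeping is pinned down, the conclusion is immediate from the existence of the lift.
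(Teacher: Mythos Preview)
Your approach is essentially the paper's: use only the $\emptyset\to\Delta^0$ lifting from the acyclic Kan condition, build a test $0$-cell of $T_F$ that encodes a given point of the fibre product, and extract a preimage in $X_{01}$ from the lift. The paper carries this out with the singleton $1$ as test object: given $(x,y)\in X_0\times_{Y_0}Y_{01}$ (the statement's $X_0\times_{X_0}X_{01}$ is a typo), it takes $\Sigma$ to be the $Y$-layer triangle on vertices $Y_0,Y_1,1$, and $G$ the arrow-span with top row $X_0\stackrel{x}{\leftarrow}1\stackrel{=}{\rightarrow}1$ lying over the long $Y$-edge $Y_0\leftarrow Y_{01}\rightarrow 1$, with the vertical map $1\to Y_{01}$ given by $y$. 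The lift is then simply a dashed arrow $1\to X_{01}$ hitting $(x,y)$.

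Your description has the layers slightly tangled. Since $p$ projects to the $Y$-part, the base $2$-simplex $\Sigma$ lives entirely in the $Y$-world and cannot have $X_0\times_{Y_0}Y_{01}$ as an apex; that fibre product (or a chosen element of it) must be carried by the top layer of $G$ instead. Likewise the lift $\Theta$ does not produce the comparison map $X_{01}\to X_0\times_{Y_0}Y_{01}$ as one of its structure maps; it produces a map \emph{into} $X_{01}$ (the dashed arrow), and it is the commutativity of that arrow with the solid data that witnesses the preimage. With those placements corrected, your sketch and the paper's proof coincide.
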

\begin{proof}
  Given an element $(x,y)\in X_0\times_{Y_0}Y_{01}$, the solid arrows of the following diagram describe a cell $\Delta^0\rightarrow T_F$:
\begin{displaymath}
 \xymatrix{&&1\ar@/_2ex/[ddll]_x\dar[dl]\ar[ddrr]^=\ar@/^1.7ex/[ddddd]^y&&\\
           &X_{01}\ar[ddd]\ar[dl]\ar[dr]&&&\\
           X_0\ar[d]&&X_1\ar[d]&&1\ar[d]\\
           Y_0&&Y_1&&1\\
           &Y_{01}\ar[ul]\ar[ur]&&Y_1\ar[ul]\ar[ur]&\\
           &&Y_{01}\ar[ul]\ar[ur]&&}
\end{displaymath}
We are assuming that an extension to a cell $\Delta^0\rightarrow\Spant_{F/}$ exists; this provides us with the dotted arrow $1\rightarrow X_{01}$: an element of $X_{01}$ which maps to $(x,y)$. This proves surjectivity.
\end{proof}

\begin{prop}
\label{cart-topmap-surj}
If $F$ is $p^\op$-cocartesian, then the map $\rho_{01}^X:X_{01}\rightarrow X_1$ is surjective.
\end{prop}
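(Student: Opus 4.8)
The plan is to run this exactly parallel to the proof of Proposition~\ref{cart-pb-surj}, of which it is the natural companion. Since $F$ is $p^\op$-cocartesian, the comparison map
\[\Spant^\op_{F/}\longrightarrow T_F = {\Spant}^\op_{F_0/}\timeso{\Span^\op_{X_0/}}\Span^\op_{X/}\]
is an acyclic Kan fibration by Definition~\ref{defn-cartesian-fibration}. In particular it has the right lifting property against $\emptyset\rightarrow\Delta^0$, so every $0$-cell of $T_F$ lifts to a $0$-cell of $\Spant^\op_{F/}$. I would use this to exhibit, for an arbitrary element $x_1\in X_1$, a $0$-cell of $T_F$ whose lift supplies an element of $X_{01}$ that $\rho^X_{01}$ carries to $x_1$; since $x_1$ is arbitrary this gives surjectivity.

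First I would build the cell. Given $x_1\colon 1\rightarrow X_1$, I would record it over the vertex $X_1$ together with its forced companions: its image in $Y_1$ under the arrow map $X_1\rightarrow Y_1$, and the span data obtained by adjoining an auxiliary one-point vertex selecting $x_1$ and composing with the given span. Drawn in the same upside-down style as in the proof of Proposition~\ref{cart-pb-surj} (with the bottom, $Y$-part of each span inverted), the solid arrows of this diagram describe a $0$-cell $\Delta^0\rightarrow T_F$: the component in $\Span^\op_{X/}$ is a composable span configuration extending the base edge, while the component in $\Spant^\op_{F_0/}$ supplies the compatible arrow data, the two agreeing over $\Span^\op_{X_0/}$ as required.

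Applying the lifting property then extends this to a $0$-cell $\Delta^0\rightarrow\Spant^\op_{F/}$, and reading off the newly filled-in $X$-side datum yields a dotted arrow $1\rightarrow X_{01}$, that is, an element of $X_{01}$ whose image under $\rho^X_{01}$ is the prescribed $x_1$. As $x_1$ was arbitrary, $\rho^X_{01}$ is surjective.

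The hard part, as in Proposition~\ref{cart-pb-surj}, will be the middle step rather than the lifting: one must check that the diagram written down genuinely is a $0$-cell of $T_F$ --- that the squares required to be pullbacks are pullbacks, and that the $\Spant^\op_{F_0/}$ and $\Span^\op_{X/}$ pieces restrict to the same cell of $\Span^\op_{X_0/}$ --- and then confirm that the lift deposits its new datum precisely in $X_{01}$, over $x_1$, rather than at some other vertex. This is a notationally heavy but entirely routine verification of exactly the kind carried out for the preceding proposition.
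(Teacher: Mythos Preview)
There is a genuine gap here. Your strategy mirrors Proposition~\ref{cart-pb-surj} by using the lifting property against $\emptyset\rightarrow\Delta^0$, but the asymmetry in the definition of $T_F$ makes this fail for $\rho^X_{01}$.

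Recall that $T_F = {\Spant}^\op_{F_0/}\timeso{\Span^\op_{X_0/}}\Span^\op_{X/}$ with $F_0 = (X_0\rightarrow Y_0)$. A $0$-cell of $T_F$ therefore consists of a full $2$-cell of $\Span$ extending the base edge (the $Y$-data) together with a single $\Spant$ $1$-cell incident to $F_0$; the only $X$-side datum you are allowed to prescribe is a span leg landing in $X_0$. This is exactly why Proposition~\ref{cart-pb-surj} works: the given $(x,y)\in X_0\times_{Y_0}Y_{01}$ provides a map $1\rightarrow X_0$, and the lift is forced to factor through $X_{01}$ over that element. But nothing in a $0$-cell of $T_F$ records a map into $X_1$; the $X$-part of the edge touching $(X_1\rightarrow Y_1)$ is precisely what the lift \emph{creates}, not what it is given. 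So a lift will produce some element of $X_{01}$, but you have no control over where $\rho^X_{01}$ sends it --- the best you could do via the $Y$-data is constrain its image in $Y_1$, which is not enough.

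The paper's argument avoids this by passing to the lifting property against $\partial\Delta^1\rightarrow\Delta^1$ and arguing by contradiction. Assuming $x\in X_1$ has empty preimage, one builds a $\partial\Delta^1$-diagram in $\Spant^\op_{F/}$: one of those two full $2$-cells has its $X_{13}=1$ mapping to $X_1$ via $x$, and the pullback $X_{03}$ is then legitimately $\emptyset$ precisely because $x$ has no preimage. The accompanying $\Delta^1\rightarrow T_F$ is also valid. But any filler would have to supply the structure map $X_{13}\rightarrow X_{12}$, i.e.\ a map $1\rightarrow\emptyset$, which is impossible. The higher-dimensional lifting is what lets one pin down $X_1$-data, via the $\partial\Delta^1$ input rather than via $T_F$.
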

\begin{proof}
Suppose this is not the case: that $x\in X_1$ has no preimage in $X_{01}$.

We consider a lifting problem for $\partial\Delta^1\rightarrow\Delta^1$ along $\Spant_{/F}\rightarrow T_F$. The data of such a situation is specified by solid arrows of the following diagram:
\begin{displaymath}
 \xymatrix{
&&&0\ar[dl]\ar[dr]&&&\\
&&0\ar[dl]\ar[dr]&&1\ar@/^1.7ex/[ddll]\dar[dl]\ar[dr]&&\\
&X_{01}\ar[dl]\ar[dr]\ar[ddd]&&0\ar[dl]\ar[dr]&&1\ar[dl]\ar[dr]&\\
X_0\ar[d]&&X_1\ar[d]&&1\ar[d]&&1\ar[d]\\
Y_0&&Y_1&&1&&1\\
&Y_{01}\ar[ul]\ar[ur]&&Y_1\ar[ul]\ar[ur]&&1\ar[ul]\ar[ur]&\\
&&Y_{01}\ar[ul]\ar[ur]&&Y_1\ar[ul]\ar[ur]&&\\
&&&Y_{01}.\ar[ul]\ar[ur]&&&}
\end{displaymath}
By hypothesis, all the squares in each half are pullbacks.

Since $F$ is assumed to be $p^\op$-cocartesian, an extension exists along the dotted line: a contradiction.
\end{proof}

\begin{prop}
\label{cart-topmap-inj}
If $F$ is $p^\op$-cocartesian, then the map $\rho_{01}^X:X_{01}\rightarrow X_1$ is injective.
\end{prop}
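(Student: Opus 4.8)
The plan is to argue by contradiction, following the template of Propositions \ref{cart-pb-surj} and \ref{cart-topmap-surj}: I would assume $\rho^X_{01}$ fails to be injective and manufacture a lifting problem against the acyclic Kan fibration $\Spant_{/F}\to T_F$ (which is available because $F$ is $p^\op$-cocartesian, by Definition \ref{defn-cartesian-fibration}) whose forced solution is incompatible with that assumption. Equivalently, since injectivity of $\rho^X_{01}$ amounts to surjectivity of the diagonal $X_{01}\to X_{01}\timeso{X_1}X_{01}$, this is of a piece with the surjectivity statement proved in Proposition \ref{cart-pb-surj}.

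So suppose there are distinct elements $x,x'\in X_{01}$ with $\rho^X_{01}(x)=\rho^X_{01}(x')=b\in X_1$. Writing $\bar x,\bar x'\in Y_{01}$ for their images under the vertical map $X_{01}\to Y_{01}$, both are sent to the common element $\rho^Y_{01}(\bar x)=\rho^Y_{01}(\bar x')\in Y_1$ lying under $b$; the point is that the configuration recorded in the base $T_F$ (the span diagram of $Y$'s, together with the bottom $X$-data) is the \emph{same} whether we feed it $x$ or $x'$, since it only remembers the image $b$ and the images in the $Y$-layer. The natural escalation in dimension from the two preceding proofs (which used $\emptyset\rightarrow\Delta^0$ and $\partial\Delta^1\rightarrow\Delta^1$) suggests setting the lifting problem up against $\partial\Delta^2\rightarrow\Delta^2$: I would draw the tall double-span diagram, in the upside-down convention adopted above, encoding $x$ on one face and $x'$ on another of a boundary map $\partial\Delta^2\rightarrow\Spant_{/F}$, arranged so that the two faces differ \emph{only} in the element of $X_{01}$ they name, while the remaining face and the whole image in $T_F$ form the common configuration determined by $b$.

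Because $\Spant_{/F}\to T_F$ is acyclic Kan (with the higher checks automatic by Proposition \ref{acyclic-kan}), the image of this boundary in $T_F$---which extends over $\Delta^2$ precisely because it does not distinguish $x$ from $x'$---lifts to a filler $\Delta^2\rightarrow\Spant_{/F}$. Reading off this filler as a genuine $2$-cell of $\Spant$, the pullback property it is required to satisfy forces the two structure maps naming $x$ and $x'$ to coincide, that is, $x=x'$, contradicting our choice; hence $\rho^X_{01}$ is injective. The main obstacle is making the bookkeeping of the middle paragraph precise: choosing the boundary data so that (i) the two distinguished faces genuinely carry $x$ and $x'$ as distinct elements, (ii) the projected boundary really does extend over $\Delta^2$ inside $T_F$, so that the cocartesian property applies, and (iii) the pullback squares imposed on the filler collapse to exactly the identification $x=x'$. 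Everything else---exhibiting the diagrams and verifying that the indicated squares are pullbacks---is routine, in the same spirit as the diagram chases already carried out for surjectivity.
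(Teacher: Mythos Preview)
Your overall strategy—contradiction via a lifting problem against the acyclic Kan map $(\Spant^{\op})_{F/}\to T_F$—is the right idea, but the way you propose to populate the diagram does not work, and the dimension escalation is a red herring.

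The concrete obstruction is the pullback property that every cell of $\Spant^{\op}$ must satisfy. If you try to build a face of your boundary with a single element $x\in X_{01}$ sitting at the apex (via a map $1\to X_{01}$), then the required pullback square over the edge $X_{01}\to X_1\leftarrow 1$ forces that apex to be the \emph{entire fibre} $(\rho^X_{01})^{-1}(b)$, not a singleton. Since you are precisely trying to prove this fibre is a singleton, you cannot assume it; so the faces ``encoding $x$'' and ``encoding $x'$'' as individual elements simply do not exist as cells of $\Spant^{\op}_{F/}$. Going up to $\partial\Delta^2\to\Delta^2$ does not help: the pullback constraint is imposed face-by-face and is just as obstructive there. (Compare Proposition~\ref{cart-pb-inj}, where the paper \emph{does} argue with two individual elements $a,a'$—but it explicitly invokes Propositions~\ref{cart-topmap-surj} and~\ref{cart-topmap-inj} to guarantee the pullback squares, so that method is unavailable at this stage.)

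The paper's fix is elegant and stays at the $\partial\Delta^1\to\Delta^1$ level: rather than two elements, it uses the whole fibre $P=(\rho^X_{01})^{-1}(b)$ together with a nontrivial automorphism $\alpha$ of $P$. With $P$ at the apex, the pullback squares are tautologically satisfied. One then sets up the boundary so that the two putative structure maps $P\to X_{01}$ are the inclusion $i$ and the twisted inclusion $i\alpha$; their images in $T_F$ agree, so the lift exists, and functoriality in the resulting $3$-cell forces $i=i\alpha$, hence $\alpha=\id$, contradicting $|P|\geq 2$.
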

\begin{proof}
Suppose not: that there is $x\in U_1$ with $P={\rho_{01}^X}^{-1}(x)$ a set of size at least 2. Then there is a nontrivial automorphism $\alpha$ of $P$.

We now consider the following lifting problem for $\partial\Delta^1\rightarrow\Delta^1$ along $\Spant_{F/}\rightarrow T_F$, where $i$ is the inclusion $P\rightarrow X_{01}$, and the top parallel collection of morphisms need not commute:
\begin{displaymath} 
\xymatrix{
&&&P\ar[dl]^\alpha\ar@/_1.7ex/[ddll]_i\ar[dr]&&&\\
&&P\ar[dl]^i\ar[dr]&&1\ar@/^1.7ex/[ddll]^(.75){x}\ar[dr]&&\\
&X_{01}\ar[dl]\ar[dr]\ar[ddd]&&1\ar[dl]_x\ar[dr]&&1\ar[dl]\ar[dr]&\\
X_0\ar[d]&&X_1\ar[d]&&1\ar[d]&&1\ar[d]\\
Y_0&&Y_1&&1&&1\\
&Y_{01}\ar[ul]\ar[ur]&&Y_1\ar[ul]\ar[ur]&&1\ar[ul]\ar[ur]&\\
&&Y_{01}\ar[ul]\ar[ur]&&Y_1\ar[ul]\ar[ur]&&\\
&&&Y_{01}.\ar[ul]\ar[ur]&&&}
\end{displaymath}
Again, all squares are pullbacks. By assumption this lifts to a complete diagram $\Delta^1\rightarrow\Spant_{F/}$, meaning that $i\alpha=i$, meaning that $\alpha$ is trivial: a contradiction.
\end{proof}

\begin{prop}
\label{cart-pb-inj}
If $F$ is $p^\op$-cocartesian, then the natural map $X_{01}\rightarrow X_0\times_{Y_0}Y_{01}$ is injective.
\end{prop}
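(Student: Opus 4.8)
The plan is to follow exactly the template of Proposition \ref{cart-topmap-inj}, replacing the fibre of $\rho^X_{01}$ by the fibre of the natural map $X_{01}\to X_0\times_{Y_0}Y_{01}$. First I would argue by contradiction: suppose this natural map fails to be injective, so that some point $(x,y)\in X_0\times_{Y_0}Y_{01}$ has preimage $P\subseteq X_{01}$ of cardinality at least two. Then $P$ carries a nontrivial automorphism $\alpha$, and writing $i:P\to X_{01}$ for the inclusion I would set up a lifting problem for $\partial\Delta^1\to\Delta^1$ along the acyclic Kan fibration $\Spant_{F/}\to T_F$ whose unique solvability (guaranteed by the assumption that $F$ is $p^{\op}$-cocartesian) forces $i\alpha=i$; since $i$ is injective this gives $\alpha=\id$, a contradiction.

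The diagram realizing this lifting problem is obtained from the one in the proof of Proposition \ref{cart-topmap-inj} by a single modification. Whereas there the relevant element was recorded only through its image $x\in X_1$ under $\rho^X_{01}$, here the element $(x,y)$ is recorded through its two components: the image $x\in X_0$ of $P$ under $\lambda^X_{01}\circ i$, and the image $y\in Y_{01}$ of $P$ under the middle vertical map of $F$ composed with $i$. Because $\alpha$ is an automorphism of the fibre of $X_{01}\to X_0\times_{Y_0}Y_{01}$, both composites $\lambda^X_{01}\circ i$ and $(\text{middle vertical})\circ i$ are invariant under $\alpha$; this is precisely what guarantees that the two endpoints of $\partial\Delta^1$ become identified after projecting to $T_F$, so that the lifting problem is genuinely posed against $\Spant_{F/}\to T_F$. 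As in the earlier propositions, all the squares appearing in the diagram are pullbacks by hypothesis.

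The main obstacle, exactly as in Proposition \ref{cart-topmap-inj}, is purely bookkeeping: one must verify that the assembled data really constitute a map $\partial\Delta^1\to\Spant_{F/}$ together with an extension of its image over $\Delta^1$ in $T_F$, and in particular that the pasting of the pullback squares is coherent with the invariance of the two projections of $P$ under $\alpha$. Once the diagram is correctly assembled, the cocartesian hypothesis supplies the unique filler $\Delta^1\to\Spant_{F/}$, whose restriction to the two vertices yields $i\alpha=i$ and hence the contradiction. Combined with Proposition \ref{cart-pb-surj}, this will show that for $p^{\op}$-cocartesian $F$ the natural map $X_{01}\to X_0\times_{Y_0}Y_{01}$ is a bijection, i.e.\ that the left-hand square of $F$ is a pullback.
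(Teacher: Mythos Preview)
Your strategy is sound in spirit, but there is a genuine gap in the ``single modification'' you propose, and it is \emph{not} purely bookkeeping. In the diagram of Proposition~\ref{cart-topmap-inj} the set $P$ sits in position $X_{02}$ (and $X_{03}$), and the pullback condition for the face $\{0,1,2\}$ reads $X_{02}=X_{01}\times_{X_1}X_{12}$. There this holds with $X_{12}=1$ because $P$ was \emph{defined} as the fibre of $\rho^X_{01}$ over a single point of $X_1$. Your $P$, however, is the fibre of a different map; the composite $\rho^X_{01}\circ i:P\to X_1$ need not be constant, so you cannot keep $X_{12}=1$. To make the face $\{0,1,2\}$ a valid $2$-cell you must instead take $X_{12}=\rho^X_{01}(i(P))\subset X_1$, and then the equality $X_{01}\times_{X_1}X_{12}=P$ holds only because $\rho^X_{01}$ is a bijection---that is, only after invoking Propositions~\ref{cart-topmap-surj} and~\ref{cart-topmap-inj}. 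Your phrase ``all the squares are pullbacks by hypothesis'' hides exactly this point.

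For comparison, the paper does not follow the automorphism template at all. It picks two elements $a,a'$ of the fibre and builds the analogous diagram with $1$'s everywhere in the upper $X$-part (so $X_{02}=X_{03}=X_{12}=X_{13}=1$, with the maps $1\to X_{01}$ given by $a$ and $a'$); the lifting then forces $a=a'$. That version also needs the bijectivity of $\rho^X_{01}$ to make the fibre of $\rho^X_{01}$ over $\rho^X_{01}(a)$ a singleton, and the paper says so explicitly. Your invariance observation (that $\lambda^X_{01}\circ i$ and the middle vertical composed with $i$ are $\alpha$-equivariant) is correct and is what makes the edge-compatibility go through, but it does not address the pullback condition, which is where the real work lies.
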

\begin{proof}
Let $(x,y)$ be any element of $X_0\times_{Y_0}Y_{01}$, and let $a,a'$ be two elements of the preimage. We consider another lifting problem for $\partial\Delta^1\rightarrow\Delta^1$ along $\Spant_{F/}\rightarrow T_F$, where again the top parallel collection of morphisms need not commute:
\begin{displaymath} 
\xymatrix{
&&&1\ar[dl]\ar@/_1.7ex/[ddll]_{a'}\ar[dr]&&&\\
&&1\ar[dl]^a\ar[dr]&&1\ar@/^1.7ex/[ddll]\ar[dr]&&\\
&X_{01}\ar[dl]\ar[dr]\ar[ddd]&&1\ar[dl]\ar[dr]&&1\ar[dl]\ar[dr]&\\
X_0\ar[d]&&X_1\ar[d]&&1\ar[d]&&1\ar[d]\\
Y_0&&Y_1&&1&&1\\
&Y_{01}\ar[ul]\ar[ur]&&Y_1\ar[ul]\ar[ur]&&1\ar[ul]\ar[ur]&\\
&&Y_{01}\ar[ul]\ar[ur]&&Y_1\ar[ul]\ar[ur]&&\\
&&&Y_{01}.\ar[ul]\ar[ur]&&&}
\end{displaymath}
The fact that appropriate pullback squares exist follows from Propositions \ref{cart-topmap-surj} and \ref{cart-topmap-inj}. Since $F$ is assumed $p^\op$-cocartesian, the lifting gives us that $a=a'$.
\end{proof}

\begin{thm}
 If $F$ is given by
\begin{displaymath}
  \xymatrix{X_0\ar[d]&X_{01}\ar[l]_{\lambda^X_{01}}\ar[r]^{\rho^X_{01}}\ar[d]&X_1\ar[d]\\
            Y_0      &Y_{01}\ar[l]^{\lambda^Y_{01}}\ar[r]_{\rho^Y_{01}}      &Y_1,}
\end{displaymath}
then it is $p$-cartesian if and only if the right-hand square is a pullback and $\lambda^X_{01}$ is an isomorphism. 
\end{thm}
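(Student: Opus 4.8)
The plan is to recognise that this theorem is essentially just the assembly of the four preceding propositions (for the forward implication) together with Proposition~\ref{spant-span-cartesian-morphisms} (for the reverse implication): the genuine work has already been carried out in those lemmas, and all that remains is to collect their conclusions and handle one minor strengthening.

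For the ``if'' direction, I would assume that the right-hand square is a pullback and that $\lambda^X_{01}$ is an isomorphism. Proposition~\ref{spant-span-cartesian-morphisms} proves this under the stronger hypothesis that $\lambda^X_{01}$ is the \emph{identity}, so the first step is to reduce to that case. Since $\lambda^X_{01}\colon X_{01}\to X_0$ is an isomorphism in $\Fin$, transporting the middle object $(X_{01}\to Y_{01})$ of the span along it produces an invertible $2$-cell exhibiting $F$ as equivalent to a $1$-cell $F'$ whose top-left leg is literally the identity and whose right-hand square, being isomorphic to the original, is still a pullback. Proposition~\ref{spant-span-cartesian-morphisms} then shows that $F'$ is $p$-cartesian, and since the class of $p$-cartesian morphisms is stable under equivalence of $1$-cells, it follows that $F$ is $p$-cartesian as well.

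For the ``only if'' direction, I would assume $F$ is $p$-cartesian; as set up at the start of this subsection, this is the same as asking that $F$ be $p^\op$-cocartesian, which is exactly the running hypothesis of Propositions~\ref{cart-pb-surj}, \ref{cart-topmap-surj}, \ref{cart-topmap-inj} and~\ref{cart-pb-inj}. Reading their conclusions back through the self-opposite identification of $\Spant$ that interchanges the two legs of a span, Propositions~\ref{cart-pb-surj} and~\ref{cart-pb-inj} together say that the canonical comparison map $X_{01}\to Y_{01}\times_{Y_1}X_1$ is both surjective and injective, hence a bijection, so that the right-hand square is a pullback; while Propositions~\ref{cart-topmap-surj} and~\ref{cart-topmap-inj} together say that the relevant leg is both surjective and injective, hence that $\lambda^X_{01}$ is an isomorphism. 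These are precisely the two conditions demanded.

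I expect the only real obstacle to be bookkeeping rather than mathematics: one must track the interchange of left and right legs, and of source and target, induced by passing to the opposite category, so that each of the four auxiliary propositions is pointed at the correct comparison map and ``surjective and injective'' genuinely assembles into ``the right-hand square is a pullback and $\lambda^X_{01}$ is an isomorphism''. Once this dictionary is fixed, both implications are immediate.
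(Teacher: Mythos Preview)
Your proposal is correct and follows essentially the same route as the paper: the paper's proof simply says that one direction is Proposition~\ref{span-cart-fib} (equivalently \ref{spant-span-cartesian-morphisms}) and the other is jointly implied by Propositions~\ref{cart-pb-surj}, \ref{cart-topmap-surj}, \ref{cart-topmap-inj} and~\ref{cart-pb-inj}. Your write-up is more explicit than the paper's on two points the paper leaves tacit---the reduction from ``$\lambda^X_{01}$ an isomorphism'' to ``$\lambda^X_{01}$ the identity'' via invariance of cartesianness under equivalence, and the bookkeeping for the self-opposite identification that swaps the two legs---but these are exactly the clarifications one would want, and they do not change the argument.
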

\begin{proof}
 One direction is Proposition \ref{span-cart-fib}, the other is jointly implied by Propositions \ref{cart-pb-surj}, \ref{cart-topmap-surj}, \ref{cart-topmap-inj}, and \ref{cart-pb-inj}.

 We note that we have not used the lifting condition for $\partial\Delta^2\rightarrow\Delta^2$, and deduce that it is automatically satisfied in the presence of the others: this is apparently not otherwise clear.
\end{proof}

\subsection{Lawvere symmetric monoidal structures}
\label{lawvere-sym-mon-structures}

Given a quasicategory $\cC$ with cartesian products, we shall produce a model of $\Span$ in quasicategories (as defined in subsection \ref{models-in-quasicategories}).

First we define an auxiliary category $\tcCt$. For $K\rightarrow\Span$, we define $\tcCt$ to be the simplicial set represented by the following functor in $K$:
\[\Hom_{\Span}(K,\tcCt)=\Hom(K\timeso{\Span}\Spant,\cC).\]
(It is straightforward to check that this functor does indeed preserve colimits.)

This has the following important structural property: 
\begin{prop}
The projection $\tp:\tcCt\rightarrow\Span$ is a cocartesian fibration.
\end{prop}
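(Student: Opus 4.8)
The plan is to read off the structure of $\tp$ from the defining adjunction, together with the fact, proved in Proposition \ref{span-cart-fib}, that $p\colon\Spant\rightarrow\Span$ is a cartesian fibration. Conceptually, writing $p^*\colon\sSet_{/\Span}\rightarrow\sSet_{/\Spant}$ for the pullback functor $K\mapsto K\timeso{\Span}\Spant$, the defining formula exhibits $\tcCt$ as the value, on the trivial fibration $\cC\times\Spant\rightarrow\Spant$, of the right adjoint to $p^*$. In particular the fibre $\tcCt_x$ over an object $x\in\Span_0$ is the functor quasicategory $\Fun(\Spant_x,\cC)$; and because the cartesian fibration $p$ makes its fibres depend \emph{contra}variantly on $\Span$, these functor quasicategories depend \emph{co}variantly. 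This is exactly the variance of a cocartesian fibration, and the proof is to make this precise.

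First I would check that $\tp$ is an inner fibration. Transposing through the defining adjunction, a lifting problem for $\tp$ against an inner horn $\Lambda^n_k\rightarrow\Delta^n$ over a map $\Delta^n\rightarrow\Span$ becomes the problem of extending a map to $\cC$ along the inclusion $\left(\Lambda^n_k\timeso{\Span}\Spant\right)\rightarrow\left(\Delta^n\timeso{\Span}\Spant\right)$. This inclusion is a monomorphism, and since $p$ is a cartesian fibration it is flat (\cite{HA}), so pulling back the inner anodyne --- hence weak categorical equivalence --- map $\Lambda^n_k\rightarrow\Delta^n$ along $p$ yields again a weak categorical equivalence. Thus the inclusion is a trivial cofibration for the Joyal model structure, and the quasicategory $\cC$, being fibrant, has the required right lifting property. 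Hence $\tp$ is an inner fibration.

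It remains to produce cocartesian lifts. Given an object $\tilde x\in\tcCt_0$ over $x$, equivalently a functor $\phi_x\colon\Spant_x\rightarrow\cC$, and a morphism $f\colon x\rightarrow y$ in $\Span_1$, I would pull $p$ back along $f\colon\Delta^1\rightarrow\Span$ to obtain a cartesian fibration $M_f:=\Delta^1\timeso{\Span}\Spant\rightarrow\Delta^1$ with fibres $\Spant_x$ and $\Spant_y$ over $0$ and $1$. The $p$-cartesian morphisms of $M_f$, described explicitly by the classification of $p$-cartesian morphisms established above, assemble into a restriction functor $f^*\colon\Spant_y\rightarrow\Spant_x$. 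I would then take the target of the lift to be $\tilde y=\phi_x\circ f^*$, and the lift itself to be the map $M_f\rightarrow\cC$ obtained by transporting $\phi_x$ along the cartesian edges of $M_f$; concretely this is the map corresponding, under the mapping-cylinder description of $M_f\rightarrow\Delta^1$ as in subsection \ref{mapping-cylinders-quasicategories}, to $\phi_x$ together with the identity natural transformation $\phi_x\circ f^*\Rightarrow\phi_x\circ f^*$.

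The main obstacle is verifying that this edge $\tilde f$ is genuinely $\tp$-cocartesian, that is (applying Definition \ref{defn-cartesian-fibration} to $\tp^\op$) that the map $\tcCt_{\tilde f/}\rightarrow\tcCt_{\tilde x/}\timeso{\Span_{x/}}\Span_{f/}$ is an acyclic Kan fibration. Here I would transpose each lifting problem for $\partial\Delta^m\rightarrow\Delta^m$ through the defining adjunction, turning it into an extension problem for a map to $\cC$ along an inclusion of simplicial sets obtained by pulling back, along $p$, a join of a boundary inclusion with the data of $\tilde x$ and $f$. The crux is to recognise this inclusion as inner anodyne, so that the quasicategory $\cC$ supplies the extension; and this is exactly where the explicit shape of the $p$-cartesian edges does the real combinatorial work, since transport along the cartesian edge over $f$ renders the fibre over $1$ redundant and collapses the problem, in the same manner as the degeneracy-collapse argument used in the proof of Proposition \ref{mapping-cylinders-2}.
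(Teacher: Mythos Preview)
Your approach is correct in outline but takes a substantially longer route than the paper, and the final step is left as a sketch rather than carried through.

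The paper's proof is essentially a one-liner: it observes that
\[\Hom_\Span(K,\tcCt)=\Hom_\Span(K\timeso{\Span}\Spant,\cC\times\Span),\]
notes that $p\colon\Spant\rightarrow\Span$ is a cartesian fibration (Proposition \ref{span-cart-fib}) and that $q\colon\cC\times\Span\rightarrow\Span$ is trivially a cocartesian fibration, and then invokes \cite{HTT}*{Lemma 3.2.2.13} directly. That lemma packages exactly the construction you describe: given a cartesian fibration $p$ and a cocartesian fibration $q$ over the same base, the ``pushforward'' simplicial set defined by your adjunction formula is automatically a cocartesian fibration over the base, and the lemma even identifies the cocartesian edges.

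What you have written is, in effect, a partial re-derivation of \cite{HTT}*{3.2.2.13} in this special case. Your inner-fibration argument via flatness is valid (though it imports heavier machinery from \cite{HA} than is needed), and your description of the candidate cocartesian lift is the right one. But the step you flag as ``the main obstacle'' --- showing that the relevant inclusion is inner anodyne so that $\cC$ supplies the extension --- is precisely the combinatorial content of Lurie's lemma, and you have only gestured at it. The analogy with the degeneracy-collapse in Proposition \ref{mapping-cylinders-2} is suggestive but does not by itself constitute a proof here, since the shape of the inclusion is more complicated than a single outer horn. If you want a self-contained argument, you would need to carry out that combinatorics in full; otherwise, citing \cite{HTT}*{3.2.2.13} as the paper does is both shorter and cleaner.
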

\begin{proof}
We have
\[\Hom_\Span(K,\tcCt)=\Hom(K\timeso{\Span}\Spant,\cC)=\Hom_\Span(K\timeso{\Span}\Spant,\cC\times\Span).\]

The map $p:\Spant\rightarrow\Span$ was shown to be a cartesian fibration in Proposition \ref{span-cart-fib}. Since the map $\cC\rightarrow 1$ is evidently a cocartesian fibration, the projection $q:\cC\times\Span\rightarrow\Span$ is also a cocartesian fibration (by \cite{HTT}, 2.3.2.3).

These two maps satisfy the hypotheses for $p$ and $q$ respectively in \cite{HTT}*{Lemma 3.2.2.13}, and so the proposition is proved.
\end{proof}

We can describe the fibre $\tcCt_A$ of $\tcCt$ over a finite set $A\in\Span_0$:
\begin{prop}
\label{fibre-description}
 \[\tcCt_A=\Hom(\Span^A, \cC).\]
\end{prop}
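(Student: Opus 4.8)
The plan is to compute the strict fibre directly from the defining (representable) property of $\tcCt$, reducing everything to an identification of the fibre $\Spant_A$ of $p$ over $A$. Since $\tp$ is an inner fibration (being a cocartesian fibration by the previous proposition), the strict fibre $\tcCt_A=\tcCt\timeso{\Span}\Delta^0$ is the correct object to study, where $\Delta^0\rightarrow\Span$ picks out $A$.

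First I would unwind the fibre. For any simplicial set $K$, a map $K\rightarrow\tcCt_A$ is the same as a map $K\rightarrow\tcCt$ whose composite with $\tp$ is the constant map at $A$; giving $K$ this constant structure map to $\Span$, the defining property gives $\Hom(K,\tcCt_A)=\Hom_\Span(K,\tcCt)=\Hom(K\timeso{\Span}\Spant,\cC)$. Because the structure map $K\rightarrow\Span$ factors through $\Delta^0\xrightarrow{A}\Span$, base change along this factorisation yields $K\timeso{\Span}\Spant\isom K\times(\Delta^0\timeso{\Span}\Spant)=K\times\Spant_A$. Hence $\Hom(K,\tcCt_A)=\Hom(K\times\Spant_A,\cC)=\Hom(K,\Hom(\Spant_A,\cC))$, and the Yoneda lemma gives $\tcCt_A\isom\Hom(\Spant_A,\cC)$.

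It then remains to identify the fibre $\Spant_A$ with $\Span^A$. Recall that $p$ is induced by the codomain functor $\Arr(\Fin)\rightarrow\Fin$, and that the degenerate $n$-cell of $\Span$ at $A$ is the constant functor $C_n\rightarrow\Fin$ at $A$ with identity structure maps. Thus an $n$-cell of $\Spant_A$ is a functor $X:C_n\rightarrow\Fin$ with the pullback property together with a natural transformation $X\Rightarrow\mathrm{const}_A$; this is precisely the data of a functor $X:C_n\rightarrow\Fin_{/A}$ with the pullback property, the latter being unchanged since the forgetful functor $\Fin_{/A}\rightarrow\Fin$ creates the connected limits involved. Decomposing each finite set over $A$ into its fibres gives an isomorphism of categories $\Fin_{/A}\isom\prod_{a\in A}\Fin$ under which pullbacks are computed componentwise, so such an $X$ is the same as an $A$-indexed tuple of functors $C_n\rightarrow\Fin$ each with the pullback property, i.e. an $n$-cell of $\prod_{a\in A}\Span=\Span^A$. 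This identification is natural in $[n]$, giving an isomorphism $\Spant_A\isom\Span^A$ of simplicial sets; combining with the previous paragraph yields the claim.

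The only place demanding care is the base-change step together with the bookkeeping of the natural transformation to $\mathrm{const}_A$: one must check that a ``span over $A$'' is genuinely equivalent to an $A$-tuple of spans, which rests on the facts that $\Fin_{/A}\rightarrow\Fin$ creates the pullbacks appearing in the pullback property and that $\Fin_{/A}\isom\prod_{a\in A}\Fin$. Both are routine, so no serious obstacle arises; the substance of the proposition lies in the clean bookkeeping rather than in any hard step.
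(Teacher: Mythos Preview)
Your proof is correct and follows essentially the same approach as the paper's own argument: unwind the defining property of $\tcCt$ to reduce to identifying the fibre $\Spant_A$, then observe that this fibre is $\Span(\Fin_{/A})\isom\Span(\Fin^A)=\Span^A$. The paper works directly with $\Delta^n$ rather than an arbitrary $K$ and is terser about the pullback-creation step, but the logic is identical.
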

\begin{proof}
 We have:
\begin{align*}
 (\tcCt_A)_n&=\left\{\text{maps}\vcenter{\xymatrix{\Delta^n\ar[rr]\ar[dr]_A&&\tcCt\ar[dl]\\&\Span&}}\right\}\\
            &=\Hom(\Delta^n\timeso{\Span}\Spant,\cC)\\
            &=\Hom(\Delta^n\times\Span(\Fin_{/A}),\cC),
\end{align*}
and so $\tcCt_A$ can be identified with the simplicial set of functors, from the category $\Span(\Fin_{/A})$ of spans of finite sets over $A$, into $\cC$. But since $\Span(\Fin_{/A})=\Span(\Fin^A)=\Span^A$, we get:
\begin{align*}
 \tcCt_A&=\Hom(\Span(\Fin_{/A}), \cC)\\
        &=\Hom(\Span^A,\cC).
\end{align*}
\end{proof}

This description allows us to analyse the $\tp$-cocartesian morphisms in $\tcCt$:
\begin{prop}
  Let $\alpha$ be a morphism in $\tcCt$, with image the 1-cell $X\stackrel{f}{\leftarrow}Z\stackrel{g}{\rightarrow}Y$ in $\Span$. Then $\alpha$ is $\tp$-cocartesian if and only if, for every $U\subset Y$, the morphism $\alpha$ takes the 1-cell
\begin{displaymath}
 \xymatrix{g^*U\ar[d]&g^*U\ar[l]\ar[d]\ar[r]&U\ar[d]\\
           X         &Z   \ar[l]      \ar[r]&Y}
\end{displaymath}
of $\Delta^1\timeso{\Span}\Spant$ to an equivalence in $\cC$.
 
\end{prop}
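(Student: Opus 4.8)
The plan is to reduce the statement to the general criterion for cocartesian edges of the fibration $\tp$, and then to exploit the explicit classification of $p$-cartesian morphisms of $\Spant$ obtained above.

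First I would recall that $\tp$ was built by applying \cite{HTT}*{Lemma 3.2.2.13} to the cartesian fibration $p:\Spant\to\Span$ and the cocartesian fibration $q:\cC\times\Span\to\Span$. That lemma not only produces $\tp$, it also characterises its cocartesian edges: an edge $\alpha$ of $\tcCt$ lying over a $1$-cell $e$ of $\Span$ is $\tp$-cocartesian if and only if the adjoint map $\Delta^1\timeso{\Span}\Spant\to\cC\times\Span$ carries every $p$-cartesian edge over $e$ to a $q$-cocartesian edge. Since $q$ is a product projection, its cocartesian edges are exactly those whose $\cC$-component is an equivalence. Hence $\alpha$ is $\tp$-cocartesian precisely when $\alpha$ sends every $p$-cartesian edge of $\Spant$ over $e=(X\xleftarrow{f}Z\xrightarrow{g}Y)$ to an equivalence in $\cC$.

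Next I would identify the $1$-cells named in the statement with $p$-cartesian edges. By the classification theorem just proved, a $1$-cell of $\Spant$ over $e$ is $p$-cartesian exactly when its top-left leg is an isomorphism and its right-hand square is a pullback. Taking the target to be a subset $U\hookrightarrow Y$ forces the source to be $Z\times_Y U=g^*U$ with left leg the identity, which is precisely the displayed cell; so each such cell is $p$-cartesian, and conversely the $p$-cartesian lift of $e$ with target $U\hookrightarrow Y$ is (canonically isomorphic to) this cell. The forward implication is now immediate: a $\tp$-cocartesian $\alpha$ sends all $p$-cartesian edges, and in particular these, to equivalences.

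The substance is the converse, where I must promote the hypothesis that $\alpha$ inverts the subset-cells to the conclusion that it inverts every $p$-cartesian edge over $e$, namely the lift with an arbitrary target $V\xrightarrow{h}Y$ in the fibre $\Spant_Y\simeq\Span^Y$. I would do this by decomposition: $V\cong\coprod_{v\in V}\{v\}$ in $\Fin_{/Y}$, each singleton is isomorphic in the fibre to the one-point subset $\{h(v)\}\subset Y$, and the pullback formula $Z\times_Y(V_1\sqcup V_2)=(Z\times_Y V_1)\sqcup(Z\times_Y V_2)$ shows that the cartesian lift to a coproduct is the product, formed in $\Delta^1\timeso{\Span}\Spant$, of the cartesian lifts to the summands. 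The main obstacle is exactly this last step: an arbitrary morphism $\alpha$ need not preserve products, so inverting the summand-cells does not formally force inversion of their product. I expect to resolve it either by combining the compatibility of $\alpha$ with these particular product cones with the fact that $\cC$-products of equivalences are equivalences, or---more in keeping with the explicit diagram-chases of the preceding lemmas---by unwinding the acyclic-Kan condition defining $\tp$-cocartesian directly through the representing formula $\Hom_\Span(K,\tcCt)=\Hom(K\timeso{\Span}\Spant,\cC)$, so that the fillers required over an arbitrary target are assembled cell by cell from the subset hypotheses.
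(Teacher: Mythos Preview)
Your approach is exactly the paper's: identify the $q$-cocartesian edges of $\cC\times\Span\to\Span$ as those whose $\cC$-component is an equivalence (the paper does this via Proposition~\ref{prods-and-carts} and \cite{HTT}*{2.3.1.4}), and then invoke \cite{HTT}*{3.2.2.13}. That is the entirety of the paper's argument.

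Where you go further is in noticing that the criterion delivered by \cite{HTT}*{3.2.2.13} quantifies over \emph{all} $p$-cartesian edges of $\Spant$ over $e$, hence over all targets $(V\to Y)$ in the fibre $\Span(\Fin_{/Y})$, whereas the stated proposition only mentions subsets $U\subset Y$. The paper does not address this discrepancy at all: its proof simply ends with ``the result follows by invoking \cite{HTT}*{Lemma 3.2.2.13}''. So the paper is really proving the version with arbitrary $(V\to Y)$, and the restriction to subsets in the displayed statement should be read as an imprecision (and indeed the only subsequent use, in Proposition~\ref{cocart-from-prods}, works with general $A\to X$). Your worry that an arbitrary $\alpha$ need not preserve products is legitimate if one takes the subset-only formulation literally, but you should not try to bridge that gap: the paper does not, and the intended statement is the one that falls straight out of Lurie's lemma.
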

\begin{proof}
First, we study the morphisms which are cocartesian with respect to the projection functor $q:\cC\times\Span\rightarrow\Span$.

But Proposition \ref{prods-and-carts} makes it clear that these are the products of the morphisms which are cocartesian for $\cC\rightarrow 1$ and those which 
are cocartesian for the identity on $\Span$. By \cite{HTT}*{Remark 2.3.1.4}, the former morphisms are the equivalences in $\cC$, and the latter are all the morphisms in $\Span$.

So the $q$-cocartesian morphisms are those which are an equivalence on the left factor.

With these preliminaries, the result follows by invoking \cite{HTT}*{Lemma 3.2.2.13}.
\end{proof}

Since Proposition \ref{fibre-description} describes the fibre of $\cC\rightarrow\Span$ over an object $A\in\Span_0\isom\Fin_0$ as the category of functors $\Span^A\rightarrow\cC$. We can thus define $\cCt$ to be the full subcategory whose objects are all the product-preserving functors $\Span^A\rightarrow\cC$ for all $A\in\Span_0$.

We define $p:\cCt\rightarrow\Span$ to be the restriction of $\tp$ to $\cCt$, and begin to amass good properties of this functor.
\begin{prop}
\label{cocart-from-prods}
The projection $p:\cCt\rightarrow\Span$ is a cocartesian fibration, with the same cocartesian morphisms as $\tcCt\rightarrow\Span$.
\end{prop}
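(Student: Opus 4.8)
The plan is to present $p:\cCt\to\Span$ as the restriction of the cocartesian fibration $\tp:\tcCt\to\Span$ to the full subcategory $\cCt$ (full in the sense of Definition \ref{full-subquasicategory}, its objects being the product-preserving functors $\Span^A\to\cC$), and to reduce the statement to closure of $\cCt$ under cocartesian transport. I use the following general fact: if $q:\cE\to\cB$ is a cocartesian fibration and $\cE'\subseteq\cE$ is a full subcategory with the property that the target of every $q$-cocartesian edge whose source lies in $\cE'$ again lies in $\cE'$, then $q|_{\cE'}$ is a cocartesian fibration whose cocartesian edges are exactly the $q$-cocartesian edges of $\cE$ that lie in $\cE'$. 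The restriction is an inner fibration since a full inclusion reflects inner horn fillers: an inner horn $\Lambda^n_k\to\cE'$ has all its vertices in $\cE'$, so any filler produced in $\cE$ has the same vertices and hence lies in $\cE'$ by fullness. The closure hypothesis then supplies the cocartesian lifts inside $\cE'$, and they remain cocartesian (and exhaust the cocartesian edges) because $\cE'$ is full.

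Everything therefore reduces to the claim that the $\tp$-cocartesian pushforward of a product-preserving functor is again product-preserving. To verify this I would make the pushforward explicit. By \cite{HTT}*{Lemma 3.2.2.13}, the cocartesian fibration $\tp$ is obtained from the cartesian fibration $p:\Spant\to\Span$ of Proposition \ref{span-cart-fib} by applying $\Fun(-,\cC)$ fibrewise: if $p$ is classified by $A\mapsto\Span^A$ (with $\Span^A=\Span(\Fin_{/A})$), then $\tp$ is classified by $A\mapsto\Fun(\Span^A,\cC)$, and the pushforward along a $1$-cell $f:A\leftarrow Z\rightarrow B$ of $\Span$ is precomposition $\phi\mapsto\phi\circ f^*$ with the cartesian-transport functor $f^*:\Span^B\to\Span^A$. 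Since a composite of product-preserving functors is product-preserving, it suffices to show that $f^*$ preserves finite products.

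The functor $f^*$ is exactly the one constructed in the proof of Proposition \ref{span-cart-fib}: on objects it carries a finite set $W\to B$ over $B$ to the pullback $Z\times_B W$, reindexed along $Z\to A$. By Proposition \ref{prod-span-c}, products in $\Span^B=\Span(\Fin_{/B})$ and in $\Span^A=\Span(\Fin_{/A})$ are computed by disjoint unions in $\Fin_{/B}$ and $\Fin_{/A}$. Because pullback of finite sets distributes over disjoint unions and reindexing along $Z\to A$ preserves them, one has $f^*(W_1\sqcup W_2)\isom f^*(W_1)\sqcup f^*(W_2)$ and $f^*(\emptyset)\isom\emptyset$, with $f^*$ sending the canonical product cones to product cones. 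Hence $f^*$ is product-preserving, so $\phi\circ f^*$ is product-preserving whenever $\phi$ is, and $\cCt$ is closed under cocartesian transport.

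Combining these steps yields the proposition: $p:\cCt\to\Span$ is a cocartesian fibration, and its cocartesian morphisms are precisely those of $\tcCt\to\Span$ lying in $\cCt$. I expect the main obstacle to be the middle step --- identifying the pushforward with precomposition along $f^*$ and checking that $f^*$ respects products; the reduction to this claim and the product computation themselves are essentially formal.
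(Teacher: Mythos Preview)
Your proposal is correct and follows essentially the same route as the paper: both argue that $p$ is an inner fibration because it is the restriction of $\tp$ to a full subcategory, and then check that the cocartesian lift produced in $\tcCt$ lands in $\cCt$ by identifying the pushforward as precomposition with the pullback functor $f^*$ and observing that $f^*$ preserves products (since pullback of finite sets distributes over disjoint unions). Your write-up is in fact more careful than the paper's on two points: you state the general closure principle for full subcategories explicitly, and you spell out why $f^*$ preserves products via Proposition~\ref{prod-span-c}, whereas the paper just declares the resulting functor ``clearly product-preserving''.
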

\begin{proof}
The map $p$ is evidently an inner fibration, since it's a restriction of an inner fibration to a full subcategory.

  So we just need to demonstrate that, given a span $\alpha=(X\stackrel{f}{\leftarrow}Z\stackrel{g}{\rightarrow}Y)$ a 1-cell in $\Span$, and an element $F$ of the fibre $\cCt_Y$, there is cocartesian lift of $\alpha$ in $\tcCt$, whose left-hand vertex $G$ is an element of the fibre $\cCt_X$.

But, in terms of the description of the fibre we are given, if
\[F:\Span(\Fin_{/Y})\rightarrow\cC\]
is product-preserving, then the vertex of the natural lift is given by
\[G(A\rightarrow X)=F(f^*A\rightarrow Y),\]
which is clearly a product-preserving functor $\Span(\Fin_{/X})\rightarrow\cC$.
\end{proof}

\begin{prop}
\label{lifting-prop}
Assume that $\cC$ has finite products. The projection $p:\cCt\rightarrow\Span$ has the property that, given a coproduct diagram in $\Span$, consisting of $A\sqcup B\leftarrow A\rightarrow A$ and $A\sqcup B\leftarrow B\rightarrow B$, the corresponding functors realise $p^{-1}(A\sqcup B)$ as the product of $p^{-1}(A)$ and $p^{-1}(B)$.
\end{prop}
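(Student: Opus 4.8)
The plan is to compute all three fibres explicitly, identify the two functors induced by the projection spans, and then recognise the assertion as the standard fact that a product-preserving functor out of a product of theories is the same thing as a pair of product-preserving functors.

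First I would pin down the fibres. By Proposition \ref{fibre-description} the fibre of $\tcCt$ over any finite set $C$ is $\Hom(\Span^C,\cC)$, and $\cCt_C=p^{-1}(C)$ is its full subcategory $\Funpp(\Span^C,\cC)$ of product-preserving functors. Since $\Span^{A\sqcup B}=\Span(\Fin_{/A\sqcup B})$ and $\Fin_{/A\sqcup B}=\Fin^A\times\Fin^B$, we have $\Span^{A\sqcup B}=\Span^A\times\Span^B$, whence $p^{-1}(A\sqcup B)=\Funpp(\Span^A\times\Span^B,\cC)$, $p^{-1}(A)=\Funpp(\Span^A,\cC)$ and $p^{-1}(B)=\Funpp(\Span^B,\cC)$.

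Next I would identify the two functors. The projection $A\sqcup B\leftarrow A\rightarrow A$ has right leg the identity and left leg the inclusion $\iota\colon A\hookrightarrow A\sqcup B$, so by Proposition \ref{cocart-from-prods} the induced functor on fibres does not alter the indexing over $A$ and extends a family indexed by $A$ to one indexed by $A\sqcup B$ by inserting the empty set over $B$. As $\Span$ is pointed, the empty family is the zero object $0$, so the induced functor $p^{-1}(A\sqcup B)\to p^{-1}(A)$ is precomposition with the inclusion $\Span^A\to\Span^A\times\Span^B$, $\mathbf{x}\mapsto(\mathbf{x},0)$; that is, $F\mapsto F(-,0)$. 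Symmetrically the other projection induces $F\mapsto F(0,-)$. It therefore remains to show that the pair of restrictions
\[ R\colon \Funpp(\Span^A\times\Span^B,\cC)\longrightarrow\Funpp(\Span^A,\cC)\times\Funpp(\Span^B,\cC),\qquad F\mapsto\bigl(F(-,0),\,F(0,-)\bigr), \]
is an equivalence.

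This last step is the structural content, and the one I would treat most carefully. I would construct a candidate inverse $P$ sending a pair $(F_1,F_2)$ to the external product $(\mathbf{x},\mathbf{y})\mapsto F_1(\mathbf{x})\times F_2(\mathbf{y})$; this uses the finite products of $\cC$, and it lands in product-preserving functors because products in $\Span^A\times\Span^B$ and in $\cC$ are computed coordinatewise. That $R\circ P\simeq\id$ follows because $F_2(0)$ is terminal in $\cC$, so $F_1(\mathbf{x})\times F_2(0)\simeq F_1(\mathbf{x})$; and $P\circ R\simeq\id$ follows from the decomposition $(\mathbf{x},\mathbf{y})\simeq(\mathbf{x},0)\times(0,\mathbf{y})$, valid because $0$ is a zero object, together with product-preservation, giving $F(\mathbf{x},\mathbf{y})\simeq F(\mathbf{x},0)\times F(0,\mathbf{y})$. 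The main obstacle is upgrading these pointwise equivalences to a genuine equivalence of quasicategories, i.e.\ supplying the homotopies $R\circ P\simeq\id$ and $P\circ R\simeq\id$ coherently rather than merely on objects; this is exactly the product-of-theories computation already recorded for models in the discussion of multisorted theories, and I would either invoke that reasoning or make it precise by observing that the required homotopies are produced functorially from the finite-product structure of $\cC$.
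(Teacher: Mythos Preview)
Your proposal is correct and follows the same outline as the paper: compute the fibres via Proposition~\ref{fibre-description} and then verify that the two projection spans induce the product. The paper's own proof is a two-line sketch: it states that the fibre over $A$ is $\Funpp(\Span^A,\cC)$, asserts this is ``isomorphic to $\cC^A$'' when $\cC$ has finite products, and then says it is quick to check the projections realise the product. That intermediate identification with $\cC^A$ is at best imprecise (a product-preserving functor out of $\Span$ is a monoid object, not a bare object), so your direct argument via the product-of-theories equivalence $\Funpp(\Span^A\times\Span^B,\cC)\simeq\Funpp(\Span^A,\cC)\times\Funpp(\Span^B,\cC)$ is in fact cleaner and more honest than what the paper writes down, while remaining the same strategy in spirit.
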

\begin{proof}
The fibre $\cCt_A$ over a finite set $A$ is the quasicategory of product-preserving functors $\Span^A\rightarrow\cC$. If $\cC$ has products, this is isomorphic to $\cC^A$.

It is quick to check that the morphisms given realise the product structures correctly.
\end{proof}

Motivated by the above propositions, we make the following definition (recalling the definition of a cocartesian fibration from \cite{HTT}*{Section 2.4}):
\begin{defn}
A \emph{Lawvere symmetric monoidal structure} is a model of $\Span$ in quasicategories: a cocartesian fibration $\cCt\rightarrow\Span$ such that preimages of product diagrams in $\Span$ are product diagrams of categories.
\end{defn}

So, the propositions above assemble to:
\begin{thm}
An quasicategory with finite products gives a Lawvere symmetric monoidal structure.
\end{thm}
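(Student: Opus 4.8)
The plan is to recognise that this theorem is a direct assembly of the propositions just established: the construction $p\colon\cCt\to\Span$ built in this subsection \emph{is} the required Lawvere symmetric monoidal structure, and verifying the definition reduces to checking its two clauses against results already in hand.

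First I would recall that, by the definition of a Lawvere symmetric monoidal structure, there are exactly two things to verify about $p$: that it is a cocartesian fibration, and that preimages of product diagrams in $\Span$ are product diagrams of quasicategories. The first is precisely the content of Proposition~\ref{cocart-from-prods}, which established that $p$ is a cocartesian fibration (with the same cocartesian morphisms as $\tp\colon\tcCt\to\Span$), so nothing further is needed there.

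For the second clause I would observe that finite products in $\Span$ are generated by binary products together with the empty product, so it suffices to treat these two cases. The binary case is exactly Proposition~\ref{lifting-prop}: given the product cone $A\sqcup B\leftarrow A\rightarrow A$ and $A\sqcup B\leftarrow B\rightarrow B$, the induced functors exhibit $p^{-1}(A\sqcup B)$ as the product $p^{-1}(A)\times p^{-1}(B)$. For the empty product I would compute the fibre over the terminal object $\emptyset\in\Span_0$ directly using Proposition~\ref{fibre-description}: since $\Span^\emptyset$ is the terminal quasicategory, $\cCt_\emptyset=\Hom(\Span^\emptyset,\cC)$ restricted to product-preserving functors is the full subquasicategory of $\cC$ on its terminal objects, which is contractible and hence the terminal quasicategory. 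Thus the empty product diagram is also carried to a product diagram.

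There is no substantial obstacle here, since the real work was done in the preceding propositions. The only point requiring minor care is that ``product diagram'' in the definition ranges over all finite arities; one must note that these are generated by the binary and nullary cases, so that Proposition~\ref{cocart-from-prods} and Proposition~\ref{lifting-prop}, together with the contractibility of the fibre over $\emptyset$, genuinely suffice to establish that the classifying functor $\Span\to\Cinfty$ is product-preserving, i.e. that $\cCt\to\Span$ is productive.
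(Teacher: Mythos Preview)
Your proposal is correct and takes essentially the same approach as the paper: the paper's proof is simply the one-line citation ``This is Propositions~\ref{cocart-from-prods} and~\ref{lifting-prop}.'' Your version is slightly more careful in that you also verify the nullary product case explicitly, which the paper leaves implicit.
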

\begin{proof}
This is Propositions \ref{cocart-from-prods} and \ref{lifting-prop}.
\end{proof}

We need notions of functor too:
\begin{defn}
A \emph{symmetric monoidal functor} between two Lawvere symmetric monoidal structures $\cCt\stackrel{p}{\rightarrow}\Span$ and $\cDt\stackrel{q}{\rightarrow}\Span$ is a map of models of $\Span$ in quasicategories: that is, a functor taking $p$-cocartesian morphisms to $q$-cocartesian morphisms.

A \emph{lax symmetric monoidal functor} is one which takes $p$-cocartesian morphisms whose image in $\Span$ is collapsing to $q$-cocartesian morphisms.
\end{defn}

\subsection{Lawvere commutative algebra objects}

Now we seek to define an algebra object in a Lawvere symmetric monoidal category.

We say that an 1-cell in the category $\Span$ is \emph{collapsing} if it is of the form
\[\xymatrix{X&Z\iar[l]\ar[r]^\sim&Y},\]
or equivalently if it's isomorphic to a diagram of the form
\[\xymatrix{A\sqcup B&A\iar[l]\ar[r]^=&A}.\]
We can thus define a subquasicategory $\Spanc$ of $\Span$ containing all objects, all collapsing 1-cells and all higher cells all of whose edges are collapsing.

We define a morphism in $\Fins$ to be collapsing if all the preimage of every element except the basepoint has size exactly one. By the same process we can define a subquasicategory $\Finsc$, and it is quick to check we have a pullback diagram
\[\xymatrix{\Finsc\pb{315}\ar[d]\ar[r]&\Spanc\ar[d]\\\Fins\ar[r]_R&\Span.}\]
In other words: a morphism in $\Fins$ is collapsing if and only if it yields a collapsing span diagram.

We define an algebra object in a style analogous to those found in \cite{HA}: a \emph{Lawvere commutative algebra object} of a Lawvere symmetric monoidal category $\cCt\stackrel{p}{\longrightarrow}\Span$ consists of a section $f$ of $p$ which takes collapsing morphisms to $p$-cocartesian morphisms.

We can then define the quasicategory of Lawvere commutative algebra objects $\Algt(\cCt)$ to be the full subquasicategory of $\Fun(\Span,\cCt)$ on the Lawvere commutative algebra objects. When the monoidal structure is understood, which is most of the time, we write $\Algt(\cC)$ instead.

\section{Comparing approaches to $E_\infty$-spaces}
\label{comparison-theorems}

The purpose of this section is to show that Lurie's definition of a symmetric monoidal category in \cite{HA} is equivalent to the one advanced in the preceding section.

We begin with some reasonably lightweight sections, which sketch straightforward simple arguments why $\Span$ should be thought of as the theory for commutative monoids in the setting of quasicategories.

The purpose of this is to compare our theory $\Span$ (both as a quasicategory, and via its models) with more traditional ways of defining $E_\infty$ structures. Our comparison, ultimately, shall be with the Barratt-Eccles operad.

\subsection{Operads and Theories}

Now, we start by proving a basic (and standard) result giving the structure of free operadic algebras.

We state a standard proposition:
\begin{prop}
Let $\cO$ be an operad in simplicial sets, and $X$ be a finite set. The free $\cO$-algebra on $X$, denoted $\cO[X]$ has underlying space given by
\[|\cO[X]| = \coprod_{n\geq0}(\cO(n)\times_{\Sigma_n}X^n) = \colim_{A\in\FSI}(\cO(A)\times X^A).\]

Thus $|\cO[X]|^B$ is given by
\[|\cO[X]|^B = \colim_{(A\stackrel{f}{\rightarrow}B)\in(\Fin_{/B})^{\isom}}(\cO(f)\times X^A),\]
where the colimit is taken in the category whose objects are finite sets with maps to $B$, and whose morphisms are isomorphisms over $B$.

And, in this notation, the $\cO$-algebra structure map $\cO(B)\times|\cO[X]|^B\rightarrow|\cO[X]|$ is simply given by the operad composition $\cO(B)\times\cO(f)\rightarrow\cO(A)$.
\end{prop}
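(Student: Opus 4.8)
The plan is to prove the three displayed formulae in turn; throughout, all colimits are ordinary (strict) colimits of simplicial sets, since $|\cO[X]|$ is the value of the standard free-algebra monad and $X$ is discrete, so no homotopy-theoretic subtlety arises. I first recall that the functor $X\mapsto\coprod_{n\geq0}\cO(n)\times_{\Sigma_n}X^n$ is precisely the monad on $\sSet$ whose algebras are $\cO$-algebras, so its value at $X$ is the free $\cO$-algebra $|\cO[X]|$. To reexpress this as a colimit over $\FSI$, I regard $A\mapsto\cO(A)$ as the symmetric sequence underlying $\cO$ (a functor $\FSI\to\sSet$ with $\cO(\underline n)=\cO(n)$ carrying the $\Sigma_n$-action) and $A\mapsto X^A=\Map(A,X)$ as a functor on $\FSI$; their product is a functor $\FSI\to\sSet$. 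Since $\FSI$ is a groupoid whose components are indexed by cardinality, with the component of $\underline n$ having automorphism group $\Sigma_n$, the colimit decomposes as a coproduct over $n$ of the strict coinvariants $(\cO(n)\times X^n)_{\Sigma_n}=\cO(n)\times_{\Sigma_n}X^n$, establishing the first line.

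For the $B$-fold power I compute $|\cO[X]|^B=\prod_{b\in B}|\cO[X]|$ by distributing the finite product past the colimit. Because $\sSet$ is cartesian closed, $-\times Y$ is a left adjoint and hence preserves colimits, so
\[\prod_{b\in B}\colim_{A_b\in\FSI}\bigl(\cO(A_b)\times X^{A_b}\bigr)\isom\colim_{(A_b)\in\FSI^B}\ \prod_{b\in B}\bigl(\cO(A_b)\times X^{A_b}\bigr).\]
The indexing groupoid $\FSI^B=\Fun(B,\FSI)$ is equivalent to $(\Fin_{/B})^{\isom}$: a family $(A_b)_{b\in B}$ corresponds to $A=\coprod_{b}A_b$ together with its evident map $f:A\to B$, with isomorphisms of families matching isomorphisms over $B$. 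Under this equivalence, recalling that $\cO(f):=\prod_{b\in B}\cO(f^{-1}(b))$, I have $\prod_b\cO(A_b)=\cO(f)$ and $\prod_b X^{A_b}=X^{\coprod_b A_b}=X^A$; since colimits are invariant under equivalence of diagram shapes, this yields the second line.

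Finally, the structure map is obtained by unwinding the operadic action on a free algebra: the action of $\theta\in\cO(B)$ on a $B$-tuple of elements represented by $(\phi_b,\xi_b)\in\cO(A_b)\times X^{A_b}$ is by definition represented by $\bigl(\gamma(\theta;(\phi_b)_{b\in B}),\,(\xi_b)_{b\in B}\bigr)\in\cO(A)\times X^A$, where $\gamma$ is the operad composition and $(\xi_b)_b\in X^{\coprod_b A_b}=X^A$. Matching this with the colimit description of the previous paragraph shows that on the term indexed by $f:A\to B$ the structure map is exactly $\cO(B)\times\cO(f)\xrightarrow{\gamma}\cO(A)$ followed by the canonical map into the colimit, as claimed. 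I expect the only genuine work to lie in the middle step --- justifying the interchange of the finite product with the colimit and identifying $\FSI^B$ with $(\Fin_{/B})^{\isom}$ compatibly with both product factors --- the first and last steps being direct unwindings of the standard definitions.
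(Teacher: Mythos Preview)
Your argument is correct. The paper itself does not prove this proposition: it is introduced as ``a standard proposition'' and no proof is given (the only trace in the source is a commented-out fragment that begins by checking the universal property $\Alg_{\cO}(\cO[X],A)\isom\Spaces(X,|A|)$ and is left unfinished). Your direct approach --- identifying the free-algebra monad, rewriting the coproduct as a colimit over the groupoid $\FSI$, then distributing the finite $B$-fold product past that colimit using cartesian closure and recognising $\FSI^B\simeq(\Fin_{/B})^{\isom}$ --- is a clean and complete verification of all three displayed claims, and is exactly the sort of unwinding one would expect for a result the paper regards as standard.
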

\begin{comment}
\begin{proof}
We seek to show that, using this definition, for any $\cO$-algebra $A$ we have
\[\Alg_{\cO}(\cO[X],A) \isom \Spaces(X,|A|).\]
There is a map $i:X\rightarrow |\cO[X]|$ taking $X\rightarrow\cO(1)\times X\isom C$; composing this with $|\cO[X]|\rightarrow|A|$ gives us our map $\Alg_{\cO}(\cO[X],A) \rightarrow \Spaces(X,|A|)$.

The composite
\[\cO(A)\times X^A\stackrel{1\times i^A}{\longrightarrow}\cO(A)\times |\cO[X]|^A\longrightarrow|\cO[X]|,\]
where the second map is the operadic action, agrees with the structure maps $\cO(A)\times X^A\rightarrow |\cO[X]|$.

Thus a map $X\rightarrow |A|$ completely determines 
\end{proof}
\end{comment}

So, a point of $\cO[X]$ consists of a finite set $U$ over $X$, and an element of $\cO(U)$.

Moreover, by the free-forgetful adjunction proved in the Proposition above, a map $\cO[Y]\rightarrow\cO[X]$ is the same as a map of spaces $Y\rightarrow|\cO[X]|$, and so is equivalent to a span diagram
\begin{displaymath}
\xymatrix{&U\ar[dl]_f\ar[dr]^g&\\
X&&Y}
\end{displaymath}
together with a point of $\cO(g)$. The correspondence works as follows: taking preimages of elements in $Y$ give a $Y$-indexed family of sets $U_y$, equipped with maps $U_y\rightarrow X$ and elements of $\cO(U_y)$.

More is true: an $n$-simplex $\alpha\in\Map(\cO[Y],\cO[X])_n$ is equivalent to a span diagram
\begin{displaymath}
\xymatrix{&U\ar[dl]_f\ar[dr]^g&\\
X&&Y}
\end{displaymath}
together with an element of $\cO(g)_n$. Indeed, the various contributions from different span diagrams are disconnected from one another.

This suggests a deep connection between span diagrams and operads, which we will exploit in one situation very shortly.

Now, given an operad $\cO$ in simplicial sets, we define a simplicial category $\ThS{\cO}$:
\begin{itemize}
\item Objects of $\ThS{\cO}$ are finite sets.
\item The homspace $\ThS{\cO}(X,Y)$ is the mapping space $\Alg_{\cO}(\cO[Y],\cO[X])$, the full sub-Kan-complex of $\Spaces(|\cO[Y]|,|\cO[X]|)$ whose $0$-simplices are maps preserving operadic structure.
\end{itemize}

Hence (by the full/forgetful adjunction) we also have
\[\ThS{\cO}(X,Y) = \Map(Y,|\cO[X]|),\]
and so in particular
\[\ThS{\cO}(X,Y)_n = \sSet(Y\times\Delta^n,|\cO[X]|)\]

Later we will write $\Th{\cO}$ for the quasicategory $\fC(\ThS{\cO})$. But in the meanwhile, the following explains the relevance of the definition.

\begin{prop}
Functors $\ThS{\cO}\rightarrow\Spaces$ that are product-preserving (in the strict sense that they send coproducts of sets to category-theoretic products of simplicial sets) are equivalent to $\cO$-algebras.
\end{prop}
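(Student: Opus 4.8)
The plan is to set up mutually inverse constructions passing between product-preserving functors and $\cO$-algebras, reducing all the data of such a functor to its behaviour on the one-element set $1$ and identifying that behaviour with an $\cO$-algebra structure. First I would record that the coproduct $X\sqcup Y$ of finite sets is the \emph{categorical product} in $\ThS{\cO}$: this is immediate from the defining formula, since
\[\ThS{\cO}(Z,X\sqcup Y)=\Map(X\sqcup Y,|\cO[Z]|)\isom\Map(X,|\cO[Z]|)\times\Map(Y,|\cO[Z]|)=\ThS{\cO}(Z,X)\times\ThS{\cO}(Z,Y).\]
Hence, if $F\colon\ThS{\cO}\to\Spaces$ is product-preserving and we set $A=F(1)$, then $F(X)\isom A^X$ naturally in $X$, and $F$ is determined by the maps of spaces it induces on hom-complexes, in particular by
\[\ThS{\cO}(X,1)=|\cO[X]|\longrightarrow\Map(A^X,A).\]

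Next I would extract the algebra structure. Writing $|\cO[X]|=\coprod_m\cO(m)\times_{\Sigma_m}X^m$, the component indexed by bijective labellings supplies a canonical inclusion $\cO(X)\hookrightarrow\ThS{\cO}(X,1)$, corresponding to the span $X\xleftarrow{=}X\rightarrow 1$ with operad label $\theta\in\cO(X)$; composing with $F$ and passing to adjoints gives structure maps $\cO(X)\times A^X\to A$, that is, an $\cO$-algebra structure on $A$. The unit, equivariance and associativity axioms are exactly the assertion that $F$ respects identities and composition. Indeed, a general point of $|\cO[X]|$, namely $(\theta\in\cO(m),\ \phi\colon m\to X)$, factors in $\ThS{\cO}$ as the operadic operation $m\to 1$ given by $\theta$, precomposed with a \emph{copying} morphism $X\to m$ (the span $X\xleftarrow{\phi}m\xrightarrow{=}m$ carrying unit operad labels); product-preservation together with the unit axiom forces the image of the copying morphism to be the reindexing $\phi^*\colon A^X\to A^m$, and composition of such morphisms is computed by the pullback composition of spans together with the operad's composition maps on the $\cO(g)$. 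Feeding this through functoriality of $F$ produces precisely the operad axioms.

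Conversely, given an $\cO$-algebra $A$ I would define $F(X)=A^X$ and send a morphism $X\xleftarrow{f}U\xrightarrow{g}Y$ equipped with $\theta\in\cO(g)$ to the map $A^X\to A^Y$ whose $y$-component is $A^X\xrightarrow{f^*}A^{g^{-1}(y)}\xrightarrow{\theta_y}A$; functoriality again reduces to the span-plus-operad composition just described, and product-preservation is built in. These two constructions are visibly mutually inverse on underlying objects, and since $\ThS{\cO}(X,Y)_n=\sSet(Y\times\Delta^n,|\cO[X]|)$ is assembled directly from the simplicial operad $\cO$, the correspondence respects the whole simplicial enrichment, not merely the vertices. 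Conceptually this is the statement that the restricted-Yoneda functor $A\mapsto\bigl(X\mapsto\Alg_\cO(\cO[X],A)\bigr)$ identifies $\cO$-algebras with product-preserving functors, the free algebras $\cO[X]$ on finite sets forming a dense family.

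The main obstacle I expect lies in the middle step: verifying that composition of span-with-operad-label morphisms in $\ThS{\cO}$ (computed by pullback of spans combined with the operad composition maps) matches, on the nose, the unit, equivariance and associativity axioms of an $\cO$-algebra, and that this matching persists at the level of the full mapping complexes rather than only their $0$-simplices. The rest is forced bookkeeping once the reduction to the singleton and the factorisation into copying and operadic operations are in place.
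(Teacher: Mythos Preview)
Your proposal is correct and follows essentially the same route as the paper: both set up mutually inverse constructions sending an $\cO$-algebra $A$ to the functor $X\mapsto A^X$, and extracting the algebra structure on $F(1)$ from the span $X\xleftarrow{=}X\rightarrow 1$ labelled by $\cO(X)$, with functoriality checked against the pullback-plus-operad-composition law for span composites. Your presentation adds a couple of helpful touches the paper leaves implicit---the explicit verification that $X\sqcup Y$ is the categorical product in $\ThS{\cO}$, the factorisation of a general morphism into a copying map followed by an operadic operation, and the restricted-Yoneda gloss---but these are elaborations of the same argument rather than a different strategy.
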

\begin{proof}
Firstly, given a $\cO$-algebra $A$, we produce a functor $F_A$.

We define $F_A(X)$ to be $A^X$ for any finite set $X$. Now, suppose given finite sets $X$ and $Y$ and an $n$-simplex $\alpha\in\ThS{\cO}(X,Y)_n$, which is the same as a map $Y\times\Delta^n\rightarrow|\cO[X]|$. We must define an induced map $F_A^*(\alpha)\in\Map(A^X,A^Y)_n$.

By restricting to coordinates, it suffices to take a map $\alpha:\Delta^n\rightarrow|\cO[X]|$ and get an induced map $F_A^*(\alpha)\in\Map(A^X,A)_n$; or, more straightforwardly, to define a map $|\cO[X]|\times A^X\rightarrow A$.

By our construction of $|\cO[X]|$ as a disjoint union above, for each finite set $Z$ we need a $Z$-equivariant map
\[\cO(Z)\times X^Z\times A^X\longrightarrow A.\]
However, such a map exists: we define it by first using the composition map $X^Z\times A^X\rightarrow A^Z$, and then using the $\cO$-algebra structure on $A$.

Equivalently, regarding a map $\cO[Y]\rightarrow\cO[X]$ as a span diagram 
\begin{displaymath}
\xymatrix{&U\ar[dl]_f\ar[dr]^g&\\
X&&Y,}
\end{displaymath}
together with an element of $\cO(g)$, we find the corresponding map $A^X\rightarrow A^Y$ is the composition of the pullback map $f^*:A^X\rightarrow A^U$ and an application of the operadic action map $\cO(g)\times A^U\rightarrow A^Y$.

It is necessary to check that this assignment is indeed functorial: if we have  finite sets $X$, $Y$ and $Z$, we must check that the diagram
\begin{displaymath}
\xymatrix{\Map(\cO[Z],\cO[Y])\times\Map(\cO[Y],\cO[X])\ar[r]\ar[d]&\Map(\cO[Z],\cO[X])\ar[d]\\
\Map(A^Y,A^Z)\times\Map(A^X,A^Y)\ar[r]&\Map(A^X,A^Z)}
\end{displaymath}
commutes.

For 0-cells corresponding to span diagrams
\begin{displaymath}
\vcenter{\xymatrix{&U\ar[dl]_f\ar[dr]^g&\\
X&&Y,}}
\qquad\text{and}\qquad
\vcenter{\xymatrix{&V\ar[dl]_{f'}\ar[dr]^{g'}&\\
Y&&Z,}}
\end{displaymath}
it's immediate to check that both composites correspond to the composite span
\begin{displaymath}
\xymatrix{&&W\ar[dl]_{f''}\ar[dr]^{g''}&&\\
&U\ar[dl]_f\ar[dr]^g&&V\ar[dl]_{f'}\ar[dr]^{g'}&\\
X&&Y&&Z,}
\end{displaymath}
with given element of $\cO(g'\circ g'')$ obtained by pulling back the element of $\cO(g)$ to obtain an element of $\cO(g'')$ and then using the operadic compition with the given element of $\cO(g')$.

This argument works similarly for higher cells, using the description above of them as span diagrams labelled by higher cells of a product of parts of the operad. This completes this part.

Now we shall go the other way, showing that any functor $F:\ThS{\cO}\rightarrow\Spaces$ has an underlying $\cO$-action on the value of $F$ on the singleton, $F(*)$.

But we get the operadic action from the span diagrams
\begin{displaymath}
\xymatrix{&X\ar[dl]_f\ar[dr]^g&\\
X&&\ast,}
\end{displaymath}
as these describe exactly maps $\cO(X)\times A^X\rightarrow A$.

We get compatibility with the operadic product, for a family $Y\rightarrow X$, by considering the composite span diagram
\begin{displaymath}
\xymatrix{&&Y\ar[dl]\ar[dr]&&\\
&Y\ar[dl]\ar[dr]&&X\ar[dl]\ar[dr]&\\
Y&&X&&\ast}
\end{displaymath}

It is quick to check that these two constructions are mutually inverse.
\end{proof}

\subsection{$\Span$ and the Barratt-Eccles operad}
\label{homs-in-span}

Another small piece of propaganda is provided by calculating the homspaces in the quasicategory $\Span$; this makes plausible much of the relationship between $\Span$ and various recognisably classical notions such as the Barratt-Eccles operad \cite{BarEcc}.

One should think of $\Span$ as a homotopy-theoretic elaboration of the theory of commutative monoids. Indeed, the theory of commutative monoids is the opposite of the full subcategory of $\Monoids$ on the objects $\N^r$, and there is a natural forgetful morphism $\Span\rightarrow\Th{\Monoids}$.

An object $X\in\Span_0$ is sent to the object $\N^X$, and a span $X\stackrel{f}{\leftarrow} Z\stackrel{g}{\rightarrow} Y$ is sent to the map
\begin{align*}
\N^X&\longleftarrow\N^Y\\
y\in Y&\longmapsto \sum_{g(z)=y}f(z).
\end{align*}

Thus the theory $\Span$ consists of finitely-generated free commutative monoids, with some unusual autoequivalences on the morphisms.

Writing $\FSI$ for the category of finite sets and isomorphisms, we have
\begin{align*}
\Span(1,1)&=N(\FSI)\\
          &\isom\coprod_{X\in\operatorname{skeleton}(\Fin)}B\Aut(X)\\
          &\isom\coprod_n B\Sigma_n
\end{align*}
(where the sum on the second line is taken over a set of representatives of the isomorphism classes)
This is a famous model for the free $E_\infty$-monoid on one generator: it's that provided by the Barratt-Eccles operad.

The composition of two spans $1\leftarrow X\rightarrow 1$ and $1\leftarrow Y\rightarrow 1$ is given by $1\leftarrow X\times Y\rightarrow 1$, and on hom-spaces is given by the maps $B\Sigma_n\times B\Sigma_{n'}\rightarrow B\Sigma_{n\times n'}$ induced by the Cartesian product map $\Sigma_n\times\Sigma_{n'}\rightarrow\Sigma_{n\times n'}$.

Moreover, we also have
\begin{align*}
\Span(X,Y)&=N(\FSI_{/X,Y})\\
          &=N(\FSI_{/X\times Y})\\
          &=\left(\coprod_n B\Sigma_n\right)^{X\times Y},
\end{align*}
so we can think of the space of spans from $X$ to $Y$ as being $X$-by-$Y$ matrices with entries in the free $E_\infty$-monoid on one generator. Composition is then matrix multiplication, and it is easily seen that coproducts and products are given on homspaces by block sums of matrices.

Let $\cE$ be the Barratt-Eccles operad. This has $\cE(X) = E(\Sigma_X)$, and operadic composition induced by the map of sets
\[\Sigma_X\times\prod_{x\in X}\Sigma_{Y_x}\longrightarrow\Sigma_Y.\]

We would like to be able to compare the weak $2$-categories $\Span2$ and the groupoid-enriched category $\Th{\cE}^\Delta$. But this requires more machinery than we wish to develop. So we do some hard work, and compare $\Span$ and $\Th{\cE}$ themselves.

\begin{prop}
There is a natural equivalence $\Psi:\Th{\cE}\rightarrow\Span$.
\end{prop}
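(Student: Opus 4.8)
The plan is to realise $\Psi$ as the coherent nerve of an equivalence of simplicial categories. Recall that $\Th{\cE}=\Ncoh(\ThS{\cE})$, and that by Proposition \ref{Span-structure} together with the nerve construction of Section \ref{two-one-categories} we may write $\Span=\Ncoh(\overline{\TwoSpan})$, where $\overline{\TwoSpan}$ is a simplicial-category model of the bicategory $\TwoSpan$ with hom-Kan-complexes $\overline{\TwoSpan}(X,Y)=N(\FSI_{/X\times Y})$. Both source and target are then enriched in Kan complexes, so it suffices to produce a simplicial functor $\Phi\colon\ThS{\cE}\to\overline{\TwoSpan}$ which is the identity on objects and a weak equivalence on each hom-space; the coherent nerve then carries it to a categorical equivalence $\Psi$, and essential surjectivity is automatic because $\Phi$ is bijective on objects.

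I would define $\Phi$ to be the identity on the common set of objects (finite sets). On hom-spaces $\Phi$ is the \emph{forgetful} map remembering only the underlying span: by the descriptions of the preceding subsections, a simplex of $\ThS{\cE}(X,Y)=\Map(Y,|\cE[X]|)$ is a span $X\xleftarrow{f}U\xrightarrow{g}Y$ decorated by a simplex of $\cE(g)=\prod_{y\in Y}E\Sigma_{U_y}$, and $\Phi$ sends it to the simplex of $N(\FSI_{/X\times Y})$ classified by $U\to X\times Y$. Making this a genuine \emph{functor} amounts to the compatibility of composition: composition in $\ThS{\cE}$ is operadic composition of $\cE$-algebra maps, whereas composition in $\overline{\TwoSpan}$ is composition of spans by pullback, and the matrix-multiplication picture of subsection \ref{homs-in-span} shows that the forgetful map intertwines the two.

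The main obstacle is exactly this compatibility at the level of a strict simplicial functor: span composition is associative only up to coherent isomorphism, so some care (equivalently, a compatible choice of strictification of $\TwoSpan$) is needed to rigidify $\Phi$. This is where contractibility of the Barratt--Eccles spaces does the work: since each $\cE(U)=E\Sigma_U$ is contractible, the space of coherence data required to promote the hom-space maps to a strict functor is itself contractible, so a compatible choice exists. Granting $\Phi$, full faithfulness is a direct homotopy-colimit computation. Writing the mapping space as a homotopy colimit over the groupoid $(\Fin_{/Y})^{\isom}$,
\[\Map(Y,|\cE[X]|)\;\simeq\;\colim_{(U\to Y)\in(\Fin_{/Y})^{\isom}}\Bigl(\textstyle\prod_{y}E\Sigma_{U_y}\times\Map(U,X)\Bigr),\]
each factor $E\Sigma_{U_y}$ is contractible, so forgetting them is an objectwise equivalence, hence a weak equivalence onto $\colim_{(U\to Y)\in(\Fin_{/Y})^{\isom}}\Map(U,X)\isom N(\FSI_{/X\times Y})=\Span(X,Y)$; this recovers the hom-space identifications of subsection \ref{homs-in-span}. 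Thus $\Phi$ is an equivalence of simplicial categories and $\Psi=\Ncoh(\Phi)$ is the desired equivalence of quasicategories.
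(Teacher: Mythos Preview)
Your strategy---construct a Dwyer--Kan equivalence $\Phi\colon\ThS{\cE}\to\overline{\TwoSpan}$ of simplicial categories and then apply $\Ncoh$---is genuinely different from the paper's, and the hom-space calculation you give is exactly right.  But the step you flag as ``the main obstacle'' really is one, and your resolution does not address it.  The difficulty is that composition in $\overline{\TwoSpan}$ uses a fixed functorial choice of pullbacks, while composition in $\ThS{\cE}$ is operadic; for $\Phi$ to be a \emph{strict} simplicial functor these must agree on the nose.  Contractibility of $\cE(U)=E\Sigma_U$ is a statement about the decorations on the source side, not about matching pullback models on the target side, so it does not by itself furnish the required coherence.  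You would either have to verify that, with the standard model of pullbacks of finite sets, the operadic composite literally equals the span composite (this is plausible but needs to be checked), or invoke a rectification theorem for lax simplicial functors.  The paper itself remarks, just above the proposition, that comparing at the simplicial-category level ``requires more machinery than we wish to develop''---precisely this issue.

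The paper's proof avoids the problem by working at the quasicategory level and exploiting that $\Span$ is a $(2,1)$-category.  A map of simplicial sets into a $(2,1)$-category is determined by its values on simplices of dimension $\leq 3$ (with dimension $3$ only imposing a compatibility condition).  So the paper defines $\Psi$ by hand on $0$-, $1$-, $2$-simplices (forgetting the $\cE$-decoration to get spans, and reading the homotopy in a $2$-simplex as the required pullback isomorphism), and then checks that on $3$-simplices the extra square supplied by $\fC[3]\to\ThS{\cE}$ makes the resulting diagram of pullbacks commute.  Equivalence then follows from the same hom-space comparison you wrote down.  Your approach would be cleaner if the strictness issue were resolved; the paper's is more hands-on but sidesteps the rectification entirely.
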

\begin{proof}
Since $\Span$ is a $2$-category, we must only construct $\Psi$ for $2$-simplices and then show that $3$-simplices can be sent to the identity.

Both categories have finite sets as objects, and so we take $\Psi$ to be the identity on objects.

Now, an element of $(\Th{\cE})_1$ consists of a morphism $\fC[1]\rightarrow\Th{\cE}^\Delta$, in other words two finite sets $Y$ and $X$, and a point of $\Th{\cE}(Y,X)$, a point of 
\[\colim_{f:A\rightarrow Y}\cE(f)\times X^A.\]
By forgetting the $\cE(f)$ (which has only a single $0$-simplex anyway), we get a span diagram $X\leftarrow A\rightarrow Y$.

An element $(\Th{\cE})_2$ gives us three such span diagrams by restricting to the three natural inclusions $\fC[1]\rightarrow\fC[2]$, agreeing on objects thus:
\begin{displaymath}
\xymatrix{
  &&C\ar@/_3ex/[ddll]\ar@/^3ex/[ddrr]&&\\
&A\ar[dl]\ar[dr]&&B\ar[dl]\ar[dr]&\\
X&&Y&&Z.}
\end{displaymath}
We also obtain a homotopy between the composite of the two short spans and the long one; this gives an isomorphism from $C$ to some model of the pullback $A\times_YB$ and thus provides a $2$-cell of $\Span$.

From a $3$-cell of $\Th{\cE}$ we get four $2$-cells of $\Span$, which assemble to form a diagram as follows,
\begin{displaymath}
\xymatrix{
&&&F\ar[dl]\ar[dr]\ar@/_5ex/[ddll]^{+}\ar@/^5ex/[ddrr]_{+}&&&\\
&&D\ar[dl]\ar[dr]\ar@/^5ex/[ddrr]&{+}&E\ar[dl]\ar[dr]\ar@/_5ex/[ddll]&&\\
&A\ar[dl]\ar[dr]&&B\ar[dl]\ar[dr]&&C\ar[dl]\ar[dr]&\\
W&&X&&Y&&Z}
\end{displaymath}
where everything is known to commute except the punctured polygons.

However, the map $\fC[3]\rightarrow\Th{\cE}^\Delta$ also provides a map $(\Delta^1)^2\rightarrow\Th{\cE}(W,Z)$, giving a commuting square
\begin{displaymath}
\xymatrix{F\ar[r]\ar[d]&D\timeso{Y}C\ar[d]\\
A\timeso{X}E\ar[r]&A\timeso{X}B\timeso{Y}C,}
\end{displaymath}
which makes the diagram commute as required.

Now we must go on to show that it is an equivalence. But this is not difficult: we shall show (using the philosophy of \cite{HTT}*{Section 1.1.3}) that $\Psi$ is an equivalence since it's the identity on objects and induces weak equivalences on homspaces.

Our calculation in Subsection \ref{homs-in-span} above, and the definition of $\Th{\cE}$ show that both have homspaces
\[\Span(X,Y)\isom\Th{\cE}(X,Y)\isom N(\Fin^{\isom}_{/(X\times Y)}.\]
These homspaces are classifying spaces of groupoids; thus the definition of $\Psi$ gives a weak equivalence immediately.
\end{proof}

\subsection{Discrete Lawvere commutative monoids}

Lurie shows \cite{HTT}*{1.2.3.1} that the nerve functor (of ordinary categories) from ordinary categories to quasicategories has a right adjoint, denoted $\Ho$, the \emph{homotopy category}. We use this theory briefly to understand how $\Span$ really is a quasicategorical version of the algebraic theory of commutative monoids.

We can compute the homotopy category of $\Span$:

\begin{prop}
\label{ho-span}
 The homotopy category $\Ho(\Span)$ of $\Span$ has finite sets as objects and isomorphism classes of span diagrams as morphisms. Composition is by pullback.
\end{prop}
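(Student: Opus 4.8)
The plan is to exploit the isomorphism $\Span\isom N(\TwoSpan)$ established in Proposition \ref{Span-structure}, which reduces the problem to computing the homotopy category of the nerve of a bicategory all of whose $2$-cells are invertible. I would first recall the general construction of the homotopy category of a quasicategory $\cC$: its objects are the $0$-cells, and its morphisms are homotopy classes of $1$-cells, where two parallel $1$-cells $f,g\colon x\to y$ are homotopic precisely when there is a $2$-simplex $\sigma$ with $d_2\sigma=f$, $d_1\sigma=g$ and $d_0\sigma=\id_y$; composites are likewise read off from $2$-simplices, as the face $d_1\sigma$ of a $\sigma$ with $d_2\sigma=f$ and $d_0\sigma=g$.

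Identifying the objects is immediate: the $0$-cells of $\Span$ are finite sets. For the morphisms, I would use the explicit description of the cells of $N(\TwoSpan)$ from Section \ref{two-one-categories}. A $1$-cell of $\Span$ is exactly a span diagram $X\leftarrow Z\rightarrow Y$. Unwinding the homotopy relation above through this description, a homotopy between parallel spans $f$ and $g$ amounts to a $2$-cell $\theta\colon\id_Y\circ f\Rightarrow g$ of $\TwoSpan$; since the $2$-cells of $\TwoSpan$ are by definition isomorphisms of span diagrams, such a $\theta$ (composed with the unit coherence) is precisely an isomorphism of spans from $f$ to $g$. As $\Span$ is a $(2,1)$-category by Proposition \ref{nerves-of-bicats}, all $2$-cells are invertible, so the homotopy relation on parallel $1$-cells coincides exactly with the relation of being isomorphic as span diagrams. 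Hence $\Hom_{\Ho(\Span)}(X,Y)$ is the set of isomorphism classes of span diagrams from $X$ to $Y$.

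Finally, to compute composition, I would exhibit, for spans $f$ and $g$, a single $2$-simplex of $\Span$ whose $d_2$ and $d_0$ faces are $f$ and $g$ and whose coherence $2$-cell $\theta_{012}$ is the identity. Under the identification of Proposition \ref{Span-structure}, such a $2$-simplex is exactly the data of a composite span built by the pullback construction of $\TwoSpan$, and its $d_1$ face is that pullback. Since $d_1\sigma$ computes the composite in $\Ho(\Span)$, composition in $\Ho(\Span)$ is given by pullback of spans, as claimed; well-definedness on isomorphism classes is automatic, as the homotopy-category composite is independent of the chosen representatives.

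I expect the only real subtlety to lie in the bookkeeping of the second paragraph: matching the simplicial homotopy relation (with its choice of a degenerate face) to the $2$-categorical notion of an isomorphism of spans, and verifying both implications of that equivalence. Everything else is formal, flowing from the identification $\Span\isom N(\TwoSpan)$ together with the $(2,1)$-category structure, so no genuinely hard construction or estimate is required.
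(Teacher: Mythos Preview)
Your proposal is correct and follows the same underlying idea as the paper, namely reading off the homotopy category from the bicategorical description via $\Span\isom N(\TwoSpan)$. The paper's own proof is a one-liner (``The only check is that the composition in $\Span$ respects isomorphism classes; this is evident''), so your argument is simply a more careful unpacking of what the paper leaves implicit.
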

\begin{proof}
 The only check is that the composition in $\Span$ respects isomorphism classes; this is evident.
\end{proof}

This category is recognisable as the algebraic theory for discrete commutative monoids.

This allows us to state the following:
\begin{thm}
\label{discrete-monoids}
Discrete Lawvere commutative monoid objects are the same as commutative monoids.
\end{thm}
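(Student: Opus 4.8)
The plan is to unwind the definitions, collapse the quasicategory $\Span$ to its homotopy category, and then invoke the classical Lawvere correspondence recalled in the introduction. Unwinding Definition \ref{lawvere-monoid-object} together with our convention that ``discrete'' refers to phenomena taking place over $\Set$, a discrete Lawvere commutative monoid object is exactly a product-preserving functor $\phi:\Span\rightarrow\Set$, where $\Set\subset\Spaces$ is the full subquasicategory on the discrete spaces.

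The key reduction is that such a $\phi$ factors through the homotopy category. Since $\Set$ is the nerve of an ordinary category, it has unique fillers for all inner horns in dimensions at least $2$; hence $\phi$ is determined by its action on $0$- and $1$-cells, and any two $1$-cells of $\Span$ joined by a $2$-cell necessarily have the same image. By the adjunction between the nerve and $\Ho$ of \cite{HTT}*{1.2.3.1}, maps $\Span\rightarrow\Set$ correspond to ordinary functors $\Ho(\Span)\rightarrow\Set$; and by Proposition \ref{ho-span} the source $\Ho(\Span)$ is precisely the classical algebraic theory $\ThMon$ of commutative monoids (finite sets, isomorphism classes of spans, composition by pullback).

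I would then transport the product-preservation condition across this correspondence. Products in $\Span$ are disjoint unions, and the projection $1$-cells of a product cone descend to the product cones of $\ThMon$; since $\Set$ is discrete, $\phi$ sends product diagrams to product diagrams if and only if the induced functor $\bar\phi:\ThMon\rightarrow\Set$ does. Thus discrete Lawvere commutative monoid objects are the same as product-preserving functors $\ThMon\rightarrow\Set$, which by the classical Lawvere theorem are exactly commutative monoids. To upgrade from a bijection on objects to the asserted equivalence of quasicategories, I would observe that morphisms and higher cells of both sides likewise factor through $\Ho(\Span)$, so the respective mapping spaces agree.

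The step I expect to be the main obstacle is the careful bookkeeping in this reduction: verifying that a product-preserving functor of quasicategories $\Span\rightarrow\Set$ corresponds precisely to a product-preserving ordinary functor out of $\Ho(\Span)$, so that the product cones and their images behave correctly under $1$-truncation, and confirming that the resulting identification is an equivalence on all cells rather than merely on isomorphism classes of objects.
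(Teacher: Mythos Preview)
Your proposal is correct and follows the same route as the paper: use the adjunction between the nerve and $\Ho$ to factor any functor $\Span\rightarrow\Set$ uniquely through $\Ho(\Span)$, and then invoke Proposition \ref{ho-span} to identify $\Ho(\Span)$ with the classical theory $\ThMon$. You spell out more carefully than the paper does why product-preservation transfers across this factorisation and why the identification is an equivalence rather than a mere bijection on objects, but the core argument is identical.
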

\begin{proof}
Since, as mentioned above, $\Ho$ is the right adjoint of the nerve functor $\Cat\rightarrow\sSet$, for any quasicategory $\cC$ all product-preserving functors $\Span\rightarrow\cC$ factor uniquely through the product-preserving functor $\Span\rightarrow\Ho(\Span)$.
\end{proof}

This will be evident from the structure results proven later in this section, but this elementary argument is nevertheless informative, in our opinion.

\subsection{General comparisons}

Our construction has strictly more data than Lurie's construction:
\begin{thm}
Any Lawvere monoidal quasicategory has an underlying symmetric monoidal quasicategory $\cCot\rightarrow\Fins$ (as defined in \cite{HA}*{Chapter 2}).
\end{thm}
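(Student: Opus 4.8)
The plan is to realise the underlying symmetric monoidal quasicategory as a pullback. Recall the functor $R\colon\Fins\rightarrow\Span$ introduced earlier: on objects it sends a pointed set $\langle n\rangle$ to its underlying unpointed set $\langle n\rangle^\circ=\{1,\dots,n\}$, and it sends a pointed map $\phi\colon\langle m\rangle\rightarrow\langle n\rangle$ to the span
\begin{displaymath}
\xymatrix{\langle m\rangle^\circ&\phi^{-1}(\langle n\rangle^\circ)\iar[l]\ar[r]&\langle n\rangle^\circ}
\end{displaymath}
whose left arm is the inclusion of the preimage (a monomorphism) and whose right arm is the restriction of $\phi$; a short check using composition-by-pullback in $\Span$ shows this is functorial. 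Given a Lawvere monoidal quasicategory $\cCt\rightarrow\Span$, I would set
\[\cCot = \Fins\timeso{\Span}\cCt,\]
with its evident projection $\cCot\rightarrow\Fins$.

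First I would observe that $\cCot\rightarrow\Fins$ is a cocartesian fibration. This is immediate from the stability of cocartesian fibrations under base change \cite{HTT}*{2.4.2.3}; its cocartesian edges are exactly those lying over cocartesian edges of $\cCt$, so for any $\phi$ in $\Fins$ the pushforward on fibres of $\cCot$ agrees with pushforward along $R(\phi)$ in $\cCt$. By construction the fibre of $\cCot$ over $\langle n\rangle$ is the fibre $\cCt_{\langle n\rangle^\circ}$ of $\cCt$ over the finite set $\{1,\dots,n\}$.

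The heart of the matter is Lurie's Segal condition \cite{HA}*{Chapter 2}: for every $n$ the inert maps $\rho^i\colon\langle n\rangle\rightarrow\langle 1\rangle$, $1\le i\le n$, must induce an equivalence $\cCot_{\langle n\rangle}\xrightarrow{\sim}(\cCot_{\langle 1\rangle})^n$. Since $\rho^i$ sends $i$ to $1$ and all else to the basepoint, we have $(\rho^i)^{-1}(\langle 1\rangle^\circ)=\{i\}$, so $R(\rho^i)$ is the span $\{1,\dots,n\}\hookleftarrow\{i\}\xrightarrow{\sim}\{1\}$. But this is precisely the $i$th projection of $\{1,\dots,n\}=\coprod_i\{i\}$ onto its factor $\{i\}\isom\{1\}$, in the description of products in $\Span$. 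Hence the Segal map for $\cCot$ is identified with the map $\cCt_{\{1,\dots,n\}}\rightarrow\prod_i\cCt_{\{i\}}$ induced by the product projections; by the defining property of a Lawvere symmetric monoidal structure, this is an equivalence (the case $n=0$ recording that the fibre over the empty set, an empty product, is terminal). This gives the Segal condition and completes the proof.

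The step I expect to require the most care is this last identification $R(\rho^i)=\text{projection}$: one must align Lurie's inert/active combinatorics of pointed maps with the product structure of $\Span$ (in which products are disjoint unions and projections carry a monomorphic left arm), and confirm that base change transports the Lawvere product condition to exactly the Segal condition rather than a variant of it.
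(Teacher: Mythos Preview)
Your proposal is correct and is essentially the paper's own argument: define $\cCot$ as the pullback of $\cCt\rightarrow\Span$ along the functor $\Fins\rightarrow\Span$, invoke \cite{HTT}*{2.4.2.3} for stability of cocartesian fibrations under base change, and deduce the Segal condition from the Lawvere product property. The paper's proof is terser (it dispatches the Segal condition in one sentence), and it names the comparison functor $L$ rather than $R$ (the paper is in fact inconsistent on this point, using both letters), but the content is the same; your unpacking of $R(\rho^i)$ as the $i$th product projection in $\Span$ is exactly the verification the paper leaves implicit.
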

\begin{proof}
We define $\cCot$ to be the pullback 
\begin{displaymath}
 \xymatrix{\cCot\ar[r]\ar[d]&\cCt\ar[d]^p\\
           \Fins\ar[r]_L    &\Span}
\end{displaymath}
The left-hand arrow is a cartesian fibration, by \cite{HTT}*{Lemma 2.4.2.3}. The required product property is immediate, since unions in $\Fins$ get sent to product diagrams in $\Span$.
\end{proof}

We note that, by further restriction of structure, we can think of $p^{-1}(*)$ as the \emph{underlying quasicategory} of $\cCt$.

Moreover, maps of Lawvere symmetric monoidal quasicategories yield maps of their underlying Lurie symmetric monoidal quasicategories.

Similarly, commutative algebra objects for $\cCt$ yield commutative algebra objects in $\cCot$: given a section $a$ of $\cCt\rightarrow\Span$, we can pull back to obtain a section of $\cCot\rightarrow\Fins$:
\begin{displaymath}
 \xymatrix{\Fins\ar[r]\ar[d]\pb{315}&\Span\ar[d]\\
           \cCot\ar[r]\ar[d]\pb{315}&\cCt \ar[d]\\
           \Fins\ar[r]              &\Span.}
\end{displaymath}
This defines a functor $\theta:\Algt(\cCt)\rightarrow\Algot(\cCot)$.

We repeat Definition \ref{lawvere-monoid-object}: we define a \emph{Lawvere monoid object} in a category $\cC$ to be a model of $\Span$ in $\cC$: that is, a product-preserving functor $\Span\rightarrow\cC$.

Similarly, for the present argument we define a \emph{Lurie monoid object} in $\cC$ to be a functor $\Fins\rightarrow\cC$ which takes the projection maps of disjoint unions to product diagrams in $\cC$. (This agrees with the definition in \cite{HA}*{Remark 2.4.2.2}).

We can form quasicategories $\Mont(\cC)$ and $\Monot(\cC)$ of Lawvere and Lurie monoid objects respectively, as full subquasicategories of the functor categories $\Fun(\Span,\cC)$ and $\Fun(\Fins,\cC)$ on the monoid objects.

The good properties of the functor $L:\Fins\rightarrow\Span$ defines a functor $\varphi:\Mont(\cC)\rightarrow\Monot(\cC)$, defined by precomposition.

Now we have our two major comparison results:
\begin{thm}
 \label{lawvere-lurie-mons}
The natural functor $\varphi:\Mont(\cC)\rightarrow\Monot(\cC)$, as defined above, is an equivalence.
\end{thm}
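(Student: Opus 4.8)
The plan is to show that $\varphi$ is essentially surjective and fully faithful. First I would check that $\varphi$ is well defined: a product-preserving $F\colon\Span\to\cC$ sends each coproduct-inclusion span $A\sqcup B\leftarrow A\rightarrow A$ to a product projection in $\cC$, and since $L$ carries the projection maps of disjoint unions in $\Fins$ to exactly these spans, the restriction $F\circ L$ satisfies the defining condition of $\Monot(\cC)$ and is a Lurie monoid object. The conceptual backbone of the argument is the factorisation, already used in Subsection~\ref{span-quasicategory}, of an arbitrary span $X\xleftarrow{f}U\xrightarrow{g}Y$ as $\Sigma_g\circ\Delta_f$, where the copying map $\Delta_f$ (identity right leg) lies in the image of $\bar R$ and is forced by the finite-product structure, while the folding map $\Sigma_g$ (identity left leg) lies in the image of $L$. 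Under $L$ the active maps $f\colon X\to Y$ of $\Fins$ correspond precisely to the folds $\Sigma_f$, carrying the same symmetric-group equivariance; so the only genuine operational data in either notion are these multiplications, and $L$ identifies them.

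For essential surjectivity I would start from a Lurie monoid $M\colon\Fins\to\cC$ and reconstruct a product-preserving $\tilde M\colon\Span\to\cC$. By the projection-to-product condition, $M$ is determined by the object $c=M(1_+)$ together with the multiplications $M(f)\colon c^X\to c^Y$ for active $f$, assembled into its coherence data. I would build $\tilde M$ so that it agrees with the canonical product structure on the copying maps $\Delta_f$ (which is determined for \emph{any} product-preserving functor) and with $M$ on the folds $\Sigma_f$; the content is then to verify the pullback coherence conditions of Proposition~\ref{Span-structure}. Via the factorisation above these reduce to the Beck--Chevalley relation $\Delta_{f'}\circ\Sigma_{g'}=\Sigma_g\circ\Delta_f$ across a pullback square, together with associativity and commutativity of the multiplication, all of which are exactly the relations already encoded in $M$ and automatic in the target.

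Fully faithfulness I would treat in parallel. A morphism in $\Mont(\cC)$ is a natural transformation of product-preserving functors, determined by its component at the generator $1$ together with naturality against every span; likewise a morphism in $\Monot(\cC)$ is determined by its component at $1_+$ with naturality against all maps of $\Fins$. The same factorisation principle shows that naturality against arbitrary spans is equivalent to naturality against the folds (the image of $L$) plus compatibility with the product structure, the latter being automatic, so that the induced map on mapping spaces is an equivalence.

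The hard part will be that $L$ is very far from an equivalence of quasicategories, so nothing here can be formal: by Subsection~\ref{homs-in-span} we have $\Span(1,1)\isom\coprod_n B\Sigma_n$, whereas $\Fins(1_+,1_+)$ is a two-point set, and $\Span$ moreover records all pullback coherences and all symmetric-group symmetries as explicit higher cells. The equivalence therefore rests entirely on the product-preservation and projection-to-product conditions collapsing this extra homotopical data, and the real technical work is to check that the higher cells of $\Span$ impose \emph{exactly} the coherences a commutative-monoid structure already satisfies, neither more nor fewer, so that the comparison of coherence spaces is an equivalence rather than merely a map. This is most cleanly organised by reducing, as above, to the generator and the active maps, where the $\Sigma_n$-symmetry of the $n$-fold multiplication is matched on one side with the $B\Sigma_n$ in $\Span(1,1)$ and on the other with the action of $\Aut_{\Fins}(n_+)=\Sigma_n$.
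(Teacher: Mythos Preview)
Your proposal identifies the right conceptual content---the factorisation of spans as $\Sigma_g\circ\Delta_f$, with the $\Delta_f$ forced by product-preservation and the $\Sigma_g$ supplied by $L$---but it remains a strategy rather than a proof, and the paper's argument is organised quite differently.

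The gap in your approach is the step ``build $\tilde M$ so that it agrees with \ldots''. In a quasicategory one cannot define a functor by prescribing its values on a generating class of $1$-cells and then checking relations: one must produce a map of simplicial sets, i.e.\ compatible values on all simplices. You acknowledge this yourself (``the real technical work is to check that the higher cells of $\Span$ impose exactly the coherences \ldots''), but you do not supply a mechanism for carrying it out. Since $\Span$ is a $(2,1)$-category one could in principle attempt this by hand in low dimensions, but even then one must exhibit, for every Lurie monoid $M$, a specific functor $\tilde M:\Span\to\cC$ together with an equivalence $\tilde M\circ L\simeq M$, functorially in $M$; your sketch does not indicate how to produce the $2$- and $3$-cells of $\tilde M$ or why the resulting assignment is inverse to $\varphi$ up to homotopy.

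The paper avoids this entirely by introducing an auxiliary quasicategory $\cJ$ containing both $\Span$ and $\Fins$ as full subcategories (a kind of mapping cylinder for $L$), and an intermediate quasicategory $\bMon(\cC)\subset\Fun(\cJ,\cC)$ of functors satisfying a mild condition. The comparison is then broken into two restriction maps $\bMon(\cC)\to\Mont(\cC)$ and $\bMon(\cC)\to\Monot(\cC)$, each of which is shown to be a trivial Kan fibration by identifying the relevant condition on $\cJ$-diagrams with a Kan-extension condition and invoking \cite{HTT}*{4.3.2.15}. The cofinality arguments that make this work (reducing a certain undercategory to the discrete set of singleton inclusions $K\leftarrow\{k\}\rightarrow\{k\}$) are precisely the formal shadow of your factorisation $\Sigma_g\circ\Delta_f$, but packaged so that the Kan-extension machinery handles all higher coherences at once. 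Your intuition is sound; what is missing is a device of this kind to convert it into an argument.
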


\begin{thm}
 \label{lawvere-lurie-algs}
Let $q:\cCt\rightarrow\Span$ be a Lawvere monoidal category, and $\cCot\rightarrow\Fins$ be the underlying Lurie monoidal category.

Then the natural functor $\theta:\Algt(\cC)\rightarrow\Algot(\cC)$ between the corresponding quasicategories of algebras, as defined above, is an equivalence.
\end{thm}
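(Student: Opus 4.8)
The plan is to identify $\theta$ with restriction along $L\colon\Fins\to\Span$ and to prove that this restriction is an equivalence, in the same spirit as the monoid comparison of Theorem~\ref{lawvere-lurie-mons}. Since $\cCot=\Fins\timeso{\Span}\cCt$ is a pullback, a section of $\cCot\to\Fins$ is exactly a functor $\Fins\to\cCt$ lifting $L$, and the Lurie-algebra condition (inert morphisms to cocartesian) matches the Lawvere one because $L$ carries the inert morphisms of $\Fins$ to collapsing spans. Under this dictionary $\theta$ is precomposition with $L$, sending a section $a$ to $a\circ L$. As both sides are full subquasicategories of section categories, it is enough to show this precomposition is fully faithful and essentially surjective.

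The geometric input is the factorisation of every $1$-cell of $\Span$ as an \emph{addition} span (left leg monic, hence lying in $L(\Fins)$) following a \emph{copying} span (right leg an isomorphism). The first fact I would establish is that any Lawvere algebra automatically sends copying spans to cocartesian morphisms: composing a copying span such as $1\leftarrow 2\xrightarrow{=}2$ with the two collapsing projections recovers the identity, so by the collapsing-to-cocartesian hypothesis and functoriality the fibrewise components of the section's value on that copying span are forced to be equivalences. Consequently a Lawvere algebra sends the copying part of every span to a cocartesian morphism, and so is determined by its restriction to $L(\Fins)$, that is, by $\theta(a)$. This already shows $\theta$ is faithful; applying the same analysis to natural transformations between sections upgrades it to fully faithful.

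For essential surjectivity I would run the construction backwards: given a Lurie algebra $a_0$, define its extension on a general span by choosing a factorisation into addition $\circ$ copying, applying $a_0$ to the addition part and the canonical cocartesian pushforward to the copying part. To see this is well defined and functorial I would lean on the fact (Proposition~\ref{Span-structure}) that $\Span$ is a $(2,1)$-category: all cells of degree $\ge 3$ are uniquely determined by their boundaries, so the only coherences to verify live in dimensions $\le 2$. This reduces the check to a bounded list of $2$- and $3$-simplex identities---compatibility of the factorisation with composition of spans and with the collapsing projections---which can be verified directly using the explicit description of $p$-cocartesian morphisms over $\Span$.

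The main obstacle is precisely this well-definedness: the factorisation of a span into addition and copying parts is only unique up to a groupoid of choices, and one must show the resulting value is independent of the choice up to coherent equivalence, with no further relations imposed. I expect the decisive tool to be the $(2,1)$-categorical rigidity of $\Span$, which collapses the potentially infinite tower of coherences into the finitely many already guaranteed by the structure, so that the commutative-algebra data on the $\Fins$-part genuinely forces the full span-indexed structure. Everything beyond dimension two is then automatic.
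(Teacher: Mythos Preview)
Your observation that a Lawvere algebra automatically sends copying spans to cocartesian morphisms is correct and useful, and your identification of $\theta$ with restriction along $L$ is fine. The gap is in the essential surjectivity argument, specifically in the appeal to the $(2,1)$-categorical rigidity of $\Span$.

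The rigidity of $\Span$ constrains maps \emph{into} $\Span$: for $m\ge 3$, an $m$-simplex of $\Span$ is uniquely determined by its boundary. It says nothing about maps \emph{out of} $\Span$ into a general quasicategory. To build a section $\Span\to\cCt$ you must supply, for every $m$-simplex $\sigma$ of $\Span$, an $m$-simplex of $\cCt$ lying over it, compatibly with faces and degeneracies. Even though $\sigma$ is determined by $\partial\sigma$ in $\Span$, the filler in $\cCt$ is not: $\cCt$ is an arbitrary quasicategory, so there is a nontrivial space of choices at every level, and these must be made coherently all the way up. Your proposed ``bounded list of $2$- and $3$-simplex identities'' does not exist; the tower of coherences is genuinely infinite. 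The same problem afflicts your fully-faithful argument, since a natural transformation of sections is a functor $\Span\times\Delta^1\to\cCt$ and faces the same issue.

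This is exactly why the paper does not attempt a direct construction. Instead it introduces an auxiliary quasicategory $\cJ$ containing both $\Fins$ and $\Span$ as full subcategories, and a category $\bAlg(\cC)$ of suitable functors $\cJ\to\cCt$. One then shows that restriction $\bAlg(\cC)\to\Algt(\cC)$ is a trivial fibration (the relevant functors are $q$-left Kan extensions of their restriction to $\Span$), and that restriction $\bAlg(\cC)\to\Algot(\cC)$ is also a trivial fibration (via $q$-right Kan extensions from $\Fins$, using a cofinality argument that boils down to the product property). Lurie's Kan-extension machinery \cite{HTT}*{Prop.~4.3.2.15} is what packages the infinite coherence problem into a tractable pointwise check; your factorisation idea is morally what underlies the cofinality step there, but it cannot replace the Kan-extension framework.
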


Both will be proved in the next section; here is the most important corollary:
\begin{thm}
\label{lawvere-lurie-cats}
 The quasicategory of Lawvere symmetric monoidal categories is equivalent to the quasicategory of Lurie symmetric monoidal categories.
\end{thm}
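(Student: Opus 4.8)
The plan is to reduce this statement to Theorem \ref{lawvere-lurie-mons} by recognising that both quasicategories in question are quasicategories of monoid objects in $\Cinfty$, and then transporting the comparison across the straightening--unstraightening equivalence of \cite{HTT}*{Section 2.4.2 and Chapter 3}.

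First I would identify the left-hand side. A Lawvere symmetric monoidal category is, by definition, a model of $\Span$ in quasicategories, and with the symmetric monoidal functors as edges the quasicategory of these objects is exactly $\Mod^\fib(\Span)$. The equivalence between $\Mod^\fib(T)$ and $\Funpp(T,\Cinfty)$ noted in subsection \ref{models-in-quasicategories}, applied with $T=\Span$, identifies this with $\Funpp(\Span,\Cinfty)=\Mont(\Cinfty)$, the quasicategory of Lawvere monoid objects in $\Cinfty$. Symmetrically, a Lurie symmetric monoidal category is a cocartesian fibration over $\Fins$ satisfying the product (Segal) condition, and the same straightening argument over the base $\Fins$ identifies the quasicategory of these with $\Monot(\Cinfty)$, the quasicategory of Lurie monoid objects in $\Cinfty$.

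The crucial step is to match the comparison functors. The underlying-category construction described just above the theorem pulls a cocartesian fibration over $\Span$ back along $L:\Fins\to\Span$; under straightening, pullback along $L$ corresponds to precomposition with $L$, which is exactly the functor $\varphi:\Mont(\Cinfty)\to\Monot(\Cinfty)$ of Theorem \ref{lawvere-lurie-mons} (with $\cC$ taken to be $\Cinfty$). That theorem then tells us $\varphi$ is an equivalence, and hence so is the underlying-category comparison of symmetric monoidal categories. Note that the algebra comparison, Theorem \ref{lawvere-lurie-algs}, is not needed for this corollary; only the monoid-object comparison is.

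The main obstacle is bookkeeping rather than conceptual difficulty. First, $\Cinfty$ is large, so I would fix a suitable universe and use the large-category version of the straightening equivalence throughout. Second, and more delicately, I must verify that straightening genuinely carries the geometric pullback-along-$L$ functor to the honest precomposition functor $\varphi$ up to coherent equivalence; this is the naturality of (un)straightening in the base, which I would extract from \cite{HTT}. Once these identifications are in place the result is formal.
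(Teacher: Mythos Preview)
Your proposal is correct and follows essentially the same route as the paper: identify both sides with $\Mont(\Cinfty)$ and $\Monot(\Cinfty)$ via straightening--unstraightening, then invoke Theorem~\ref{lawvere-lurie-mons}. You are in fact slightly more careful than the paper, which does not explicitly check that the pullback-along-$L$ functor matches $\varphi$ under straightening nor mention the size issue; but the argument is the same.
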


\begin{proof}
 The quasicategory of cocartesian fibrations over $\Fins$ is equivalent to the quasicategory of functors $\Fins\rightarrow\Cinfty$; and the disjoint union property for cocartesian fibrations is equivalent to taking disjoint unions to products.

So the category of Lurie symmetric monoidal categories is equivalent to $\Monot(\Cinfty)$.

Similarly, the quasicategory of cocartesian fibrations over $\Span$ is equivalent to the category of functors $\Span\rightarrow\Cinfty$; and the product property is equivalent to being product-preserving.

So the category of Lawvere symmetric monoidal categories is equivalent to $\Mont(\Cinfty)$.

Theorem \ref{lawvere-lurie-mons} gives the required equivalence to prove the theorem.
\end{proof}

\subsection{The proofs of the comparison theorems}

This section merely contains the proofs of the two basic comparison results stated in the last section. The two arguments are very similar.

Both employ an auxiliary quasicategory $\cJ$, defined as follows:
\[\cJ_n=\coprod_{\substack{i+j+1=n,\\i,j\geq -1}}\left\{\text{$f:\Delta^j\rightarrow\Fins$, $g:\Delta^i\star\Delta^j\rightarrow\Span$, with $g|_{\Delta^j}=L\circ f$.}\right\},\]
where the face and degeneracy maps are obvious.

We can draw cells of $\cJ$ as span diagrams equipped with a fenced-off sub-span diagram on the right-hand side, where the fenced-off part is in the essential image of $L$:
\begin{displaymath}
\xymatrix@!R=6mm@!C=6mm
{&&&&X_{04}\pb{270}\ar[dr]\ar[dl]&&&&\\
&&&X_{03}\pb{270}\ar[dr]\ar[dl]&&X_{14}\pb{270}\ar[dr]\ar[dl]&\dl[dddlll]&&\\
&&X_{02}\pb{270}\ar[dr]\ar[dl]&&X_{13}\pb{270}\ar[dr]\ar[dl]&&X_{24}\pb{270}\ar[dr]\iar[dl]&&\\
&X_{01}\ar[dr]\ar[dl]&&X_{12}\ar[dr]\ar[dl]&&X_{23}\ar[dr]\iar[dl]&&X_{34}\ar[dr]\iar[dl]&\\
X_0&&X_1&&X_2&&X_3&&X_4}
\end{displaymath}

When $n=0$, we have $\Ob\cJ=\Ob(\Span)\sqcup\Ob(\Fins)$ (the summands are the contributions of $i=0, j=-1$ and $i=-1, j=0$ respectively).

The full subcategory $\cJ_\Span$ on the objects corresponding to $\Ob(\Span)$ consists of the contributions by $i=n, j=-1$, and this is a copy of $\Span$; the full subcategory $\cJ_\Fins$ on the objects corresponding to $\Ob(\Fins)$ consists of the contributions by $i=-1, j=n$, and this is a copy of $\Fins$.

Note that there are no 1-cells from $\cJ_\Fins$ to $\cJ_\Span$ in $\cJ$. Note also that the full subcategory embedding $\Span\rightarrow\cJ$ admits a retraction $\cL:\cJ\rightarrow\Span$ (defined by $L$ on $\cJ_\Fins$).

Equipped with this, we can prove the results:

\begin{proof}[of Theorem \ref{lawvere-lurie-mons}]
We write $\bMon(\cC)$ for the full subcategory of functors $\Map(\cJ,\cC)$ on the objects $f$ such that:
\begin{enumerate}[(i)]
 \item\label{m-equivs} For any set $A$, the canonical 1-cell of $\cJ$ defined by the constant maps $A_+:\Delta^0\rightarrow\Fins$ and $A:\Delta^0\star\Delta^0\rightarrow\Span$ is sent by $f$ to an equivalence in $\cC$,
 \item\label{m-lawvere} The restriction $f|_{\cJ_\Span}$ is a Lawvere monoid object.
 \item\label{m-lurie} The restriction $f|_{\cJ_\Fins}$ is a Lurie monoid object
\end{enumerate}

We notice that, in the presence of condition (\ref{m-equivs}), conditions (\ref{m-lawvere}) and (\ref{m-lurie}) are equivalent to one another. Indeed, both the sets of collapsing morphisms and the product properties match up under the given equivalences.

Also, condition (\ref{m-equivs}) is equivalent to saying that $f$ is a left Kan extension of $f|_{\Span}$ along $\Span\rightarrow\cJ$ (as defined in \cite{HTT}*{Section 4.3.2}).

Indeed, to say that the following diagram is a Kan extension diagram
\begin{displaymath}
 \xymatrix{\cJ_\Span\ar[r]^{f|_{\cJ_\Span}}\ar[d]&\cC\\
           \cJ\ar[ur]^f&}
\end{displaymath}
is to say that, for every object $X\in\Ob(\cJ)$, the diagram
\begin{displaymath}
 \xymatrix{(\cJ_\Span)_{/X}\ar[r]^{f}\ar[d]&\cC\\
           (\cJ_\Span)_{/X}\star 1\ar[ur]^f&}
\end{displaymath}
makes $f(X)$ a colimit of $f_{/X}$. Here $(\cJ_\Span)_{/X}$ is notation for $(\cJ_{/X})\timeso{\cJ}(\cJ_{\Span})$.

If $X\in\Ob(\cJ_\Span)$, then $(\cJ_\Span)_{/X}\isom\Span_{/X}$, and the diagram is vacuously a colimit.

If $X_+\in\Ob(\cJ_\Fins)$, then $(\cJ_\Span)_{/X}\isom\Span_{/X}$, and the diagram is a colimit if and only if the 1-cell from $A$ to $A_+$ is taken to an equivalence in $\cCt$.

We can show that every map $f_0:\Span\rightarrow\cC$ admits $f_0\circ\cL$ as a left Kan extension to a map $\cJ\rightarrow\cC$.
Indeed, following the definition of a Kan extension along an inclusion, this amounts to showing for $X\in(\Fins)_0$ that the diagram
\begin{displaymath}
\xymatrix{\Span_{/X}\ar[d]\ar[rr]&&\Span\ar[r]^{f_0}&\cC\\
1\star\Span_{/X}\ar[r]&\cJ\ar[r]_{\cL}&\Span\ar[ur]_{f_0}&}
\end{displaymath}
is a colimit diagram of shape $\Span_{/X}$ in $\cC$. However, $\Span_{/X}$ has a terminal object given by the diagram of identities:
\begin{displaymath}
\xymatrix{&X\ar[dl]\ar[dr]&\dl[dl]\\
X&&X}
\end{displaymath}
Thus the colimit is given by $f_0(X)$, with colimiting structure maps described by $\cL$.

Hence we can use \cite{HTT}*{Prop 4.3.2.15} to deduce that the restriction functor $p:\bMon(\cC)\rightarrow\Mont(\cC)$ is acyclic Kan. 

Now, composition with $\cL$ defines a section of the functor $\bMon(\cC)\rightarrow\Mont(\cC)$, and $\theta$ is the composition of this with the restriction map $p':\bMon(\cC)\rightarrow\Monot(\cC)$.

Thus all we need to do is show that $p'$ is acyclic Kan, and \cite{HTT}*{Prop 4.3.2.15} says that this will follow from these two claims:
\begin{enumerate}[(a)]
\item\label{mak-a} Every $f_0\in\Monot(\cC)$ admits a right Kan extension $f$, as shown fitting into the following diagram:
\begin{displaymath}
  \xymatrix{\Fins\ar[r]^{f_0}\ar[d]&\cC\\
            \cJ\ar[ur]_f&}
\end{displaymath}
\item\label{mak-b} Given $f\in\sSet_\Span(\cJ,\cC)$ such that $f_0=f|_{\Fins}$ is a Lurie monoid object, $f$ is a right Kan extension of $f_0$ if and only if $f$ satisfies condition (\ref{m-equivs}) above.
\end{enumerate}

To prove (\ref{mak-a}), for any object $K\in\Span$, we consider the quasicategory \[\cK_K=\Fins\timeso{\Span}(\Span_{K/}).\]
We write $g_K$ for the composite $\cK_K\rightarrow\Fins\rightarrow\cC$.

According to \cite{HTT}*{Lemma 4.3.2.13}, it will suffice to to show that, for every $K$, $g_K$ has a colimit in $\cC$.

Since there is an injection $\Fins\rightarrow\Span$, there is an injection $\cK_K\rightarrow\Span_{K/}$; we thus write the objects of $\cK_K$ as morphisms $K\stackrel{a}{\leftarrow}Y\stackrel{b}{\rightarrow}Z$ of $\Span$. We let $\cK'_K$ denote the full subcategory on the objects where $b$ is an isomorphism and $a$ an injection.

The inclusion $\cK'_K\rightarrow\cK_K$ has a right adjoint. Indeed, one choice of right adjoint sends the object $K\stackrel{a}{\leftarrow}Y\stackrel{b}{\rightarrow}Z$ to $K\leftarrow \im(a)\rightarrow \im(a)$. Regarding an adjunction as a bicartesian fibration over $\Delta^1$, we need to provide cartesian lifts of the nontrivial 1-cell in $\Delta^1$; a lift for $K\stackrel{a}{\leftarrow} Y\stackrel{b}{\rightarrow}Z$ is given by
\begin{displaymath}
\xymatrix@!R=8mm@!C=8mm{
&&Y\ar[dl]\ar[dr]^=\pb{270}&&\\
&\im(a)\iar[dl]\ar[dr]^=&&Y\ar[dl]\ar[dr]&\\
K&&\im(a)&&Z;}
\end{displaymath}
it is readily checked that this is indeed cartesian.

Hence $(\cK'_K)^\op\rightarrow(\cK_K)^\op$ is cofinal, so we just need to show that $g'_K=g_K|_{\cK'_K}$ has a colimit in $\cC$.

We write $\cK''_K$ for the full subcategory of $\cK'_K$ on the objects $K\leftarrow\{k\}\rightarrow\{k\}$. Because of the product property of monoidal objects, $g''_K=g'_K|_{\cK''_K}$ is a Kan extension of $g'_K$.

Thus, using \cite{HTT}*{Lemma 4.3.2.7}, we merely need to show that $g''_K$ has a limit in $\cC$. But by the product property, $f$ exhibits $f(K)$ as a limit of $g''_K$, thus proving~(\ref{mak-a}).

The argument is reversible: it shows that $f$ is a right Kan extension of $f_0$ at $K_+$ if and only if $f$ induces an equivalence $f(K_+)\rightarrow f(K)$; this proves~(\ref{mak-b}).
\end{proof}

\begin{proof}[of Theorem \ref{lawvere-lurie-algs}]
We write $\bAlg(\cC)$ for the full subcategory of functors $\Map_\Span(\cJ,\cCt)$ on the objects $f$ such that $qf=\cL$, and:
\begin{enumerate}[(i)]
 \item\label{d-equivs} For any set $A$, the canonical 1-cell of $\cJ$ defined by the constant maps $A_+:\Delta^0\rightarrow\Fins$ and $A:\Delta^0\star\Delta^0\rightarrow\Span$ is taken to an equivalence in $\cCt$,
 \item\label{d-lawvere} The restriction $f|_{\cJ_\Span}$ is a Lawvere algebra object.
 \item\label{d-lurie} The restriction $f|_{\cJ_\Fins}$ is a Lurie algebra object, in the sense that the following diagram factors with a dotted arrow, as shown, to give one:
\begin{displaymath}
 \xymatrix{\cJ_\Fins\ar[d]\ar[r]^\sim&\Fins\dar[r]&\cCot\ar[d]\\
           \cJ\ar[rr]&&\cCt.}
\end{displaymath}
\end{enumerate}

We notice that, in the presence of condition (\ref{d-equivs}), conditions (\ref{d-lawvere}) and (\ref{d-lurie}) are equivalent to one another. Indeed, both the sets of collapsing morphisms and the product properties match up under the given equivalences.

Also, condition (\ref{d-equivs}) is equivalent to saying that $f$ is a $q$-Kan extension of $f|_{\Span}$ along $\Span\rightarrow\cJ$ (as defined in \cite{HTT}*{Section 4.3.2}).

Indeed, to say that the following diagram is a Kan extension diagram
\begin{displaymath}
 \xymatrix{\cJ_\Span\ar[r]^{f|_{\cJ_\Span}}\ar[d]&\cCt\ar[d]^q\\
           \cJ\ar[r]\ar[ur]^f&\Span}
\end{displaymath}
is to say that, for every object $X\in\Ob(\cJ)$, the diagram
\begin{displaymath}
 \xymatrix{(\cJ_\Span)_{/X}\ar[r]^{f}\ar[d]&\cCt\ar[d]^q\\
           (\cJ_\Span)_{/X}\star 1\ar[r]\ar[ur]^f&\Span}
\end{displaymath}
makes $f(X)$ a $q$-colimit of $f_{/X}$. Here $(\cJ_\Span)_{/X}$ is notation for $(\cJ_{/X})\timeso{\cJ}(\cJ_{\Span})$.

If $X\in\Ob(\cJ_\Span)$, then $(\cJ_\Span)_{/X}\isom\Span_{/X}$, and the diagram is vacuously a $q$-colimit.

If $X_+\in\Ob(\cJ_\Fins)$, then $(\cJ_\Span)_{/X}\isom\Span_{/X}$, and the diagram is a $q$-colimit if and only if the 1-cell from $A$ to $A_+$ is taken to an equivalence in $\cCt$.

Since every map $f_0:\Span\rightarrow\cCt$ has $f_0\circ\cL$ as a $q$-left Kan extension to a map $\cJ\rightarrow\cCt$, \cite{HTT}*{Prop 4.3.2.15} says that the restriction functor $p:\bAlg(\cC)\rightarrow\Algt(\cC)$ is acyclic Kan.

Now, composition with $\cL$ defines a section of the functor $\bAlg(\cC)\rightarrow\Algt(\cC)$, and $\theta$ is the composition of this with the restriction map $p':\bAlg(\cC)\rightarrow\Algot(\cC)$.

Thus all we need to do is show that $p'$ is acyclic Kan, and \cite{HTT}*{Prop 4.3.2.15} says that this will follow from these two claims:
\begin{enumerate}[(a)]
\item\label{ak-a} Every $f_0\in\Algot(\cC)$ admits a $q$-Kan extension $f$, as shown fitting into the following diagram:
\begin{displaymath}
  \xymatrix{\Fins\ar[r]^{f_0}\ar[d]&\cCot\ar[r]&\cCt\ar[d]\\
            \cJ\ar[rr]\ar[urr]_f&&\Span}
\end{displaymath}
\item\label{ak-b} Given $f\in\sSet_\Span(\cJ,\cCt)$ such that $f_0=f|_{\Fins}$ is a Lurie algebra object, $f$ is a $q$-right Kan extension of $f_0$ if and only if $f$ satisfies condition (\ref{d-equivs}) above.
\end{enumerate}

To prove (\ref{ak-a}), for any object $K\in\Span$, we consider the quasicategory \[\cK_K=\Fins\timeso{\Span}(\Span_{K/}).\]
We write $g_K$ for the composite $\cK_K\rightarrow\Fins\rightarrow\cCot$.

%%% I think Lurie has a typo at this point, and attempts to map into $\cC$ instead of $\cCot$...

According to \cite{HTT}*{Lemma 4.3.2.13}, it will suffice to to show that, for every $K$, $g_K$ has a $q$-colimit in $\cCot$.

Since there is an injection $\Fins\rightarrow\Span$, there is an injection $\cK_K\rightarrow\Span_{K/}$; we thus write the objects of $\cK_K$ as morphisms $K\stackrel{a}{\leftarrow}Y\stackrel{b}{\rightarrow}Z$ of $\Span$. We let $\cK'_K$ denote the full subcategory on the objects where $b$ is an isomorphism and $a$ an injection.

As in the preceding proof of Theorem \ref{lawvere-lurie-mons}, we just need to show that $g'_K=g_K|_{\cK'_K}$ has a $q$-colimit in $\cCot$.

We write $\cK''_K$ for the full subcategory of $\cK'_K$ on the objects $K\leftarrow\{k\}\rightarrow\{k\}$. Because of the product property of monoidal objects, $g''_K=g'_K|_{\cK''_K}$ is a $q$-Kan extension of $g'_K$.

Thus, using \cite{HTT}*{Lemma 4.3.2.7}, we merely need to show that $g''_K$ has a $q$-limit in $\cCt$. But $f$ exhibits $f(K)$ as a $q$-limit of $g''_K$, and that proves~(\ref{ak-a}).

Similarly, this argument is reversible: $f$ is a $q$-right Kan extension of $f_0$ at $K_+$ if and only if $f$ induces an equivalence $f(K_+)\rightarrow f(K)$; this proves~(\ref{ak-b}).
\end{proof}

\begin{bibdiv}
\begin{biblist}
\bibselect{bibliography}
\end{biblist}
\end{bibdiv}
\end{document}